\documentclass{article}
\usepackage[utf8]{inputenc}
\usepackage{amsmath}
\usepackage{amssymb}
\usepackage{stmaryrd}
\usepackage{multicol}
\usepackage{amsthm}
\usepackage{bbold}
\usepackage[left=3cm,right=3cm,top=3cm,bottom=3cm]{geometry}
\usepackage[ colorlinks=true, linkcolor=blue, citecolor=green]{hyperref}
\hypersetup{bookmarks=true}
\usepackage{bookmark}
\usepackage{mathrsfs}
\usepackage{graphicx}
\usepackage{appendix}
\usepackage{tikz}
\usepackage{subcaption}
\usetikzlibrary[patterns]

\newtheorem{theorem}{Theorem}[section]
\newtheorem{prop}{Proposition}[section]
\newtheorem{hyp}[prop]{Assumption}
\newtheorem{defi}[prop]{Definition}
\newtheorem{lemme}[prop]{Lemma}
\newtheorem{corr}[prop]{Corollary}
\newtheorem{conj}[theorem]{Conjecture}

\newcommand{\footremember}[2]{
    \footnote{#2}
    \newcounter{#1}
    \setcounter{#1}{\value{footnote}}
}

\title{Individual-based stochastic model with unbounded growth, birth and death rates: a tightness result}
\author{Virgile Brodu\footremember{trailer}{Université de Lorraine, Inria, IECL, F-54000 Nancy, France}}

\begin{document}
\maketitle

\begin{abstract}
We study population dynamics through a general growth/degrowth-fragmentation process, with resource consumption and unbounded growth/degrowth, birth and death rates. Our model is structured in a positive trait called \textit{energy} (which is a proxy for any biological parameter such as size, age, mass, protein quantity...), and the jump rates of the process can be arbitrarily high depending on individual energies, which has not been considered yet in the literature. After a preliminary study to construct well-defined objects (which is necessary contrary to similar works, because of the explosion of individual rates), we consider a classical sequence of renormalizations of the underlying process and obtain a tightness result for the associated laws in large-population asymptotics. We characterize the accumulation points of this sequence as solutions of an integro-differential system of equations, which proves the existence of measure solutions to this system. Furthermore, if such a measure solution is unique, then our tightness result becomes a convergence result towards this unique process. We illustrate our work with the case of \textit{allometric} rates (\textit{i.e.} they are assumed to be power functions) and eventually present numerical simulations in this allometric setting.
\end{abstract}

\noindent \textbf{Keywords:} individual-based model, Jumping Markov Process, large-population asymptotics, martingale problem, measure-valued process, tightness, unbounded jump rates.

\section{Introduction}

We propose a general framework to study a population, with competition between individuals through resource consumption, and design a stochastic individual-based model. We gather all individual trajectories into a measure-valued population process. This work is in line with a rich literature about similar individual-based models, originating with \cite{fournier2004microscopic}, followed by \cite{champagnat2005individualbased} with mutations, adding an age structure in \cite{chi_08}, an interaction with resources in \cite{FRITSCH20151}, and a diffusion term in \cite{tchouanti2024well}. The main contribution of all these papers is a convergence result in large-population asymptotics, for a suitable renormalization of the underlying stochastic processes, towards the solution $\mu^{*}$ to an integro-differential system of equations, which is deterministic conditionally to the initial condition (see the original Theorem 5.3. in \cite{fournier2004microscopic}, or the recent Theorem 2.1. and Theorem 3.2. in \cite{tchouanti2024well}). The main result of our article is Theorem~\ref{theo:convergence}, which is a tightness result similar to these convergences established in the literature. One common feature between previous papers is that growth, birth and death rates are bounded, and $\mu^{*}$ integrates bounded functions. Our work goes beyond these previous results for the following reasons. First, our jump rates are not necessarily bounded, which adds potential problems of explosion of individual energy or population size in finite time, and technical difficulties in the proofs, but allows more general and biologically relevant models. We develop the case of \textit{allometric} scalings (\textit{i.e.} the jump rates are power functions), which seem to be a key ingredient for modelling the behavior of species \cite{peters_1983,brown_04,malerba_2019,brodu2026}, in Section~\ref{subsec:examples}. Then, any accumulation point, still denoted as $\mu^{*}$, integrates a broader set of functions, not necessarily bounded (see Lemma~\ref{corr:plusdefonctions}). Hence, our work could be adapted to generalize existing results on growth-fragmentation models such as \cite{chi_08} or \cite{FRITSCH20151}. 
\\\\
We make a specific choice for the fragmentation modelling: every offspring in the population appears with energy $x_{0}>0$, which is a fixed parameter of the model. The introduction of this parameter is linked to the biological motivation of this work, presented in \cite{brodu2026}, where we make $x_{0}$ vary to obtain necessary conditions for our model to be biologically relevant with allometric functional parameters. On the contrary, in the present article, $x_{0}$ is fixed once and for all. Our work could certainly be extended to other various fragmentation patterns, like binary fission \cite{doumic2015statistical}, conservative and symmetrical division kernels \cite{coranicofab}, or self-similar splitting densities \cite{lauwal07}.
\\\\
Individual trajectories depend on the evolution of the resource over time, which in turn depends on individual energies. Thus, we study a process $(\mu_{t},R_{t})_{t \geq 0}$, where $\mu$ is a finite point measure representing the state of the population, and $R \in [0,R_{\max}]$ the amount of resources ($R_{\max}>0$ being a parameter of the model). This measure-valued process is constructed with Poisson point measures and is piecewise deterministic. Compared to other existing models \cite{chi_08,FRITSCH20151}, due to the choice of unbounded jump rates in our case, additional work is necessary to ensure that the process is well defined for every $t \geq 0$. Then, we introduce a sequence of renormalized processes $(\mu^{K}_{t},R^{K}_{t})_{t \geq 0}$ constructed with the same procedure. The parameter $K$ is a scaling parameter representing the number of individuals in the population at time 0. We want to understand the behavior of our model in a large population asymptotic, when the parameter $K$ goes to $+ \infty$. Importantly, unlike previous work on similar renormalizations of individual-based models \cite{fournier2004microscopic,champagnat2005individualbased,chi_08,FRITSCH20151,tchouanti2024well}, we have to add an additional assumption, due to the fact that our rates are unbounded. Precisely, we assume that there exists a ``Lyapunov-type'' smooth and non-decreasing function $\omega$, associated with the functional parameters of the model (see the upcoming $\mathbf{(H2)}$ in Theorem~\ref{theo:initial}). This will allow us to control stochastic integrals associated to our renormalized processes, and have two important consequences. First, we can prove martingale properties for our processes (see Proposition~\ref{prop:martingalerenormalisee}), which is a key ingredient of the proof in Section~\ref{sec:proof}. Then, we can control the expectation of the population size at any time uniformly on $K$ (see Proposition~\ref{lemme:controlenp}). Provided that such a function $\omega$ exists, we endow the set of measures integrating the function $\omega$ with a weighted topology which is not standard. We call it the $\omega$-weak topology, and write $(\mathcal{M}_{\omega}(\mathbb{R}^{*}_{+}),\mathrm{w})$ for the associated topological space (see Section~\ref{sec:theorem} and Appendix B.2 in \cite{broduthesis} for details).
\\\\
Let us give immediately a taste of our main result with as least technical details as possible. In the following, $b(x)$ is the birth rate for an individual with energy $x$, and similarly $d(x)$ is the death rate. The functions $b$ and $d$ are non-negative. The function $g : (x,R) \mapsto g(x,R)$ accounts for the instantaneous rate of growth or degrowth for an individual with energy $x$ and resources $R$. The function $\rho : (R,\mu) \mapsto \rho(R,\mu)$ is the instantaneous rate of increase or decrease of resources $R$ consumed by the population $\mu$. The functions $g$ and $\rho$ can change sign, depending on the resources $R$ and the state of the individual/population. For more details about the previous functional parameters, we refer the reader to Section~\ref{subsec:dyn}. In the following, we write $\langle \mu, \varphi \rangle := \displaystyle{\int_{\mathbb{R}^{*}_{+}} \varphi(x) \mathrm{d}\mu(x)}$ for every $\mu \in \mathcal{M}_{\omega}(\mathbb{R}^{*}_{+})$ and $\varphi$ measurable. A simplified summarization of our tightness result reads as follow:
\begin{theorem}
First, we assume that
\begin{itemize}
\item[$\mathbf{(H0)}$] $\forall x>0, \quad g(x,0) < 0 < g(x,R_{\max})$,
\item [$\mathbf{(H1)}$] $\forall x >0, \displaystyle{\int_{0}^{x} \dfrac{d(t)}{g(t,0)} \mathrm{d}t} = - \infty$.
\end{itemize}
Then, we assume that there exists a weight function $\omega$ such that
\begin{itemize}
\item[$\mathbf{(H2)}$]
$\omega \in \mathcal{C}^{1}(\mathbb{R}^{*}_{+})$ is positive, non-decreasing, Lipschitz continuous on $(1,+\infty)$, and
\begin{itemize}
\item[$\mathbf{(a)}$] $\exists c_{g}>0, \forall x>0, \quad  \sup_{R \in [0,R_{\max}]} |g(x,R)| \leq c_{g}\omega(x)$,
\item[$\mathbf{(b)}$] $\exists c_{b}>0, \forall x>1, \quad b(x) \leq c_{b}\omega(x)$,
\item[$\mathbf{(c)}$] $\exists c_{d}>0, \forall x>0, \quad d(x) \omega(x) \leq c_{d}$.
\end{itemize} 
\end{itemize}
Finally, we assume that the sequence $\left(\left(\mu^{K}_{t},R^{K}_{t}\right)_{t \geq 0}\right)_{K \in \mathbb{N}^{*}}$ of renormalized processes (formally defined in Section \ref{subsec:construcdeux}) is such that
\begin{itemize}
    \item[$\mathbf{(H3)}$] there exists a random variable $\mu_{0}^{*} \in \mathcal{M}_{\omega}(\mathbb{R}^{*}_{+})$ such that $(\mu_{0}^{K})_{K \in \mathbb{N}^{*}}$ converges in law towards $\mu_{0}^{*}$ in $(\mathcal{M}_{\omega}(\mathbb{R}^{*}_{+}),\mathrm{w})$,
    \item[$\mathbf{(H4)}$] there exists $p>1$ such that 
    \begin{align*}
    \underset{K \in \mathbb{N}^{*}}{\mathrm{sup}} \hspace{0.1 cm} \mathbb{E}\left(\left\langle \mu^{K}_{0}, 1 + \mathrm{Id} + \omega \right\rangle^{p}\right) < + \infty.
    \end{align*}
\end{itemize} 
Under $\mathbf{(H0)}$, $\mathbf{(H1)}$, $\mathbf{(H2)}$, $\mathbf{(H3)}$, $\mathbf{(H4)}$, for all $T \geq 0$, the sequence $\bigg(\left(\mu^{K}_{t},R^{K}_{t}\right)_{t \in [0,T]}\bigg)_{K \in \mathbb{N}^{*}}$ is tight in the Skorokhod space $\mathbb{D}([0,T], (\mathcal{M}_{\omega}(\mathbb{R}^{*}_{+}),\mathrm{w}) \times [0,R_{\max}])$. Any of its accumulation point $(\mu^{*}_{t},R^{*}_{t})_{t \in [0,T]}$ is a continuous process and verifies almost surely, for all $t \in [0,T]$,
\begin{align}
    R^{*}_{t} =  R^{*}_{0} + \displaystyle{\int_{0}^{t}\rho(R^{*}_{s},\mu^{*}_{s})\mathrm{d}s},
    \label{eq:resslim}
\end{align}
     and for every $\varphi :(t,x) \mapsto \varphi(t,x) =: \varphi_{t}(x)$ smooth enough and dominated by $\omega$,
     \begin{multline}
       \langle \mu^{*}_{t}, \varphi_{t} \rangle =  \langle \mu^{*}_{0}, \varphi_{0} \rangle + \displaystyle{\int_{0}^{t} \int_{\mathbb{R}^{*}_{+}} \bigg( \partial_{1}\varphi(s,x) + g(x,R^{*}_{s})\partial_{2}\varphi(s,x) }   \\
     + b(x)(\varphi_{s}(x_{0}) + \varphi_{s}(x-x_{0}) - \varphi_{s}(x)) - d(x)\varphi_{s}(x) \bigg) \mu^{*}_{s}(\mathrm{d}x) \mathrm{d}s,
     \label{eq:indivlim}
     \end{multline}
     where $\partial_{1}$, respectively $\partial_{2}$, are the partial derivatives with respect to the first, respectively second, variable.
\label{theo:initial}
\end{theorem}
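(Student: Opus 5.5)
The proof follows the classical scheme of \cite{fournier2004microscopic,chi_08,FRITSCH20151}. It has three stages: uniform moment estimates, tightness via the Aldous--Rebolledo criterion together with a compact containment condition adapted to the $\omega$-weak topology, and identification of the limit through the martingale problem. The hypotheses $\mathbf{(H0)}$ and $\mathbf{(H1)}$ are used only in the preliminary construction. $\mathbf{(H0)}$ keeps $R^{K}$ in $[0,R_{\max}]$. $\mathbf{(H1)}$ guarantees that an individual degrowing at the minimal resource level dies almost surely before its energy reaches $0$, so that no mass leaks to the boundary $\{0\}$ and the point measures stay supported in $\mathbb{R}^{*}_{+}$. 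I therefore take for granted that $(\mu^{K},R^{K})$ is well defined for all times, together with the martingale decomposition of Proposition~\ref{prop:martingalerenormalisee}. For $\varphi$ dominated by $\omega$, this decomposition reads $\langle \mu^{K}_{t},\varphi_{t}\rangle=\langle \mu^{K}_{0},\varphi_{0}\rangle+\int_{0}^{t}\langle \mu^{K}_{s},L_{R^{K}_{s}}\varphi_{s}\rangle\mathrm{d}s+M^{K,\varphi}_{t}$. Here $L_{R}$ is the generator appearing in (\ref{eq:indivlim}), and the bracket $\langle M^{K,\varphi}\rangle_{t}=\frac{1}{K}\int_{0}^{t}\langle\mu^{K}_{s},b(\cdot)(\varphi_{s}(x_{0})+\varphi_{s}(\cdot-x_{0})-\varphi_{s})^{2}+d\varphi_{s}^{2}\rangle\mathrm{d}s$.

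\textbf{Step 1: uniform moments.} I apply the decomposition to $\varphi=1+\mathrm{Id}+\omega$ and raise it to the power $p$, using a localisation by stopping times $\tau_{n}^{K}=\inf\{t:\langle\mu^{K}_{t},1+\omega\rangle\geq n\}$. The drift is then bounded by a constant times $\langle\mu^{K}_{s},1+\mathrm{Id}+\omega\rangle$, using the following facts:
\begin{itemize}
\item[(i)] by $\mathbf{(H2)(a)}$ and the Lipschitz property of $\omega$, $g\omega'\lesssim\omega$ on $(1,\infty)$;
\item[(ii)] on $(0,1]$, $\omega$ is monotone and $C^{1}$, and I treat this region by the same bound;
\item[(iii)] $b(x)(\omega(x_{0})+\omega(x-x_{0})-\omega(x))\le b(x)\omega(x_{0})\lesssim\omega(x)+\sup_{(0,1]}b$, using monotonicity of $\omega$ and $\mathbf{(H2)(b)}$;
\item[(iv)] births conserve $\mathrm{Id}$ up to non-negativity, and deaths only decrease all quantities.
\end{itemize}
Gronwall, Burkholder--Davis--Gundy and $\mathbf{(H4)}$ then give $\sup_{K}\mathbb{E}(\sup_{t\le T}\langle\mu^{K}_{t},1+\mathrm{Id}+\omega\rangle^{p})<\infty$, which is essentially Proposition~\ref{lemme:controlenp}. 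Since $\omega$ is only assumed to bound $b$, I expect to need the $\mathrm{Id}$ moment to absorb the mass created at $x_{0}$. The bound (i) needs $\omega'\lesssim 1$, which the Lipschitz assumption provides.

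\textbf{Step 2: tightness.} Following Roelly's criterion in its weighted version (Appendix B.2 of \cite{broduthesis}), tightness in $(\mathcal{M}_{\omega},\mathrm{w})$ reduces to two things. The first is tightness of the real processes $\langle\mu^{K},\varphi\rangle$ for $\varphi$ in a countable dense class of $\omega$-dominated smooth functions. The second is a control of mass near $0$ and $+\infty$, i.e. $\lim_{\varepsilon\to0}\limsup_{K}\mathbb{P}(\sup_{t\le T}\langle\mu^{K}_{t},\omega\mathbb{1}_{(0,\varepsilon)\cup(1/\varepsilon,\infty)}\rangle>\eta)=0$. For the first, Aldous--Rebolledo applies. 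The finite-variation part is controlled by Step 1. The bracket is $O(1/K)$ times a quantity with bounded expectation thanks to $\mathbf{(H2)(c)}$, which bounds $d\omega^{2}\le c_{d}\omega$, and $p>1$ provides uniform integrability. For the second, the control at infinity follows by applying the decomposition to a smooth $\psi_{\varepsilon}$ equal to $\omega$ beyond $1/\varepsilon$, with a Markov bound and the fact that $\langle\mu^{K}_{0},\psi_{\varepsilon}\rangle$ is small uniformly by $\mathbf{(H3)}$--$\mathbf{(H4)}$. Near $0$ the weight $\omega$ is bounded, so only the number of individuals with small energy must be controlled. This is where $\mathbf{(H1)}$ and the fact that births occur at $x_{0}>0$ enter: mass near $0$ can only come from degrowth of already small individuals, which are killed at rate $\int d/|g|=\infty$. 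I expect this boundary control near $0$, uniformly in $K$, to be the main obstacle. I would try a test function $\psi_{\varepsilon}(x)=\exp(-\int_{x}^{\varepsilon}d(t)/|g(t,0)|\,\mathrm{d}t)$-type, adapted so that $L_{R}\psi_{\varepsilon}\le C\psi_{\varepsilon}$. Tightness of $R^{K}$ is immediate, since it is Lipschitz with constant $\sup|\rho|$ on the relevant set, given moment bounds on $\mu^{K}$.

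\textbf{Step 3: identification.} Jumps of $\langle\mu^{K},\varphi\rangle$ are of size $O(\|\varphi/\omega\|_{\infty}\sup\omega/K)$ on compacts, so limits are continuous. I define the functional $\Psi_{t}(\mu,R)=\langle\mu_{t},\varphi_{t}\rangle-\langle\mu_{0},\varphi_{0}\rangle-\int_{0}^{t}\langle\mu_{s},L_{R_{s}}\varphi_{s}\rangle\mathrm{d}s$. Then $\mathbb{E}|\Psi_{t}(\mu^{K},R^{K})|\le\mathbb{E}(\langle M^{K,\varphi}\rangle_{t})^{1/2}\to0$. The functional $\Psi_{t}$ is continuous at continuous paths for the $\omega$-weak topology, after truncating $L_{R}\varphi$, which is only $\omega$-dominated, by the moment bound with $p>1$. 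Passing to the limit along a subsequence therefore gives $\Psi_{t}(\mu^{*},R^{*})=0$ a.s. for all $t$ and all $\varphi$ in the countable class, and (\ref{eq:indivlim}) extends by density. Equation (\ref{eq:resslim}) follows identically, provided $\rho$ is continuous on $[0,R_{\max}]\times(\mathcal{M}_{\omega},\mathrm{w})$.
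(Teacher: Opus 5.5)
The genuine gap is in Step~2: you leave the containment of mass near $0$ open, and the mechanism you propose for it is wrong. Mass near $0$ does not only come from degrowth. A birth from a parent with energy $x\in(x_0,x_0+y)$ puts the parent at $x-x_0\in(0,y)$. Consequently no weight $V$ charging small energies can satisfy $L_RV\le CV$ (unless $b$ vanishes fast at $x_0^+$), because the birth part of $L_RV$ contains $b(x)V(x-x_0)$, which is large as $x\downarrow x_0$ while $V(x)$ is not. Your $\psi_\varepsilon$ vanishes at $0$, so it does not charge small energies at all. Its reciprocal $V(x)=\exp(\int_x^\varepsilon d(t)/|g(t,0)|\,\mathrm{d}t)$ does satisfy $gV'-dV\le 0$, because $g\ge g(\cdot,0)$, but it fails precisely at this birth term.

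Here is what is actually needed.
\begin{itemize}
\item If $\omega(0^+)=0$, which $\mathbf{(H2)(c)}$ forces whenever $d$ is unbounded at $0$, there is nothing to prove, since $\langle\mu^K_t,\omega\mathbb{1}_{(0,\varepsilon)}\rangle\le\omega(\varepsilon)N^K_t$.
\item If $\omega(0^+)>0$, then $d$ is bounded near $0$ and $\mathbf{(H1)}$ forces $\int_0\mathrm{d}x/\ell(x)=+\infty$. So $\mathbf{(H1)}$ acts through slow degrowth, not through killing: within time $T$ the flow cannot carry an energy $\ge y_\varepsilon$ into $(0,\varepsilon)$, for some $y_\varepsilon\to0$. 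Trace each individual back to its last reset: time $0$, its own birth at $x_0$ (excluded once $y_\varepsilon<x_0$), or its last birth event as a parent. This gives, pathwise, $\sup_{t\le T}\langle\mu^K_t,\mathbb{1}_{(0,\varepsilon)}\rangle\le\langle\mu^K_0,\mathbb{1}_{(0,y_\varepsilon)}\rangle+K^{-1}\#\{\text{births on }[0,T]\text{ from parents in }(x_0,x_0+y_\varepsilon)\}$. The last term has expectation $\mathbb{E}\int_0^T\langle\mu^K_s,b\,\mathbb{1}_{(x_0,x_0+y_\varepsilon)}\rangle\,\mathrm{d}s$.
\end{itemize}
The first term is controlled by $\mathbf{(H3)}$. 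The second is small if $b(x)\to0$ as $x\downarrow x_0$. But when $b$ jumps at $x_0$, as in the allometric setting, you need a separate estimate excluding accumulation of individuals just above $x_0$, whose speed $g(x_0,R^K_t)$ may vanish by $\mathbf{(H0)}$ and continuity. Nothing in your plan supplies this.

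The rest of your plan follows the paper closely. Step~1 is Proposition~\ref{lemme:controlenp}, your Aldous--Rebolledo estimates are those of Section~\ref{subsec:tension}, and Step~3 mirrors the proof of Proposition~\ref{prop:identification}. Since $\mathbf{(H2)}$ makes $b$ and $\overline{g}$ $\omega$-dominated, you may indeed bypass the $\varpi$-argument of Proposition~\ref{prop:holderproper}.

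Where you differ is Step~2. The paper never formulates a containment condition. It proves tightness for the vague topology with Theorem~\ref{theo:roel}, where compactness only needs $\sup_t\langle\mu_t,\omega\rangle\le M$ by Banach--Alaoglu. It then shows continuity of limit points and uses Theorem~\ref{theo:melroel} to reduce $\omega$-weak tightness to the identity $x^*=\langle\mu^*,\omega\rangle$, where $x^*$ is the limit of $\langle\mu^K,\omega\rangle$ (Section~\ref{subsec:topotricks}). This concentrates your difficulty in one place without removing it. The exchange of $\lim_K$ and $\lim_n$ made there is equivalent to $\lim_n\limsup_K\mathbb{E}\langle\mu^K_t,\omega-\varphi_n\rangle=0$, a fixed-time form of your containment condition. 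That follows from Proposition~\ref{lemme:controlenp} alone when $\omega=o(1+\mathrm{Id})$ at $0$ and at $+\infty$, but not in general. So for $\omega(0^+)>0$ the argument above is needed on either route.

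Two smaller points.
\begin{itemize}
\item At $+\infty$, a cutoff on the linear scale leaves a transition term $\langle\mu^K_s,\omega\mathbb{1}_{A_\varepsilon}\rangle$ that is not $\lesssim\psi_\varepsilon$. Take the cutoff on the scale of $\log x$ instead, which turns that term into $O(1/\log(1/\varepsilon))\langle\mu^K_s,\omega\rangle$ because $\omega(x)\lesssim x$ at infinity.
\item Item (ii) of Step~1 is not automatic. On $(0,1]$, $\mathbf{(H2)(a)}$ only gives $|g|\omega'\le c_g\omega\omega'$, which can blow up at $0$ under $\mathbf{(H0)}$--$\mathbf{(H2)}$: near $0$, take $\omega=x^\kappa$ with $\kappa<1/2$, $\psi=2x^\kappa$, $\ell=x$, $d=x^{-\kappa}$. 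This is harmless for the moment bound, since $\omega\le\omega(1)$ on $(0,1]$ and you can run Step~1 with a weight equal to $\omega$ on $[1,\infty)$ and with bounded derivative on $(0,1]$.
\end{itemize}
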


Assumptions $\mathbf{(H0)}$ and $\mathbf{(H1)}$ are part of the model, and necessary for the good definition of individual trajectories (see Section~\ref{subsec:assumptions}). Then, Assumption $\mathbf{(H2)}$ is necessary to obtain martingale properties for our processes (see Section~\ref{subsec:martrenom}). Note that the case of bounded rates is included in Theorem~\ref{theo:initial}, simply by choosing $\omega \equiv 1$. Finally, Assumptions $\mathbf{(H3)}$ and $\mathbf{(H4)}$ are classical assumptions to prove this kind of tightness result (see the original Assumption C1.1 in \cite{fournier2004microscopic}, or Assumption 4 in \cite{chi_08}), but adapted here to our setting with the weight function $\omega$. Our main result, which is Theorem~\ref{theo:convergence} in Section~\ref{sec:theorem}, will be stronger than Theorem~\ref{theo:initial}, because it relies on weaker, though more complex, assumptions on the weight function $\omega$. Assumption $\mathbf{(H2)}$ will thus replaced by the combination of Assumption~\ref{hyp:poidsomega} and Assumption~\ref{ass:finallejd}. Note that the limiting system \eqref{eq:resslim}-\eqref{eq:indivlim} is very similar to those obtained in previous works (see for example Theorem 5.2 in \cite{FRITSCH20151}), but with two main differences. First, Equation~\eqref{eq:indivlim} is not only valid for bounded functions, but for functions dominated by $\omega$, which can be unbounded (see Figure~\ref{fig:exampeomega}). Then, the term multiplied by $b(x)$ in Equation~\eqref{eq:indivlim} can be interpreted as a birth term, and is naturally adapted to our specific fragmentation modelling with parameter $x_{0}$. To the best of our knowledge, this is the first tightness result with unbounded rates for this kind of individual-based model. The sketch of the proof follows the procedure initially proposed by Fournier and Méléard \cite{fournier2004microscopic}, but at each step, we encounter additional difficulties due to the interaction with the resource and unbounded rates. In particular, we extend results of Roelly \cite{roel_86} in Theorem~\ref{theo:roel}, and Méléard-Roelly \cite{meleard1993convergences} in Theorem \ref{theo:melroel} to work with the $\omega$-weak topology.
\\\\
In Section~\ref{subsec:dyn} and \ref{sec:construction}, we construct the process $(\mu_{t},R_{t})_{t}$. In Section \ref{subsec:assumptions} and \ref{subsec:refinement}, we provide assumptions under which this process is well-defined for every $t \geq 0$. We also introduce the weight function $\omega$. In Section~\ref{subsec:geninf}, we show martingale properties of the process. In Section~\ref{subsec:construcdeux}, we introduce the sequence of renormalized processes $(\mu^{K}_{t},R^{K}_{t})_{t \in [0,T]}$. We present our main result in Theorem~\ref{theo:convergence} of Section~\ref{sec:theorem}, along with possible extensions of this result and conjectures in Section~\ref{subsec:discussiontheo}. In particular, if there exists a unique solution to the system \eqref{eq:resslim}-\eqref{eq:indivlim}, then our tightness result becomes a convergence result. The uniqueness of the limiting process in the case of bounded rates is well-known (see Proposition II.5.7. in \cite{broduthesis}). It remains an open question to know if we can tackle more general cases with unbounded rates, and we provide some directions of research in Section~\ref{section:uniqueness}. In Section~\ref{subsec:examples}, we illustrate our results, both theoretically and numerically, with power functions for the jump rates. As far as individual-based models are concerned, this so-called \textit{allometric} setting has not been studied yet, except for our previous model with constant resources in \cite{brodu2026}. Finally, the proof of Theorem~\ref{theo:convergence} is developed in Section~\ref{sec:proof}. In Appendix~\ref{app:inter}, we provide the proofs of intermediate results, including Theorem~\ref{theo:roel} and Theorem \ref{theo:melroel}; and in Appendix~\ref{app:simu}, we give our simulation parameters and algorithms for Section~\ref{subsec:examples}. Note that we will also refer extensively to \cite{broduthesis} for technical, but classical details.

\section{Definitions and assumptions}
\label{sec:defass}

First, in Section~\ref{subsec:dyn}, we define individual dynamics, \textit{i.e.} deterministic metabolism and resource consumption, and random birth or death events. Then, in Section~\ref{sec:construction}, we introduce notations to gather all the individual flows and resource dynamics into one deterministic flow between random jumps. Then, we provide an algorithmic construction of the process $(\mu_{t},R_{t})_{t}$ with Poisson point measures, valid up to a stopping time accounting for two possible problematic events. The first one is the possibility for individual trajectories to reach 0 or $+ \infty$ in finite time between random jumps, and the second one is the possible accumulation of jump times at the population level. In Section~\ref{subsec:assumptions}, we provide an assumption to address the first problematic event. Then, in Section~\ref{subsec:refinement}, we resolve the second problem, by giving a general setting under which the population process is almost surely well-defined for $t \in \mathbb{R}^{+}$. Once this is done, in Section~\ref{subsec:geninf}, we show martingale properties for the process $(\mu_{t},R_{t})_{t \geq 0}$.

\subsection{Individual and resource dynamics}
\label{subsec:dyn}
In this section, we introduce the functional parameters and main mechanisms of our model at the individual level, but note that the formal construction of the population process is done in Section~\ref{sec:construction}. To distinguish between individuals, we define the set of indices as
$$ \mathcal{U} := \bigcup_{n \in \mathbb{N}}(\mathbb{N}^{*})^{n+1}.$$
Over time, every individual in the population will have an index of the form $ u := u_{1}...u_{n+1}$ with some $n \geq 0$, and some positive integers $u_{1},..., u_{n}$.
At time $t \geq 0$, an alive individual indexed by $u$ in the population is characterized by a trait called \textit{energy} and written $\xi^{u}_{t} \in \mathbb{R}^{*}_{+} \cup \{ \partial \}$, where $\partial$ is a cemetery state. Every individual compete for a fluctuating stock of resources $R_{t} \in \mathbb{R}^{+}$, can die and reach the cemetery state $\partial$, and can reproduce several times during its life. When a birth occurs, we add an individual to the population, with a new index in $\mathcal{U}$. In the population, at time $t \geq 0$, we denote as $V_{t}$ the set of alive individuals at this time (\textit{i.e.} whose energy is not $\partial$ at time $t$). Every alive individual $u \in V_{t}$ is represented by a Dirac mass at $\xi^{u}_{t}$. Thus, we will define the population process $\mu_{t}$ as a point measure given at time $t$ by 
\begin{align}
\mu_{t} := \sum\limits_{u \in V_{t}}{\delta_{\xi_{t}^{u}}}.
\label{eq:mudemfu}
\end{align} 
An individual trajectory is deterministic between some random jump times, corresponding to birth or death events. 
\\\\
\textbf{Birth}
\\\\
For $t \geq 0$, an individual indexed by $u \in V_{t-}$ with energy $\xi^{u}_{t-} \in \mathbb{R}^{*}_{+}$ gives birth to a single offspring at rate $b(\xi^{u}_{t-})$. This individual transfers a constant amount of energy $x_{0}$ to the offspring. The energy of the individual goes from $\xi^{u}_{t^{-}}$ to $\xi^{u}_{t} := \xi^{u}_{t^{-}}-x_{0}$. The point measure then jumps to $\mu_{t} := \mu_{t-}-\delta_{\xi^{u}_{t-}}+\delta_{\xi_{t-}^{u}-x_{0}}+\delta_{x_{0}}$. If the parent has the label $u := u_{1}...u_{n}$ and this is the $k$-th birth jump for this parent for $k \geq 1$, the index of the offspring is $uk := u_{1}...u_{n}k$. Then, we set $V_{t} := V_{t-} \cup \{uk\}$.
 
\begin{center}
\begin{tikzpicture}[xscale=1,yscale=1]

\tikzstyle{fleche}=[->,>=latex,thick]
\tikzstyle{noeud}=[circle,draw]

\def\DistanceInterNiveaux{3}
\def\DistanceInterFeuilles{2}

\def\NiveauA{(0)*\DistanceInterNiveaux}
\def\NiveauB{(1)*\DistanceInterNiveaux}

\def\InterFeuilles{(-1)*\DistanceInterFeuilles}

\node[noeud](R) at ({\NiveauA},{(0.5)*\InterFeuilles}) {} ;
\node(S) at ([xshift={-0.4 cm}]R) {$\xi^{u}_{t-}$} ;
\node[noeud](R_a) at ({\NiveauB},{(0)*\InterFeuilles}) {} ;
\node(T) at ([xshift={0.5 cm}]R_a) {$x_{0}$} ;
\node[noeud](R_b) at ({\NiveauB},{(1)*\InterFeuilles}) {} ;
\node(U) at ([xshift={1.1 cm}]R_b) {$\xi^{u}_{t-}-x_{0}$} ;

\draw[fleche] (R)--(R_a) {};
\draw[fleche] (R)--(R_b) {};
\end{tikzpicture}
\end{center}
We assume that the birth rate $b$ is equal to 0 for every $x \leq x_{0}$, so that no individual with negative energy appears during a birth event. Also, we assume that $b$ is non-negative and continuous for $x > x_{0}$.
\\\\
\textbf{Death}
\\\\
For $t \geq 0$, an individual indexed by $u \in V_{t-}$ with energy $\xi^{u}_{t-} \in \mathbb{R}^{*}_{+}$ dies at positive and continuous rate $d(\xi^{u}_{t-})$. Then, the individual process jumps to $\xi_{t}^{u} := \partial$ and we set $\xi_{s}^{u} := \partial$ for every $s \geq t$. The point measure jumps to $\mu_{t} :=\mu_{t-}-\delta_{\xi^{u}_{t-}}$, and we set $V_{t} :=V_{t-} \setminus \{u\}$.
\\\\
\textbf{Energy loss and resource consumption}
\begin{enumerate}
    \item For $t \geq 0$, an individual $u \in V_{t}$ with energy $\xi^{u}_{t} \in \mathbb{R}^{*}_{+}$ loses energy over time, at non-negative and $\mathcal{C}^{1}(\mathbb{R}^{*}_{+})$ rate $\ell(\xi^{u}_{t})$. We are interested in situations where $\ell$ is a positive function (\textit{i.e.} we consider a true growth/degrowth-fragmentation model with possibly decreasing energies over time) to model the metabolic rate of individuals \cite{brown_04,brodu2026}. However, note that we also allow $\ell \equiv 0$, so that our model can be adapted to generalize previous growth-fragmentation models where individual traits are only increasing \cite{chi_08,FRITSCH20151}.
    \item In order to balance this energy loss, this individual consumes the resource at rate $f(\xi^{u}_{t},R_{t})$. Importantly, we suppose that $f$ is of the form $f(\xi,R) := \phi(R)\psi(\xi),$ where $\psi$ is a $\mathcal{C}^{1}(\mathbb{R}^{*}_{+})$ positive function. Also, we assume that $\phi$ is a $\mathcal{C}^{1}(\mathbb{R}^{+})$ non-decreasing function on $\mathbb{R}^{+}$, verifying $\phi(0)=0$ and $\lim_{x \rightarrow + \infty} \phi(x) =1$. Finally, we suppose that $\phi$ is Lipschitz continuous on $\mathbb{R}^{+}$, meaning that
\begin{align}
\exists k >0, \forall R_{1},R_{2} \geq 0, \forall x>0, \quad |f(x,R_{1})-f(x,R_{2})| \leq k|R_{1}-R_{2}|\psi(x).
\label{eq:lipshitzphi}
\end{align}
\end{enumerate}
Thus, between two random jump times (due to birth or death events), the energy evolves according to the following equation:
\begin{align}
    \dfrac{\mathrm{d}\xi^{u}_{t}}{\mathrm{d}t} =  f(\xi^{u}_{t},R_{t}) - \ell(\xi^{u}_{t}) = : g(\xi^{u}_{t},R_{t}).
    \label{eq:indivenergy}
\end{align}
Remark that $g : (x,R) \mapsto g(x,R)$ is $\mathcal{C}^{1,1}(\mathbb{R}^{*}_{+}\times \mathbb{R}^{+})$, meaning that it is differentiable with continuous derivatives in both its variables. Also, one can replace $f$ by $g$ in \eqref{eq:lipshitzphi}. It is possible that at some time $t \geq 0$, $\xi^{u}_{t}$ reaches either 0 or $+ \infty$ (vanishing energy or explosion of \eqref{eq:indivenergy} in finite time before a jump event). We will make assumptions in Section~\ref{subsec:assumptions} to avoid this situation almost surely. For now, if this happens at some time $t$, we adopt the convention $\xi^{w}_{s} := \partial$ for every $s \geq t$ and $w \in V_{t-}$. Also, we set the point measure to $\mu_{s} :=0$ for all $s \geq t$, and $V_{s} := \varnothing $ for $s \geq t$. Finally, for any $t \geq 0$ and $u \in \mathcal{U}$, if $u \notin V_{t}$, we set $\xi^{u}_{t} := \partial$ (in particular, individuals that are not born yet at time $t$ have energy $\partial$).
\\\\
\textbf{Resource dynamics}
\\\\
Let  $\chi > 1$, $R_{\max}>0$ and $\varsigma \in \mathcal{C}^{1}(\mathbb{R}^{+})$, such that $\varsigma(0)\geq 0$ and $\varsigma(R)<0$ if $R \geq R_{\max}$. Between random jumps, the quantity of resource $R_{t} \in \mathbb{R}^{+}$ satisfies the following equation
\begin{align}
\label{eq:eqress}
    \dfrac{\mathrm{d}R_{t}}{\mathrm{d}t} = \rho(\mu_{t},R_{t}) := \varsigma(R_{t}) - \chi \displaystyle{\int_{\mathbb{R}^{*}_{+}} f(x,R_{t}) \mu_{t}(\mathrm{d}x)}.
\end{align}
The function $\rho$ is well-defined on $\mathcal{M}_{P}(\mathbb{R}^{*}_{+}) \times \mathbb{R}^{+}$, where $\mathcal{M}_{P}(\mathbb{R}^{*}_{+})$ is the space of finite point measures on $\mathbb{R}^{*}_{+}$. Equation \eqref{eq:eqress} means that in the absence of individuals in the population, the amount of resource eventually stabilizes at some $R_{\mathrm{eq}} \in [0,R_{\max}]$. For the renewal function $\varsigma$, one can think for example of a logistic growth $R \geq 0 \mapsto R(R_{\mathrm{eq}}-R)$ with $R_{\mathrm{eq}} \leq R_{\max}$. This is a classical assumption in literature for biotic resources \cite{brannstrom2011,yeakel2018dynamics, fritsch:hal-02129313}. Another example is the case of a chemostat, where we can take $\varsigma : R \geq 0 \mapsto D(R_{\mathrm{in}}-R)$, with a dilution rate $D>0$, and the constant $R_{\mathrm{in}} \leq R_{\max}$ can be interpreted as an abiotic nutrient flow in the chemostat \cite{FRITSCH20151}. The coefficient $\chi$ can be interpreted as the inverse of the conversion efficiency. The ratio $1/\chi <1$ represents the proportion of resource consumed by individuals effectively converted to energy. We suppose that $\chi$ is a constant, which is a usual assumption in literature \cite{loeuille2005}, even if in \cite{fritsch:hal-02129313}, the authors make $\chi$ depend on individual energy over time. The integral quantity represents the speed at which the whole population consumes the resource at time $t$. This non-linear term is an indirect source of competition between individuals.
\\\\
If we choose an initial condition $R_{0} \in [0,R_{\max}]$, Equation~\eqref{eq:eqress} enforces that $R_{t} \in [0,R_{\max}]$ for every $t \geq 0$, as long as $\displaystyle{\int_{\mathbb{R}^{*}_{+}} \psi(x) \mu_{t}(\mathrm{d}x)} < + \infty$ (in particular, this is the case if $\mu_{t} \in \mathcal{M}_{P}(\mathbb{R}^{*}_{+})$). Moreover, if $R_{0} > R_{\max}$, even with no individuals in the population, the resource will decrease to $R_{\max}$. Hence, without loss of generality, we assume that $R_{0} \in [0,R_{\max}]$. Also, we make the following assumption.
\begin{hyp}
$\forall x>0 \quad g(x,R_{\max})>0.$
\label{hyp:gainenergy}
\end{hyp}
If Assumption~\ref{hyp:gainenergy} is not verified, as $\phi$ is non-decreasing, we obtain
$$ \exists M>0, \forall x \leq M, \forall R \in [0,R_{\max}] \quad g(x,R) \leq 0.$$
Considering \eqref{eq:indivenergy}, this means that if the initial energy of an individual is in $(0,M]$, then it will remain in this compact set over time. In other terms, if Assumption~\ref{hyp:gainenergy} is not verified, we impose an \textit{a priori} upper bound on the maximal energy in our model. Similar mass-structured models where the maximal mass $M$ of an individual is deterministically bounded are already developed in previous works \cite{FRITSCH20151,coranicofab}. Although observed living species obviously have bounded masses, we want to design a model where this bound is not artificially imposed by the model, but results from interaction with a limiting resource. This is why in our setting, we allow individual energies to increase indefinitely, at least if there are sufficient resources (\textit{i.e.} with $R_{\max}$ resources are available), which is expressed in Assumption~\ref{hyp:gainenergy}. In all the rest of this article, we implicitly work under Assumption~\ref{hyp:gainenergy}. Note that if $\ell$ is a positive function, Assumption~\ref{hyp:gainenergy} corresponds to Assumption $\mathbf{(H0)}$ in Theorem~\ref{theo:initial}.

\begin{lemme}
Assumption~\ref{hyp:gainenergy} implies that
\begin{align*}
\phi(R_{\max})>0 \quad \mathrm{and} \quad \forall x>0, \; \psi(x) > \ell(x).
\end{align*}
\label{lemme:gainenergy}
\end{lemme}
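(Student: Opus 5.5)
We plan to argue directly from the definition $g(x,R)=\phi(R)\psi(x)-\ell(x)$ and the sign and positivity information already imposed on $\phi$, $\psi$ and $\ell$. Assumption~\ref{hyp:gainenergy} states that for every $x>0$,
\begin{align*}
\phi(R_{\max})\psi(x) > \ell(x) \geq 0,
\end{align*}
since $\ell$ is non-negative. Both conclusions will be read off from this single inequality.

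For the first claim, we fix any $x>0$. The inequality gives $\phi(R_{\max})\psi(x)>0$. Since $\psi(x)>0$ by assumption, we can divide by $\psi(x)$ and obtain $\phi(R_{\max})>0$. Note that $\phi(R_{\max})\geq 0$ was already automatic, because $\phi$ is non-decreasing with $\phi(0)=0$. The content of this step is therefore only the strictness.

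For the second claim, we use that $\phi$ is non-decreasing with $\lim_{x\to+\infty}\phi(x)=1$, which forces $\phi(R)\leq 1$ for all $R\geq 0$. In particular $0<\phi(R_{\max})\leq 1$. Since $\psi(x)>0$, this gives
\begin{align*}
\psi(x) \geq \phi(R_{\max})\psi(x) > \ell(x),
\end{align*}
which is the desired strict inequality for every $x>0$.

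There is no genuine obstacle here. The only point requiring care is justifying $\phi\leq 1$ on $\mathbb{R}^{+}$: a non-decreasing function converging to $1$ at $+\infty$ cannot exceed its limit. We will state this explicitly, and everything else is a one-line manipulation.
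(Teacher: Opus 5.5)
Your proof is correct and is essentially the paper's argument, which simply says the claim is straightforward from the assumptions on $\psi$, $\phi$ and $\ell$. You have written that verification out explicitly, including the one point worth stating, namely that $\phi \leq 1$ on $\mathbb{R}^{+}$ because $\phi$ is non-decreasing with limit $1$.
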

\begin{proof}
\vspace{-0.5cm}
This is straightforward from our assumptions on $\psi$, $\phi$ and $\ell$.
\end{proof}

\subsection{Algorithmic construction of the process with Poisson point measures}
\label{sec:construction}

\subsubsection{Deterministic flow between random jumps}
\label{subssub:wouah}

We begin with the definition of the deterministic flow associated to individual energies and the amount of resources between random jumps. We provide $\mathcal{U}$ with the lexicographical order, denoted as $\prec$, and consider a finite subset $V \subseteq \mathcal{U}$ of cardinality $|V| \in \mathbb{N}$. It means that there exists $u_{1} \prec ... \prec u_{|V|}$ elements of $\mathcal{U}$, such that $V= \{u_{1},...,u_{|V|} \}$, with $V = \varnothing$ if $|V|=0$. Let us fix an initial condition $R_{0} \in [0,R_{\max}]$ and $(\xi^{u_{j}}_{0})_{1 \leq j \leq |V|} \in \left(\mathbb{R}^{*}_{+}\right)^{|V|}$ individual energies indexed by $V$. In the following, we will lighten this notation into $\Xi_{0} := (\xi^{u}_{0})_{u \in V}$. We write $\left((X^{u}_{t}(\Xi_{0},R_{0}))_{ u \in V},X^{\Re}_{t}(\Xi_{0},R_{0})\right)$ for a solution to the system of $|V|+1$ coupled equations 
\begin{align}
    \dfrac{\mathrm{d}X^{\Re}_{t}(\Xi_{0},R_{0})}{\mathrm{d}t} & = \rho\left( \sum\limits_{u \in V}{\delta_{X^{u}_{t}(\Xi_{0},R_{0})}}, X^{\Re}_{t}(\Xi_{0},R_{0})\right),\label{eq:regroupindiv} \\
    \dfrac{\mathrm{d}X^{u}_{t}(\Xi_{0},R_{0})}{\mathrm{d}t} & = g(X^{u}_{t}(\Xi_{0},R_{0}),X^{\Re}_{t}(\Xi_{0},R_{0})) \quad \mathrm{for} \hspace{0.1 cm} u \in V, \label{regeoupee}
\end{align}
with initial condition at time 0
\begin{center}
    $\begin{array}{lll}
    X^{\Re}_{0}(\Xi_{0},R_{0}) & = & R_{0}, \\
    X^{u}_{0}(\Xi_{0},R_{0}) & = & \xi^{u}_{0} \quad \mathrm{for} \hspace{0.1 cm} u \in V. \\
\end{array}$
\end{center}
\begin{prop}
Let $V \subseteq \mathcal{U}$ be finite and $((\xi^{u}_{0})_{u \in V},R_{0}) \in (\mathbb{R}^{*}_{+})^{|V|} \times [0,R_{\max}]$. Then, there exists a neighborhood $O \subseteq (\mathbb{R}^{*}_{+})^{|V|} \times [0,R_{\max}]$ of $((\xi^{u}_{0})_{u \in V},R_{0})$, and a neighborhood $J \subseteq \mathbb{R}^{+}$ of 0, such that
\begin{itemize}
\item[1.] For every $((\xi^{u})_{u \in V},R) \in O$, there exists a unique local solution with values in $(\mathbb{R}^{*}_{+})^{|V|} \times [0,R_{\max}]$ to the system of coupled equations \eqref{eq:regroupindiv}-\eqref{regeoupee}, starting from $((\xi^{u})_{u \in V},R)$ at time 0. This solution is at least defined on $J$, and denoted as $\left((X^{u}_{t}(\Xi,R))_{u \in V},X^{\Re}_{t}(\Xi,R)\right)$, with $\Xi := (\xi^{u})_{u \in V}$. 
\item[2.] The function $(t,(\xi^{u})_{u \in V},R) \in J \times O \mapsto \left((X^{u}_{t}(\Xi,R))_{u \in V},X^{\Re}_{t}(\Xi,R)\right)$ is $\mathcal{C}^{1,1}(J \times O)$, and $\mathcal{C}^{2}$ in the variable $t$.
\end{itemize}
\label{prop:cauchy}
\end{prop}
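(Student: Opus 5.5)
The plan is to rewrite the coupled system \eqref{eq:regroupindiv}-\eqref{regeoupee} as an autonomous ODE $\dot{Y} = F(Y)$ in finite dimension, with $Y = ((x^{u})_{u \in V}, r)$, and then apply Cauchy--Lipschitz together with the classical theorem on differentiable dependence on initial conditions. Explicitly,
\begin{align*}
F\big((x^{u})_{u\in V},r\big) := \Big( \big(g(x^{u},r)\big)_{u \in V},\; \varsigma(r) - \chi\, \phi(r) \sum_{u \in V} \psi(x^{u}) \Big),
\end{align*}
since for the point measure $\sum_{u}\delta_{x^{u}}$ the integral in $\rho$ reduces to a finite sum. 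We define $F$ on the open set $(\mathbb{R}^{*}_{+})^{|V|} \times \mathbb{R}$ after extending $\varsigma$ and $\phi$ to $\mathcal{C}^{1}$ functions on a neighbourhood of $\mathbb{R}^{+}$ (for instance by a first-order Taylor extension on $(-1,0)$). Because $\psi$, $\ell$, $\phi$, $\varsigma$ are $\mathcal{C}^{1}$ and $g = \phi\psi - \ell$, the map $F$ is $\mathcal{C}^{1}$, and in particular locally Lipschitz.

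\textbf{Step 1 (existence, uniqueness, flow regularity).} We apply the Cauchy--Lipschitz theorem with the $\mathcal{C}^{1}$ dependence of the flow on initial data. This gives, around $Y_{0} = ((\xi^{u}_{0}),R_{0})$, a neighbourhood $\tilde O$ and an interval $J$ containing $0$ such that the flow $(t,Y) \mapsto \Phi_{t}(Y)$ is defined and $\mathcal{C}^{1}$ on $J \times \tilde O$. Since $\partial_{t}\Phi = F(\Phi)$ with $F \in \mathcal{C}^{1}$ and $\Phi$ continuous, we get $\partial_{t}\Phi \in \mathcal{C}^{1}$. Hence $\Phi$ is $\mathcal{C}^{2}$ in $t$, and point 2 follows. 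Uniqueness among local solutions comes from the local Lipschitz property of $F$.

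\textbf{Step 2 (values stay in $(\mathbb{R}^{*}_{+})^{|V|}\times[0,R_{\max}]$).} Positivity of the $x^{u}$ is handled by shrinking $\tilde O$ to a compact neighbourhood bounded away from $0$ and $J$ small enough, which keeps $x^{u}_{t} > 0$ on $J$ by continuity. Invariance of $[0,R_{\max}]$ for the $r$-component is the main obstacle. If $r=0$ then $\dot r = \varsigma(0) - \chi \phi(0)\sum_{u}\psi = \varsigma(0) \geq 0$. If $r = R_{\max}$ then $\dot r \leq \varsigma(R_{\max}) < 0$, because $\phi \geq 0$ and $\psi > 0$.

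The strict inequality at $R_{\max}$ gives invariance directly. At $0$ the inequality is only weak, so the first thing to try is a comparison argument. Write $\dot r = \varsigma(r) - \chi\phi(r)S_{t}$ with $S_{t} = \sum_{u}\psi(x^{u}_{t})$ continuous. Using $\phi(0)=0$ and that $\varsigma$ and $\phi$ are $\mathcal{C}^{1}$, we get for $r$ near $0$ the bound $\dot r \geq \varsigma(0) - C|r| \geq -C|r|$. A Grönwall argument on the negative part $r^{-}$ then shows that $r_{t} < 0$ is impossible once $r_{0} \geq 0$: a solution of $\dot r \geq -C r$ starting at $r_{0} \geq 0$ stays above $r_{0}e^{-Ct} \geq 0$. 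With the extended $F$, this makes the solution independent of the extension.

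Finally, set $O := \tilde O \cap \big((\mathbb{R}^{*}_{+})^{|V|}\times[0,R_{\max}]\big)$, which is a relative neighbourhood of the initial condition. Restricting $\Phi$ to $J\times O$ preserves the $\mathcal{C}^{1,1}$ regularity, understood as one-sided at the boundary points $R=0$ or $R=R_{\max}$. This proves points 1 and 2.
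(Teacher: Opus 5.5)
Your proposal is correct and takes the same route as the paper. The paper's proof only cites the classical Cauchy--Lipschitz theorem with $\mathcal{C}^{1}$ dependence on initial data for the finite-dimensional $\mathcal{C}^{1}$ vector field $F$. You additionally spell out two points the paper leaves implicit: extending $\varsigma$ and $\phi$ to an open domain so that the theorem applies, and proving forward invariance of $[0,R_{\max}]$ through the sign of $\dot r$ at $R_{\max}$ and a Gronwall argument at $0$. The one slip is the phrase ``$\Phi$ continuous'': to get $\partial_{t}\Phi=F(\Phi)\in\mathcal{C}^{1}$ you need $\Phi\in\mathcal{C}^{1}$, which you had already established.
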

\begin{proof}
As $\varsigma \in \mathcal{C}^{1}(\mathbb{R}^{+})$, and $f$, $g$ are $\mathcal{C}^{1,1}(\mathbb{R}^{*}_{+}\times \mathbb{R}^{+})$, classical arguments entails the result (see Corollaire II.2. and Théorème II.10. in Chapter X of \cite{zuily1996elements}).
\end{proof}
\textbf{Remark:} The previous objects do not depend on the set of indices $V$. We use these notations to be able to identify any individual by an index and keep track of its energy over time, in the upcoming construction of our population process. 
\\\\
We introduce $t_{\mathrm{exp}}(\Xi_{0},R_{0}) \in (0,+\infty]$ the maximal time of existence of the solution to \eqref{eq:regroupindiv}-\eqref{regeoupee} starting from $(\Xi_{0},R_{0})$ at time 0 highlighted in Proposition~\ref{prop:cauchy}. The deterministic time $t_{\mathrm{exp}}(\Xi_{0},R_{0})$ is finite, if and only if one of the $X^{u}_{t}(\Xi_{0},R_{0})$ reaches 0 or $+ \infty$ in finite time.~Finally, we define a flow $X$ with a measure-valued first component, to be able to use it in the upcoming definition of the stochastic measure-valued process $(\mu_{t},R_{t})_{t}$. Suppose that $\mu_{0} \in \mathcal{M}_{P}(\mathbb{R}^{*}_{+})$ is such that
$$ \mu_{0} = \sum_{u \in V} \delta_{\xi^{u}_{0}},$$
with $\mu_{0}=0$ if $V=\varnothing$. Then, in the following, we write $t_{\exp}(\mu_{0},R_{0}) := t_{\exp}(\Xi_{0},R_{0})$, and this does not depend on the set of indices $V$. Also, for $t \in [0,t_{\mathrm{exp}}(\mu_{0},R_{0}))$, we define
\begin{center}
    $X_{t}(\mu_{0},R_{0}) := \left( \sum\limits_{u \in V}{\delta_{X^{u}_{t}(\Xi_{0},R_{0})}}, X^{\Re}_{t}(\Xi_{0},R_{0})\right),$
\end{center}
and this again does not depend on the indexing by $V$. Finally, we adopt the \hypertarget{conv}{convention} $X_{t}(\mu_{0},R_{0}) := (0,0)$ for every $ t \geq t_{\mathrm{exp}}(\mu_{0},R_{0})$. We thus have defined the deterministic flow with measure-valued first component 
\begin{align*}
    \begin{array}{ccrcl}
X & : & \mathcal{M}_{P}(\mathbb{R}^{*}_{+}) \times \mathbb{R}^{+} \times \mathbb{R}^{+} & \to & \mathcal{M}_{P}(\mathbb{R}_{+}^{*}) \times \mathbb{R}^{+}  \\
 & & (\mu,R,t) & \mapsto & X_{t}(\mu,R). \\
\end{array}
\end{align*}
In the following, for $\mu$ measure on $\mathbb{R}^{*}_{+}$ and $f$ measurable from $\mathbb{R}^{*}_{+}$ to $\mathbb{R}$, we write $$\langle \mu, f \rangle := \displaystyle{\int_{\mathbb{R}^{*}_{+}} f \mathrm{d}\mu }.$$

\subsubsection{Algorithmic construction of the process}
\label{sec:inductive}

First, we define individual energies $((\xi^{u}_{t})_{u \in \mathcal{U}})_{t}$, the set of alive individuals $(V_{t})_{t}$ and the amount of resources $(R_{t})_{t}$ inductively, by constructing a sequence of successive random jump times $(J_{n})_{n \geq 0}$, between which the dynamics are deterministic (note immediately that our construction will then be valid only up to time $\sup_{n \in \mathbb{N}} J_{n}$). Then, we gather individual processes into a measure-valued process $(\mu_{t})_{t}$. Between two jump times, the process $(\mu_{t},R_{t})_{t}$ will be deterministic and will follow the flow with measure-valued first component $X$ defined in Section~\ref{subssub:wouah}. We consider $\mathcal{N}(\mathrm{d}s,\mathrm{d}u,\mathrm{d}h)$ and $\mathcal{N}'(\mathrm{d}s,\mathrm{d}u,\mathrm{d}h)$ two independent Poisson point measures on $\mathbb{R}^{+} \times \mathcal{U} \times \mathbb{R}^{*}_{+}$, with intensity $\mathrm{d}s \times n(\mathrm{d}u) \times \mathrm{d}h$, with $n(\mathrm{d}u) := \sum_{w \in \mathcal{U}}{\delta_{w}(\mathrm{d}u)}$. The support of $\mathcal{N}$, respectively $\mathcal{N}'$, on $\mathbb{R}^{+} \times \mathcal{U} \times \mathbb{R}^{*}_{+}$ is a countable random set, denoted as $\mathrm{supp}(\mathcal{N})$, respectively $\mathrm{supp}(\mathcal{N}')$. This is a random variable verifying $\mathcal{N}(\mathrm{d}s,\mathrm{d}u,\mathrm{d}h) = \sum_{(s,u,h) \in \mathrm{supp}(\mathcal{N}) } \delta_{(s,u,h)}$, respectively the same equation with $\mathcal{N}'$.
\\\\
For the initial condition, let $(\mu_{0},R_{0})$ be a random variable taking values in $ \mathcal{M}_{P}(\mathbb{R}^{*}_{+}) \times [0,R_{\max}]$. We define $N_{0} := \langle \mu_{0}, 1 \rangle$ the initial number of individuals, $V_{0} := \{ 1,..., N_{0} \} \subseteq \mathcal{U}$, and $\Xi_{0} := (\xi^{u}_{0})_{u \in V_{0}}$ the initial individual energies. Thus, we index alive individuals at time 0 so that $\mu_{0} := \sum_{u \in V_{0}} \delta_{\xi^{u}_{0}}.$ Also, for $u \notin V_{0}$, we set $\xi^{u}_{0} := \partial$. The Poisson point measures $\mathcal{N}$ and $\mathcal{N}'$ are independent from $(\mu_{0},R_{0})$. The canonical filtration associated to $(\mu_{0},R_{0})$, $\mathcal{N}$ and $\mathcal{N}'$ is $(\mathcal{F}_{t})_{t \geq 0}$. 
\\\\
We now define the sequence $(J_{n})_{n \in \mathbb{N}}$ of successive jump times of the population process. First, we set $J_{0} := 0$, and then suppose that our process is described until some time $J_{n} < + \infty$ with $n \geq 0$. At time $J_{n}$, there exists a finite $V_{J_{n}} \subseteq \mathcal{U}$, associated to individual energies $\Xi_{J_{n}} := (\xi^{u}_{J_{n}})_{u \in V_{J_{n}}}$. With the convention $\inf(\varnothing) = + \infty$, we define
\begin{align*}
J^{b}_{n+1} &:= \inf \{ t \in (J_{n}, J_{n} + t_{\exp}(\Xi_{J_{n}},R_{J_{n}})), \, (t,u,h) \in \mathrm{supp}(\mathcal{N}), \\
& \hspace{6 cm} u \in V_{J_{n}}, \, h \leq b\left(X^{u}_{t-J_{n}}(\Xi_{J_{n}},R_{J_{n}})\right)  \},\\
J^{d}_{n+1} & := \inf \{t \in (J_{n}, J_{n} + t_{\exp}(\Xi_{J_{n}},R_{J_{n}})), \, (t,u,h) \in \mathrm{supp}(\mathcal{N}'), \\
& \hspace{6cm} u \in V_{J_{n}}, \, h \leq d\left(X^{u}_{t-J_{n}}(\Xi_{J_{n}},R_{J_{n}})\right)\}.
\end{align*} 
First if $J_{n} + t_{\exp}(\Xi_{J_{n}},R_{J_{n}}) = J^{b}_{n+1} \wedge J^{d}_{n+1} = + \infty$, it means that there are no jumps anymore, and no explosion of the solution to \eqref{eq:regroupindiv}-\eqref{regeoupee} starting from $(\Xi_{J_{n}},R_{J_{n}})$. Then, we set $J_{n+1} = + \infty$, and for $t \geq J_{n}$, $V_{t} = V_{J_{n}}$. Concerning individual energies, for $t \geq J_{n}$, if $u \notin V_{J_{n}}$, then $\xi^{u}_{t} = \partial$, and
$$\left((\xi^{u}_{t})_{u \in V_{J_{n}}},R_{t}\right) = \left((X^{u}_{t-J_{n}}(\Xi_{J_{n}},R_{J_{n}}))_{u \in V_{J_{n}}},X^{\Re}_{t-J_{n}}(\Xi_{J_{n}},R_{J_{n}})\right).$$
Else if $J_{n} + t_{\exp}(\Xi_{J_{n}},R_{J_{n}}) < J^{b}_{n+1} \wedge J^{d}_{n+1} = + \infty$, it means that one or several individual energies reach 0 or $+ \infty$ at time $J_{n} + t_{\exp}(\Xi_{J_{n}},R_{J_{n}})$ (which is an event that we will avoid almost surely in Section~\ref{subsec:assumptions}). Then, we set $J_{n+1}=J_{n} + t_{\exp}(\Xi_{J_{n}},R_{J_{n}})$, and for $t \in (J_{n},J_{n+1})$, we set $V_{t} = V_{J_{n}}$ and if $u \notin V_{J_{n}}$, then $\xi^{u}_{t} = \partial$. In addition, for $t \in (J_{n},J_{n+1})$, we set
$$\left((\xi^{u}_{t})_{u \in V_{J_{n}}},R_{t}\right) = \left((X^{u}_{t-J_{n}}(\Xi_{J_{n}},R_{J_{n}}))_{u \in V_{J_{n}}},X^{\Re}_{t-J_{n}}(\Xi_{J_{n}},R_{J_{n}})\right).$$
Next, we set $V_{J_{n+1}} = \varnothing$, $R_{J_{n+1}} = R_{J_{n+1}-} $ and for $u \in \mathcal{U}$, $\xi^{u}_{J_{n+1}} = \partial$. Remark that with these conventions, we necessarily get back to the first case described above for the definition of $J_{n+2}$ (so $J_{n+2} = +\infty$), and eventually obtain that for all $t \geq J_{n+1}$, $V_{t} = \varnothing$ and for $u \in \mathcal{U}$, $\xi^{u}_{t} = \partial$.
\\\\
Finally, if $J^{b}_{n+1} \wedge J^{d}_{n+1} < J_{n} + t_{\exp}(\Xi_{J_{n}},R_{J_{n}})$, it means that one birth or death event occurs. By property of Poisson point measures, we almost surely have $J^{b}_{n+1} \neq J^{d}_{n+1}$, and the infimum in the definition of $J^{b}_{n+1}$ or $J^{d}_{n+1}$ is reached at a single element $(t,u,h) \in \mathrm{supp}(\mathcal{N})$ or $\mathrm{supp}(\mathcal{N}')$. We distinguish again between two cases.
\begin{itemize}
\item First if $J^{b}_{n+1} < J^{d}_{n+1}$, it means that one birth event occurs. This event concerns an individual indexed by some $w \in V_{J_{n}}$. Then, we set $J_{n+1} = J^{b}_{n+1}$, and for all $t \in [J_{n},J_{n+1})$, $V_{t}=V_{J_{n}}$. Concerning individual energies, if $u \notin V_{J_{n}}$, we set $\xi^{u}_{t}  = \partial$ and 
$$\left((\xi^{u}_{t})_{u \in V_{J_{n}}},R_{t}\right) = \left((X^{u}_{t-J_{n}}(\Xi_{J_{n}},R_{J_{n}}))_{u \in V_{J_{n}}},X^{\Re}_{t-J_{n}}(\Xi_{J_{n}},R_{J_{n}})\right).$$
Then, at time $J_{n+1}$, a new individual appears in the population. We set $V_{J_{n+1}} = V_{J_{n}} \cup \{ wk \} $, where $k-1$ is the number of offspring individual $w$ already produced (\textit{i.e.} the cardinality of the set $\{ (t,u,h) \in \mathrm{supp}(\mathcal{N}), \, u=w, \; \exists \, 1 \leq m \leq n, \, J^{b}_{m} = t \}$). We also set $\xi^{wk}_{J_{n+1}} = x_{0}$, $\xi^{w}_{J_{n+1}} = \xi^{w}_{J_{n+1}-}-x_{0}$ and $\xi^{u}_{J_{n+1}} = \xi^{u}_{J_{n+1}-}$ for $u \in \mathcal{U} \setminus \{w,wk\}$. Finally, $R_{J_{n+1}} = R_{J_{n+1}-}$.
\item Else if $J^{d}_{n+1} < J^{b}_{n+1}$, it means that a death event occurs. This event concerns an individual indexed by some $w \in V_{J_{n}}$. Then, we set $J_{n+1} = J^{d}_{n+1}$, and for all $t \in [J_{n},J_{n+1})$, $V_{t}=V_{J_{n}}$. Concerning individual energies, if $u \notin V_{J_{n}}$, we set $\xi^{u}_{t}  = \partial$ and 
$$\left((\xi^{u}_{t})_{u \in V_{J_{n}}},R_{t}\right) = \left((X^{u}_{t-J_{n}}(\Xi_{J_{n}},R_{J_{n}}))_{u \in V_{J_{n}}},X^{\Re}_{t-J_{n}}(\Xi_{J_{n}},R_{J_{n}})\right).$$
Then, at time $J_{n+1}$, individual $w$ disappears from the population. We set $V_{J_{n+1}} = V_{J_{n}} \setminus \{ w\}$. We also set $\xi^{w}_{J_{n+1}} = \partial$ and $\xi^{u}_{J_{n+1}} = \xi^{u}_{J_{n+1}-}$ for $u \in \mathcal{U} \setminus \{w\}$. Finally, $R_{J_{n+1}} = R_{J_{n+1}-}$.
\end{itemize}
By convention, for all $n \in \mathbb{N}$, $J_{n+1}^{b}=J^{d}_{n+1} = J_{n+1} = + \infty$ if $J_{n}=+ \infty$. The sequence $(J_{n})_{n \in \mathbb{N}^{*}}$ is non-decreasing, so we can define $J_{\infty} := \lim_{n \rightarrow + \infty} J_{n}$. Eventually, for every $t \in [0,J_{\infty})$, we define $\mu_{t}$ as in \eqref{eq:mudemfu}, with $\mu_{t}=0$ if $V_{t}= \varnothing$. We verify immediately that, if no individual energy reaches 0 or explodes, for all $n \in \mathbb{N},$ for all $t \in [J_{n},J_{n+1})$, $\mu_{t}$ coincide with the deterministic flow $X_{t-J_{n}}(\mu_{J_{n}},R_{J_{n}})$, and is modified at any jump time according to the rules given in Section~\ref{subsec:dyn}. Note that $\mu_{t}$ (as well as the $(\xi^{u}_{t})_{u \in \mathcal{U}}$, $V_{t}$ and $R_{t}$) is well-defined only for $t \in [0,J_{\infty})$, with $J_{\infty}$ possibly finite or infinite. For all $t \in [0,J_{\infty})$, $V_{t}$ is the set containing the indices of alive individuals at time $t$ (if $u \notin V_{t}$, then $\xi^{u}_{t} = \partial$; and if $u \in V_{t}$, then $\xi^{u}_{t} \in \mathbb{R}^{*}_{+}$). The previously described update rules of the set $V_{t}$ at each jump event make it adapted with respect to the filtration $(\mathcal{F}_{t})_{t}$. In the following, we want to avoid almost surely the following situations:
\begin{itemize}
\item[] \hypertarget{sitone}{\textbf{Situation 1}} One of the individual energy vanishes/explodes.
\item[] \hypertarget{sitwo}{\textbf{Situation 2}} There is an accumulation of jump times.
\end{itemize}
We define
$$ \tau_{\exp} := \inf \{ J_{n}, \; J_{n+1}=J_{n} + t_{\exp}(\Xi_{J_{n}},R_{J_{n}}) < + \infty \}, $$
with the convention $\inf(\varnothing) = + \infty$. To avoid \hyperlink{sitone}{Situation 1}, respectively \hyperlink{sitwo}{Situation 2}, we need to ensure that almost surely, $\tau_{\exp} = + \infty$, respectively $J_{\infty} = + \infty.$ In Section~\ref{subsec:assumptions}, we give an assumption under which $\tau_{\exp} = + \infty$ holds true almost surely. In Section~\ref{subsec:refinement}, we work under the assumption of Section~\ref{subsec:assumptions}, and we define a general setting under which $J_{\infty} = + \infty$ holds true almost surely.

\subsubsection{Classical writing of the process on $[0,J_{\infty} \wedge \tau_{\exp})$}
\label{subsec:classicalw}

Before time $\tau_{\exp}$, individual energies never vanish or explode, hence the global jump rate of the population is finite at any time in $[0,J_{\infty} \wedge \tau_{\exp})$. Thus, for every $J_{n}<\tau_{\exp}$, the next jump time $J_{n+1}$ is either $+ \infty$ or a birth/death jump in the inductive construction of Section~\ref{sec:inductive}. Hence, for every $t \in [0,J_{\infty}\wedge \tau_{\exp})$, we can write, with the \hyperlink{conv}{convention} on the flow $X$ in mind,
\begin{align*}
(\mu_{t},R_{t}) = & \hspace{0.1 cm}  X_{t}(\mu_{0},R_{0}) \\
  & + \displaystyle{\int_{0}^{t}\int_{\mathcal{U} \times \mathbb{R}^{*}_{+} } \mathbb{1}_{\{u \in V_{s-} \}} \mathbb{1}_{\{h \leq b(\xi^{u}_{s-})\}}}  [ X_{t-s}(\mu_{s-} + \delta_{x_{0}} + \delta_{\xi^{u}_{s-}-x_{0}} -\delta_{\xi^{u}_{s-}},R_{s})  \\
    & \hspace{3 cm} - X_{t-s}(\mu_{s-},R_{s}) ] \mathcal{N}(\mathrm{d}s,\mathrm{d}u,\mathrm{d}h) & \\
     & + 
    \displaystyle{\int_{0}^{t}\int_{\mathcal{U} \times \mathbb{R}^{*}_{+} } \mathbb{1}_{\{ u \in V_{s-}\}} \mathbb{1}_{\{h \leq d(\xi^{u}_{s-})\}} [X_{t-s}(\mu_{s-} - \delta_{\xi^{u}_{s-}},R_{s})}  \\ 
    &  \hspace{3 cm} - X_{t-s}(\mu_{s-},R_{s}) ] \mathcal{N}'(\mathrm{d}s,\mathrm{d}u,\mathrm{d}h). \\
\end{align*}
This formal writing is classical in the literature (Definition 2.4. in \cite{chi_08}, Section 4.1 in  \cite{FRITSCH20151}), and should be understood as a telescopic sum. First, individual energies and resources evolve deterministically, following the flow $X_{t}(\mu_{0},R_{0})$. Then, at each birth or death event, we erase the current flow and replace it with a new flow, modified according to our birth and death rules.

\subsection{Assumption for non vanishing/exploding individual energies}
\label{subsec:assumptions}

In this section, we provide a framework under which $\tau_{\exp}=+ \infty$ almost surely. Recall that this event occurs if no individual energy reaches 0 or $+ \infty$ in finite time. First, without further assumption, we will show that interaction with limiting resources prevents individual energies from reaching $+ \infty$ in finite time. Then, we will introduce an additional assumption to prevent individual energies from reaching 0 in finite time (see Assumption~\ref{hyp:probamortel}). We begin with a classical result, associated to the writing of the process in Section~\ref{subsec:classicalw} as a telescopic sum (see for example Proposition 4.1 in \cite{FRITSCH20151}), and introduce some notations for this purpose.
\\\\
Let $\varphi : (t,x) \mapsto \varphi_{t}(x)$ be a $\mathcal{C}^{1,1}(\mathbb{R}^{+} \times \mathbb{R}^{*}_{+})$ function, which means that it is differentiable with continuous derivatives in both its variables. Recall that we write $\partial_{1}\varphi$, respectively $\partial_{2}\varphi$, for the first, respectively second partial derivative, and also for every $t \geq 0$ we define 
\begin{align}
\Phi_{t} : (R,x) \in \mathbb{R}^{+} \times \mathbb{R}^{*}_{+} \mapsto \partial_{1}\varphi(t,x) + g(x,R)\partial_{2}\varphi(t,x).
\label{eq:Phidef}
\end{align}
Note that $\Phi$ depends on $\varphi$, but to lighten the notations, we choose to write it this way in all the rest of this article. The notation $\Phi$ will always be related to the definition in \eqref{eq:Phidef} with a function $\varphi$ we work with without ambiguity.
\begin{lemme}
Let $\varphi \in \mathcal{C}^{1,1}(\mathbb{R}^{+} \times \mathbb{R}^{*}_{+})$, and $F \in \mathcal{C}^{1,1}([0,R_{\max}] \times \mathbb{R})$. The process defined in Section~\ref{sec:inductive} verifies that, for all $t \in [0,J_{\infty} \wedge \tau_{\exp})$,
\begin{align*}
F(R_{t},\langle \mu_{t}, \varphi_{t} \rangle) = & \hspace{0.1 cm} F(R_{0},\langle \mu_{0}, \varphi_{0} \rangle) \\
    & + \displaystyle{\int_{0}^{t} \bigg[ \rho(\mu_{s},R_{s}) \partial_{1}F(R_{s},\langle \mu_{s}, \varphi_{s} \rangle) + \langle \mu_{s}, \Phi_{s}(R_{s},.) \rangle \partial_{2} F(R_{s},\langle \mu_{s}, \varphi_{s} \rangle) \bigg] \mathrm{d}s} \\
    & + \displaystyle{\int_{0}^{t}\int_{\mathcal{U} \times \mathbb{R}^{*}_{+} } \mathbb{1}_{\{u \in V_{s-} \}} \mathbb{1}_{\{h \leq b(\xi^{u}_{s-})\}}  \bigg( F(R_{s},\langle \mu_{s-}+ \delta_{x_{0}} + \delta_{\xi^{u}_{s-}-x_{0}}-\delta_{\xi^{u}_{s-}}, \varphi_{s} \rangle)} \\
    & \hspace{4cm}- F(R_{s}, \langle \mu_{s-}, \varphi_{s} \rangle) \bigg) \mathcal{N}(\mathrm{d}s,\mathrm{d}u,\mathrm{d}h) & \\
     & +
    \displaystyle{\int_{0}^{t}\int_{\mathcal{U} \times \mathbb{R}^{*}_{+} } \mathbb{1}_{\{u \in V_{s-} \}} \mathbb{1}_{\{h \leq d(\xi^{u}_{s-})\}} }\bigg( F(R_{s},\langle \mu_{s-}-\delta_{\xi^{u}_{s-}}, \varphi_{s} \rangle) \\
    & \hspace{4cm}- F(R_{s}, \langle \mu_{s-}, \varphi_{s} \rangle) \bigg) \mathcal{N}'(\mathrm{d}s,\mathrm{d}u,\mathrm{d}h),
\end{align*}
with $\Phi$ associated to $\varphi$ as in \eqref{eq:Phidef}.
\label{lemme:decompocun}
\end{lemme}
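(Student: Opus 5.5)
The identity is a pathwise telescoping argument along the jump times $(J_n)_n$ built in Section~\ref{sec:inductive}, so I would argue for a fixed $\omega$ in the probability space. Fix $t\in[0,J_\infty\wedge\tau_{\exp})$. Since $t<J_\infty$, there is a (random) $N$ with $J_N\le t<J_{N+1}$. Since $t<\tau_{\exp}$, each $J_{n+1}$ with $n<N$ is a birth or death jump and not an explosion time. Write the set of jump times up to $t$ as $\{J_1,\dots,J_N\}$, and decompose
\begin{align*}
F(R_t,\langle\mu_t,\varphi_t\rangle)-F(R_0,\langle\mu_0,\varphi_0\rangle)
={}&\sum_{n=0}^{N}\Big[F(R_{J_{n+1}\wedge t-},\langle\mu_{J_{n+1}\wedge t-},\varphi_{J_{n+1}\wedge t}\rangle)-F(R_{J_n},\langle\mu_{J_n},\varphi_{J_n}\rangle)\Big]\\
&+\sum_{n=1}^{N}\Big[F(R_{J_n},\langle\mu_{J_n},\varphi_{J_n}\rangle)-F(R_{J_n-},\langle\mu_{J_n-},\varphi_{J_n}\rangle)\Big],
\end{align*}
with the convention that for $n=N$ the left limit at $t$ is just the value at $t$, by continuity of the flow.

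For the continuous part, on $[J_n,J_{n+1}\wedge t)$ the pair $(\mu_s,R_s)$ equals $X_{s-J_n}(\mu_{J_n},R_{J_n})$. By Proposition~\ref{prop:cauchy}, each $\xi^u_s$ and $R_s$ is $\mathcal C^1$ in $s$ there, with derivatives $g(\xi^u_s,R_s)$ and $\rho(\mu_s,R_s)$. Since $V_s$ is a fixed finite set on this interval, $s\mapsto\langle\mu_s,\varphi_s\rangle=\sum_{u\in V_{J_n}}\varphi(s,\xi^u_s)$ is $\mathcal C^1$ by the chain rule, with derivative $\sum_u[\partial_1\varphi(s,\xi^u_s)+g(\xi^u_s,R_s)\partial_2\varphi(s,\xi^u_s)]=\langle\mu_s,\Phi_s(R_s,\cdot)\rangle$. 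A second chain rule with $F\in\mathcal C^{1,1}$, followed by the fundamental theorem of calculus on each interval, gives exactly the $\mathrm ds$-integrand of the statement. Summing over $n$ yields the Lebesgue integral over $[0,t]$; the finitely many jump instants have measure zero, so using $\mu_s$ in place of $\mu_{s-}$ is harmless.

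For the jump part, at each $J_n$ the resource is continuous, $R_{J_n}=R_{J_n-}$, and $\varphi_{J_n}$ is evaluated at the same time on both sides. By construction, exactly one atom $(J_n,w,h)$ of $\mathcal N$ or $\mathcal N'$ realizes the infimum defining $J^b_n$ or $J^d_n$, with $w\in V_{J_n-}$ and $h\le b(\xi^w_{J_n-})$ (resp.\ $d(\xi^w_{J_n-})$). Here we use that $X^w_{J_n-J_{n-1}}(\Xi_{J_{n-1}},R_{J_{n-1}})=\xi^w_{J_n-}$. The measure then becomes $\mu_{J_n-}+\delta_{x_0}+\delta_{\xi^w_{J_n-}-x_0}-\delta_{\xi^w_{J_n-}}$ or $\mu_{J_n-}-\delta_{\xi^w_{J_n-}}$, so each jump increment equals the integrand of the corresponding Poisson integral evaluated at that atom.

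The point I expect to need the most care is the converse: showing that no other atom of $\mathcal N,\mathcal N'$ in $[0,t]\times\mathcal U\times\mathbb R^*_+$ contributes. For an atom $(s,u,h)$ with $s\in(J_n,J_{n+1})$, either $u\notin V_{s-}=V_{J_n}$, or $h>b(\xi^u_{s-})$ (resp.\ $d$), since otherwise the infimum would be strictly smaller than $J_{n+1}$. At $s=J_n$ itself, the only atoms are the realizing one; almost surely $\mathcal N$ and $\mathcal N'$ share no time coordinate and each puts at most one atom per time. The Poisson integrals are countable sums of nonnegative-indicator terms. Only finitely many of these are nonzero, because the set of alive labels is finite on $[0,t]$ and, for each $u$, the rates $b(\xi^u_s)$ and $d(\xi^u_s)$ are bounded on $[0,t]$ by continuity before $\tau_{\exp}$. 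Hence these sums are well defined and equal the finite sum of jump increments, which concludes the proof.
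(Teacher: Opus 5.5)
Your proof is correct and is essentially the paper's argument written out by hand: the paper invokes Itô's formula for càdlàg finite-variation processes together with the classical decomposition of $\langle\mu_t,\varphi_t\rangle$, and on $[0,t]$, where there are finitely many jumps, this is exactly your telescoping over $(J_n)$ plus the fundamental theorem of calculus along the flow. Your closing appeal to boundedness of the rates is unnecessary, and not quite justified since $b$ is only assumed continuous on $(x_0,+\infty)$; your converse argument already shows that the only nonzero terms of the Poisson integrals are the $N$ realizing atoms.
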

\begin{proof}
This is a classical result for a process written like in Section~\ref{subsec:classicalw}, valid pathwisely until the random time $J_{\infty} \wedge \tau_{\exp}$, because our process is a finite variation process (see Definition page 39 in \cite{protter2005stochastic}) with right-continuous sample paths on this time window. We use Itô's formula (Theorem 31 page 78 in \cite{protter2005stochastic}) and a classical decomposition of $\langle \mu_{t}, \varphi_{t} \rangle$ (see for example Proposition 4.1 in \cite{FRITSCH20151}). For a detailed proof, we refer the reader to Lemma II.1.10. in \cite{broduthesis}.
\end{proof}

In the following, for $t \in [0,J_{\infty} \wedge \tau_{\exp})$, we define $E_{t} := \langle \mu_{t}, \mathrm{Id} \rangle$ the total energy of the population at time $t$. In the dynamics described in \eqref{eq:eqress}, as $\varsigma$ is $\mathcal{C}^{1}$, the speed of renewal of $R_{t}$ is upper bounded by $||\varsigma||_{\infty,[0,R_{\max}]} < + \infty$, for every $t \geq 0$. This resource renewal is the only income of biomass into the system described by $(\mu_{t},R_{t})_{t}$, so this shall give us a control on the total biomass of the system.
\begin{prop}
The process defined in Section~\ref{sec:inductive} verifies that
\begin{align}
\forall t \in [0,J_{\infty} \wedge \tau_{\exp}), \quad R_{t}+E_{t} \leq R_{0}+E_{0}+t||\varsigma||_{\infty,[0,R_{\max}]} < + \infty.
\label{eq:controlressource}
\end{align}
\label{prop:controlebiomasse}
\end{prop}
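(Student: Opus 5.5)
We will apply Lemma~\ref{lemme:decompocun} with $\varphi_t(x) := x$ (time-independent, so $\partial_1\varphi \equiv 0$ and $\partial_2\varphi \equiv 1$) and $F(R,y) := R + y$. This choice is $\mathcal{C}^{1,1}$ with $\partial_1 F = \partial_2 F = 1$, and $\langle \mu_t, \varphi_t\rangle = E_t$. The lemma then gives, for $t \in [0,J_\infty \wedge \tau_{\exp})$, a decomposition of $R_t + E_t$ into a drift integral plus two Poisson integrals.

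First, the jump terms. At a birth event the increment of $F$ is
\[
\langle \delta_{x_0} + \delta_{\xi^u_{s-}-x_0} - \delta_{\xi^u_{s-}}, \mathrm{Id}\rangle = x_0 + \xi^u_{s-} - x_0 - \xi^u_{s-} = 0,
\]
so the $\mathcal{N}$-integral vanishes identically. At a death event the increment is $-\xi^u_{s-} < 0$, since energies are positive before $\tau_{\exp}$. Hence the $\mathcal{N}'$-integral is non-positive. It is pathwise a finite sum, because only finitely many jumps occur on $[0,t]$ for $t < J_\infty$.

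Second, the drift. Here $\Phi_s(R,x) = g(x,R) = f(x,R) - \ell(x)$, so the integrand equals
\[
\varsigma(R_s) - \chi\langle \mu_s, f(\cdot,R_s)\rangle + \langle \mu_s, f(\cdot,R_s)\rangle - \langle \mu_s, \ell\rangle .
\]
Since $\chi > 1$ and $f \geq 0$ (because $\phi \geq 0$ as a non-decreasing function with $\phi(0)=0$, and $\psi > 0$), the consumption contributions combine to $(1-\chi)\langle \mu_s, f(\cdot,R_s)\rangle \leq 0$. Also $\ell \geq 0$, so the drift is bounded above by $\varsigma(R_s)$. Since $R_s \in [0,R_{\max}]$ (the measure $\mu_s$ is a finite point measure before $J_\infty \wedge \tau_{\exp}$, as noted after \eqref{eq:eqress}), we get $\varsigma(R_s) \leq \|\varsigma\|_{\infty,[0,R_{\max}]}$. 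Integrating over $[0,t]$ yields \eqref{eq:controlressource}.

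Finiteness of the right-hand side is immediate: $\mu_0$ is a finite point measure on $\mathbb{R}^*_+$, so $E_0 < \infty$, and $R_0 \leq R_{\max}$. The only mildly delicate point is justifying that $R_s$ stays in $[0,R_{\max}]$, which is needed to bound $\varsigma(R_s)$. This follows from the sign conditions on $\varsigma$ at $0$ and above $R_{\max}$, together with $f(x,0) = 0$, because between jumps the flow satisfies \eqref{eq:eqress} with finite $\langle \mu, \psi\rangle$, and $R$ is continuous across jumps.
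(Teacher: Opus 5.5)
Your proof is correct and follows the paper's argument exactly: you apply Lemma~\ref{lemme:decompocun} with $F(r,x)=r+x$ and $\varphi(t,x)=x$, observe that birth jumps leave $R+E$ unchanged and death jumps decrease it, and bound the drift by $\varsigma(R_s)$ using $\chi>1$. You also spell out the nonnegativity of $f$ and $\ell$ and the fact that $R_s\in[0,R_{\max}]$, which the paper leaves implicit.
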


\begin{proof}
Let $t \in [0,J_{\infty} \wedge \tau_{\exp})$, by Lemma~\ref{lemme:decompocun} applied to $F: (r,x) \mapsto r+x$ and $\varphi :(t,x) \mapsto x$, we have
\begin{align*}
R_{t} + E_{t} = & \hspace{0.1 cm} R_{0}+E_{0} + \displaystyle{\int_{0}^{t} \bigg[ \rho(\mu_{s},R_{s}) + \langle \mu_{s}, \Phi_{s}(R_{s},.) \rangle \bigg] \mathrm{d}s} \\
     & -
    \displaystyle{\int_{0}^{t}\int_{\mathcal{U} \times \mathbb{R}^{*}_{+} } \mathbb{1}_{\{u \in V_{s-} \}} \mathbb{1}_{\{h \leq d(\xi^{u}_{s-})\}} }\xi^{u}_{s-} \mathcal{N}'(\mathrm{d}s,\mathrm{d}u,\mathrm{d}h) \\
    \leq & \hspace{0.1 cm} R_{0}+E_{0} + \displaystyle{\int_{0}^{t} \varsigma(R_{s}) \mathrm{d}s},    
\end{align*}
which concludes. Note that we used in particular the fact that the constant $\chi$ in the definition \eqref{eq:eqress} of $\rho$ is larger than 1.
\end{proof}

This control of the total biomass of the system entails that individual energies almost surely do not explode in finite time. In order to gather all our intermediate results into the upcoming Proposition~\ref{prop:etapeun}, we now introduce an assumption to prevent individual energies from reaching 0 in finite time. We adopt the convention that for any $x>0$, if $\ell(x)=0$, then $\frac{d}{\ell}(x) = +\infty$.
\begin{hyp}[\textbf{Individual energy avoids 0}]
For all $x>0$,
\begin{align*}
\displaystyle{\int_{0}^{x} \dfrac{d}{\ell}(y) \mathrm{d}y }  = + \infty.
\end{align*}
\label{hyp:probamortel}
\end{hyp}

The only random event that allows to avoid 0 is a death, and there is no energy gain in the worst possible case, which is $R=0$. Hence, it is natural to compare the death rate $d$ and the energy loss $\ell$ in a neighborhood of 0. Assumption~\ref{hyp:probamortel} expresses as an integral condition that the death rate $d$ should dominate the energy loss $\ell$ near 0. Note that if $\ell$ is a positive function, Assumption~\ref{hyp:probamortel} corresponds to Assumption $\mathbf{(H1)}$ in Theorem~\ref{theo:initial}.

\begin{prop}
Under Assumption~\ref{hyp:probamortel}, the process defined in Section~\ref{sec:inductive} verifies
$$ (J_{1}=+ \infty) \quad \mathrm{or} \quad (J_{1} < t_{\exp}(\mu_{0},R_{0})).$$
\label{prop:etapeun}
\end{prop}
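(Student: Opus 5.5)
Since $J_1 = J_1^b\wedge J_1^d\wedge (J_0+t_{\exp})$ in the construction, the statement says the following. Almost surely, it is not the case that the deterministic flow started from $(\mu_0,R_0)$ reaches its explosion time $t_{\exp}(\mu_0,R_0)<+\infty$ before any birth or death event. Equivalently, $\mathbb{P}\big(t_{\exp}(\mu_0,R_0)<+\infty,\ J_1^b\wedge J_1^d \geq t_{\exp}(\mu_0,R_0)\big)=0$. By independence of $(\mu_0,R_0)$ from $\mathcal{N},\mathcal{N}'$, we condition on the initial condition and treat it as deterministic. Set $t^*:=t_{\exp}(\mu_0,R_0)<+\infty$. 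On $[0,t^*)$ the flow is deterministic, so the death time $J_1^d$ satisfies
\[
\mathbb{P}(J_1^d\geq t^*)=\exp\Big(-\sum_{u\in V_0}\int_0^{t^*} d\big(X^u_s\big)\,\mathrm{d}s\Big),
\]
by the Poisson property of $\mathcal{N}'$ (the individual indices are distinct atoms of $n(\mathrm{d}u)$). It therefore suffices to show that $\int_0^{t^*} d(X^u_s)\,\mathrm{d}s=+\infty$ for some $u\in V_0$.

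\textbf{Step 1: no explosion to $+\infty$.} Along the flow there are no jumps, so the computation of Proposition~\ref{prop:controlebiomasse} (Lemma~\ref{lemme:decompocun} with no Poisson terms) gives $\sum_u X^u_t\le R_0+E_0+t\|\varsigma\|_{\infty,[0,R_{\max}]}$ for $t<t^*$. Each $X^u$ thus stays bounded on $[0,t^*)$. Moreover $X^\Re_t\in[0,R_{\max}]$, since $\rho$ is finite for point measures. Hence finiteness of $t^*$ forces some $u$ with $\liminf_{t\to t^*}X^u_t=0$. Because $g$ is $\mathcal{C}^{1}$ and the system is nonsingular while all coordinates stay in a compact subset of $\mathbb{R}^*_+$, in fact $X^u_t\to0$ as $t\to t^*$. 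I would argue this by contradiction with the maximality of $t^*$ in Proposition~\ref{prop:cauchy}, using uniform existence time on compacts of $(\mathbb{R}^*_+)^{|V|}\times[0,R_{\max}]$.

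\textbf{Step 2: divergence of the death integral.} Since $f\ge0$, we have $\dot X^u_s = g(X^u_s,X^\Re_s)\ge -\ell(X^u_s)$. Fix $x>0$ small and take $s_0<t^*$ with $X^u_s<x$ on $[s_0,t^*)$. Substituting $y=X^u_s$ on the portions where $X^u$ decreases, we get
\[
\int_{s_0}^{t^*} d(X^u_s)\,\mathrm{d}s \;\ge\; \int_{s_0}^{t^*} \frac{d}{\ell}(X^u_s)\,(-\dot X^u_s)^+\,\mathrm{d}s \;\ge\; \int_0^{X^u_{s_0}}\frac{d}{\ell}(y)\,\mathrm{d}y .
\]
The first inequality uses $d(X^u_s)\ge \frac{d}{\ell}(X^u_s)(-\dot X^u_s)^+$, which follows from $(-\dot X^u)^+\le \ell(X^u)$, with the convention $\frac d\ell=+\infty$ when $\ell=0$ being harmless since then $(-\dot X^u)^+=0$. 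The second inequality is a change-of-variables/coarea argument: the path goes from $X^u_{s_0}$ down to $0$, so every level $y\in(0,X^u_{s_0})$ is crossed downward at least once. The right-hand side is $+\infty$ by Assumption~\ref{hyp:probamortel}. Therefore $\mathbb{P}(J_1^d\ge t^*)=0$, and so $J_1<t^*$ almost surely on $\{t^*<\infty\}$. On $\{t^*=\infty\}$, either $J_1=+\infty$ or $J_1<t^*$ trivially.

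\textbf{Main obstacle.} I expect the main obstacle to be the rigorous change of variables in Step 2 for a non-monotone path. I would handle it with the one-dimensional area/coarea formula for the $\mathcal{C}^1$ function $s\mapsto X^u_s$ on $[s_0,t')$, applied with the nonnegative integrand $\frac{d}{\ell}$, and then let $t'\uparrow t^*$ using monotone convergence. A secondary point is justifying $X^u_t\to0$ rather than only along a subsequence. Alternatively, I would avoid that point by working with $\liminf$ directly: downcrossings of every level below $X^u_{s_0}$ still occur, which suffices for the coarea bound.
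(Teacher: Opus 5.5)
Your proof is correct and takes essentially the same route as the paper's (Appendix~\ref{app:prop27}). You rule out blow-up to $+\infty$ with the biomass bound of Proposition~\ref{prop:controlebiomasse}, then show that a trajectory approaching $0$ forces $\int_0^{t_{\exp}} d(\xi^u_s)\,\mathrm{d}s=+\infty$ via the substitution $y=\xi^u_s$ on decreasing stretches, using $-g\le\ell$ and Assumption~\ref{hyp:probamortel}, so that a death jump occurs before $t_{\exp}$ almost surely. The only differences are cosmetic: you compute $\mathbb{P}(J^d_1\ge t^*)$ directly rather than arguing by contradiction, and you use the area/coarea formula with a $\liminf$ formulation where the paper decomposes $\{g<0\}$ into disjoint open intervals, which is if anything slightly more careful.
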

The proof of Proposition~\ref{prop:etapeun} can be found in Appendix~\ref{app:prop27}. We conclude with the following corollary, using Markov property.

\begin{corr}
Under Assumption~\ref{hyp:probamortel}, the process defined in Section~\ref{sec:inductive} verifies $\tau_{\exp} = + \infty$ almost surely.
\label{prop:modelrelevant}
\end{corr}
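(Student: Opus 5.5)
The plan is to iterate Proposition~\ref{prop:etapeun} along the jump times $(J_{n})_{n}$ using the strong Markov property of the construction. By definition, $\tau_{\exp} < +\infty$ if and only if there exists $n \in \mathbb{N}$ such that $J_{n} < +\infty$ and $J_{n+1} = J_{n} + t_{\exp}(\Xi_{J_{n}},R_{J_{n}}) < +\infty$. Hence
$$\{\tau_{\exp} < +\infty\} = \bigcup_{n \in \mathbb{N}} A_{n}, \quad A_{n} := \{J_{n}<+\infty, \; J_{n+1} = J_{n} + t_{\exp}(\Xi_{J_{n}},R_{J_{n}}) < +\infty\}.$$
By countable subadditivity, it suffices to show $\mathbb{P}(A_{n}) = 0$ for each $n$.

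Fix $n$ and work on $\{J_{n} < +\infty\}$. We may assume $J_{n} < \tau_{\exp}$, since otherwise $A_{m}$ already holds for some $m<n$, and those events are handled by induction. Then the state $(\mu_{J_{n}},R_{J_{n}})$ is an $\mathcal{F}_{J_{n}}$-measurable random variable in $\mathcal{M}_{P}(\mathbb{R}^{*}_{+}) \times [0,R_{\max}]$. The shifted point measures
$$\mathcal{N}^{(n)}(\mathrm{d}s,\mathrm{d}u,\mathrm{d}h) := \mathcal{N}(J_{n}+\mathrm{d}s,\mathrm{d}u,\mathrm{d}h), \qquad \mathcal{N}'^{(n)} \text{ defined likewise},$$
restricted to $s>0$, are again independent Poisson point measures with intensity $\mathrm{d}s\, n(\mathrm{d}u)\,\mathrm{d}h$. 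They are independent of $\mathcal{F}_{J_{n}}$, by the strong Markov property of Poisson point measures at the stopping time $J_{n}$. It remains to check that $J_{n}$ is an $(\mathcal{F}_{t})$-stopping time, which holds by construction, since each $J_{n}$ is defined as a first hitting of the support by an adapted region.

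The relabelling $V_{J_{n}} \to \{1,\dots,|V_{J_{n}}|\}$ is a bijection of $\mathcal{U}$, and it preserves the Poisson structure because $n(\mathrm{d}u)$ is counting measure. The construction of Section~\ref{sec:inductive} started at $J_{n}$ is therefore exactly the construction started from the initial condition $(\mu_{J_{n}},R_{J_{n}})$ driven by $(\mathcal{N}^{(n)},\mathcal{N}'^{(n)})$. Its first jump time is $J_{n+1}-J_{n}$.

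Proposition~\ref{prop:etapeun} holds for any random initial condition independent of the driving Poisson measures. Applying it conditionally on $\mathcal{F}_{J_{n}}$ gives, almost surely on $\{J_{n}<\infty\}$, that either $J_{n+1}-J_{n} = +\infty$ or $J_{n+1}-J_{n} < t_{\exp}(\mu_{J_{n}},R_{J_{n}})$. Both alternatives exclude $A_{n}$, so $\mathbb{P}(A_{n})=0$.

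The main obstacle is justifying the strong Markov/regeneration step rigorously. One must check that $J_{n}$ is a stopping time and that the post-$J_{n}$ restriction of the Poisson measures is independent of $\mathcal{F}_{J_{n}}$ with the same law. One must also check that Proposition~\ref{prop:etapeun} is really stated for arbitrary random initial conditions rather than deterministic ones. If only the deterministic version is available, I would condition on $(\mu_{J_{n}},R_{J_{n}})$ and integrate, using measurability of the map from initial condition to the event in question.
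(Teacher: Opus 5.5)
Your proposal is correct and follows essentially the paper's argument. You reduce to a fixed $n$, use the strong Markov property of $\mathcal{N},\mathcal{N}'$ at $J_n$ to identify $J_{n+1}-J_n$ with the first jump time of the construction restarted from $(\mu_{J_n},R_{J_n})$, and conclude with Proposition~\ref{prop:etapeun}. That proposition is indeed stated for a random initial condition independent of the Poisson measures; the paper uses a copy $(\tilde{\mu}_0,\tilde{R}_0)$ with the law of $(\mu_{J_n},R_{J_n})$ where you use explicitly shifted measures.

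Your preliminary reduction to $J_n<\tau_{\exp}$ is unnecessary and slightly misphrased, since $\tau_{\exp}=J_n$ is exactly the event $A_n$. Nothing is lost, though: the restart identity holds on all of $\{J_n<+\infty\}$, because after an explosion the population is empty and $t_{\exp}=+\infty$.
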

\begin{proof}
By definition of $\tau_{\exp}$ and construction of the process, it suffices to show that for every $n \in \mathbb{N}$, we almost surely have
\begin{align*}
(J_{n+1}=+ \infty) \quad \mathrm{or} \quad (J_{n+1} < J_{n} + t_{\exp}(\mu_{J_{n}},R_{J_{n}})< + \infty).
\end{align*} 
Let us fix $n \in \mathbb{N}$ and work in the following conditionnally to the event $\{J_{n} + t_{\exp}(\mu_{J_{n}},R_{J_{n}}) < + \infty \} \cap \{ J_{n+1} < + \infty \}$. We aim to show that $J_{n+1} < J_{n} + t_{\exp}(\mu_{J_{n}},R_{J_{n}})$ almost surely. We define $(\tilde{\mu}_{0},\tilde{R}_{0})$ a random variable with same law as $(\mu_{J_{n}},R_{J_{n}})$, and $(\tilde{\mu}_{t},\tilde{R}_{t})_{t}$ a process starting from the random initial condition $(\tilde{\mu}_{0},\tilde{R}_{0})$ and constructed with the algorithmic procedure described in Section~\ref{sec:inductive}. The jump times with indices 0 and 1 associated to $(\tilde{\mu}_{t},\tilde{R}_{t})_{t}$ are naturally written $\tilde{J}_{0}$ and $\tilde{J}_{1}$, and note that by construction, $\tilde{J}_{0}:= 0$. Under the event $\{ J_{n+1} < + \infty \}$, from the strong Markov property for Poisson point processes (see Example 10.4(a) in \cite{daley2007introduction}), the law of $J_{n}+t_{\exp}(\mu_{J_{n}},R_{J_{n}})-J_{n+1}$ conditionnally to $\{J_{n}+t_{\exp}(\mu_{J_{n}},R_{J_{n}}) < + \infty \}$ and $\mathcal{F}_{J_{n}}$ is equal to the law of 
$\tilde{J}_{0}+t_{\exp}(\tilde{\mu}_{0},\tilde{R}_{0})-\tilde{J}_{1} = t_{\exp}(\tilde{\mu}_{0},\tilde{R}_{0})-\tilde{J}_{1}$, which concludes thanks to Proposition~\ref{prop:etapeun}.
\end{proof}

\subsection{Assumption for a well-defined population process $(\mu_{t})_{t}$ for every $t \geq 0$}
\label{subsec:refinement}

At this step, thanks to Assumption~\ref{hyp:probamortel}, we have $\tau_{\exp}=+ \infty$ almost surely (it is Corollary~\ref{prop:modelrelevant}), and we work under this event. The process $(\mu_{t},R_{t})_{t}$ is still well-defined only on $[0,J_{\infty})$, with $J_{\infty}$ possibly finite, \textit{i.e.} there is a possible accumulation of jump times. In Section~\ref{subsec:omega}, we assume the existence of an appropriate weight function $\omega$. Then, using this weight function, we give in Section~\ref{subsec:proofnonacc} a setting under which almost surely,  $J_{\infty} = + \infty$. We will even obtain in Proposition~\ref{prop:controlpop} a stronger result, which implies in particular that the expectation of the population size is finite for every $t \geq 0$. 

\subsubsection{Definition of the weight function $\omega$}
\label{subsec:omega}

We define a weight function $\omega$ adapted to the functional parameters $b$, $d$, $\phi$ and $\psi$ for two reasons. First, if such a weight function exists, we shall prove in Section~\ref{subsec:proofnonacc} that $(\mu_{t})_{t}$ is well-defined on $\mathbb{R}^{+}$. Then, we will obtain in Section~\ref{subsec:geninf} important martingale properties for our process. We write $\hbar : x \in \mathbb{R}^{+} \mapsto x+x^{2}$, and $\overline{g} : x >0 \mapsto \sup_{R \in [0,R_{\max}]} |g(x,R)|$. Note that for $x>0$, $\overline{g}(x) = \max(\ell(x), \phi(R_{\max})\psi(x)-\ell(x))$.
\begin{hyp}[\textbf{Existence of an appropriate weight function}]
There exists $\omega \in \mathcal{C}^{1}(\mathbb{R}^{*}_{+})$ positive and non-decreasing such that
\begin{itemize}
\item $\exists C_{g}>0, \forall x>0, \quad \overline{g}(x)(1+\omega'(x)) \leq C_{g}(1+x+\omega(x))$,
\item $\exists C_{b}>0, \forall x>0, \quad b(x)(1+\hbar\left(\left|\omega(x_{0})+\omega(x-x_{0}) -\omega(x)\right|\right)) \leq C_{b}(1+x+\omega(x)),$
\item $\exists C_{d}>0, \forall x>0, \quad d(x)\hbar(\omega(x)) \leq C_{d}(1+x+\omega(x)).$
\end{itemize} 
\label{hyp:poidsomega}
\end{hyp}

\textbf{Remark:} Recall that $b \equiv 0$ on $(0,x_{0}]$, so if $\omega$ is Lipschitz continuous on $(1,+\infty)$ (which is equivalent to $\omega'$ bounded on $(1,+\infty)$, and entails that $x>1 \mapsto \omega(x)/x$ is bounded on a neighborhood of $+ \infty$), the second point of Assumption~\ref{hyp:poidsomega} is equivalent to the lighter assumption
\begin{align}
\exists C_{b}>0, \forall x>1, \quad b(x) \leq C_{b}(1+x).
\label{eq:blipschitz}
\end{align}
Thus, we let the reader check that if $\mathbf{(H2)}$ in Theorem~\ref{theo:initial} is verified, then Assumption~\ref{hyp:poidsomega} holds true. Assumption $\mathbf{(H2)}$ is way more readable, though more restrictive on $\omega$. We believe, although it is not proven in this paper, that Assumption~\ref{hyp:poidsomega} on $\omega$ is sharp in our framework, in the sense that it is necessary and sufficient to obtain martingale properties for our process with the usual techniques developed originally in \cite{fournier2004microscopic}. In general, proving the existence of a weight function $\omega$ verifying Assumption~\ref{hyp:poidsomega} could be a difficult problem, similar to the search for Lyapunov functions associated to the extended generator of a Feller process (see for example condition (CD2) in Section 4.1. of \cite{meyn1993stability}, or Assumption (E) in Section 2 of \cite{champagnat2023general}).
\\\\
We present in Section~\ref{ex3} an \textit{allometric} setting with unbounded functional parameters, where jump rates are power functions, and prove that there exists weight functions $\omega$ verifying Assumption~\ref{hyp:poidsomega} in this specific context. It is of order $x^{\kappa_{1}}$, respectively $x^{\kappa_{2}}$, in a neighborhood of 0, respectively $+ \infty$, with $0 \leq \kappa_{1} \leq \kappa_{2} \leq 1$. The typical shape of the weight function $\omega$ in this setting is shown on Figure~\ref{fig:exampeomega}. 

\begin{figure}[h!]
\centering
\includegraphics[scale=0.5]{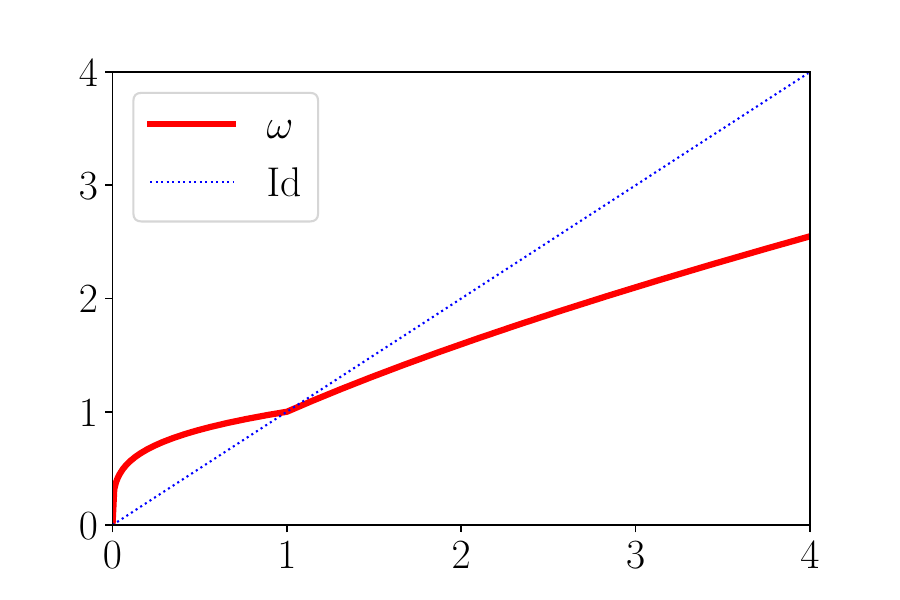} 
\caption{Possible shape of the weight function $\omega$ with an allometric choice of parameters.}
\label{fig:exampeomega}
\end{figure}

Notice that in general, the third point of Assumption~\ref{hyp:poidsomega} implies that $d\omega$ should be bounded in a neighborhood of 0. Hence, if $d(x) \xrightarrow[x \rightarrow  0]{} + \infty$, which can be a biological assumption for the death rate $d$, then $\omega(x) \xrightarrow[x \rightarrow  0]{} 0$ as shown on Figure~\ref{fig:exampeomega}. We will discuss in Section~\ref{sec:theorem} how this impacts the interpretation of the tightness result in Theorem~\ref{theo:convergence}.

\begin{lemme}
Assumption~\ref{hyp:poidsomega} is equivalent to the combination of the following properties. First, $\omega \in \mathcal{C}^{1}(\mathbb{R}^{*}_{+})$, is positive and non-decreasing, and then
\begin{multline}
 \exists \omega_{1}>0, \forall x>0, \\
 \overline{g}(x)(1+\omega'(x)) + b(x)\left(1+\left|\omega(x_{0})+\omega(x-x_{0}) -\omega(x)\right|\right) + d(x)\omega(x) \leq \omega_{1}(1+x+\omega(x)), \label{eq:conditionomega}
\end{multline}
and
\begin{align}
 \exists \omega_{2}>0, \forall x>0, \quad b(x)(\omega(x_{0})+\omega(x-x_{0}) -\omega(x))^{2}+ d(x)\omega^{2}(x) \leq \omega_{2} (1+x+\omega(x)).
\label{eq:conditionomegadeux}
\end{align}
\label{lemme:equivalence}
\end{lemme}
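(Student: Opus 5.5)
We prove the two implications separately. The positivity, monotonicity and $\mathcal{C}^{1}$ regularity of $\omega$ appear on both sides, so only the inequalities need attention. Throughout we write $\Delta(x) := \omega(x_{0})+\omega(x-x_{0})-\omega(x)$, with the convention that every term containing $b$ vanishes for $x\le x_{0}$ because $b\equiv 0$ there. The proof uses only the elementary comparisons $y\le \hbar(y)$ and $y^{2}\le \hbar(y)$ for $y\ge 0$, together with $\hbar(y)\le 2\max(y,y^{2})$.

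\emph{Assumption~\ref{hyp:poidsomega} implies \eqref{eq:conditionomega}--\eqref{eq:conditionomegadeux}.} We bound each term of \eqref{eq:conditionomega} separately.
The growth term $\overline{g}(1+\omega')$ is directly at most $C_{g}(1+x+\omega)$.
For the birth term, since $|\Delta|\le \hbar(|\Delta|)$, we get $b(1+|\Delta|)\le b(1+\hbar(|\Delta|))\le C_{b}(1+x+\omega)$.
For the death term, $\omega\le\hbar(\omega)$ gives $d\,\omega\le d\,\hbar(\omega)\le C_{d}(1+x+\omega)$.
Summing the three bounds yields \eqref{eq:conditionomega} with $\omega_{1}:=C_{g}+C_{b}+C_{d}$.

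For \eqref{eq:conditionomegadeux} we argue the same way. Since $\Delta^{2}\le\hbar(|\Delta|)$ and $\omega^{2}\le\hbar(\omega)$, we have $b\Delta^{2}+d\omega^{2}\le (C_{b}+C_{d})(1+x+\omega)$, so we may take $\omega_{2}:=C_{b}+C_{d}$.

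\emph{The two inequalities imply Assumption~\ref{hyp:poidsomega}.} The first point of the assumption follows directly from \eqref{eq:conditionomega} by dropping the nonnegative $b$ and $d$ terms, with $C_{g}:=\omega_{1}$.

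For the second point, $\hbar(|\Delta|)=|\Delta|+\Delta^{2}$. We use \eqref{eq:conditionomega} to control $b(1+|\Delta|)$ and \eqref{eq:conditionomegadeux} to control $b\Delta^{2}$, which gives $b(1+\hbar(|\Delta|))\le(\omega_{1}+\omega_{2})(1+x+\omega)$.

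For the third point, $d\,\hbar(\omega)=d\omega+d\omega^{2}$. These two pieces are bounded by \eqref{eq:conditionomega} and \eqref{eq:conditionomegadeux} respectively, so $C_{d}:=\omega_{1}+\omega_{2}$ works.

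All the terms involved are nonnegative ($b,d\ge0$, $\overline{g}\ge0$, $\omega,\omega'\ge0$), so discarding summands from the left-hand sides is legitimate. There is no real obstacle beyond keeping track of constants. The one point to check is the handling of the absolute value in $|\Delta|$, since $\Delta$ may be negative: in \eqref{eq:conditionomegadeux}, $\Delta$ appears squared, so its sign does not matter there, and elsewhere it always appears as $|\Delta|$.
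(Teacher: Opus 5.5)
Your proof is correct and follows the paper's argument exactly, down to the constants: $\omega_{1}=C_{g}+C_{b}+C_{d}$ and $\omega_{2}=C_{b}+C_{d}$ in one direction, and $C_{g}=\omega_{1}$, $C_{b}=C_{d}=\omega_{1}+\omega_{2}$ in the other, using $\hbar(y)=y+y^{2}$ and the nonnegativity of all terms. The paper simply declares these choices immediate, whereas you spell out the termwise comparisons (the unused bound $\hbar(y)\le 2\max(y,y^{2})$ can be dropped).
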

\begin{proof}
The fact that Assumption~\ref{hyp:poidsomega} implies \eqref{eq:conditionomega} and \eqref{eq:conditionomegadeux} is immediate. We can take $\omega_{1} = C_{g} + C_{b} + C_{d}$ and $\omega_{2} = C_{b} + C_{d}$. Conversely, if \eqref{eq:conditionomega} and \eqref{eq:conditionomegadeux} hold true, we can take $C_{g} = \omega_{1}$ and $C_{b} = C_{d} = \omega_{1}+ \omega_{2}$.
\end{proof} 
\textbf{Remark:}  Lemma~\ref{lemme:equivalence} gives us an insight on why Assumption~\ref{hyp:poidsomega} is interesting to obtain martingale properties for our process. Equation \eqref{eq:conditionomega} shall provide a control for the stochastic integrals appearing in the decomposition of $\langle \mu_{t}, \varphi_{t} \rangle$, using Lemma~\ref{lemme:decompocun} with $F : (r,x) \mapsto x$. Also, \eqref{eq:conditionomegadeux} is meant to control the quadratic variation of the martingale part of these stochastic integrals (see Corollary~\ref{corr:martingaleutile} in Section~\ref{subsec:geninf}). 

\subsubsection{Proof of the non-accumulation of jump times}
\label{subsec:proofnonacc}

For $t \in [0,J_{\infty})$, we define $N_{t} := \langle \mu_{t}, 1 \rangle$. It represents the number of individuals in the population at time $t$. We also define $\Omega_{t} := \langle \mu_{t}, \omega \rangle$, and for $M>0$, we introduce the stopping time
$$\tau_{M} := \inf \left\{ t \in [0,J_{\infty}), E_{t} + N_{t} + \Omega_{t} \geq M \right\},$$
with the convention $\inf(\varnothing) = + \infty$. Finally, for any positive function $w$ on $\mathbb{R}^{*}_{+}$, we write $\varphi \in \mathfrak{B}_{w}(\mathbb{R}^{*}_{+})$, if $\frac{\varphi}{w}$ is a bounded function on $\mathbb{R}^{*}_{+}$.

\begin{defi}[\hypertarget{gen}{\textbf{General setting}}]
In the following, we denote as `the general setting', the framework gathering the dynamics described in Sections~\ref{subsec:dyn} and \ref{sec:construction}, Assumptions~\ref{hyp:probamortel} and \ref{hyp:poidsomega}, and the additional assumption
$$\mathbb{E}(E_{0}+N_{0}+\Omega_{0}) <+ \infty.$$
\end{defi}
We are now ready to give the main result of this section.
\begin{prop}
Under the \hyperlink{gen}{general setting}, we almost surely have
\begin{align}
\tau_{M} \xrightarrow[M \rightarrow + \infty]{} + \infty.
\label{eq:taum}
\end{align} 
Then, for all $T \geq 0$, 
\begin{align}
\mathbb{E}\left(\sup\limits_{t \in [0,T \wedge J_{\infty})} (E_{t}+N_{t}+\Omega_{t})\right) < + \infty.
\label{eq:controleomega}
\end{align}
This immediately implies that for every $\varphi \in \mathfrak{B}_{1+\mathrm{Id}+\omega}(\mathbb{R}^{*}_{+})$, we have
\[\mathbb{E}\left(\underset{t \in [0,T \wedge J_{\infty})}{\mathrm{sup}} |\langle \mu_{t}, \varphi \rangle| \right) < + \infty. \]
\label{prop:controlpop}
\end{prop}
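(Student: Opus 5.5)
We prove both claims with one Gronwall estimate for the stopped process, working on $\{\tau_{\exp}=+\infty\}$ (Corollary~\ref{prop:modelrelevant}). Fix $M>0$ and $T\geq 0$. On $[0,\tau_M)$ we have $E_t+N_t+\Omega_t< M$, so the total jump rate is bounded by $M$ times a constant. Indeed, Assumption~\ref{hyp:poidsomega} gives $b(x)+d(x)\le C(1+x+\omega(x))$, because $\hbar\geq 0$ and $\omega>0$, and $d\hbar(\omega)\le C_d(1+x+\omega)$ bounds $d$ where $\omega$ is not small. The jump times therefore cannot accumulate before $\tau_M$, and $\tau_M<J_\infty$ whenever $J_\infty<\infty$. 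At most one individual (of energy $x_0$) is added per jump, so $\tau_M$ is not reached by a single large jump.

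We apply Lemma~\ref{lemme:decompocun} with $F(r,x)=x$ and $\varphi=1+\mathrm{Id}+\omega$, which is $\mathcal C^1$ on $\mathbb R^*_+$. This gives a pathwise decomposition of $Y_t:=N_t+E_t+\Omega_t$ at $t\wedge\tau_M$. The drift term is $\langle\mu_s,g(\cdot,R_s)(1+\omega')\rangle\le C_g\,Y_s$ by the first point of the assumption. A birth jump changes $Y$ by $1+0+(\omega(x_0)+\omega(x-x_0)-\omega(x))$, which is bounded in absolute value by $1+|\Delta\omega|$. A death jump changes $Y$ by $-(1+x+\omega(x))\le 0$. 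We then take the supremum over $[0,T\wedge\tau_M]$, drop the negative death contributions, and bound the positive part of the birth integral by $\int\mathbb 1\,b(\xi^u_{s-})(1+|\Delta\omega|)\,\mathcal N$. Taking expectations and using the compensator, which is legitimate because everything is bounded before $\tau_M$, we get
\[
\mathbb E\Big(\sup_{s\le t\wedge\tau_M}Y_s\Big)\le \mathbb E(Y_0)+(C_g+C_b)\int_0^t\mathbb E\Big(\sup_{r\le s\wedge\tau_M}Y_r\Big)\,\mathrm ds .
\]
For the drift part we also need $g$ nonnegative in the bound; taking absolute values and using $\overline g(1+\omega')\le C_g(1+x+\omega)$ handles this. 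Gronwall's lemma then yields $\mathbb E(\sup_{t\le T\wedge\tau_M}Y_t)\le \mathbb E(Y_0)e^{(C_g+C_b)T}=:C_T$, uniformly in $M$. The finiteness of $\mathbb E(Y_0)$ comes from the general setting.

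For \eqref{eq:taum}, the same bound gives $M\,\mathbb P(\tau_M\le T)\le \mathbb E(Y_{T\wedge\tau_M})\le C_T$, so $\mathbb P(\tau_M\le T)\to 0$. Since $\tau_M$ is non-decreasing in $M$, its limit $\tau_\infty$ satisfies $\tau_\infty> T$ almost surely for every $T$, hence $\tau_\infty=+\infty$. This also implies $J_\infty=+\infty$ almost surely, because $\tau_M<J_\infty$ whenever $J_\infty<\infty$.

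For \eqref{eq:controleomega} we let $M\to\infty$ by monotone convergence in $\sup_{t\le T\wedge\tau_M}Y_t$. The last claim follows from $|\langle\mu_t,\varphi\rangle|\le\|\varphi/(1+\mathrm{Id}+\omega)\|_\infty Y_t$.

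The main obstacle is the rigorous handling of the pathwise decomposition before $\tau_M$. One has to justify that the compensated integrals are true martingales when stopped, by bounding the rates through $Y<M$, and that $\tau_M<J_\infty$ on $\{J_\infty<\infty\}$. The Gronwall step is then routine.
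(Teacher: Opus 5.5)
Your Gronwall estimate is sound, and it differs slightly from the paper's route. You apply Lemma~\ref{lemme:decompocun} to the full weight $1+\mathrm{Id}+\omega$ and absorb the growth of $E_t$ through the ``$1$'' in $\overline{g}(1+\omega')\le C_g(1+x+\omega)$. The paper applies the lemma to $1+\omega$ only, and controls $E_t$ separately with the biomass bound of Proposition~\ref{prop:controlebiomasse}. That bound exploits the resource dynamics and $\chi>1$, and it yields the constant $(R_{\max}+\mathbb{E}(Y_0)+T\|\varsigma\|_{\infty,[0,R_{\max}]})e^{(C_g+C_b)T}$. Your version is more self-contained and gives $\mathbb{E}(Y_0)e^{(C_g+C_b)T}$, while the paper's needs only $\overline{g}\,\omega'\le C_g(1+x+\omega)$ at this stage. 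The birth and death bookkeeping matches the paper: energy is conserved at births, deaths are dropped, and $b(1+|\Delta\omega|)\le C_b(1+x+\omega)$ because $\hbar(y)\ge y$.

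Your preliminary paragraph, however, contains a false claim. The third point of Assumption~\ref{hyp:poidsomega} gives only $d\le C_d(1+x+\omega)/(\omega+\omega^2)$, not $d\le C(1+x+\omega)$. When $d(x)\to+\infty$ as $x\to 0$, the paper notes that $\omega(x)\to 0$; the allometric case $d(x)=C_\delta x^{\delta}$ with $\delta<0$ is an example. Then $\langle \mu_t, d\rangle$ is not bounded by a constant times $M$ on $[0,\tau_M)$, so ``the total jump rate is bounded by $M$ times a constant'' fails. With it goes your justification that jumps cannot accumulate before $\tau_M$, and that $\tau_M<J_\infty$ on $\{J_\infty<\infty\}$. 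You rely on this to evaluate $Y$ on the closed interval $[0,T\wedge\tau_M]$, in $M\,\mathbb{P}(\tau_M\le T)\le \mathbb{E}(Y_{T\wedge\tau_M})$, and to conclude $J_\infty=+\infty$.

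The conclusion is still true, but it needs a counting argument rather than a rate bound. On $[0,\tau_M\wedge J_\infty)$ the birth intensity is at most $C_b M$, so almost surely only finitely many births occur on bounded time intervals. Each death removes an individual, so the number of deaths is at most $N_0$ plus the number of births. Alternatively, do as the paper does and work on $\{t\le T\wedge\tau_M,\ t<J_\infty\}$ throughout. The statement is phrased with $\wedge J_\infty$ precisely so that non-accumulation can be deferred to Corollary~\ref{corr:jinfini}. On $\{\tau_M\le T\}$ you automatically have $\tau_M<J_\infty$ by the definition of $\tau_M$, so your bound $M\,\mathbb{P}(\tau_M\le T)\le C_T$ survives. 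The rest of your argument then goes through unchanged.
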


The proof of Proposition~\ref{prop:controlpop} can be found in Appendix~\ref{app:prop212}. It uses classical arguments, adapted to our \hyperlink{gen}{general setting} with the weight function $\omega$.

\begin{corr}
Under the \hyperlink{gen}{general setting}, the process $(\mu_{t})_{t}$ is almost surely well-defined for every $t \geq 0$, \textit{i.e.} $\mathbb{P}(\tau_{\exp}\wedge J_{\infty} = + \infty)=1$.
\label{corr:jinfini}
\end{corr}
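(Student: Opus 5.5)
We need $\mathbb{P}(\tau_{\exp} = +\infty) = 1$ and $\mathbb{P}(J_{\infty} = +\infty) = 1$, since the event $\{\tau_{\exp}\wedge J_{\infty} = +\infty\}$ is the intersection of these two events. The first is exactly Corollary~\ref{prop:modelrelevant}, which holds because Assumption~\ref{hyp:probamortel} is part of the \hyperlink{gen}{general setting}. So the plan reduces to showing $J_{\infty} = +\infty$ almost surely, and we work on the full-probability event $\{\tau_{\exp}=+\infty\}$.

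For this we use \eqref{eq:taum} of Proposition~\ref{prop:controlpop}: almost surely $\tau_{M}\to+\infty$ as $M\to+\infty$. We argue by contradiction on the event $\{J_{\infty}<+\infty\}$. By definition, $\tau_{M}$ is an infimum over $[0,J_{\infty})$, with the convention $\inf\varnothing=+\infty$. Hence $\tau_{M}<+\infty$ forces $\tau_{M}\le J_{\infty}$. If $\tau_M=+\infty$ for some $M$, then $E_t+N_t+\Omega_t<M$ for all $t<J_\infty$. So on $\{J_\infty<\infty\}\cap\{\tau_M\to\infty\}$, there is some $M$ for which the population stays bounded, $N_t< M$, on all of $[0,J_\infty)$.

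Next we use the fact that the energies stay in a compact subset of $\mathbb{R}^*_+$. Proposition~\ref{prop:controlebiomasse} bounds every energy above by $R_0+E_0+J_\infty\|\varsigma\|_\infty$. Lower bounds come from the fact that $t_{\exp}$ is never reached and only finitely many individuals are alive. Once all energies lie in a compact $[a,A]\subset\mathbb{R}^*_+$, the total jump rate $\sum_{u\in V_t}(b+d)(\xi^u_t)\le M\sup_{[a,A]}(b+d)$ is bounded. A point process with bounded intensity has only finitely many points on a bounded interval. Equivalently, the jump times are dominated by the atoms of $\mathcal N+\mathcal N'$ restricted to $[0,J_\infty]\times\{\text{finitely many labels}\}\times[0,C]$, which form an almost surely finite set. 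This contradicts having infinitely many $J_n$ in $[0,J_\infty]$.

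The main obstacle is the lower bound away from $0$ uniformly up to $J_\infty$, since infinitely many jumps could in principle push energies toward $0$. There are two ways to avoid it. The simplest is to note that \eqref{eq:taum} as proved in Proposition~\ref{prop:controlpop} is understood with $\tau_M\wedge J_\infty$, and its proof already shows $\mathbb{P}(\tau_M< t)\to0$ for each $t$ together with $\tau_M\le J_\infty$ whenever $J_\infty<\infty$. In that case $J_\infty\ge \lim_M\tau_M=+\infty$ directly. If that reading is not available, the fallback is to use the explicit structure. Only finitely many labels ever exist: at most $M$ alive at once, and births are bounded because $b$ is continuous on compacts. Each label's trajectory solves an ODE with finitely many jumps of size $x_0$, and Assumption~\ref{hyp:probamortel} combined with $\tau_{\exp}=\infty$ prevents it from reaching $0$. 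I would present the first route and keep the second as justification.
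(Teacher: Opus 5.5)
Your reduction matches the paper's proof, which is a one-line appeal to the argument of \cite{fournier2004microscopic} (Theorem~3.1(i)) combined with \eqref{eq:taum}. That is: $\tau_{\exp}=+\infty$ a.s.\ by Corollary~\ref{prop:modelrelevant}, and on $\{J_\infty<+\infty\}$ there is some $M$ with $\tau_M=+\infty$, i.e.\ $E_t+N_t+\Omega_t<M$ on all of $[0,J_\infty)$. What remains is to show that a bounded population cannot produce infinitely many jumps, and this is where your proposal has a gap.

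The route you say you would present is circular. The proof of Proposition~\ref{prop:controlpop} does not show that $\tau_M\le J_\infty$ whenever $J_\infty<+\infty$. With the convention $\inf\varnothing=+\infty$, which you used yourself a paragraph earlier, $\tau_M=+\infty$ exactly on the event where level $M$ is never reached before $J_\infty$. Ruling out that this event coexists with $J_\infty<+\infty$ is the very statement to be proved. If you instead set $\tau_M:=J_\infty$ on that event, the inequality $\mathbb{P}(\tau_M\le T)\le\mathbb{P}\bigl(\sup_{[0,T\wedge\tau_M\wedge J_\infty)}(E+N+\Omega)\ge M\bigr)$ used in Appendix~\ref{app:prop212} no longer holds, so \eqref{eq:taum} is lost.

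Your claim that energies stay uniformly away from $0$ up to $J_\infty$ "because $t_{\exp}$ is never reached" is also unjustified. $\tau_{\exp}=+\infty$ only controls each inter-jump flow. Births leave the parent with energy $\xi^{u}_{s-}-x_{0}$, which can be arbitrarily close to $0$, and $d$ may blow up there.

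The missing idea is that no bound on $d$, hence no lower bound on energies, is needed. On $\{\tau_M=+\infty\}$, fewer than $M$ individuals are alive and every energy is below $M$. So the total birth intensity is at most $M\,C_b(1+M+\omega(M))$, by the second point of Assumption~\ref{hyp:poidsomega} and monotonicity of $\omega$. Continuity of $b$ alone does not bound it near $x_0^+$, contrary to your remark.

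Compensating the $\mathcal{N}$-integral stopped at $t\wedge\tau_M\wedge J_\infty$ shows that the number of births before that time has finite expectation, hence is a.s.\ finite. Every death removes one individual, so the number of deaths is at most $N_0$ plus the number of births. Hence on $\{J_\infty\le t\}\cap\{\tau_M=+\infty\}$ there are a.s.\ only finitely many jumps before $J_\infty$, contradicting $J_\infty<+\infty$. Taking the union over $t,M\in\mathbb{N}$ concludes.

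Your fallback route contains the first half of this (finitely many births). It then detours through an energy lower bound instead of simply counting deaths.
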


\begin{proof}
This is a direct adaptation of the proof of point (i) of Theorem 3.1 in \cite{fournier2004microscopic}, using \eqref{eq:taum}.
\end{proof} 

\textbf{Remark:} It is possible to obtain the conclusion of Corollary~\ref{corr:jinfini} by replacing Assumption~\ref{hyp:poidsomega} with \eqref{eq:blipschitz}, using Proposition~\ref{prop:controlebiomasse} and classical arguments as in the proof of point (i) in Theorem 3.1 in \cite{fournier2004microscopic}. One can then wonder why we work under Assumption~\ref{hyp:poidsomega} instead of \eqref{eq:blipschitz}, which does not involve an additional weight function $\omega$. What makes Assumption~\ref{hyp:poidsomega} necessary in our work is that we also need martingale properties for our process (see Corollary~\ref{corr:martingaleutile}) to obtain the tightness result in Theorem~\ref{theo:convergence}.

\subsection{Martingale properties}
\label{subsec:geninf}

In this section, we work under the \hyperlink{gen}{general setting}, so the process $(\mu_{t},R_{t})_{t \geq 0}$ is almost surely well-defined thanks to Corollary~\ref{corr:jinfini}. We write $\mathfrak{F}$ for the set of functions of the form $F_{\varphi} : (\mu,r) \in \mathcal{M}_{P}(\mathbb{R}^{*}_{+}) \times [0,R_{\max}] \mapsto F(r,\langle \mu,\varphi \rangle) \in \mathbb{R}$, with $F :(r,x) \mapsto F(r,x)$ in $\mathcal{C}^{1,1}([0,R_{\max}] \times \mathbb{R})$ and $\varphi \in \mathcal{C}^{1}(\mathbb{R}^{*}_{+})$. For such a function, we write $\partial_{1}F_{\varphi}(\mu,r) := \partial_{1}F(r,\langle \mu, \varphi \rangle)$ and $\partial_{2}F_{\varphi}(\mu,r) := \partial_{2}F(r,\langle \mu, \varphi \rangle)$. Finally, for $F \in \mathcal{C}^{1,1}([0,R_{\max}] \times \mathbb{R})$ and $\varphi \in \mathcal{C}^{1,1}(\mathbb{R}^{+} \times \mathbb{R}^{*}_{+})$ (so that for all $t \geq 0$, $F_{\varphi_{t}}$ is well-defined in $\mathfrak{F}$), we define
\begin{multline}
\mathfrak{M}_{F,\varphi,t} :=  F_{\varphi_{t}}(R_{t},\mu_{t}) - \hspace{0.1 cm} F_{\varphi_{0}}(R_{0},\mu_{0}) \\
 \hspace{1.2cm} - \displaystyle{\int_{0}^{t} \rho(\mu_{s},R_{s}) \partial_{1}F_{\varphi_{s}}(R_{s},\mu_{s}) + \langle \mu_{s}, \Phi_{s}(R_{s},.) \rangle \partial_{2} F_{\varphi_{s}}(R_{s},\mu_{s}) \mathrm{d}s} \\
     \hspace{2.6cm}- \displaystyle{\int_{0}^{t}\int_{\mathbb{R}^{*}_{+} } b(x)}  \bigg[ F_{\varphi_{s}}(R_{s},\mu_{s}+ \delta_{x_{0}} + \delta_{x-x_{0}}-\delta_{x})- F_{\varphi_{s}}(R_{s}, \mu_{s}) \bigg] \mu_{s}(\mathrm{d}x) \mathrm{d}s \\
     - \displaystyle{\int_{0}^{t}\int_{\mathbb{R}^{*}_{+} } d(x)}  \bigg[ F_{\varphi_{s}}(R_{s},\mu_{s}-\delta_{x})- F_{\varphi_{s}}(R_{s}, \mu_{s}) \bigg] \mu_{s}(\mathrm{d}x) \mathrm{d}s,
    \label{eq:maringaloss}
\end{multline}
with $\Phi$ associated to $\varphi$ as in \eqref{eq:Phidef}. The process $(\mathfrak{M}_{F,\varphi,t})_{t \geq 0}$ is almost surely well-defined under the \hyperlink{gen}{general setting} by Lemma~\ref{lemme:decompocun} and Corollary~\ref{corr:jinfini}. In the following, quadratic variations of square-integrable martingales are predictable quadratic variation defined as in Theorem 4.2. in \cite{Jacod1987LimitTF}. Also, we write $\widetilde{\mathcal{N}}$ and $\widetilde{\mathcal{N}'}$ for the compensated measures associated with the Poisson point measures $\mathcal{N}$ and $\mathcal{N}'$ (\textit{i.e.} $\widetilde{\mathcal{N}}(\mathrm{d}s,\mathrm{d}u,\mathrm{d}h) := \mathcal{N}(\mathrm{d}s,\mathrm{d}u,\mathrm{d}h) - \mathrm{d}s \mathrm{d}u \mathrm{d}h$, and the same definition with $\mathcal{N}'$).
\begin{prop}
Under the \hyperlink{gen}{general setting}, let $F \in \mathcal{C}^{1,1}([0,R_{\max}] \times \mathbb{R})$ and $\varphi \in \mathcal{C}^{1,1}(\mathbb{R}^{+} \times \mathbb{R}^{*}_{+})$. 
\begin{itemize}
\item[$\mathrm{(i)}$] Assume that for all $t \geq 0$,
$$\mathbb{E}\left( \displaystyle{\int_{0}^{t}\int_{\mathbb{R}^{*}_{+}}}b(x)\bigg| F_{\varphi_{s}}(R_{s},\mu_{s}+\delta_{x_{0}}+\delta_{x-x_{0}}-\delta_{x}) - F_{\varphi_{s}}(R_{s},\mu_{s}) \bigg| \mu_{s}(\mathrm{d}x) \mathrm{d}s \right) < + \infty,$$
and
$$\mathbb{E}\left( \displaystyle{\int_{0}^{t}\int_{\mathbb{R}^{*}_{+}}}d(x)\bigg| F_{\varphi_{s}}(R_{s},\mu_{s}-\delta_{x}) - F_{\varphi_{s}}(R_{s},\mu_{s}) \bigg| \mu_{s}(\mathrm{d}x) \mathrm{d}s \right) < + \infty.$$
Then $(\mathfrak{M}_{F,\varphi,t})_{t \geq 0}$ is a $(\mathcal{F}_{t})_{t \geq 0}$-martingale.
\item[$\mathrm{(ii)}$] Suppose in addition that for all $t \geq 0$,
$$\mathbb{E}\left( \displaystyle{\int_{0}^{t}\int_{\mathbb{R}^{*}_{+}}}b(x)\bigg[ F_{\varphi_{s}}(R_{s},\mu_{s}+\delta_{x_{0}}+\delta_{x-x_{0}}-\delta_{x}) - F_{\varphi_{s}}(R_{s},\mu_{s}) \bigg]^{2} \mu_{s}(\mathrm{d}x) \mathrm{d}s \right) < + \infty,$$
and
$$\mathbb{E}\left( \displaystyle{\int_{0}^{t}\int_{\mathbb{R}^{*}_{+}}}d(x)\bigg[ F_{\varphi_{s}}(R_{s},\mu_{s}-\delta_{x}) - F_{\varphi_{s}}(R_{s},\mu_{s}) \bigg]^{2} \mu_{s}(\mathrm{d}x) \mathrm{d}s \right) < + \infty.$$
Then $(\mathfrak{M}_{F,\varphi,t})_{t \geq 0}$ is a square-integrable martingale, with predictable quadratic variation given for all $t \geq 0$ by
\begin{align*}
\left\langle \mathfrak{M}_{F,\varphi}  \right\rangle_{t}  := & \hspace{0.1 cm} \displaystyle{\int_{0}^{t}\int_{\mathbb{R}^{*}_{+}}}b(x)\bigg[ F_{\varphi_{s}}(R_{s},\mu_{s}+\delta_{x_{0}}+\delta_{x-x_{0}}-\delta_{x}) - F_{\varphi_{s}}(R_{s},\mu_{s}) \bigg]^{2} \mu_{s}(\mathrm{d}x) \mathrm{d}s  \\
& + \displaystyle{\int_{0}^{t}\int_{\mathbb{R}^{*}_{+}}}d(x)\bigg[ F_{\varphi_{s}}(R_{s},\mu_{s}-\delta_{x}) - F_{\varphi_{s}}(R_{s},\mu_{s}) \bigg]^{2} \mu_{s}(\mathrm{d}x) \mathrm{d}s
\end{align*}
\end{itemize}
\label{prop:martingaledeouf}
\end{prop}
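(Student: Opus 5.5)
The plan is to start from the pathwise decomposition of Lemma~\ref{lemme:decompocun}, which holds for all $t \geq 0$ by Corollary~\ref{corr:jinfini}, and to compensate the two Poisson integrals. Write $H^{b}(s,u,h) := \mathbb{1}_{\{u \in V_{s-}\}}\mathbb{1}_{\{h \leq b(\xi^{u}_{s-})\}}\big(F_{\varphi_{s}}(R_{s},\mu_{s-}+\delta_{x_{0}}+\delta_{\xi^{u}_{s-}-x_{0}}-\delta_{\xi^{u}_{s-}})-F_{\varphi_{s}}(R_{s},\mu_{s-})\big)$, and define $H^{d}$ analogously with $\mathcal{N}'$. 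These integrands are predictable, since they are left-continuous functions of $s$ and of $\mathcal{F}_{s-}$-measurable quantities; recall also that $R$ is continuous, so $R_{s}=R_{s-}$. Integrating against the compensator $\mathrm{d}s\, n(\mathrm{d}u)\,\mathrm{d}h$ and summing over $u \in V_{s}$ gives exactly
\[
\int_{0}^{t}\int b(x)\big[F_{\varphi_{s}}(R_{s},\mu_{s}+\delta_{x_{0}}+\delta_{x-x_{0}}-\delta_{x})-F_{\varphi_{s}}(R_{s},\mu_{s})\big]\mu_{s}(\mathrm{d}x)\,\mathrm{d}s,
\]
and the analogous death term. Here $\mu_{s-}$ may be replaced by $\mu_{s}$ because the two differ only on a countable set of times. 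Comparing with \eqref{eq:maringaloss}, we obtain
\[
\mathfrak{M}_{F,\varphi,t}=\int_{0}^{t}\!\!\int H^{b}\,\widetilde{\mathcal{N}}(\mathrm{d}s,\mathrm{d}u,\mathrm{d}h)+\int_{0}^{t}\!\!\int H^{d}\,\widetilde{\mathcal{N}'}(\mathrm{d}s,\mathrm{d}u,\mathrm{d}h).
\]
This identity is pathwise, but it is meaningful only once the compensator integrals are finite. The integrability assumptions in (i) guarantee that $\mathbb{E}\int_{0}^{t}\int|H^{b}|\,\mathrm{d}s\,n(\mathrm{d}u)\,\mathrm{d}h<+\infty$, and likewise for $H^{d}$, so both integrals are almost surely finite.

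For point (i), I would invoke the classical result that the compensated integral of a predictable integrand $H$ with $\mathbb{E}\int_{0}^{t}\int|H|\,\mathrm{d}s\,n(\mathrm{d}u)\,\mathrm{d}h<+\infty$ for all $t$ is a martingale; see Ikeda--Watanabe, Chapter II, Section 3, or Jacod--Shiryaev II.1.28. If one wants to be self-contained, the argument goes through the localizing sequence $\tau_{M}$ of Proposition~\ref{prop:controlpop}. On $[0,\tau_{M}]$ the population is bounded and the energies lie in a compact set: indeed, since $\omega$ is non-decreasing and positive, $E$, $N$ and $\Omega$ bounded imply that the energies are bounded above. For the lower bound I would have to be careful and possibly add $\inf_{u}\xi^{u}$ to the localization. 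With this localization the rates and integrands are bounded, so the stopped processes are martingales. One then passes to the limit $M\to\infty$ by dominated convergence, dominating $|\mathfrak{M}_{F,\varphi,t\wedge\tau_{M}}|$ by the sum of the total variations $\int\int|H^{b}|\,(\mathcal{N}+\mathrm{d}s\,n(\mathrm{d}u)\,\mathrm{d}h)$ and the corresponding death term. These are integrable, since the expectation of the integral of $|H|$ against $\mathcal{N}$ equals its expectation against the compensator. Together with $\tau_{M}\to+\infty$ from \eqref{eq:taum}, this yields the martingale property of $\mathfrak{M}_{F,\varphi}$.

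For point (ii), the $L^{2}$ hypotheses say that $\mathbb{E}\int_{0}^{t}\int (H^{b})^{2}\,\mathrm{d}s\,n(\mathrm{d}u)\,\mathrm{d}h<+\infty$, and similarly for $H^{d}$. The $L^{2}$ theory of stochastic integrals against compensated Poisson measures then shows that each compensated integral is a square-integrable martingale with predictable quadratic variation $\int_{0}^{t}\int (H)^{2}\,\mathrm{d}s\,n(\mathrm{d}u)\,\mathrm{d}h$; concretely, this again follows by localization with $\tau_{M}$ and Fatou/monotone convergence for the $L^{2}$ bound. Because $\mathcal{N}$ and $\mathcal{N}'$ are independent, they almost surely have no common atoms, so the cross bracket vanishes. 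Summing the two quadratic variations and computing the integral in $h$ and the sum in $u$ as before gives exactly the formula for $\langle \mathfrak{M}_{F,\varphi}\rangle_{t}$.

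The main obstacle I expect is checking that the compensated-integral representation is legitimate, that is, the predictability and measurability of the integrands built from the constructed process. The integrands are defined only through the algorithmic construction, and one has to verify that the indicator $\mathbb{1}_{\{u\in V_{s-}\}}$ and $\xi^{u}_{s-}$ are predictable. I would handle this by noting that they are left-continuous and adapted, and then rely on the standard theorem rather than an explicit localization. The identification of the quadratic variation is otherwise routine.
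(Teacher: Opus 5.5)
Your proposal is correct and is essentially the paper's proof. You rewrite $\mathfrak{M}_{F,\varphi,t}$ via Lemma~\ref{lemme:decompocun} as a sum of integrals of predictable integrands against $\widetilde{\mathcal{N}}$ and $\widetilde{\mathcal{N}'}$, note that hypotheses (i) and (ii) are exactly the $L^{1}$ and $L^{2}$ conditions on these integrands with respect to the compensator $\mathrm{d}s\,n(\mathrm{d}u)\,\mathrm{d}h$, and conclude with Ikeda--Watanabe's results on stochastic integrals against Poisson point measures, as the paper does.

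You can drop the optional localization by $\tau_{M}$. Hypothesis (i) already puts the integrands in the required integrability class. Moreover, localizing the energies alone would not make the integrands bounded, for instance because of the term $\varphi_{s}(\xi^{u}_{s-}-x_{0})$ near $0$.
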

\begin{proof}
We observe from Lemma~\ref{lemme:decompocun} that
\begin{align*}
\mathfrak{M}_{F,\varphi,t} & = \displaystyle{\int_{0}^{t}\int_{\mathcal{U} \times \mathbb{R}^{*}_{+} } \mathbb{1}_{\{u \in V_{s-} \}} \mathbb{1}_{\{h \leq b(\xi^{u}_{s-})\}}  \bigg( F(R_{s},\langle \mu_{s-}+ \delta_{x_{0}} + \delta_{\xi^{u}_{s-}-x_{0}}-\delta_{\xi^{u}_{s-}}, \varphi_{s} \rangle)} \\
    & \hspace{4cm}- F(R_{s}, \langle \mu_{s-}, \varphi_{s} \rangle) \bigg) \widetilde{\mathcal{N}}(\mathrm{d}s,\mathrm{d}u,\mathrm{d}h) & \\
     & \hspace{0.1 cm} +
    \displaystyle{\int_{0}^{t}\int_{\mathcal{U} \times \mathbb{R}^{*}_{+} } \mathbb{1}_{\{u \in V_{s-} \}} \mathbb{1}_{\{h \leq d(\xi^{u}_{s-})\}} }\bigg( F(R_{s},\langle \mu_{s-}-\delta_{\xi^{u}_{s-}}, \varphi_{s} \rangle) \\
    & \hspace{4cm}- F(R_{s}, \langle \mu_{s-}, \varphi_{s} \rangle) \bigg) \widetilde{\mathcal{N}'}(\mathrm{d}s,\mathrm{d}u,\mathrm{d}h).
\end{align*}
Then, Proposition~\ref{prop:martingaledeouf} follows from classical results from Ikeda and Watanabe on stochastic integrals with respect to Poisson point measures (see p.62 in \cite{ikeda2014stochastic}).
\end{proof}
In the following, we define $\mathcal{C}^{1,1}_{\omega}(\mathbb{R}^{+} \times \mathbb{R}^{*}_{+})$ the set of functions $\varphi \in \mathcal{C}^{1,1}(\mathbb{R}^{+} \times \mathbb{R}^{*}_{+})$ such that $\varphi : (t,x) \mapsto \omega(x)$ or
\begin{align*}
\exists C>0, \forall x>0, \quad \sup_{t \in \mathbb{R}^{+}} \bigg( \left|\varphi(t,x)\right| + |\partial_{1} \varphi(t,x)|\dfrac{\omega(x)}{1+x+\omega(x)} + |\partial_{2} \varphi(t,x)|\omega(x)  \bigg) \leq C \omega(x).
\end{align*}
Note that the function $\varphi : (t,x) \mapsto \omega(x)$ does not necessarily verify the previous condition (in particular, $\omega'$ is not necessarily bounded). However, we include this specific function in $\mathcal{C}^{1,1}_{\omega}(\mathbb{R}^{+} \times \mathbb{R}^{*}_{+})$, because we need to be able to apply the following results to this function for the proof of Theorem~\ref{theo:convergence}.
\begin{corr}
Under the \hyperlink{gen}{general setting}, let $\varphi \in \mathcal{C}^{1,1}_{\omega}(\mathbb{R}^{+} \times \mathbb{R}^{*}_{+})$. Then the process $(\langle \mu_{t}, \varphi_{t} \rangle)_{t \geq 0}$ is a semi-martingale, with for all $t \geq 0$, a finite variation part given by
\begin{align*}
V_{\varphi,t} := & \hspace{0.1 cm} \langle \mu_{0}, \varphi_{0} \rangle + \displaystyle{\int_{0}^{t}} \langle \mu_{s}, \Phi_{s}(R_{s},.) \rangle \mathrm{d}s - \displaystyle{\int_{0}^{t}\int_{\mathbb{R}^{*}_{+} }} d(x)\varphi_{s}(x) \mu_{s}(\mathrm{d}x) \mathrm{d}s \\
    &  + \displaystyle{\int_{0}^{t}}\int_{\mathbb{R}^{*}_{+} } b(x)\bigg( \varphi_{s}(x_{0})+\varphi_{s}(x-x_{0})-\varphi_{s}(x)\bigg) \mu_{s}(\mathrm{d}x) \mathrm{d}s,
\end{align*}
with $\Phi$ associated to $\varphi$ as in \eqref{eq:Phidef}, and a square-integrable martingale part $\heartsuit_{\varphi,t} := \langle \mu_{t}, \varphi_{t} \rangle - V_{\varphi,t} $ whose predictable quadratic variation is given by
\begin{align*}
\langle \heartsuit_{\varphi} \rangle_{t} =  \displaystyle{\int_{0}^{t}\int_{\mathbb{R}^{*}_{+}}}b(x)\bigg[ \varphi_{s}(x_{0})+\varphi_{s}(x-x_{0})-\varphi_{s}(x) \bigg]^{2} \mu_{s}(\mathrm{d}x) \mathrm{d}s + \displaystyle{\int_{0}^{t}\int_{\mathbb{R}^{*}_{+}}}d(x)\varphi_{s}^{2}(x) \mu_{s}(\mathrm{d}x) \mathrm{d}s.
\end{align*}
\label{corr:martingaleutile}
\end{corr}
\begin{proof}
We apply Proposition~\ref{prop:martingaledeouf} to $F : (R,x) \in [0,R_{\max}] \times \mathbb{R} \mapsto x $ and $\varphi$. We let the reader verify that we can do so, thanks to the assumption $\varphi \in \mathcal{C}^{1,1}_{\omega}(\mathbb{R}^{*}_{+})$ (in particular, if $\varphi \neq \omega$, we use the fact that there exists a constant $C>0$ such that $|\partial_{2}\varphi(t,x)| < C$ for every $t \geq 0$, $x >0$, so $|\varphi_{s}(x-x_{0})-\varphi_{s}(x)| \leq C x_{0}$ for every $s \in [0,t]$ and $x>0$), Lemma~\ref{lemme:equivalence} and finally Proposition~\ref{prop:controlpop}.
\end{proof}

\textbf{Remark:} At this step, one can show that $(\mu_{t},R_{t})_{t \geq 0}$ is a Jumping Markov Process (JMP). This particular type of Feller process was initially introduced by Davis \cite{davis1984piecewise} for $\mathbb{R}^{n}$-valued processes, and called Piecewise Deterministic Markov Processes (PDMP). Then, Jacod and Skorokhod introduced in \cite{jacod1996jumping} the general definition of a JMP, adapted to our measure-valued setting. We can further characterize the Feller process $(\mu_{t},R_{t})_{t \geq 0}$ \textit{via} its extended generator (see p.45 in \cite{jacod1996jumping}), and refer the reader to Proposition II.1.20. in \cite{broduthesis} for details.

\section{Renormalization of the process}
\label{subsec:construcdeux}

In this section, we work under Assumptions~\ref{hyp:probamortel} and \ref{hyp:poidsomega}. In Section~\ref{subsec:algorenom}, we define a sequence $\left(\left(\mu^{K}_{t},R^{K}_{t}\right)_{t \geq 0}\right)_{K \in \mathbb{N}^{*}}$, where every process $\left(\mu^{K}_{t},R^{K}_{t}\right)_{t \geq 0}$ is a renormalization of the initial process $(\mu_{t},R_{t})_{t \geq 0}$ defined in Section~\ref{sec:construction}, and $K$ is a scaling parameter representing the population size at time 0, meant to diverge towards $+ \infty$. In Section~\ref{subsec:martrenom}, we use the results of Section~\ref{sec:defass} to obtain martingale and control properties for the renormalized process $(\mu^{K}_{t},R^{K}_{t})_{t \geq 0}$.

\subsection{Definition of the renormalized process}
\label{subsec:algorenom}

We follow a classical procedure, first described in \cite{fournier2004microscopic}, and then reproduced in many articles \cite{champagnat2005individualbased, chi_08, FRITSCH20151,tchouanti2024well}. First, for every $K \in \mathbb{N}^{*}$, we will define an auxiliary process $(\nu^{K}_{t},R^{K}_{t})_{t \geq 0}$, following the exact same construction as in Section~\ref{sec:construction}, but with an inverse conversion efficiency $\chi_{K} := \chi/K$. Thus, all the results of Section~\ref{sec:defass} will apply to $(\nu^{K}_{t},R^{K}_{t})_{t \geq 0}$, simply replacing $\chi$ with $\chi_{K}$. We begin with the definition of the renormalized deterministic flow followed by individual energies between jumps. We consider an initial condition $(\nu^{K}_{0},R_{0}) \in \mathcal{M}_{P}(\mathbb{R}^{*}_{+})\times [0,R_{\max}]$ at time 0, which means that there exists $N \in \mathbb{N}$ and $(\xi^{u,K}_{0})_{1 \leq u \leq N}$ in $(\mathbb{R}^{*}_{+})^{N}$ such that $$\nu^{K}_{0} := \sum\limits_{u=1}^{N} \delta_{\xi^{u,K}_{0}},$$  
where $\nu^{K}_{0}=0$ if $N=0$. We write $\left((X^{u,K}_{t}(\nu^{K}_{0},R_{0}))_{1 \leq u \leq N},X^{\Re,K}_{t}(\nu^{K}_{0},R_{0})\right)$ for a solution to the system of $N+1$ coupled equations 
\begin{align}
    \dfrac{\mathrm{d}X^{\Re,K}_{t}(\nu^{K}_{0},R_{0})}{\mathrm{d}t} & = \varsigma(X^{\Re,K}_{t}(\nu^{K}_{0},R_{0})) - \dfrac{\chi}{K} \displaystyle{\int_{\mathbb{R}^{*}_{+}} f(x,X^{\Re,K}_{t}(\nu^{K}_{0},R_{0})) \tilde{\nu}^{K}_{t}(\mathrm{d}x)},\label{eq:regroupindivdeux} \\
    \dfrac{\mathrm{d}X^{u,K}_{t}(\nu^{K}_{0},R_{0})}{\mathrm{d}t} & = g(X^{u,K}_{t}(\nu^{K}_{0},R_{0}),X^{\Re,K}_{t}(\nu^{K}_{0},R_{0})) \quad \mathrm{for} \hspace{0.1 cm} 1 \leq u \leq N, \label{regeoupeedeux}
\end{align}
where $\tilde{\nu}^{K}_{t}:= \sum_{i=1}^{N} \delta_{X^{u,K}_{t}(\nu^{K}_{0},R_{0})} $, and with initial condition at time 0
\begin{center}
    $\begin{array}{lll}
    X^{\Re,K}_{0}(\nu^{K}_{0},R_{0}) & = & R_{0}, \\
    X^{u,K}_{0}(\nu^{K}_{0},R_{0}) & = & \xi^{u,K}_{0} \quad \mathrm{for} \hspace{0.1 cm} 1 \leq u \leq N. \\
\end{array}$ 
\end{center}
Note that the system of equations \eqref{eq:regroupindivdeux}-\eqref{regeoupeedeux} is similar to \eqref{eq:regroupindiv}-\eqref{regeoupee}, where we only replace $\chi$ by $\chi/K$. With the same arguments as in Proposition~\ref{prop:cauchy}, we can define for $t$ in a neighborhood of 0, denoted as $[0,t^{K}_{\exp}(\nu_{0}^{K},R_{0}))$ the renormalized flow 
\begin{center}
    $X^{K}_{t}(\nu^{K}_{0},R_{0}) := \left( \sum\limits_{u =1}^{N}{\delta_{X^{u,K}_{t}(\nu^{K}_{0},R_{0})}}, X^{\Re,K}_{t}(\nu^{K}_{0},R_{0})\right),$
\end{center}
and it benefits from the same regularity properties as $X$ of Section~\ref{sec:construction}. We also adopt the \hyperlink{conv}{convention} depicted in the remark after Corollary~\ref{prop:modelrelevant} to make sense of the previous notation for $t \geq t^{K}_{\exp}(\nu_{0}^{K},R_{0})$. 
\begin{defi}[\textbf{Renormalized process}]
Let $R_{0} \in [0,R_{\max}]$ be a random variable. Let $(\nu^{K}_{0})_{K \geq 1}$ be a sequence of random variables in $\mathcal{M}_{P}(\mathbb{R}^{*}_{+})$, such that
    \begin{align}
    \underset{K \geq 1}{\mathrm{sup}} \bigg( \dfrac{1}{K} \hspace{0.1 cm} \mathbb{E}\left(\langle \nu^{K}_{0},1+\mathrm{Id}+\omega \rangle\right) \bigg) < + \infty.
    \label{eq:assumptioninticonditionK}
    \end{align}
We use the Poisson point measures $\mathcal{N}$ and $\mathcal{N}'$ of Section~\ref{sec:construction}, independent from $R_{0}$ and $(\nu^{K}_{0})_{K \geq 1}$. For every $K \geq 1$, the renormalized process $(\mu^{K}_{t},R^{K}_{t})_{t \geq 0}$ with initial condition $(\mu^{K}_{0},R_{0})$, is given for every $t \geq 0$ by $\mu^{K}_{t} := \dfrac{\nu^{K}_{t}}{K}$ and

\begin{align*}
    (\nu^{K}_{t},R^{K}_{t}) = &  \hspace{0.1 cm}X^{K}_{t}(\nu^{K}_{0},R_{0})  \\  & +  \displaystyle{\int_{0}^{t}\int_{\mathcal{U} \times \mathbb{R}^{*}_{+} } \mathbb{1}_{\{u \in V^{K}_{s-} \}} \mathbb{1}_{\{h \leq b\left(\xi^{u,K}_{s-}\right)\}}}  [ X^{K}_{t-s}(\nu^{K}_{s-} + \delta_{x_{0}} + \delta_{\xi^{u,K}_{s-}-x_{0}} -\delta_{\xi^{u,K}_{s-}},R^{K}_{s})  \\
    &  \hspace{3 cm} - X^{K}_{t-s}(\nu^{K}_{s-},R^{K}_{s}) ] \mathcal{N}(\mathrm{d}s,\mathrm{d}u,\mathrm{d}h)  \\
     & + 
    \displaystyle{\int_{0}^{t}\int_{\mathcal{U} \times \mathbb{R}^{*}_{+} } \mathbb{1}_{\{ u \in V^{K}_{s-}\}} \mathbb{1}_{\{h \leq d\left(\xi^{u,K}_{s-}\right)\}} [X^{K}_{t-s}(\nu^{K}_{s-} - \delta_{\xi^{u,K}_{s-}},R^{K}_{s})}  \\ 
    & \hspace{3 cm} - X^{K}_{t-s}(\nu^{K}_{s-},R^{K}_{s}) ] \mathcal{N}'(\mathrm{d}s,\mathrm{d}u,\mathrm{d}h), 
\end{align*}
where for all $t \geq 0$, $V^{K}_{t}$ is the set containing alive individuals at time $t$, and $\xi^{u,K}_{t}$ are individual energies, defined and actualized over time with the same conventions as in Section~\ref{sec:construction}, simply replacing the flow $X$ by $X^{K}$.
\end{defi}
Under Assumptions~\ref{hyp:probamortel} and \ref{hyp:poidsomega}, from Corollary~\ref{corr:jinfini}, for every $K \in \mathbb{N}^{*}$, the renormalized process $(\mu^{K}_{t},R^{K}_{t})_{t \geq 0}$ is almost surely well-defined, \textit{i.e.} individual energies do not vanish/explode and there is no accumulation of jumps in finite time. Our motivation is to keep the same amount of resources and temporal dynamics, but to consider population sizes going to $+ \infty$. Intuitively, the way we proceed is to make every individual in the population smaller, and the interaction between individuals via resource consumption proportional to their typical size. The parameter $1/K$ represents the amount of resource consumed by a single individual, and modelling a population of $K$ such individuals leads us back to the temporal dynamics of Section~\ref{sec:construction}.

\begin{defi}[\hypertarget{ren}{\textbf{Renormalized setting}}]
In the following, we denote as `the renormalized setting', the framework refering to the previous renormalization. Hence, we work with Assumptions~\ref{hyp:probamortel}, \ref{hyp:poidsomega}, and the condition \eqref{eq:assumptioninticonditionK}. 
\end{defi}
\textbf{Remark:} Note that for any $K \geq 1$, Assumptions~\ref{hyp:probamortel} and \ref{hyp:poidsomega} do not depend on $K$. In particular, we work with a fixed weight function $\omega$ that does not depend on $K$, and verifies Assumption~\ref{hyp:poidsomega}. Moreover, the \hyperlink{ren}{renormalized setting} implies the \hyperlink{gen}{general setting} (the construction of Section~\ref{sec:construction} accounts for the case $K=1$). 

\subsection{Properties of the renormalized process}
\label{subsec:martrenom}

Most of the results of Section~\ref{sec:defass} can be adapted to the study of the renormalized processes. For example, we recover martingale properties. In the following, we naturally write $N^{K}_{t} := \langle \mu^{K}_{t}, 1 \rangle$, $E^{K}_{t} := \langle \mu^{K}_{t}, \mathrm{Id} \rangle$ and $\Omega^{K}_{t} := \langle \mu^{K}_{t}, \omega \rangle$ for $t \geq 0$ and $K \in \mathbb{N}^{*}$.

\begin{prop}
Under the \hyperlink{ren}{renormalized setting}, let $\varphi \in \mathcal{C}^{1,1}_{\omega}(\mathbb{R}^{+} \times \mathbb{R}^{*}_{+})$ and $K \in \mathbb{N}^{*}$. Then the process $(\langle \mu^{K}_{t}, \varphi_{t} \rangle)_{t \geq 0}$ is a semi-martingale, with for all $t \geq 0$, a finite variation part given by
\begin{align*}
V^{K}_{\varphi,t} := & \hspace{0.1 cm} \langle \mu^{K}_{0}, \varphi_{0} \rangle + \displaystyle{\int_{0}^{t}} \langle \mu^{K}_{s}, \Phi_{s}(R^{K}_{s},.) \rangle \mathrm{d}s - \displaystyle{\int_{0}^{t}\int_{\mathbb{R}^{*}_{+} }} d(x)\varphi_{s}(x) \mu^{K}_{s}(\mathrm{d}x) \mathrm{d}s \\
    &  + \displaystyle{\int_{0}^{t}}\int_{\mathbb{R}^{*}_{+} } b(x)\bigg( \varphi_{s}(x_{0})+\varphi_{s}(x-x_{0})-\varphi_{s}(x)\bigg) \mu^{K}_{s}(\mathrm{d}x) \mathrm{d}s,
\end{align*}
with $\Phi$ associated to $\varphi$ as in \eqref{eq:Phidef}, and a square-integrable martingale part $\heartsuit^{K}_{\varphi,t} := \langle \mu^{K}_{t}, \varphi_{t} \rangle -V^{K}_{\varphi,t} $ whose predictable quadratic variation is given by
\begin{align*}
\langle \heartsuit^{K}_{\varphi} \rangle_{t} & = \dfrac{1}{K} \Bigg( \displaystyle{\int_{0}^{t}\int_{\mathbb{R}^{*}_{+}}}b(x)\bigg[ \varphi_{s}(x_{0})+\varphi_{s}(x-x_{0})-\varphi_{s}(x) \bigg]^{2} \mu^{K}_{s}(\mathrm{d}x) \mathrm{d}s \\
& \hspace{8cm} + \displaystyle{\int_{0}^{t}\int_{\mathbb{R}^{*}_{+}}}d(x)\varphi_{s}^{2}(x) \mu^{K}_{s}(\mathrm{d}x) \mathrm{d}s\Bigg).
\end{align*}
\label{prop:martingalerenormalisee}
\end{prop}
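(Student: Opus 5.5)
The plan is to reduce Proposition~\ref{prop:martingalerenormalisee} to Corollary~\ref{corr:martingaleutile}, applied to the auxiliary process $(\nu^{K}_{t},R^{K}_{t})_{t \geq 0}$, and then divide by $K$.

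\textbf{Step 1: the auxiliary process fits the general setting.} For fixed $K$, the process $(\nu^{K}_{t},R^{K}_{t})_{t \geq 0}$ is built exactly as in Section~\ref{sec:construction}, with $\chi$ replaced by $\chi_{K}=\chi/K$. Assumptions~\ref{hyp:probamortel} and \ref{hyp:poidsomega} do not involve $\chi$, and neither do $g$, $b$, $d$ or $\omega$. Condition \eqref{eq:assumptioninticonditionK} gives
\[
\mathbb{E}\left(\langle \nu^{K}_{0},1+\mathrm{Id}+\omega\rangle\right) \leq K \sup_{K'\geq 1}\frac{1}{K'}\,\mathbb{E}\left(\langle \nu^{K'}_{0},1+\mathrm{Id}+\omega\rangle\right) < +\infty,
\]
which is the integrability condition of the \hyperlink{gen}{general setting}.

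One point needs care. Proposition~\ref{prop:controlebiomasse} used $\chi>1$, whereas $\chi/K$ may be smaller than $1$. The place where this control enters, namely the proof of Proposition~\ref{prop:controlpop} in Appendix~\ref{app:prop212}, has to be rechecked. If that proof relies on the bound $R_{t}+E_{t} \leq R_{0}+E_{0}+t\|\varsigma\|_{\infty}$, I would replace it with the bound obtained from Lemma~\ref{lemme:decompocun} applied to $F(r,x)=r\,\chi_{K}^{-1}\vee 1+x$. This bound is still linear in $t$ and holds for each fixed $K$. I expect, though, that Proposition~\ref{prop:controlpop} uses only Assumption~\ref{hyp:poidsomega} and $R\in[0,R_{\max}]$, through $\overline{g}$. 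With that settled, Corollaries~\ref{prop:modelrelevant} and \ref{corr:jinfini} and Proposition~\ref{prop:controlpop} hold for $\nu^{K}$.

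\textbf{Step 2: semimartingale decomposition of $\nu^{K}$.} I apply Corollary~\ref{corr:martingaleutile} to $\nu^{K}$ and $\varphi\in\mathcal{C}^{1,1}_{\omega}$. The function $g$ in $\Phi$ is unchanged by the renormalization; only $\rho$ changes, and $\rho$ does not appear because $F(r,x)=x$. This gives the decomposition $\langle\nu^{K}_{t},\varphi_{t}\rangle = V_{\varphi,t}+\heartsuit_{\varphi,t}$, with the formulas of Corollary~\ref{corr:martingaleutile} written with $\nu^{K}$ and $R^{K}$.

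\textbf{Step 3: divide by $K$.} The map $\nu\mapsto\nu/K$ is linear, so the finite variation part becomes $V_{\varphi,t}/K$, which is exactly $V^{K}_{\varphi,t}$ once $\nu^{K}_{s}(\mathrm{d}x)=K\mu^{K}_{s}(\mathrm{d}x)$ is substituted. The martingale part $\heartsuit_{\varphi,t}/K$ is still a square-integrable martingale, and its predictable quadratic variation is $\langle\heartsuit_{\varphi}\rangle_{t}/K^{2}$. Rewriting the integrals against $\nu^{K}_{s}=K\mu^{K}_{s}$ leaves the stated factor $1/K$ in front.

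The main obstacle is Step 1: making sure that nothing in Section~\ref{sec:defass} silently used $\chi>1$ in a way that breaks for $\chi/K$. The rest is bookkeeping.
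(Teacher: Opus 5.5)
This is correct and is essentially the paper's own route. The paper only says the proof is ``similar to the proofs in Section~\ref{sec:defass}'': one applies Lemma~\ref{lemme:decompocun} and Corollary~\ref{corr:martingaleutile} to $\nu^{K}$, built with $\chi/K$, and rescales by $1/K$ as in Lemma~\ref{lemme:decompok}, which is exactly your Steps 2--3.

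Your caution about $\chi/K\le 1$ is warranted, and your fix $F(r,x)=\max(1,K/\chi)\,r+x$ works. Contrary to your expectation, Appendix~\ref{app:prop212} does invoke Proposition~\ref{prop:controlebiomasse}, and so does case (i) of the proof of Proposition~\ref{prop:etapeun} in Appendix~\ref{app:prop27}; the rescaled bound repairs both uses.
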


We do not develop here the proof of Proposition~\ref{prop:martingalerenormalisee}, since it is similar to the proofs in Section~\ref{sec:defass}, and refer the reader to Proposition II.2.5. in \cite{broduthesis}. We go further with properties that holds true uniformly in $K$, which will be useful in Section~\ref{sec:proof} for the proof of Theorem~\ref{theo:convergence}.

\begin{prop}
Let $p \geq 1$, and under the \hyperlink{ren}{renormalized setting}, assume in addition that
\begin{align}
    \underset{K \in \mathbb{N}^{*}}{\mathrm{sup}} \hspace{0.1 cm} \mathbb{E}\left(\left(E_{0}^{K} +N_{0}^{K} + \Omega^{K}_{0}\right)^{p}\right) < + \infty.
    \label{eq:lesmom}
    \end{align}
Then, for all $T \geq 0$, we have
\[ \sup_{K \in \mathbb{N}^{*}} \mathbb{E}\left( \sup_{t \in [0,T]}\left(E_{t}^{K} +N_{t}^{K} + \Omega^{K}_{t}\right)^{p} \right) < + \infty. \]
This immediately implies that for every $\varphi \in \mathfrak{B}_{1+\mathrm{Id}+\omega}(\mathbb{R}^{*}_{+})$, we have
\[ \sup_{K \in \mathbb{N}^{*}} \mathbb{E}\left(\underset{t \in [0,T]}{\mathrm{sup}} \left|\langle \mu^{K}_{t}, \varphi \rangle\right|^{p} \right) < + \infty. \] 
\label{lemme:controlenp}
\end{prop}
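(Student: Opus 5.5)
We plan to follow the classical scheme of Fournier and Méléard, applied to the functional $Y^{K}_{t} := E^{K}_{t}+N^{K}_{t}+\Omega^{K}_{t} = \langle \mu^{K}_{t}, 1+\mathrm{Id}+\omega\rangle$, and to use the localizing stopping times $\tau^{K}_{M} := \inf\{t \geq 0, \, Y^{K}_{t} \geq M\}$. By Corollary~\ref{corr:jinfini}, for fixed $K$ we have $\tau^{K}_{M} \to +\infty$ almost surely as $M \to +\infty$. We will apply Lemma~\ref{lemme:decompocun}, adapted to the renormalized process, with $\varphi_{t}(x) := 1+x+\omega(x)$ and $F(r,y) := y^{p}$. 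This $F$ is $\mathcal{C}^{1}$ for $p \geq 1$; if $p=1$ we instead take $F(r,y)=y$. We then bound $\mathbb{E}\big(\sup_{s \leq t \wedge \tau^{K}_{M}} (Y^{K}_{s})^{p}\big)$ by a constant times $\big(1+\int_{0}^{t} \mathbb{E}(\sup_{u \leq s\wedge\tau^{K}_{M}} (Y^{K}_{u})^{p}) \mathrm{d}s\big)$, with constants independent of $K$ and $M$. Gronwall's lemma then gives a bound $C_{T}$, and Fatou's lemma as $M \to \infty$ concludes.

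\textbf{Step 1: the drift terms.} The drift contributes $p (Y^{K}_{s})^{p-1}\langle \mu^{K}_{s}, \Phi_{s}(R^{K}_{s},.)\rangle$, where $\Phi_{s}(R,x) = g(x,R)(1+\omega'(x))$. By \eqref{eq:conditionomega} this is bounded by $\omega_{1} p (Y^{K}_{s})^{p}$. We keep only the positive parts, since a supremum over a nonnegative integrand upper-bounds the supremum of the process.

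\textbf{Step 2: the jump terms.} In a birth event, $\langle \nu^{K},\varphi\rangle$ changes by $\Delta_{b}(x) := 1+x_{0}+\omega(x_{0})+\omega(x-x_{0})-\omega(x)$, because the energy part $\mathrm{Id}$ is conserved. In a death event, the change is $-(1+x+\omega(x)) \leq 0$. Death jumps can therefore be dropped from an upper bound on $Y^{p}$, since $F$ is nondecreasing on $\mathbb{R}^{+}$. For birth jumps, the jump of $Y^{K}$ is $\Delta_{b}(x)/K$, and we use the mean value inequality
\[
(y+h)^{p}-y^{p} \leq p\,|h|\,(y+|h|)^{p-1} \leq C_{p}\big(|h|y^{p-1}+|h|^{p}\big).
\]
With $|h| = |\Delta_{b}(x)|/K \leq |\Delta_{b}(x)|$ and compensator intensity $K b(x)\mu^{K}_{s}(\mathrm{d}x)$, the first piece gives $C\, y^{p-1}\langle\mu^{K}_{s}, b|\Delta_{b}|\rangle \leq C' y^{p}$ by \eqref{eq:conditionomega}.

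\textbf{Step 3: the main obstacle.} The second piece is $K^{1-p}\langle\mu^{K}_{s}, b|\Delta_{b}|^{p}\rangle$, and this is where the difficulty lies. Assumption~\ref{hyp:poidsomega} only controls $b(1+\hbar(|\Delta_{\omega}|))$, that is powers up to $2$. We will therefore first treat $1 \leq p \leq 2$, where $|\Delta_{b}|^{p} \leq 1+|\Delta_{b}|^{2}$ is controlled through \eqref{eq:conditionomegadeux}, and use $K^{1-p} \leq 1$. For larger $p$ we would try instead to bound $|\Delta_{b}|^{p} \leq |\Delta_{b}|^{2}\,\|\Delta_{b}\|^{p-2}$ on the support of $b$. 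Alternatively, we could hope to use that $|h| \leq K^{-1}|\Delta_{b}|$ together with $y \geq K^{-1}$ whenever the population is nonempty, so that $|h|^{p} \leq |h|^{2} y^{p-2}\cdot(\text{const})$. The latter seems the most robust route: it bounds $(y+h)^{p}-y^{p}-py^{p-1}h$ by $C h^{2}(y+|h|)^{p-2}$, and then uses \eqref{eq:conditionomegadeux} together with $|h|\le C y$.

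\textbf{Step 4: the martingale parts.} The compensated martingale part is nonnegative in sign for the birth terms only after compensation, so we bound its supremum with the Burkholder--Davis--Gundy inequality. Alternatively, following \cite{fournier2004microscopic}, we simply use that the integral against $\mathcal{N}$ of nonnegative terms has expectation equal to that of its compensator; this avoids BDG entirely, since we only need upper bounds on positive jumps. Combining the steps gives the Gronwall inequality, uniformly in $K$, using \eqref{eq:lesmom} at time $0$. The final claim follows from $|\langle\mu^{K}_{t},\varphi\rangle| \leq \|\varphi/(1+\mathrm{Id}+\omega)\|_{\infty} Y^{K}_{t}$.
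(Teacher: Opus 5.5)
Your skeleton is the paper's: It\^o's formula for $(Y^{K})^{p}$ with $\varphi=1+\mathrm{Id}+\omega$, dropping death jumps because $F$ is non-decreasing, localizing with $\tau^{K}_{M}$, bounding the $\mathcal{N}$-integral by a nonnegative integrand and passing to its compensator, then Gronwall and Fatou. For $1\le p\le 2$ your Steps 1--3 close. The gap is Step 3 for $p>2$.

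A small slip first: the birth jump of $Y^{K}$ is $\Delta_{b}(x)/K$ with $\Delta_{b}(x)=1+\omega(x_{0})+\omega(x-x_{0})-\omega(x)$. The $\mathrm{Id}$ contributions cancel completely, so there is no $x_{0}$ term.

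Your first option for $p>2$ needs $\Delta_{b}$ bounded on $\{b>0\}$. Assumption~\ref{hyp:poidsomega} does not give this, since it only controls $b\,\Delta_{b}^{2}$. For example, take $\omega(x)=x^{2}$, $b=\mathbb{1}_{(x_{0},+\infty)}$, $d(x)=x^{-2}$, $\ell(x)=x$, $\psi(x)=3x$, with $\phi(R_{\max})>1/3$. These satisfy Assumptions~\ref{hyp:probamortel} and \ref{hyp:poidsomega}, yet $\Delta_{b}(x)=1+2x_{0}^{2}-2x_{0}x\to-\infty$.

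Your second option rests on $|h|\le Cy$. The reason you give ($|h|\le K^{-1}|\Delta_{b}|$ together with $y\ge K^{-1}$) only yields $|h|\le|\Delta_{b}(x)|\,y$. That is not uniform, for the same reason.

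The missing idea is a one-sided bound, as in the paper's proof:
\begin{itemize}
\item Since $\omega$ is non-decreasing, $\omega(x-x_{0})\le\omega(x)$, so $\Delta_{b}\le c:=1+\omega(x_{0})$. Every birth therefore raises $Y^{K}$ by at most $c/K$.
\item Because $F$ is non-decreasing, you may replace the jump by $c/K$. For $K\ge1$, $(y+c/K)^{p}-y^{p}\le C_{p}K^{-1}(1+y^{p-1})$.
\item Against the intensity $K\,b(x)\,\mu^{K}_{s}(\mathrm{d}x)\,\mathrm{d}s$, this gives $C_{p}\langle\mu^{K}_{s},b\rangle\big(1+(Y^{K}_{s})^{p-1}\big)\le 2C_{p}C_{b}\big(1+(Y^{K}_{s})^{p}\big)$. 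The only input is $b\le C_{b}(1+\mathrm{Id}+\omega)$.
\end{itemize}
This treats every $p\ge1$ at once, with no second-order expansion and no use of \eqref{eq:conditionomegadeux}.

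The same bound is what actually justifies your claim $|h|\le Cy$. Indeed $h\ge-\omega(x)/K\ge-y$, and $h\le c/K\le c\,y$ because $y\ge N^{K}\ge K^{-1}$ when a birth occurs. With this, your first-order estimate $p|h|(y+|h|)^{p-1}\le C|h|y^{p-1}$ already suffices for all $p$, again via \eqref{eq:conditionomega}.

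Your $p\le 2$ argument would cover everything the paper later uses: $p=1$, and some $p>1$, which can be lowered to $\min(p,2)$. It does not cover the proposition as stated for all $p\ge1$.
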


The proof of Proposition~\ref{lemme:controlenp} can be found in Appendix~\ref{app:lemmcontr}.

\begin{corr}
Under the \hyperlink{ren}{renormalized setting}, let $\varphi \in \mathcal{C}^{1,1}_{\omega}(\mathbb{R}^{*}_{+})$ and $K \in \mathbb{N}^{*}$. Assume that there exists $p>1$ such that \eqref{eq:lesmom} holds true. Then, for all $t \geq 0$, the family of square-integrable martingales $\left(\heartsuit^{K}_{\varphi,t}\right)_{K \in \mathbb{N}^{*}}$ defined in Proposition~\ref{prop:martingalerenormalisee} is uniformly integrable.
\label{corr:uniformly}
\end{corr}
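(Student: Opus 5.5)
The plan is to prove that, for fixed $t \geq 0$, the family $\left(\heartsuit^{K}_{\varphi,t}\right)_{K \in \mathbb{N}^{*}}$ is bounded in $L^{q}$ for some $q>1$. Uniform integrability then follows from the de la Vallée-Poussin criterion. The natural exponent is $q := \min(p,2) > 1$. Since $\heartsuit^{K}_{\varphi,t} = \langle \mu^{K}_{t}, \varphi_{t}\rangle - V^{K}_{\varphi,t}$, it suffices to bound the $q$-th moments of the two terms separately, uniformly in $K$. We use $|a-b|^{q} \leq 2^{q-1}(|a|^{q}+|b|^{q})$.

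\textbf{First term.} Since $\varphi \in \mathcal{C}^{1,1}_{\omega}$, we have $|\varphi_{t}| \leq C\omega$; in the case $\varphi = \omega$ this holds trivially. Hence $\varphi_{t} \in \mathfrak{B}_{1+\mathrm{Id}+\omega}(\mathbb{R}^{*}_{+})$ with a constant independent of $t$. Writing $|\langle \mu^{K}_{t},\varphi_{t}\rangle| \leq C\,(E^{K}_{t}+N^{K}_{t}+\Omega^{K}_{t})$, Proposition~\ref{lemme:controlenp} gives $\sup_{K}\mathbb{E}(|\langle \mu^{K}_{t},\varphi_{t}\rangle|^{p}) < +\infty$. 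By Jensen's inequality, the same bound holds with $q$ in place of $p$.

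\textbf{Second term.} We bound the integrand of $V^{K}_{\varphi,t}$ pointwise by a constant times $1+x+\omega(x)$, and there are three pieces to handle.
\begin{itemize}
\item[(a)] For the transport piece we use $|\Phi_{s}(R,x)| \leq |\partial_{1}\varphi(s,x)| + \overline{g}(x)|\partial_{2}\varphi(s,x)|$. The definition of $\mathcal{C}^{1,1}_{\omega}$ gives $|\partial_{1}\varphi| \leq C(1+x+\omega)$ and $|\partial_{2}\varphi| \leq C$. Assumption~\ref{hyp:poidsomega}, in the form \eqref{eq:conditionomega}, gives $\overline{g} \leq \omega_{1}(1+x+\omega)$. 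When $\varphi = \omega$, the first inequality of Assumption~\ref{hyp:poidsomega} controls $\overline{g}\,\omega'$ directly.
\item[(b)] For the death piece, $d|\varphi_{s}| \leq C d\omega \leq C\omega_{1}(1+x+\omega)$.
\item[(c)] For the birth piece with $\varphi \neq \omega$, we use $|\varphi_{s}(x_{0})| \leq C\omega(x_{0})$ and the Lipschitz bound $|\varphi_{s}(x-x_{0})-\varphi_{s}(x)| \leq Cx_{0}$, already noted in the proof of Corollary~\ref{corr:martingaleutile}. Combined with $b \leq \omega_{1}(1+x+\omega)$ from \eqref{eq:conditionomega}, this gives the bound. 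When $\varphi = \omega$, \eqref{eq:conditionomega} bounds $b\,|\omega(x_{0})+\omega(x-x_{0})-\omega(x)|$ directly.
\end{itemize}
Putting these together,
\[
|V^{K}_{\varphi,t}| \leq C\langle \mu^{K}_{0},\omega\rangle + C' \int_{0}^{t}\left(E^{K}_{s}+N^{K}_{s}+\Omega^{K}_{s}\right)\mathrm{d}s \leq C''(1+t)\sup_{s\in[0,t]}\left(E^{K}_{s}+N^{K}_{s}+\Omega^{K}_{s}\right).
\]
Proposition~\ref{lemme:controlenp} with exponent $p$ then yields $\sup_{K}\mathbb{E}(|V^{K}_{\varphi,t}|^{p})<+\infty$.

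\textbf{Conclusion.} Both terms are bounded in $L^{p}$ uniformly in $K$, so $\sup_{K}\mathbb{E}(|\heartsuit^{K}_{\varphi,t}|^{p}) < +\infty$ with $p>1$. This gives uniform integrability. Alternatively, one could bound the second moment through the bracket: $\mathbb{E}\bigl((\heartsuit^{K}_{\varphi,t})^{2}\bigr) = \mathbb{E}\bigl(\langle \heartsuit^{K}_{\varphi}\rangle_{t}\bigr) \leq \frac{C t}{K}\,\mathbb{E}\bigl(\sup_{s\leq t}(E^{K}_{s}+N^{K}_{s}+\Omega^{K}_{s})\bigr)$, using \eqref{eq:conditionomegadeux}. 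That route only needs $p\geq 1$ and even gives convergence to $0$, but the statement assumes $p>1$, so I follow the direct route.

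The main obstacle is the bookkeeping for the special case $\varphi=\omega$, where $\omega'$ may be unbounded. Every bound there must come directly from the precise form of Assumption~\ref{hyp:poidsomega} rather than from the $\mathcal{C}^{1,1}_{\omega}$ derivative estimates. One must also check that the constants are uniform in $s\in[0,t]$ and in $K$, which holds because neither Assumption~\ref{hyp:poidsomega} nor $\omega$ depends on $K$.
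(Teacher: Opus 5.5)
Your proposal is correct and is essentially the paper's argument: you bound $|\langle \mu^{K}_{t},\varphi_{t}\rangle|$ and $|V^{K}_{\varphi,t}|$ by a constant times $\sup_{s\le t}(E^{K}_{s}+N^{K}_{s}+\Omega^{K}_{s})$ using the definition of $\mathcal{C}^{1,1}_{\omega}$ and Lemma~\ref{lemme:equivalence}, invoke Proposition~\ref{lemme:controlenp} with exponent $p$, and conclude by $L^{p}$-boundedness (de la Vall\'ee-Poussin). Your side remark is also accurate: the bracket identity gives an $L^{2}$ bound of order $1/K$ that needs only $p=1$, which is how the paper itself argues in Step~1 of the proof of Proposition~\ref{prop:identification}.
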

\begin{proof}
This follows from Lemma~\ref{lemme:equivalence}, Proposition~\ref{prop:martingalerenormalisee}, Proposition~\ref{lemme:controlenp} applied to $\varphi$ and $p$, and Proposition 2.2 p.494 in \cite{ek_2005}.
\end{proof}

\section{Main results and conjectures}
\label{sec:theorem}

In Section~\ref{subsec:main}, we give our main tightness result in Theorem~\ref{theo:convergence}, and the sketch of its proof. Then in Section~\ref{subsec:discussiontheo}, we present two lines of research to extend Theorem~\ref{theo:convergence}.

\subsection{Main theorem and sketch of the proof}
\label{subsec:main}

We begin with preliminary definitions and consider a function $w : \mathbb{R}^{*}_{+} \rightarrow \mathbb{R}^{*}_{+}$. We write $\mathcal{M}_{w}(\mathbb{R}^{*}_{+})$ for the set of positive measures $\mu$ on $\mathbb{R}^{*}_{+}$ such that $\langle \mu, w \rangle < + \infty$.  In particular, $\mathcal{M}_{P}(\mathbb{R}^{*}_{+}) \subseteq \mathcal{M}_{w}(\mathbb{R}^{*}_{+})$. We define $\mathcal{C}_{c}(\mathbb{R}^{*}_{+})$ the space of continous functions with compact support, and $\mathcal{C}_{w}(\mathbb{R}^{*}_{+})$ the space of continuous functions $f$ such that $f \in \mathfrak{B}_{w}(\mathbb{R}^{*}_{+})$. The vague, respectively $w$-weak, topology on $\mathcal{M}_{w}(\mathbb{R}^{*}_{+})$ is the finest topology for which the applications $\mu \mapsto \langle \mu, f \rangle$ are continous, with $f$ in $\mathcal{C}_{c}(\mathbb{R}^{*}_{+})$, respectively in $\mathcal{C}_{w}(\mathbb{R}^{*}_{+})$. We write $(\mathcal{M}_{w}(\mathbb{R}^{*}_{+}),v)$, respectively $(\mathcal{M}_{w}(\mathbb{R}^{*}_{+}),\mathrm{w})$, when we endow $\mathcal{M}_{w}(\mathbb{R}^{*}_{+})$ with the vague topology, respectively the $w$-weak topology.
\\\\
The latter notation is not standard in the literature and can be seen as a weighted version of the usual weak topology, which corresponds to the case $w \equiv 1$. We introduce it because in Theorem~\ref{theo:convergence}, our processes will take values in such weigthed spaces of measures. The $w$-weak topology is always finer than the vague topology, but depending on the weight function $w$, it is not necessarily comparable to the usual weak topology. With classical techniques, we show that both spaces $(\mathcal{M}_{w}(\mathbb{R}^{*}_{+}),v)$ and $(\mathcal{M}_{w}(\mathbb{R}^{*}_{+}),\mathrm{w})$ are Polish spaces (see Appendix B.2.1 in \cite{broduthesis}). We naturally endow $[0,R_{\max}]$ with the usual topology, and $(M_{w}(\mathbb{R}^{*}_{+}),v) \times [0,R_{\max}]$ or $(M_{w}(\mathbb{R}^{*}_{+}),\mathrm{w}) \times [0,R_{\max}]$ with the product topology, and these are again Polish spaces. 
\\\\
For $i=v$ or $\mathrm{w}$, we write $\mathbb{D}([0,T], (\mathcal{M}_{w}(\mathbb{R}^{*}_{+}),i) \times [0,R_{\max}])$ for the space of càdlàg functions from $[0,T]$ to $(\mathcal{M}_{w}(\mathbb{R}^{*}_{+}),i) \times [0,R_{\max}]$, and $\mathcal{C}([0,T], (\mathcal{M}_{w}(\mathbb{R}^{*}_{+}),i) \times [0,R_{\max}])$ for continuous ones. These spaces are endowed with the usual Skorokhod topology, hence are Polish spaces (Theorem 5.6 p.121 in \cite{ek_2005}). Finally, for every $T \geq 0$, we \hypertarget{defomega}{define} $\mathcal{C}^{1,1}_{\omega,T}(\mathbb{R}^{+} \times \mathbb{R}^{*}_{+})$ the set of functions $\varphi \in \mathcal{C}^{1,1}(\mathbb{R}^{+} \times \mathbb{R}^{*}_{+})$ such that
\begin{align*}
\exists C>0, \forall x>0, \quad \sup_{t \in [0,T]}\bigg( \left|\varphi(t,x)\right|(1+d(x)) + |\partial_{1} \varphi(t,x)| + |\partial_{2} \varphi(t,x)|\omega(x)  \bigg) \leq C \omega(x).
\end{align*}
For technical reasons, just before stating our main theorem, we formulate the following additional assumption. We will use it in particular in Section~\ref{subsec:identification}.
\begin{hyp}
There exists $\eta \in (0,1)$ such that the functions $\omega$ (weight function), $b$ (birth rate) and $\overline{g}$ (maximal speed of energy gain) verify
\begin{itemize}
\item[-] $\exists \tilde{c}_{b}>0, \forall x>1, \quad b(x) \leq \tilde{c}_{b}(\varpi(x)+ \omega(x))$,
\item[-] $\exists \tilde{c}_{g}>0, \forall x>0, \quad \overline{g}(x) \leq \tilde{c}_{g}(\varpi(x)+ \omega(x))$,
\end{itemize} 
where $\varpi : x>0 \mapsto (x-x_{0})^{1-\eta}\mathbb{1}_{x-x_{0}>0}$. In addition, we ask for $1/\omega$ to be bounded in a neighborhood of $+ \infty$.
\label{ass:finallejd}
\end{hyp}
\textbf{Remark:} Assumption~\ref{ass:finallejd} may look redundant with Assumption~\ref{hyp:poidsomega}. We believe, although it is still a conjecture, that it is possible to obtain Theorem~\ref{theo:convergence} without Assumption~\ref{ass:finallejd}. This is discussed in Section~\ref{subsec:determinsiticattempt}. Also, note that $\mathbf{(H2)}$ in Theorem~\ref{theo:initial} implies Assumption~\ref{ass:finallejd}.

\begin{theorem}
We work under the \hyperlink{ren}{renormalized setting} and Assumption~\ref{ass:finallejd}. Let $\left(\left(\mu^{K}_{t},R^{K}_{t}\right)_{t \geq 0}\right)_{K \in \mathbb{N}^{*}}$ the sequence of renormalized processes defined in Section \ref{subsec:construcdeux} be such that $\mathbf{(H3)}$ and $\mathbf{(H4)}$ hold true. Then, for all $T \geq 0$, $\bigg(\left(\mu^{K}_{t},R^{K}_{t}\right)_{t \in [0,T]}\bigg)_{K \in \mathbb{N}^{*}}$ is tight in $\mathbb{D}([0,T], (\mathcal{M}_{\omega}(\mathbb{R}^{*}_{+}),\mathrm{w}) \times [0,R_{\max}])$. Any of its accumulation point $(\mu^{*}_{t},R^{*}_{t})_{t \in [0,T]}$ is in $\mathcal{C}([0,T], (\mathcal{M}_{\omega}(\mathbb{R}^{*}_{+}),\mathrm{w}) \times [0,R_{\max}])$; and for all $t \in [0,T]$, for every $\varphi \in \mathcal{C}^{1,1}_{\omega,T}(\mathbb{R}^{+} \times \mathbb{R}^{*}_{+})$, it verifies almost surely \eqref{eq:resslim} and \eqref{eq:indivlim}. 
\label{theo:convergence}
\end{theorem}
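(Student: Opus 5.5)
We will follow the Fournier--Méléard scheme, adapted to the $\omega$-weak topology. The plan has three parts: tightness in the vague topology, upgrade to the $\omega$-weak topology, then identification of the limit.

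\textbf{Step 1: vague tightness.} We first prove tightness of $((\mu^{K}_{t},R^{K}_{t})_{t\in[0,T]})_K$ in $\mathbb{D}([0,T],(\mathcal{M}_{\omega}(\mathbb{R}^{*}_{+}),v)\times[0,R_{\max}])$.
\begin{itemize}
\item For the measure component, we use the extension of Roelly's criterion (Theorem~\ref{theo:roel}). It suffices to show that, for $\varphi$ in a countable dense subset of smooth compactly supported functions (and for $\varphi=\omega$), the real processes $(\langle\mu^K_t,\varphi\rangle)_t$ are tight in $\mathbb{D}([0,T],\mathbb{R})$.
\item For each such $\varphi$ we apply Aldous--Rebolledo to the semimartingale decomposition of Proposition~\ref{prop:martingalerenormalisee}.
\item Compact containment of $\sup_t\langle\mu^K_t,\varphi\rangle$ follows from Proposition~\ref{lemme:controlenp} with $p>1$ given by \textbf{(H4)}.
\item For stopping times $S_K\le S_K+\delta$, the increments of $V^K_\varphi$ and $\langle\heartsuit^K_\varphi\rangle$ are bounded by $\delta\, C\sup_{t\le T}\langle\mu^K_t,1+\mathrm{Id}+\omega\rangle$. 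This uses \eqref{eq:conditionomega} and \eqref{eq:conditionomegadeux} from Lemma~\ref{lemme:equivalence}, together with the extra factor $1/K$ in the quadratic variation.
\item The resource $R^K_t$ is Lipschitz in $t$: $|\rho|\le \|\varsigma\|_\infty+\chi\langle\mu^K_t,\psi\rangle$, and $\psi\lesssim\overline g+\ell$, which is dominated by $1+\mathrm{Id}+\omega$. Tightness of the resource component then follows from Arzelà--Ascoli, uniformly in $K$ in expectation.
\end{itemize}

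\textbf{Step 2: continuity of limits and upgrade to the $\omega$-weak topology.}
\begin{itemize}
\item Jumps of $\langle\mu^K,\varphi\rangle$ are of size at most $C\omega(x)/K$ on the jumping individual. By Proposition~\ref{lemme:controlenp} these vanish uniformly, so every accumulation point is continuous.
\item To pass from vague to $\omega$-weak convergence we use the extension of Méléard--Roelly (Theorem~\ref{theo:melroel}). It requires that, along a vaguely converging subsequence with continuous limit, $\langle\mu^K,\omega\rangle$ converges in law to $\langle\mu^*,\omega\rangle$ in $\mathbb{D}$. This amounts to excluding mass escape at $+\infty$ and towards $0$ in the $\omega$-weighted sense.
\item At $+\infty$, escape is controlled by the $(1+\mathrm{Id}+\omega)$-moment bound. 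Assumption~\ref{ass:finallejd} gives $\omega\gtrsim1$ near infinity and $b,\overline g\lesssim\varpi+\omega$, so $\omega/(1+\mathrm{Id})$ does not decay there. We will instead handle the sublinear parts via $\varpi=o(\mathrm{Id})$, combined with uniform integrability from $p>1$.
\item Near $0$, we use $\omega$ non-decreasing: the mass $\langle\mu^K,\omega\mathbb 1_{(0,\varepsilon)}\rangle\le\omega(\varepsilon)N^K$ is controlled when $\omega(0^+)=0$. Otherwise we apply the semimartingale decomposition to cutoff functions $\omega\chi_\varepsilon$ and show that $\sup_K\mathbb{E}\sup_t$ of the tail vanishes as $\varepsilon\to0$.
\end{itemize}

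\textbf{Step 3: identification of the limit.}
\begin{itemize}
\item Fix $\varphi\in\mathcal{C}^{1,1}_{\omega,T}$ and define on path space the functional $\Psi_t(\mu,R)=\langle\mu_t,\varphi_t\rangle-\langle\mu_0,\varphi_0\rangle-\int_0^t\langle\mu_s,\Phi_s(R_s,\cdot)+b(\cdot)(\varphi_s(x_0)+\varphi_s(\cdot-x_0)-\varphi_s)-d\varphi_s\rangle\,\mathrm{d}s$.
\item Under the $\omega$-weak topology, $\Psi_t$ is continuous at continuous paths provided every integrand is in $\mathcal{C}_\omega$. For the $d\varphi$ and $\partial_1\varphi$ terms this holds by the definition of $\mathcal{C}^{1,1}_{\omega,T}$. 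The terms $b(\cdot)(\dots)$ and $g\partial_2\varphi$ are bounded only by $\varpi+\omega$, so they are not in $\mathcal{C}_\omega$. This is precisely where Assumption~\ref{ass:finallejd} enters: we truncate at level $A$ and control the remainder using $\varpi\le (x-x_0)^{1-\eta}$, which is $o(\mathrm{Id})$. We then use uniform bounds on $\langle\mu^K,\mathrm{Id}\rangle$, and Fatou for the limit, to make the remainder small uniformly in $K$.
\item We have $\Psi_t(\mu^K,R^K)=\heartsuit^K_{\varphi,t}$, and $\mathbb{E}[(\heartsuit^K_{\varphi,t})^2]=\mathbb{E}\langle\heartsuit^K_\varphi\rangle_t=O(1/K)$. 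With the uniform integrability of Corollary~\ref{corr:uniformly}, this gives $\mathbb{E}|\Psi_t(\mu^*,R^*)|=0$, which is \eqref{eq:indivlim}.
\item For \eqref{eq:resslim}, the map $(\mu,R)\mapsto R_t-R_0-\int_0^t\rho(R_s,\mu_s)\,\mathrm{d}s$ is handled in the same way. Here $\psi\in\mathcal{C}_{\varpi+\omega}$, and the Lipschitz property \eqref{eq:lipshitzphi} controls the dependence on $R$.
\end{itemize}

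The main obstacle is expected to be Step 2 combined with this truncation. Vague tightness is routine, but the $\omega$-weak topology does not control the linear-growth integrands. Balancing moments of $\mathrm{Id}$ against the $\omega$-topology, without losing mass near $0$ or $\infty$, is the delicate point, and it is exactly why Assumption~\ref{ass:finallejd} is imposed.
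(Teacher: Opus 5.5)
Your overall architecture is the paper's: Theorem~\ref{theo:roel} with Aldous--Rebolledo for vague tightness, continuity of limits, the upgrade through Theorem~\ref{theo:melroel}, and identification via $\Psi_t(\mu^K)=\heartsuit^K_{\varphi,t}$ with an $O(1/K)$ bracket. Your local variants are sound and somewhat simpler:
\begin{itemize}
\item Arzel\`a--Ascoli for $R^K$;
\item truncating the $\varpi$-part of the integrands in $x$ (on $\{x>A\}$, $\varpi\le A^{-\eta}\mathrm{Id}$) instead of the paper's H\"older split on $\{\varpi>M\omega\}$;
\item treating \eqref{eq:resslim} directly as a functional that vanishes on the prelimit, instead of going through the solution map $\mathcal{R}$.
\end{itemize}

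The genuine gap is in Step~2, exactly where you place the difficulty. To use Theorem~\ref{theo:melroel} you must show that the limit $x^*$ of $\langle\mu^K,\omega\rangle$ equals $\langle\mu^*,\omega\rangle$, i.e.\ that no $\omega$-mass escapes to $0$ or $+\infty$. Your argument at $+\infty$ does not give this. Bounds on $\sup_K\mathbb{E}\sup_t\langle\mu^K_t,1+\mathrm{Id}+\omega\rangle^p$ give uniform integrability of total masses over the probability space, not spatial tightness of $\omega\,\mu^K_t$. For example, $\mu^K=\delta_1+K^{-1}\delta_K$ with $\omega=\mathrm{Id}$ satisfies all such bounds and converges vaguely to $\delta_1$, yet $\langle\mu^K,\omega\rangle=2\neq1$. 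This case is not artificial: $\omega(x)=x$ satisfies Assumptions~\ref{hyp:poidsomega} and~\ref{ass:finallejd} in the allometric setting with $\alpha=\gamma=1$, $C_\gamma>C_\alpha$, $\delta=-1$, $\beta\le1$. Your remark that $\varpi=o(\mathrm{Id})$ does not help, since $\varpi$ does not enter $\langle\mu^K,\omega\rangle$. Your Step~2 is only complete when $\omega(x)/x\to0$ at infinity (the tail is then at most $\sup_{x>A}\frac{\omega(x)}{x}E^K_t$) and $\omega(0^+)=0$.

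In general you need $\lim_{A\to\infty}\sup_K\mathbb{E}\sup_{t\le T}\langle\mu^K_t,\omega\mathbb{1}_{\{x>A\}}\rangle=0$, and its analogue near $0$ when $\omega(0^+)>0$. This must come from the dynamics together with $\mathbf{(H3)}$--$\mathbf{(H4)}$. At $+\infty$ one workable route is a pathwise comparison:
\begin{itemize}
\item between jumps $\dot\xi^u\le\overline g(\xi^u)$;
\item births only lower the parent's energy and create offspring at $x_0$;
\item along $\dot x=\overline g(x)$ one has $\frac{\mathrm{d}}{\mathrm{d}t}(1+x+\omega(x))\le C_g(1+x+\omega(x))$, so large energies at time $t$ come only from initial individuals with large energy, whose tails are uniformly small by $\mathbf{(H3)}$--$\mathbf{(H4)}$.
\end{itemize}
Near $0$ the cutoff argument you sketch is not automatic either. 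A birth from a parent with energy in $(x_0,x_0+\varepsilon)$ sends it into $(0,\varepsilon)$, so the generator applied to $\omega\chi_\varepsilon$ has a source term that requires control of $\mu^K$ just above $x_0$.

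You cannot borrow this estimate from the paper. Section~\ref{subsec:topotricks} obtains $x^*_t\ge\langle\mu^*_t,\omega\rangle$ by lower semicontinuity, and the reverse by Fatou plus an exchange of $\lim_K$ and $\lim_n$ over cutoffs $\varphi_n\to\omega$. That exchange amounts to $\sup_K\mathbb{E}\langle\mu^K_t,\omega-\varphi_n\rangle\to0$, which is the very tail estimate at stake, and domination by $\omega$ does not provide it.

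A smaller point: jumps of $\langle\mu^K,\omega\rangle$ do not vanish ``by Proposition~\ref{lemme:controlenp}'', since a single individual may carry $\omega(x)/K$ of order one. Use instead $\mathbb{E}\sup_{t\le T}|\Delta\heartsuit^K_{\omega,t}|^2\le\mathbb{E}\langle\heartsuit^K_\omega\rangle_T=O(1/K)$, which follows from \eqref{eq:conditionomegadeux} and Proposition~\ref{lemme:controlenp}.
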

On the one hand, the tightness result of Theorem~\ref{theo:convergence} is an important contribution to the study of individual-based models as in Section~\ref{sec:construction} with unbounded growth, birth and/or death rates. In the literature, when the previously mentioned rates are bounded, one of the main technical point of the proof is to provide uniform (on $K$ and $t \in [0,T]$) bounds and martingale properties for quantities of the form $\langle \mu^{K}_{t}, 1 \rangle$. With Proposition~\ref{prop:martingalerenormalisee} in mind, one can relate $\langle \mu^{K}_{t}, 1 \rangle$ and quantities of the form $\langle \mu^{K}_{t}, b \rangle$ and $\langle \mu^{K}_{t}, d \rangle$. When rates are bounded, $\langle \mu^{K}_{t}, b \rangle$ and $\langle \mu^{K}_{t}, d \rangle$ are themselves controlled by $C\langle \mu^{K}_{t}, 1 \rangle$ with a constant $C>0$. One classically deduces a functional equation verified by $\langle \mu^{K}_{t}, 1 \rangle$ and concludes with Gronwall lemma. It is then possible to make sense of limiting quantities of the form $\langle \mu^{*}_{t}, b \rangle$ and $\langle \mu^{*}_{t}, d \rangle$. With unbounded rates, the previous technique does not work anymore, and it is even possible that for $t \in [0,T]$, quantities of the form $\langle \mu^{*}_{t}, b \rangle$ and $\langle \mu^{*}_{t}, d \rangle$ are infinite. Hence, instead of controlling quantities of the form $\langle \mu^{K}_{t}, 1 \rangle$, we search for a function $\omega$ such that  $\langle \mu^{K}_{t}, \omega \rangle$ is related to $\langle \mu^{K}_{t}, b \omega \rangle$ and $\langle \mu^{K}_{t}, d \omega \rangle$, and the latter quantities are themselves controlled by $C\langle \mu^{K}_{t}, \omega \rangle$ with a constant $C>0$. Also, we want to be able to define $\langle \mu^{*}_{t}, b \omega \rangle$ and $\langle \mu^{*}_{t}, d \omega \rangle$ as finite quantities. The constraints that have to be verified by $\omega$ are expressed in Assumptions~\ref{hyp:poidsomega} and \ref{ass:finallejd}. We thus work in a weighted space with respect to the function $\omega$ and apply the same procedure as in the classical case. We recover a classical result of tightness for measure-valued processes, initiated in \cite{fournier2004microscopic}, but without \textit{a priori} bounds on the growth, birth and/or death rates.
\\\\
On the other hand, there is a price to pay to obtain this general tightness result. It holds true only in the weighted space $\mathbb{D}([0,T], (\mathcal{M}_{\omega}(\mathbb{R}^{*}_{+}),\mathrm{w}) \times [0,R_{\max}])$, with a weight function $\omega$ verifying Assumptions~\ref{hyp:poidsomega} and \ref{ass:finallejd}. In particular, if $d(x) \xrightarrow[x \rightarrow  0]{} + \infty$, then $\omega(x) \xrightarrow[x \rightarrow  0]{} 0$. For biological reasons, this is the typical case we want to investigate if we think of an unbounded death rate (the Metabolic Theory of Ecology assumes a death rate of the form $x>0 \mapsto x^{-\delta}$ with $\delta >0$ \cite{malerba_2019}). The tightness result of Theorem~\ref{theo:convergence} is weaker `near 0' than a tightness in $\mathbb{D}([0,T], (\mathcal{M}_{1}(\mathbb{R}^{*}_{+}),\mathrm{w}) \times [0,R_{\max}])$ (\textit{i.e.} with the usual weak topology on $\mathcal{M}_{1}(\mathbb{R}^{*}_{+})$), in the sense that Equation~\eqref{eq:indivlim} is not valid for $\varphi \equiv 1$, because this function does not converge to 0 at 0. Still, it is possible that $\omega(x) \xrightarrow[x \rightarrow +\infty]{} +\infty$ (see Section~\ref{subsec:examples} for a specific example), so that the tightness result of Theorem~\ref{theo:convergence} is stronger `near $+\infty$' than a tightness with the usual weak topology on $\mathcal{M}_{1}(\mathbb{R}^{*}_{+})$, in the sense that Equation~\eqref{eq:indivlim} is valid for functions $\varphi$ going to $+\infty$ near $+ \infty$. 
\\\\
Finally, remark that for a given initial condition $\mu^{*}_{0}$, Equations~\eqref{eq:resslim} and \eqref{eq:indivlim} are the weak formulation of a PDE system, and Theorem~\ref{theo:convergence} provides the existence of measure solutions to this problem. The reader can already consider the system \eqref{eq:indivedp}, \eqref{eq:ressedp}, \eqref{eq:boundaryedp} to have a clearer idea of the kind of deterministic PDE we obtain when there exists functions solution.
\\\\
\hypertarget{sketch}{\textbf{Sketch of the proof of Theorem~\ref{theo:convergence}}:} 
\\\\
In the following, we work under the assumptions of Theorem~\ref{theo:convergence} and fix $T \geq 0$. For any $K \in \mathbb{N}^{*}$, we write $\mathscr{L}^{K}$ for the law of the process $\bigg(\left(\mu^{K}_{t},R^{K}_{t}\right)_{t \in [0,T]}\bigg)_{K \in \mathbb{N}^{*}}$. Every $\mathscr{L}^{K}$ is a probability measure on $\mathbb{D}([0,T],\mathcal{M}_{\omega}(\mathbb{R}^{*}_{+}) \times [0,R_{\max}])$. Note that $\mathscr{L}^{K}$ does not depend on the choice of the topology on $\mathcal{M}_{\omega}(\mathbb{R}^{*}_{+})$, if we choose among the vague topology or the $\omega$-weak topology (see Lemma B.2.10. in \cite{broduthesis}). Our aim in the following proof is first to prove the tightness of $(\mathscr{L}^{K})_{K \in \mathbb{N}^{*}}$ in $\mathbb{D}([0,T], (\mathcal{M}_{\omega}(\mathbb{R}^{*}_{+}),\mathrm{w}) \times [0,R_{\max}])$, and then to characterize any accumulation point with \eqref{eq:resslim}-\eqref{eq:indivlim}.
We divide the proof in four steps.
\begin{itemize}
\item First, in Section~\ref{subsec:tension}, we show that $(\mathscr{L}^{K})_{K \in \mathbb{N}^{*}}$  is tight in $\mathbb{D}([0,T], (\mathcal{M}_{\omega}(\mathbb{R}^{*}_{+}),v) \times [0,R_{\max}])$. Remark that $\mathcal{M}_{\omega}(\mathbb{R}^{*}_{+})$ is endowed with the vague topology at this step. We extend a criterion of Roelly \cite{roel_86} to our weighted space of measures (see Theorem~\ref{theo:roel}), which reduces the problem to proving the tightness of a sequence in $\mathbb{D}([0,T], \mathbb{R})$. To do so, we use a criterion of Aldous and Rebolledo \cite{ald_86} and Proposition~\ref{prop:martingalerenormalisee}.
\item In Section~\ref{subsec:continuitylimit}, we prove that any limit of a subsequence of $\bigg(\left(\mu^{K}_{t},R^{K}_{t}\right)_{t \in [0,T]}\bigg)_{K \in \mathbb{N}^{*}}$ converging in law in $\mathbb{D}([0,T], (\mathcal{M}_{\omega}(\mathbb{R}^{*}_{+}),v) \times [0,R_{\max}])$ is in $\mathcal{C}([0,T], (\mathcal{M}_{\omega}(\mathbb{R}^{*}_{+}),\mathrm{w}) \times [0,R_{\max}])$ (note that the limit is continuous for $\mathcal{M}_{\omega}(\mathbb{R}^{*}_{+})$ endowed with the $\omega$-weak topology). We adapt the reasoning of Step 2 in Section 5 of \cite{jourdain_11} to our weighted setting.
\item Thanks to the continuity of any accumulation point, in Section~\ref{subsec:topotricks}, we extend a result of Méléard and Roelly \cite{meleard1993convergences} to our weighted space of measures (see Theorem~\ref{theo:melroel}), and prove that $(\mathscr{L}^{K})_{K \in \mathbb{N}^{*}}$ is tight in $\mathbb{D}([0,T], (\mathcal{M}_{\omega}(\mathbb{R}^{*}_{+}),\mathrm{w}) \times [0,R_{\max}])$. In particular, we use the previous step to control the finite variation and martingale parts of $\langle
\mu^{K}_{t}, \omega \rangle$ for $K \geq 1$ and $t \in [0,T]$.
\item Finally in Section~\ref{subsec:identification}, we characterize the limit $(\mu^{*},R^{*})$ of any converging subsequence of our process in $\mathbb{D}([0,T], (\mathcal{M}_{\omega}(\mathbb{R}^{*}_{+}),\mathrm{w}) \times [0,R_{\max}])$, still written $(\mu^{K},R^{K})_{K \geq 1}$, with Equations~\eqref{eq:resslim} and \eqref{eq:indivlim}. It is precisely at this step that we use the additional Assumption~\ref{ass:finallejd}, to be able to control, uniformly on $K \geq 1$ and $t \in [0,T]$, quantities of the form $\mathbb{E}\left(\langle \mu^{K}_{t} - \mu^{*}_{t}, b + \overline{g} \rangle \right)$.
\end{itemize}

\subsection{Possible extensions of Theorem~\ref{theo:convergence}}
\label{subsec:discussiontheo}

First in Section~\ref{section:uniqueness}, we present the difficulties encountered for showing that there exists a unique measure solution to the system \eqref{eq:resslim}-\eqref{eq:indivlim}. If this uniqueness holds true, the tightness result of Theorem~\ref{theo:convergence} is in fact a convergence in law towards the unique limit identified by \eqref{eq:resslim}-\eqref{eq:indivlim}. Then in Section~\ref{subsec:determinsiticattempt}, we conjecture an extension of Theorem~\ref{theo:convergence} to a tightness result in a broader set of measure-valued processes, with additional regularity and control assumptions on the solutions to \eqref{eq:resslim}-\eqref{eq:indivlim}.

\subsubsection{Uniqueness of a solution to \eqref{eq:resslim}-\eqref{eq:indivlim}}
\label{section:uniqueness}

Classical results depicted in the literature (Theorem 5.3. in \cite{fournier2004microscopic}, Corollary 3.3. in \cite{chi_08}, Theorem 5.2 in \cite{FRITSCH20151}) establish convergence in law towards a deterministic limit (conditionally to the initial condition $\mu^{*}_{0}$) and not only tightness of sequences of renormalizations as the one described in Section~\ref{subsec:algorenom}. They use a compactness-uniqueness argument summarized as follows. The law of an accumulation point of $\left((\mu^{K}_{t},R^{K}_{t})_{t \in [0,T]}\right)_{K \in \mathbb{N}^{*}}$ in $ \mathbb{D}([0,T], (\mathcal{M}_{\omega}(\mathbb{R}^{*}_{+}) ,\mathrm{w})\times [0,R_{\max}])$ is always a mixture between the law of $\mu^{*}_{0}$ and the law of solutions to \eqref{eq:resslim}-\eqref{eq:indivlim}. Now if for any fixed $\mu^{*}_{0}$, a solution to \eqref{eq:resslim}-\eqref{eq:indivlim} is unique, then the law of such an accumulation point is unique conditionally to $\mu^{*}_{0}$, and our tightness result becomes a convergence result towards this unique limit. 
\\\\
We refer the reader to Proposition II.5.7. in \cite{broduthesis} for a proof of this uniqueness result in the case of bounded rates. We encountered two main difficulties in the general case with unbounded rates.
\begin{itemize}
\item[-] First, the deterministic flow describing the evolution of individual energies and the resource (see again Section~\ref{subssub:wouah}) may only be locally well-defined. This is because with a possibly unbounded growth rate $g$, this flow can explode or reach 0 in finite time. Hence, we cannot control pathwisely our random trajectories for any time $t \geq 0$, by a straightforward comparison with this deterministic flow. We only have a result in expectation in Proposition~\ref{lemme:controlenp}.
\item[-] Then, the usual technique to obtain uniqueness is to pick two solutions $\mu_{t}$ and $\mu'_{t}$ to \eqref{eq:resslim}-\eqref{eq:indivlim} with the same initial condition, and to show that $| \langle \mu'_{t} - \mu_{t}, \varphi \rangle |= 0$ for any $t \geq 0$ and $\varphi$ in a broad enough set of test functions (see again the proof of Proposition II.5.7. in \cite{broduthesis}). A technical step is to provide an upper bound for $| \langle \mu'_{t} - \mu_{t}, \varphi \rangle |$ with integral terms, where integrands are functions that verify the same bounds as $\varphi$ up to a multiplicative constant, in order to use Gronwall lemma. The main difficulty here is precisely to check that the integrands verify the same constraints as $\varphi$, because they depend themselves on $\varphi$ and the previously mentioned deterministic flow. It is still an open question to know if one can find appropriate conditions on $\varphi$ and/or its derivatives, that we are able to recover for the previously mentioned integrands.
\end{itemize}

For the previously mentioned reasons, our feeling is that the classical proof of uniqueness in the case of bounded rates (the original argument comes from Step 3. in the proof of Theorem 5.3. in \cite{fournier2004microscopic}) can hardly be extended to more general cases. Hopefully, there are other ways to proceed, and we still conjecture that this uniqueness result should hold true in our setting. For example, we could certainly adapt the work of \cite{lauwal07} to obtain a uniqueness result in the case where birth and death rates are power functions with non-negative exponents, and the growth rate is a power function with an exponent $0 \leq \tau \leq 1$ (see Theorem 1.2. in \cite{lauwal07}). Finally, for biological reasons depicted in \cite{brodu2026}, we are particularly interested in a death rate which is a power function with a negative exponent. Obtaining a uniqueness result in that setting is left for future work.

\subsubsection{Extension of Theorem~\ref{theo:convergence} with additional assumptions}
\label{subsec:determinsiticattempt}

We aim for a stronger conclusion where we do not use Assumption~\ref{ass:finallejd}, and the tightness holds true in the broader Skorokhod space $\mathbb{D}([0,T], (\mathcal{M}_{1+ \mathrm{Id} + \omega}(\mathbb{R}^{*}_{+}),\mathrm{w}) \times [0,R_{\max}])$. This replaces $\omega$ with $1+ \mathrm{Id} + \omega$, thus is an amelioration of Theorem~\ref{theo:convergence}, only if $\omega$ is dominated by $1+ \mathrm{Id}$ in a neighborhood of 0 or $+ \infty$. This will be the case in the allometric example presented in Section~\ref{subsec:examples} (see also Figure~\ref{fig:exampeomega}).

\begin{conj}
We work under the assumptions of Theorem~\ref{theo:convergence} without Assumption~\ref{ass:finallejd}, but assume in addition that the sequence $(\mu_{0}^{K})_{K \in \mathbb{N}^{*}}$ converges in law towards $\mu_{0}^{*}$ in $(\mathcal{M}_{1+ \mathrm{Id}+\omega}(\mathbb{R}^{*}_{+}),\mathrm{w})$. Then, for all $T \geq 0$, the sequence $\bigg(\left(\mu^{K}_{t},R^{K}_{t}\right)_{t \in [0,T]}\bigg)_{K \in \mathbb{N}^{*}}$ is tight in $\mathbb{D}([0,T], (\mathcal{M}_{1+ \mathrm{Id} + \omega}(\mathbb{R}^{*}_{+}),\mathrm{w}) \times [0,R_{\max}])$.
\label{theo:final}
\end{conj}

\textbf{Remark:} We can show that if the death rate $d$ is bounded, then Conjecture~\ref{theo:final} holds true (see Lemma II.5.11. in \cite{broduthesis}). In particular, for any accumulation point $\mu^{*}$, Equation~\eqref{eq:indivlim} is valid for $\varphi : (t,x) \mapsto x$, \textit{i.e.} we have an explicit expression of $\langle \mu^{*}_{t}, \mathrm{Id} \rangle$ for $t \in [0,T]$ (in the case of a mass-structured model, this represents the total biomass for the limiting system described by $\mu^{*}$ at time $t$). If we adapt our setting to recover existing individual-based models with bounded rates, such as \cite{chi_08} or \cite{FRITSCH20151}, we thus extend their results. Indeed, previous papers were only able to compute numerically $\langle \mu^{*}_{t}, \mathrm{Id} \rangle$ for any $t \geq 0$, as the limit of quantities of the form $\langle \mu^{*}_{t}, \varphi \rangle$ with compactly supported functions $\varphi$.
\\\\
Now if the death rate $d$ is unbounded, we propose an approach to prove Conjecture~\ref{theo:final}. If we fix $T \geq 0$ and an accumulation point $(\mu^{*}_{t},R^{*}_{t})_{t \in [0,T]}$ of the sequence of renormalized processes of Section~\ref{subsec:algorenom}, we write for $t \in [0,T]$,
$$\beta_{t} := \displaystyle{\int_{0}^{t}} \langle \mu^{*}_{s}, d \times (1 + \mathrm{Id}) \rangle \mathrm{d}s.$$ 
Note that this quantity is possibly finite or infinite. 

\begin{prop}
We work under the assumptions of Theorem~\ref{theo:convergence} without Assumption~\ref{ass:finallejd}, but assume in addition that
\begin{itemize}
\item[$(i)$] the sequence $(\mu_{0}^{K})_{K \in \mathbb{N}^{*}}$ converges in law towards $\mu_{0}^{*}$ in $(\mathcal{M}_{1+ \mathrm{Id}+\omega}(\mathbb{R}^{*}_{+}),\mathrm{w})$,
\item[$(ii)$] for any accumulation point $\mu^{*}$, we have: $\forall t \in [0,T], \quad \beta_{t} < + \infty$,
\item[$(iii)$] any accumulation point $(\mu^{*}_{t})_{t \in [0,T]}$ is in $\mathcal{C}([0,T], (\mathcal{M}_{1+ \mathrm{Id} + \omega}(\mathbb{R}^{*}_{+}),\mathrm{w}))$.
\end{itemize} 
Then, the conclusion of Conjecture~\ref{theo:final} holds true.
\label{prop:ladernieredelathese}
\end{prop}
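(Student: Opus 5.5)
The plan is to run the same four-step scheme as in the \hyperlink{sketch}{sketch of the proof} of Theorem~\ref{theo:convergence}, with the weight $\omega$ replaced by $W := 1+\mathrm{Id}+\omega$, and to use hypotheses $(ii)$ and $(iii)$ in place of Assumption~\ref{ass:finallejd}. Continuity of the limit is no longer proved; it is assumed in $(iii)$.

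\textbf{Step 1: vague tightness.} First I establish tightness of $(\mathscr{L}^{K})_{K}$ in $\mathbb{D}([0,T],(\mathcal{M}_{W}(\mathbb{R}^{*}_{+}),v)\times[0,R_{\max}])$. By Theorem~\ref{theo:roel} (the Roelly-type criterion, which works for any weight, here $W$), it suffices to check two things. The first is tightness of $\langle \mu^{K},\varphi\rangle$ for $\varphi$ in a dense family of $\mathcal{C}_{c}(\mathbb{R}^{*}_{+})$, smooth and compactly supported. This follows from the Aldous--Rebolledo criterion applied to the semimartingale decomposition of Proposition~\ref{prop:martingalerenormalisee}. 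On a compact support all rates are bounded, and Proposition~\ref{lemme:controlenp} together with $\mathbf{(H4)}$ gives uniform moments of $\sup_{t\le T}\langle\mu^{K}_{t},W\rangle$. The second is the compact containment in $\mathcal{M}_{W}$. This also comes from Proposition~\ref{lemme:controlenp}, since $\sup_{t\le T}\langle \mu^{K}_{t},W\rangle$ is bounded in $L^{p}$ uniformly in $K$. The component $R^{K}$ is tight because it takes values in $[0,R_{\max}]$ and $|\rho|$ is bounded by $\|\varsigma\|_\infty+\chi\langle\mu^{K},\psi\rangle$, and $\psi\le C W$ by Assumption~\ref{hyp:poidsomega}.

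\textbf{Step 2: upgrade to the $W$-weak topology.} I then apply Theorem~\ref{theo:melroel} (the Méléard--Roelly type result) with weight $W$. Assumption $(iii)$ says that every vague accumulation point is continuous for the $W$-weak topology. Given this, the theorem reduces $W$-weak tightness to checking that $\langle \mu^{K},W\rangle$ converges in law in $\mathbb{D}([0,T],\mathbb{R})$ along the subsequence to $\langle\mu^{*},W\rangle$. Equivalently, no mass escapes to $0$ or to $+\infty$ when it is weighted by $W$. To prove this I control the two tails separately. For the part near $+\infty$, I use the semimartingale decomposition of $\langle\mu^{K}_{t},\mathrm{Id}+\omega\rangle$ (Corollary~\ref{corr:martingaleutile} with $\varphi=\omega$ and $\varphi=\mathrm{Id}$) and a truncation $\chi_{A}W$ with $\chi_{A}$ a smooth cutoff equal to $1$ on $[A,\infty)$. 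Lemma~\ref{lemme:equivalence} then bounds the drift of $\langle\mu^{K},\chi_{A}W\rangle$ by $C\langle\mu^{K},\chi_{A/2}W\rangle$ plus a term carrying $b$ from beyond $A$. Gronwall, $\mathbf{(H4)}$ and the convergence in $(i)$ of the initial condition, which controls $\langle\mu^K_0,\chi_AW\rangle$ uniformly, make this small uniformly in $K$ as $A\to\infty$. The martingale parts vanish as $K\to\infty$ because their brackets are $O(1/K)$ by Proposition~\ref{prop:martingalerenormalisee}.

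\textbf{Step 3: the tail near $0$, the main obstacle.} The hardest part is the tail near $0$ for the test function $1+\mathrm{Id}$. When $d$ is unbounded near $0$, the death term $\int_{0}^{t}\langle\mu^{K}_{s},d(1+\mathrm{Id})\rangle\mathrm{d}s$ has no a priori uniform bound. I would handle it by a monotonicity/Fatou argument using $(ii)$. Write $\langle\mu^{K}_{t},\theta_{\epsilon}(1+\mathrm{Id})\rangle$ with $\theta_{\epsilon}$ a smooth cutoff equal to $1$ on $(0,\epsilon]$. Its increase comes only from transport inward at rate $\bar g$, since offspring are born at $x_{0}>\epsilon$; deaths only decrease it. I expect the dangerous contribution to be the flux $\int_0^t\langle\mu^K_s,\bar g|\theta_\epsilon'|(1+\mathrm{Id})\rangle\,\mathrm{d}s$ through $[\epsilon,2\epsilon]$, a region where all rates are bounded. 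By Step 1, this flux converges to its $\mu^{*}$ counterpart. Passing to the limit in the decomposition, the death integral along the subsequence converges to $\beta$-type quantities. Finiteness of $\beta_{T}$ in $(ii)$ and dominated convergence in $\epsilon$ then give
\[\limsup_{\epsilon\to0}\limsup_{K}\mathbb{E}\Big(\sup_{t\le T}\langle\mu^{K}_{t},\theta_{\epsilon}(1+\mathrm{Id})\rangle\Big)=0.\]
The delicate points are making the interchange of $\limsup_K$ and the death integral rigorous, for which I would try lower semicontinuity in law with respect to the vague topology plus the identity in the limit, and obtaining the supremum over $t$ via a Doob inequality on the martingale part. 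Once both tails are controlled, Theorem~\ref{theo:melroel} gives tightness in $\mathbb{D}([0,T],(\mathcal{M}_{W}(\mathbb{R}^{*}_{+}),\mathrm{w})\times[0,R_{\max}])$, which is the conclusion of Conjecture~\ref{theo:final}.
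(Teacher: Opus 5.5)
The paper gives no argument for this statement; the proof is deferred to the author's thesis. So I judge your proposal on its own merits.

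\textbf{The reduction is sound.} You prove vague tightness in $\mathcal{M}_W$, with $W=1+\mathrm{Id}+\omega$. For the weight, the proof of the Roelly criterion only uses the uniform bound on $\sup_t\langle\mu^K_t,W\rangle$, not tightness of $\langle\mu^K,W\rangle$ in $\mathbb{D}$, so that step is fine. You then apply Theorem~\ref{theo:melroel} with (iii). This correctly reduces everything to showing that no $W$-mass escapes to $0$ or to $+\infty$.

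\textbf{The tail at $+\infty$ can be done, but not as written.} Corollary~\ref{corr:martingaleutile} does not cover $\varphi=\mathrm{Id}$. Moreover, the relevant bracket contains $\frac1K\int_0^t\langle\mu^K_s,d\,\mathrm{Id}^2\rangle\mathrm{d}s$, which $W$ does not control. It is simpler to take $\chi_AW$ nondecreasing with $\chi_AW(x_0)=0$. Then births and deaths can only decrease $\langle\mu^K,\chi_AW\rangle$, and a pathwise bound suffices. The Gronwall step has $\chi_{A/2}$ on the right-hand side, so it must be iterated.

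\textbf{The genuine gap is Step 3, which is the heart of the statement.} In the identity for $\langle\mu^K_t,\theta_\epsilon(1+\mathrm{Id})\rangle$, both the near-$0$ mass and the near-$0$ death integral have unknown limits. Vague convergence only gives
$\liminf_K\int_0^t\langle\mu^K_s,d\,\theta_\epsilon(1+\mathrm{Id})\rangle\mathrm{d}s\ge\int_0^t\langle\mu^*_s,d\,\theta_\epsilon(1+\mathrm{Id})\rangle\mathrm{d}s$.
Claiming convergence to the $\beta$-type limit is an unproved uniform-integrability property of $d$ near $0$. It is of the same nature as what you are trying to prove.

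The inequality has the right sign, but it only yields
\[\limsup_K\mathbb{E}\langle\mu^K_t,\theta_\epsilon(1+\mathrm{Id})\rangle\le\mathbb{E}\langle\mu^*_0,\theta_\epsilon(1+\mathrm{Id})\rangle+\mathbb{E}\int_0^t\langle\mu^*_s,\overline{g}\,|\theta'_\epsilon|(1+\mathrm{Id})\rangle\mathrm{d}s-\mathbb{E}\int_0^t\langle\mu^*_s,d\,\theta_\epsilon(1+\mathrm{Id})\rangle\mathrm{d}s+\cdots\]
Hypothesis (ii) only makes the subtracted term small. The inflow term is of order $\epsilon^{-1}\int_0^t\mu^*_s([\epsilon,2\epsilon])\mathrm{d}s$. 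It is not dominated, so ``dominated convergence in $\epsilon$'' does not apply to it. Its vanishing as $\epsilon\to0$ is a statement about the flux of $\mu^*$ into $0$, and nothing in your argument controls that flux.

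\textbf{What is missing is a balance law for the limit near $0$.}
\begin{itemize}
\item Keep the signed flux $g(\cdot,R^K_s)\theta'_\epsilon$ instead of $\overline{g}|\theta'_\epsilon|$.
\item Show that $\mu^*$ loses no mass through the boundary $0$. A natural route is the limit equation tested against compactly supported functions in $(0,x_0)$ increasing to $\theta_\epsilon(1+\mathrm{Id})$; that equation does hold under vague convergence. This is where (ii), combined with Assumption~\ref{hyp:probamortel}, has to be used.
\item With this, the right-hand side above becomes $\mathbb{E}\langle\mu^*_t,\theta_\epsilon(1+\mathrm{Id})\rangle+o_\epsilon(1)$, up to a Gronwall term on the defect.
\item Hypothesis (iii), which Step 3 never uses, then upgrades fixed-$t$ convergence to convergence in $\mathbb{D}$. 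This works because $\langle\mu^K,W\rangle$ can only increase at rate $C\langle\mu^K,W\rangle$ plus a martingale of order $K^{-1/2}$; deaths only decrease it.
\end{itemize}

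\textbf{Two further inaccuracies.}
\begin{itemize}
\item Births also feed the region near $0$: a parent in $(x_0,x_0+2\epsilon)$ drops into $(0,2\epsilon)$.
\item Contrary to your claim, the death integral does have an a priori bound. Deaths are at most initial individuals plus births, and the energy removed by deaths is at most the initial energy plus the energy gained by transport. Hence $\sup_K\mathbb{E}\int_0^T\langle\mu^K_s,d(1+\mathrm{Id})\rangle\mathrm{d}s<\infty$, and by Fatou $\mathbb{E}\beta_T<\infty$ for every accumulation point. So the missing step is not a bound on the death term; it is the boundary balance above.
\end{itemize}
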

The proof of Proposition~\ref{prop:ladernieredelathese} can be found in Section II.5.3 of \cite{broduthesis} (see Theorem II.5.3.). Note that conditionally to the initial condition $\mu^{*}_{0}$, the additional assumptions we introduce in Proposition~\ref{prop:ladernieredelathese} depend only on the limit $\mu^{*}$, which is characterized by \eqref{eq:resslim}-\eqref{eq:indivlim}. Hence, we transposed our probabilistic questioning into the study of the deterministic solutions to the weak formulation of a PDE system. To show Conjecture~\ref{theo:final}, it suffices to show that solutions to \eqref{eq:resslim}-\eqref{eq:indivlim} verify the conclusions of points $(ii)$ and $(iii)$. This could be settled with a deterministic approach, which is not our area of expertise.

\section{Application to allometric functional parameters}
\label{subsec:examples}

In this section, to illustrate the general results of Section~\ref{sec:theorem}, we introduce a specific setting with allometric functional parameters (\textit{i.e.} the jump rates are power functions). For more details about the biological motivation behind this example, see \cite{brodu2026}. We first show in Section~\ref{ex3} that this allometric case falls within the framework of the \hyperlink{gen}{general setting} depicted in Section~\ref{sec:defass} (in particular, we prove that there exists a weight function $\omega$ verifying Assumptions~\ref{hyp:poidsomega} and \ref{ass:finallejd} when the functional parameters of our model are allometric). Thus, the tightness result of Theorem~\ref{theo:convergence} is valid in this allometric case, and we then provide numerical illustrations in Section~\ref{subsec:comparisonibmpde}.

\subsection{Allometric setting}
\label{ex3}

To the best of our knowledge, the following setting has not been studied with an individual-based approach so far, except for the model with constant resources in \cite{brodu2026}. In the following, we will refer to it as the `allometric setting'. For every $x>0$ and $R \geq 0$, we set:
\begin{enumerate}
\item $\ell(x) :=C_{\alpha}x^{\alpha}$,
\item $b(x):= \mathbb{1}_{x > x_{0}}C_{\beta}x^{\beta}$,
\item $f(x,R) := \phi(R)C_{\gamma}x^{\gamma}$ (\textit{i.e.} $\psi(x) = C_{\gamma}x^{\gamma}$),
\item $d(x):= C_{\delta}x^{\delta}$,
\end{enumerate}
with $\alpha, \beta, \gamma, \delta \in \mathbb{R}$ and $(C_{\alpha}, C_{\beta}, C_{\gamma}, C_{\delta}) \in \mathbb{R}^{*}_{+}$.
If $f$ and $g$ are two functions on $\mathbb{R}^{*}_{+}$ with $g$ positive, we write $f \circeq_{0} g$, respectively $f \circeq_{\infty} g$, if $f(x)= g(x)$ on a neighborhood of 0, respectively a neighborhood of $+ \infty$.
\begin{defi}[\textbf{Allometric form}]
We say that $\omega \in \mathcal{C}^{1}(\mathbb{R}^{*}_{+})$ has an \hypertarget{alloform}{allometric form} if there exists $0 \leq \kappa_{1} \leq \kappa_{2}$ with
\begin{itemize}
\item[-] $ \exists C_{1}>0, \quad \omega(x) \circeq_{0} C_{1} x^{\kappa_{1}}$,
\item[-]$ \exists C_{2}>0, \quad \omega(x) \circeq_{\infty} C_{2}x^{\kappa_{2}}$.
\end{itemize}
Remark that in that case, $\omega$ is non-decreasing.
\end{defi}
We verify easily that we can construct such functions for any $0 \leq \kappa_{1} \leq \kappa_{2}$. In the upcoming lemmas, we investigate under which conditions on the allometric coefficients we verify Assumptions~\ref{hyp:gainenergy}, \ref{hyp:probamortel} and \ref{ass:finallejd} with a weight function $\omega$ with an \hyperlink{alloform}{allometric form}.
\begin{lemme}
Under the allometric setting, we have
\begin{center}
$($Assumptions~\ref{hyp:gainenergy} and \ref{hyp:probamortel} $)\Leftrightarrow$ $(\delta \leq \alpha -1$, $\gamma = \alpha$ and $C_{\gamma} > C_{\alpha})$.
\end{center} 
\label{lemme:allomcond}
\end{lemme}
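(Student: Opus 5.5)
We prove both directions separately, using the explicit forms $g(x,R)=\phi(R)C_{\gamma}x^{\gamma}-C_{\alpha}x^{\alpha}$ and $d/\ell(y)=(C_{\delta}/C_{\alpha})y^{\delta-\alpha}$. The reverse implication is a direct computation, and the forward implication reduces to looking at the behaviour near $0$ and near $+\infty$.

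\emph{Reverse direction.} Assume $\gamma=\alpha$, $C_{\gamma}>C_{\alpha}$ and $\delta\le\alpha-1$.
\begin{itemize}
\item Then $g(x,R_{\max})=x^{\alpha}(\phi(R_{\max})C_{\gamma}-C_{\alpha})$. So Assumption~\ref{hyp:gainenergy} holds as soon as $\phi(R_{\max})C_{\gamma}>C_{\alpha}$.
\item For Assumption~\ref{hyp:probamortel}, the integrand is $y^{\delta-\alpha}$ with $\delta-\alpha\le -1$. Hence $\int_{0}^{x}y^{\delta-\alpha}\,\mathrm{d}y=+\infty$ for every $x>0$.
\end{itemize}

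\emph{Forward direction, Assumption~\ref{hyp:probamortel}.} This gives $\int_0^x y^{\delta-\alpha}\,\mathrm{d}y=+\infty$, which is equivalent to $\delta-\alpha\le-1$. Here we use that $\ell>0$ everywhere in the allometric setting, so the convention $d/\ell=+\infty$ never applies.

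\emph{Forward direction, Assumption~\ref{hyp:gainenergy}.} By Lemma~\ref{lemme:gainenergy}, we get $\phi(R_{\max})>0$ and $C_{\gamma}x^{\gamma}>C_{\alpha}x^{\alpha}$ for all $x>0$. This means $x^{\gamma-\alpha}>C_{\alpha}/C_{\gamma}$ for all $x>0$.
\begin{itemize}
\item If $\gamma>\alpha$, the left side tends to $0$ as $x\to0$, a contradiction.
\item If $\gamma<\alpha$, it tends to $0$ as $x\to+\infty$, again a contradiction.
\item Hence $\gamma=\alpha$, and then $C_{\gamma}>C_{\alpha}$.
\end{itemize}

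\emph{Main obstacle.} The delicate point is the constant $\phi(R_{\max})$. The reverse direction literally needs $\phi(R_{\max})C_{\gamma}>C_{\alpha}$, which is stronger than $C_{\gamma}>C_{\alpha}$ unless $\phi(R_{\max})=1$. I would handle this in one of two ways:
\begin{itemize}
\item interpret the condition $C_{\gamma}>C_{\alpha}$ in the statement as implicitly normalized by $\phi(R_{\max})$, i.e.\ as $\phi(R_{\max})C_{\gamma}>C_{\alpha}$, and state that normalization explicitly in the proof;
\item or note that the equivalence is meant with $\phi(R_{\max})$ absorbed into $C_{\gamma}$.
\end{itemize}
With this understood, the forward direction yields exactly $\phi(R_{\max})C_{\gamma}>C_{\alpha}$, and both directions match.
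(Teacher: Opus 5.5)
Your steps follow the paper's proof, and each one is correct: Lemma~\ref{lemme:gainenergy} plus the behaviour of $x^{\gamma-\alpha}$ at $0$ and $+\infty$ for the forward direction, and non-integrability of $y^{\delta-\alpha}$ at $0$ for Assumption~\ref{hyp:probamortel}. The obstacle you flag is not a question of interpretation, though. It is a genuine error in the statement, which the paper's one-line converse (``we verify that conversely, this implies Assumption~\ref{hyp:gainenergy}'') overlooks.

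Since $\phi$ is non-decreasing with limit $1$, one has $\phi(R_{\max})\le 1$. Whenever $\phi(R_{\max})<1$, you can pick $C_{\gamma}>C_{\alpha}$ with $\phi(R_{\max})C_{\gamma}\le C_{\alpha}$. Then the right-hand side of the lemma holds while $g(x,R_{\max})=(\phi(R_{\max})C_{\gamma}-C_{\alpha})x^{\alpha}\le 0$. The paper's own simulation parameters are an instance: $\phi(R)=R/(5+R)$, $R_{\max}=2$, $C_{\gamma}=2$, $C_{\alpha}=1$, $\gamma=\alpha=3/4$, $\delta=\alpha-1$ satisfy the right-hand side, yet $g(x,R_{\max})=-\tfrac{3}{7}x^{3/4}<0$ for all $x>0$. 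Absorbing $\phi(R_{\max})$ into $C_{\gamma}$ is not available either. The lemma is about the constants of the allometric setting with $\phi$ normalized by $\lim_{R\to+\infty}\phi(R)=1$, so doing so changes the statement rather than proving it.

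You should therefore say plainly that the reverse implication is false as written. Then prove the corrected equivalence, with $\phi(R_{\max})C_{\gamma}>C_{\alpha}$ in place of $C_{\gamma}>C_{\alpha}$.

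Your forward direction, as written, does not give this sharp condition. Going through Lemma~\ref{lemme:gainenergy} discards the factor $\phi(R_{\max})$ and only yields $C_{\gamma}>C_{\alpha}$. So your closing claim that the forward direction ``yields exactly'' $\phi(R_{\max})C_{\gamma}>C_{\alpha}$ is not supported by your argument. Instead, run the same limits $x\to 0$ and $x\to+\infty$ directly on $\phi(R_{\max})C_{\gamma}x^{\gamma}>C_{\alpha}x^{\alpha}$, which already forces $\phi(R_{\max})>0$. This gives $\gamma=\alpha$ and $\phi(R_{\max})C_{\gamma}>C_{\alpha}$, and together with your reverse computation it proves the corrected lemma.
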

\begin{proof}
Under the allometric setting, we have
$$  \forall x>0 , \quad \psi(x)-\ell(x) = C_{\gamma}x^{\gamma}-C_{\alpha}x^{\alpha},  $$
so Lemma~\ref{lemme:gainenergy} implies that $\gamma = \alpha $ and $C_{\gamma}> C_{\alpha}$, and we verify that conversely, this implies Assumption~\ref{hyp:gainenergy}. Assumption~\ref{hyp:probamortel} is also immediately equivalent to $\delta \leq \alpha-1$.
\end{proof}
\begin{lemme}
Under the allometric setting, under Assumptions~\ref{hyp:gainenergy} and \ref{hyp:probamortel}, suppose that the weight function $\omega$ has an \hyperlink{alloform}{allometric form} with $0 \leq \kappa_{1} \leq \kappa_{2}$.
\begin{itemize}
\item[-] If $ \delta < -1$, then Assumption~\ref{hyp:poidsomega} is equivalent to 
$$\kappa_{1}=\kappa_{2}= -\delta, \quad \alpha \in [0,1]\quad \mathrm{and} \quad \beta \leq 2+ \delta;$$
\item[-] If $-1 \leq \delta \leq 0 $, then Assumption~\ref{hyp:poidsomega} is equivalent to 
$$ - \delta \leq \kappa_{1} \leq \kappa_{2} \leq \dfrac{1-\delta}{2} \leq 1, \quad \alpha \in [0,1], \quad \mathrm{and} \quad \beta \leq 1.$$
\item[-] If $\delta > 0 $, then Assumption~\ref{hyp:poidsomega} cannot be verified.
\end{itemize}
These constraints on the allometic coefficients are illustrated on Figure~\ref{fig:contraitnesallomdeux} and Figure~\ref{fig:contraitnesallomtrois}.
\label{lemme:allomqutre}  
\end{lemme}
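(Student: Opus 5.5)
The proof reduces each of the three inequalities of Assumption~\ref{hyp:poidsomega} to comparisons of power exponents, separately on a neighborhood of $0$ and on a neighborhood of $+\infty$. On any compact subset of $\mathbb{R}^{*}_{+}$ both sides are continuous and the right-hand side is bounded below by $1$, so nothing needs checking there. The one exception is the point $x_{0}$, where $b$ jumps; there $b$ stays bounded, so it is harmless.

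By Lemma~\ref{lemme:allomcond} we have $\gamma=\alpha$, $C_{\gamma}>C_{\alpha}$ and $\delta\le\alpha-1$. Hence $\overline{g}(x)=\max\left(C_{\alpha},\phi(R_{\max})C_{\gamma}-C_{\alpha}\right)x^{\alpha}$ is a positive multiple of $x^{\alpha}$. For an \hyperlink{alloform}{allometric form} we have the following.
\begin{itemize}
\item Near $0$: $\omega\asymp x^{\kappa_1}$, and $\omega'\asymp x^{\kappa_1-1}$, with $\omega'\equiv0$ if $\kappa_1=0$.
\item Near $+\infty$: $\omega\asymp x^{\kappa_2}$ and $\omega'\asymp x^{\kappa_2-1}$.
\item Near $+\infty$, the mean value theorem gives $|\omega(x_0)+\omega(x-x_0)-\omega(x)|\asymp 1+x^{\kappa_2-1}$.
\item $1+x+\omega\asymp 1$ near $0$ and $\asymp x^{\max(1,\kappa_2)}$ near $+\infty$.
\end{itemize}

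With these asymptotics, the three points of Assumption~\ref{hyp:poidsomega} become the following list of conditions.
\begin{itemize}
\item Growth point, near $0$: $\alpha\ge0$, and either $\kappa_1=0$ or $\alpha+\kappa_1\ge1$.
\item Growth point, near $+\infty$: $\alpha\le\max(1,\kappa_2)$ and $\alpha+\kappa_2-1\le\max(1,\kappa_2)$.
\item Death point, near $0$: $\delta+\kappa_1\ge0$, since $\hbar(\omega)\asymp x^{\kappa_1}$ when $\kappa_1>0$.
\item Death point, near $+\infty$: $\delta+2\kappa_2\le\max(1,\kappa_2)$.
\item Birth point, near $+\infty$: $\beta+\max(0,2(\kappa_2-1))\le\max(1,\kappa_2)$. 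Near $0$ there is no birth constraint, since $b\equiv0$ on $(0,x_0]$.
\end{itemize}
Each item is an ``if and only if'', because all the functions involved are exact power functions near $0$ and $+\infty$. This is what makes the lemma an equivalence rather than only a sufficient condition.

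It remains to solve this system in the three regimes for $\delta$.

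\textbf{Case $\delta<-1$.} The death condition at $0$ forces $\kappa_1\ge-\delta>1$, so $\kappa_2>1$. The death condition at $+\infty$ then reads $\delta+2\kappa_2\le\kappa_2$, i.e. $\kappa_2\le-\delta$, and therefore $\kappa_1=\kappa_2=-\delta$. The growth conditions at $+\infty$ reduce to $\alpha\le1$, and $\alpha\ge0$ comes from the growth condition at $0$; the condition $\alpha+\kappa_1\ge1$ is then automatic. The birth condition becomes $\beta+2\kappa_2-2\le\kappa_2$, i.e. $\beta\le2+\delta$.

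\textbf{Case $-1\le\delta\le0$.} If $\kappa_2>1$, the death condition at $+\infty$ would give $\kappa_2\le-\delta\le1$, a contradiction; so $\kappa_2\le1$. The death condition then reads $\kappa_2\le(1-\delta)/2$, and together with $\kappa_1\ge-\delta$ this gives the stated chain of inequalities. The growth conditions give $\alpha\in[0,1]$. For the condition at $0$, I use $\alpha\ge1+\delta\ge1-\kappa_1$, which is exactly where the hypothesis $\delta\le\alpha-1$ from Lemma~\ref{lemme:allomcond} enters. The birth condition gives $\beta\le1$.

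\textbf{Case $\delta>0$.} Here $\alpha\ge1+\delta>1$, so the growth condition at $+\infty$ forces $\kappa_2\ge\alpha>1$. The death condition then requires $\delta+2\kappa_2\le\kappa_2$, which is impossible.

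The main obstacle is not any of these computations but making the reduction to exponents rigorous, in particular in the degenerate subcases.
\begin{itemize}
\item When $\kappa_1=0$ or $\kappa_2=0$, $\omega'$ vanishes identically near the corresponding end, so the growth term involving $\omega'$ imposes no condition there.
\item For the birth term, I must check that $\omega(x_0)+\omega(x-x_0)-\omega(x)$ has a genuine two-sided power asymptotic. When $\kappa_2\le1$ it tends to a constant, and I need that limit not to cancel inconveniently; I will handle this by bounding it above and below for large $x$.
\end{itemize}
I will treat these boundary cases explicitly before running the case analysis above.
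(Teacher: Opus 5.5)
Your proposal is correct and follows essentially the paper's route: reduce each inequality of Assumption~\ref{hyp:poidsomega} to exponent comparisons near $0$ and near $+\infty$, then split according to $\delta$. You are in fact more explicit than the paper about the near-$0$ growth constraint $\alpha+\kappa_1\ge 1$, which is indeed automatic from $\delta\le\alpha-1$.

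One small correction: for $\kappa_2\le 1$ you need no lower bound on $|\omega(x_0)+\omega(x-x_0)-\omega(x)|$, and none holds in general, since it vanishes identically for $\omega=\mathrm{Id}$. This is harmless because the factor $1+\hbar(\cdot)$ in the birth condition is already bounded below by $1$.
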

\begin{proof}
By Lemma~\ref{lemme:allomcond}, we work with $\delta \leq \alpha -1$, $\gamma = \alpha$ and $C_{\gamma} > C_{\alpha}$, and an \hyperlink{alloform}{allometric form} for $\omega$ with $0 \leq \kappa_{1} \leq \kappa_{2}$. First, we suppose that Assumption~\ref{hyp:poidsomega} holds true. If we consider the different points of Assumption~\ref{hyp:poidsomega}, it suffices to study the associated inequalities on a neighborhood of 0 and on a neighborhood of $+ \infty$, since all the considered functions are continuous. Under the allometric setting, the third point of Assumption~\ref{hyp:poidsomega} gives
\begin{align}
\forall x>0, \quad C_{\delta}x^{\delta}(\omega(x)+ \omega^{2}(x)) \leq C_{d}\left( 1 + x + \omega(x) \right).
\label{eq:allomthird}
\end{align} 
Considering $x \rightarrow 0$, respectively $x \rightarrow + \infty$, and an \hyperlink{alloform}{allometric form} for $\omega$, we obtain that \eqref{eq:allomthird} implies that $- \delta \leq \kappa_{1}$, respectively that $2\kappa_{2}+ \delta \leq \max(1,\kappa_{2})$. First if $\kappa_{2}>1$, then we necessarily have $1 < -\delta = \kappa_{1} =\kappa_{2}$. Else, we verify that we have $-\delta \leq \kappa_{1} \leq \kappa_{2} \leq \dfrac{1-\delta}{2} \leq 1$. Then, the first point of Assumption~\ref{hyp:poidsomega} gives 
\begin{align}
\forall x>0, \quad  \max(\phi(R_{\max})C_{\gamma}-C_{\alpha},C_{\alpha})x^{\alpha}(1+\omega'(x)) \leq C_{g}\left( 1 + x + \omega(x) \right).
\label{eq:allompsi}
\end{align} 
Considering $x \rightarrow 0$ and $x \rightarrow + \infty$, we verify that \eqref{eq:allompsi} implies that $0 \leq \alpha \leq \max(1,\kappa_{2})$. In particular, we obtain that Assumption~\ref{hyp:poidsomega} cannot be verified if $\delta >0$ (because in that case, we would have $0 > -\delta \geq 1- \alpha \geq 0$). Also, if we suppose by contradiction that $\alpha >1$ (so $\kappa_{2}>1$), then considering \eqref{eq:allompsi} when $x \rightarrow + \infty$, we would have $\alpha+ \kappa_{2}-1 \leq \max(1,\kappa_{2}) = \kappa_{2}$ so $\alpha \leq 1$ which is a contradiction. Hence, we always have $\alpha \in [0,1]$. Finally, we consider the second point of Assumption~\ref{hyp:poidsomega}. First if $-1 \leq \delta \leq 0$, then $\omega$ is Lipschitz continuous on $(1,+\infty)$ by the previous work (because $\kappa_{2} \leq 1$), so the second point of Assumption~\ref{hyp:poidsomega} is equivalent to \eqref{eq:blipschitz}, which entails $\beta \leq 1$. Else if $\delta < -1$, we verify that the left-hand side in the second point of Assumption~\ref{hyp:poidsomega} is of order $x^{\beta-2\delta-2}$ in a neighborhood of $+ \infty$, and the right-hand side is of order $x^{-\delta}$, hence we necessarily have $\beta \leq 2+\delta$. The converse implications in Lemma~\ref{lemme:allomcond} are straightforward verifications.
\end{proof}
\textbf{Remark:} Note that if we want to pick a weight function that has an \hyperlink{alloform}{allometric form} with $0 \leq \kappa_{1} \leq \kappa_{2}$, we restrict ourselves to the case $\alpha \in [0,1]$. If we want to consider for example the case $\beta+1=\delta+1=\alpha= \gamma=3/4$ supported by the Metabolic Theory of Ecology \cite{savdee_08}, we can pick $\kappa_{1}= 1/4$ and $\kappa_{2}= 5/8$ according to Lemma~\ref{lemme:allomqutre} (and this is the best choice in the sense that we cannot choose another weight function that has an \hyperlink{alloform}{allometric form} and dominates this particular weight near 0 or $+ \infty$). Obviously, we investigated here only a precise form of the weight function $\omega$, other choices may be possible to be less restrictive in the case $\beta+1=\delta+1=\alpha=3/4$, or to study for example the case $\alpha <0$. We leave this for future work. 

\begin{figure}[h!]
\centering
\begin{tikzpicture}
\draw[pattern=north east lines, pattern color=green] (0,1) -- (-1,1) -- (-3,-1)--(-4,-2)--(0,-2);
\draw[color=green,thick,-] (0,-2) -- (0,1);
\draw[color=green,thick,-] (0,1) -- (-1,1);
\draw[color=green,thick,-] (-1,1) -- (-4,-2);
\draw[->] (-4,0) -- (1,0);
\draw[->] (0,-2) -- (0,2);
\draw (-0.1,0.1) -- (0.1,-0.1) node[below right]{0};
\node at (1,-0.25) {$\delta$};
\node at (-0.25,2) {$\beta$};

\draw (-1,0.1) -- (-1,-0.1) node[below]{$-1$};
\draw (-0.1,1) -- (0.1,1) node[right]{1};
\draw (-0.1,-1) -- (0.1,-1) node[right]{$-1$};

\draw[color=white,thick,-] (-4,-2) -- (0,-2);

\draw[fill=green] (-1,-1) circle (0.1cm);
\end{tikzpicture}
\caption{Visual representation of the constraints on $\delta$ and $\beta$ in Lemma~\ref{lemme:allomqutre}, with $\gamma=\alpha \in [0,1]$ (we took $\alpha=0$ on this figure). The admissible coefficients $(\delta, \beta)$ are those in the green hatched area, and verify one of the two following conditions: $(i)$ ($ \delta < -1$ and $\beta \leq 2+\delta$); $(ii)$ ($-1 \leq \delta \leq 0$ and $\beta \leq 1$). Remark that the green area always contains the particular case $\beta=\delta=\alpha-1=\gamma-1$ highlighted by the Metabolic Theory of Ecology, represented by a green dot on the figure.}
\label{fig:contraitnesallomdeux}
\end{figure}
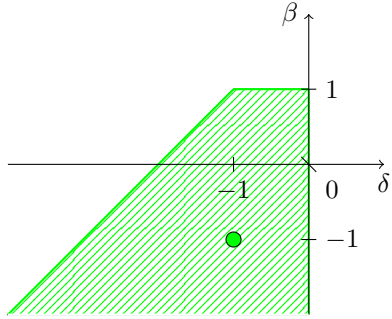

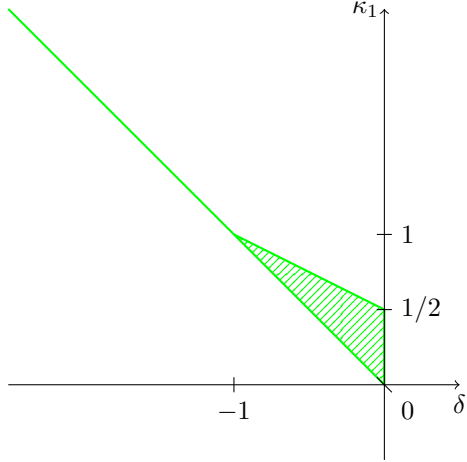
\begin{figure}[h!]
\centering
\begin{tikzpicture}
\draw[pattern=north east lines, pattern color=green] (-2,2) -- (0,0) -- (0,1);
\draw[color=green,thick,-] (-5,5) -- (-2,2);
\draw[color=green,thick,-] (-2,2) -- (0,0);
\draw[color=green,thick,-] (0,0) -- (0,1);
\draw[color=green,thick,-] (-2,2) -- (0,1);
\draw[->] (-5,0) -- (1,0);
\draw[->] (0,-1) -- (0,5);
\draw (-0.1,0.1) -- (0.1,-0.1) node[below right]{0};
\node at (1,-0.25) {$\delta$};
\node at (-0.25,5) {$\kappa_{1}$};

\draw (-2,0.1) -- (-2,-0.1) node[below]{$-1$};
\draw (-0.1,2) -- (0.1,2) node[right]{1};
\draw (-0.1,1) -- (0.1,1) node[right]{$1/2$};


\end{tikzpicture}
\caption{Visual representation of the constraints on $\kappa_{1}$ in Lemma~\ref{lemme:allomqutre}, depending on the value of $\delta$ (we took $\alpha=1$ on this figure). This represents the fact that if $\delta < -1$, then $\kappa_{1}=-\delta$, and if $-1 \leq \delta \leq 0$, then $-\delta \leq \kappa_{1} \leq (1-\delta)/2$. Note that $\kappa_{2}$ verifies the same conditions in Lemma~\ref{lemme:allomqutre}, so that this graph is also valid to visualize the constraints on $\kappa_{2}$. To sum up, if we want to pick an admissible triplet $(\delta,\kappa_{1},\kappa_{2})$, we first choose $(\delta,\kappa_{1})$ on the green line or in the green hatched area. Then, the remaining possible values for $\kappa_{2}$ are such that the two following conditions hold true: $\kappa_{2} \geq \kappa_{1}$, and $(\delta,\kappa_{2})$ is also on the green line or in the green hatched area.}
\label{fig:contraitnesallomtrois}
\end{figure}

\newpage

\begin{lemme}
Under the allometric setting, suppose that the weight function $\omega$ has an \hyperlink{alloform}{allometric form} with $0 \leq \kappa_{1} \leq \kappa_{2}$. Then, Assumption~\ref{ass:finallejd} is equivalent to 
\begin{center}
$ \exists \eta \in (0,1), \quad \kappa_{1} \leq \alpha \leq \max(\kappa_{2},1-\eta)$ and $\beta \leq \max(\kappa_{2},1-\eta)$.
\end{center} 
\label{eq:encoredesrestrictiobs}
\end{lemme}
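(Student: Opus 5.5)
The plan is to reduce each of the three requirements of Assumption~\ref{ass:finallejd} to a comparison of power functions near $0$ and near $+\infty$. On compact subsets of $\mathbb{R}^{*}_{+}$, all the functions involved are continuous and $\omega$ is positive, so those regions cost nothing. I will work in the context of Lemma~\ref{lemme:allomcond}, i.e.\ with $\gamma=\alpha$ and $C_{\gamma}>C_{\alpha}$. Then
\[
\overline{g}(x)=\max\bigl(C_{\alpha},\,\phi(R_{\max})C_{\gamma}-C_{\alpha}\bigr)\,x^{\alpha}=:c\,x^{\alpha},
\]
with $c>0$, and this is exactly what makes only $\alpha$ appear in the statement. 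The same $\eta\in(0,1)$ will be used in both directions of the equivalence.

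\emph{Third requirement.} Near $+\infty$ we have $\omega(x)=C_{2}x^{\kappa_{2}}$ with $\kappa_{2}\geq 0$, so $1/\omega$ is bounded there. This requirement is therefore always satisfied for an allometric form and plays no role in the equivalence.

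\emph{Second requirement: the bound on $\overline{g}$.} Split $\mathbb{R}^{*}_{+}$ into $(0,\varepsilon]$, $[\varepsilon,A]$ and $[A,+\infty)$, with $\varepsilon<x_{0}$.
\begin{itemize}
\item On $(0,\varepsilon]$, $\varpi\equiv 0$. The inequality reads $c\,x^{\alpha}\leq \tilde c_{g}C_{1}x^{\kappa_{1}}$ as $x\to 0$, which holds if and only if $\alpha\geq\kappa_{1}$.
\item On $[\varepsilon,A]$, the ratio $\overline{g}/(\varpi+\omega)$ is continuous with positive denominator, hence bounded.
\item On $[A,+\infty)$, $(x-x_{0})^{1-\eta}\sim x^{1-\eta}$, so $\varpi(x)+\omega(x)$ is of exact order $x^{\max(\kappa_{2},1-\eta)}$. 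The bound $c\,x^{\alpha}\lesssim x^{\max(\kappa_{2},1-\eta)}$ then holds if and only if $\alpha\leq\max(\kappa_{2},1-\eta)$.
\end{itemize}
Combining the three regions gives $\kappa_{1}\leq\alpha\leq\max(\kappa_{2},1-\eta)$.

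\emph{First requirement: the bound on $b$ for $x>1$.} On $(1,x_{0}]$ (if nonempty) we have $b\equiv 0$, so nothing is required. On $(x_{0},A]$, continuity of $b$ and positivity of $\omega$ give a bound. Near $+\infty$, $C_{\beta}x^{\beta}\lesssim x^{\max(\kappa_{2},1-\eta)}$ if and only if $\beta\leq\max(\kappa_{2},1-\eta)$. No constraint arises near $0$ because the condition is only imposed for $x>1$.

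\emph{Assembling and main obstacle.} For the direct implication, take the $\eta$ given by Assumption~\ref{ass:finallejd} and let $x\to0$ and $x\to+\infty$ in the two inequalities to extract the exponent conditions. For the converse, take the $\eta$ from the condition and glue the three regions with a single constant. The only delicate point I expect is the middle region near $x_{0}$, where $\varpi$ vanishes or is not bounded below. This is handled by relying solely on $\omega\geq \min_{[\varepsilon,A]}\omega>0$ there; that minimum is positive because $\omega$ is continuous and positive (and non-decreasing).
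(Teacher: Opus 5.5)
Your proposal is correct and follows the same route as the paper, whose proof simply calls this an immediate verification and singles out that boundedness of $1/\omega$ near $+\infty$ is automatic for an allometric form. Your region-by-region comparison of exponents near $0$ and near $+\infty$ supplies exactly the details the paper leaves to the reader; you also make explicit $\gamma=\alpha$, which comes from the standing Assumption~\ref{hyp:gainenergy} and is indeed needed for only $\alpha$ to appear in the statement.
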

\begin{proof}
This is an immediate verification (in particular, if $\omega$ has an \hyperlink{alloform}{allometric form}, the fact that $1/\omega$ is bounded near $+ \infty$ is automatically verified).
\end{proof}
Eventually, if we want to apply Theorem~\ref{theo:convergence} under the allometric setting, we thus have to gather all the restrictive assumptions of Lemmas~\ref{lemme:allomcond}, \ref{lemme:allomqutre} and \ref{eq:encoredesrestrictiobs}. For example, if we fix $\beta=\delta=\alpha-1$ (this is motivated by Theorem 2. in Section 3.1. of \cite{brodu2026}), this gives the following constraints on $\kappa_{1}$ and $\kappa_{2}$:
\begin{itemize}
\item[-] $0 \leq 1- \alpha \leq \kappa_{1} \leq \kappa_{2} \leq 1- \dfrac{\alpha}{2} < 1$,
\item[-] $\kappa_{1} \leq \alpha < 1$, which implies with the previous point that $\alpha \in [1/2,1)$.
\end{itemize}

\subsection{Numerical illustration of Theorem~\ref{theo:convergence} under the allometric setting}
\label{subsec:comparisonibmpde}

In this section, we illustrate numerically the tightness result of Theorem~\ref{theo:convergence} under the allometric setting. We consider a deterministic solution $(\mu^{*}_{t},R^{*}_{t})_{t \in [0,T]}$ to the system \eqref{eq:resslim}-\eqref{eq:indivlim} with initial condition $\mu^{*}_{0}$, and make the following assumption.

\begin{hyp}
For every $t \in [0,T]$, $\mu^{*}_{t}$ admits an integrable density $u_{t} \in \mathcal{C}^{1}(\mathbb{R}^{*}_{+} \setminus \{x_{0}\})$ with respect to Lebesgue measure, such that
\begin{itemize}
\item[-] for all $t \in [0,T]$, the function $ x \in \mathbb{R}^{*}_{+} \setminus \{ x_{0} \} \mapsto u_{t}(x) =: u(t,x)$ is $\mathcal{C}^{1}$,
\item[-] for all $t \in [0,T]$, the limits $u_{t}(x_{0}+)$ and $u_{t}(x_{0}-)$ exist and are finite,
\item[-] for all $x \in \mathbb{R}^{*}_{+}$, the function $t \in [0,T] \mapsto u_{t}(x)$ is $\mathcal{C}^{1}$,
\item[-] there exists a locally integrable function $F$ on $\mathbb{R}^{*}_{+}$, such that for all $t \in [0,T]$, for all $x >0$, $|\partial_{1} u(t,x)| \leq F(x).$
\end{itemize}
\label{hyp:reguldens}
\end{hyp}
We conjecture that Assumption~\ref{hyp:reguldens} holds true under our setting, and refer to Proposition III.1.3. in \cite{broduthesis} for a line of research. With the notations of Section~\ref{subsec:dyn} and Assumption~\ref{hyp:reguldens}, we let the reader check that we can rewrite the weak formulation \eqref{eq:resslim}-\eqref{eq:indivlim} into a classical PDE system with function solutions. Namely, $(u_{t},R^{*}_{t})_{t}$ should verify, for every $t \in (0,T]$ and $x \in \mathbb{R}^{*}_{+} \setminus \{ x_{0}\}$,
\begin{align}
\partial_{t}u_{t}(x) + \partial_{x}\bigg(g(x,R^{*}_{t})u_{t}(x)\bigg) & = b(x+x_{0})u_{t}(x+x_{0})-(b(x)+d(x))u_{t}(x),
\label{eq:indivedp}
\end{align}
where $\partial_{x}$, respectively $\partial_{t}$ is the partial derivative with respect to the variable $x$, respectively $t$, and
\begin{align}
\dfrac{\mathrm{d}R^{*}_{t}}{\mathrm{d}t}  = \varsigma(R^{*}_{t}) & - \chi \displaystyle{\int_{\mathbb{R}^{*}_{+}}} f(x,R^{*}_{t}) u_{t}(x) \mathrm{d}x,
\label{eq:ressedp}
\end{align}
so that in particular $t \mapsto R^{*}_{t}$ is $\mathcal{C}^{1}$ on $[0,T]$. We also have the boundary condition
\begin{align}
\displaystyle{\int_{\mathbb{R}^{*}_{+}}} b(y) u_{t}(y) \mathrm{d}y &= \bigg( u_{t}(x_{0}+)-u_{t}(x_{0}-) \bigg)g(x_{0},R^{*}_{t}),
\label{eq:boundaryedp}
\end{align}
and the initial condition $u_{0}$ at time $t=0$. 
\\\\
In the following, we simulate the stochastic process $(\mu^{K}_{t},R^{K}_{t})_{t \geq 0}$ of Section~\ref{subsec:construcdeux} for different values of $K \geq 1$, and denote it as the \textit{individual-based model}, or simply IBM. We will also simulate the PDE system \eqref{eq:indivedp}, \eqref{eq:ressedp}, \eqref{eq:boundaryedp}, and denote this deterministic model as the \textit{PDE model} in the following. Both IBM and PDE model are implemented with \verb+Python+, under the allometric setting of Section~\ref{ex3}, and for the renewal of the resource, we place ourselves in a chemostat setting (see \cite{FRITSCH20151}). We specify our simulation parameters in Appendix~\ref{app:simupara} and describe our algorithms in Appendix~\ref{app:algo}. As in Section 6 of \cite{FRITSCH20151}, we compare simulations of the IBM and the PDE model in three different regimes.
\begin{enumerate}
\item Small population size, with $K=100$;
\item Medium population size, with $K=1000$;
\item Large population size, with $K=10000$.
\end{enumerate}
For each of these regimes, we start from the same initial condition $u_{0}$ depicted on Figure~\ref{fig:intiialdensitth} in Appendix~\ref{app:simupara} and simulate 100 independent runs of the IBM. The convergence of the IBM towards the PDE model is illustrated on Figure~\ref{fig:convergence}, where we present the evolution of the population size, the total energy of the population, and the amount of resources over time for $t \in [0,200]$. The fact that the limit is apparently unique motivates the discussion of Section~\ref{section:uniqueness}. We also represent a phase portrait energy/resource on this time window. Remark that we illustrate the convergence of $\langle \mu^{K}_{t}, 1 \rangle$ (population size) and $\langle \mu^{K}_{t}, \mathrm{Id} \rangle$ (total energy), where $1+\mathrm{Id}$ is not dominated by $\omega$, and this is motivated by Conjecture~\ref{theo:final}. These simulation results are very similar to those obtained in Section 6 in \cite{FRITSCH20151}, but the main difference is the deviations of the IBM from the PDE in terms of total energy, that we observe on the second line of Figure~\ref{fig:convergence}. As time increases, it seems that the variability of the IBM trajectories around the PDE also increases, even if this phenomenon has less impact as $K$ goes to $+ \infty$ by our tightness result. This variability comes precisely from the main new contribution of our work compared to existing literature, which is the fact that the individual growth rate are not bounded, so individual energies can increase very fast.
\\\\
Then on Figure~\ref{fig:densities}, at times $t=0$, 20 and 160, we show a numerical approximation of the renormalized energy distribution $ \tilde{u}_{t} : x>0 \mapsto \dfrac{u_{t}(x)}{\int_{0}^{+ \infty} u_{t}(y) \mathrm{d}y}$, where $(t,x) \mapsto u_{t}(x)$ is solution to the PDE system \eqref{eq:indivedp}, \eqref{eq:ressedp}, \eqref{eq:boundaryedp} with initial condition $u_{0}$. On this curve, we superimpose renormalized histograms of the empirical energy distribution in the population for 100 independent IBM simulations, taken at the same times $t$ to illustrate again the convergence result of Theorem~\ref{theo:convergence}. We observe numerically that for every $t >0$, the density $ x \mapsto u_{t}(x)$ is discontinuous at $x_{0}$, as expressed in the boundary condition \eqref{eq:boundaryedp}. Furthermore, the density is rapidly (we can observe this phenomenon from $t=5$) concentrated on a precise energy window and has a bimodal shape, with a peak near $x_{0}$ and another one near 0, which seems natural with our birth rule in mind. The density decreases very fast to 0 after $x_{0}$ and seems to stabilize after time $t=150$. Finally, our main observation is that the system seems to reach a non-trivial equilibrium, different from $(R_{\mathrm{in}},0)$ (extinction of the population).

\begin{figure}[h!]
\centering
\begin{subfigure}{0.32\textwidth}
    \includegraphics[width=\textwidth]{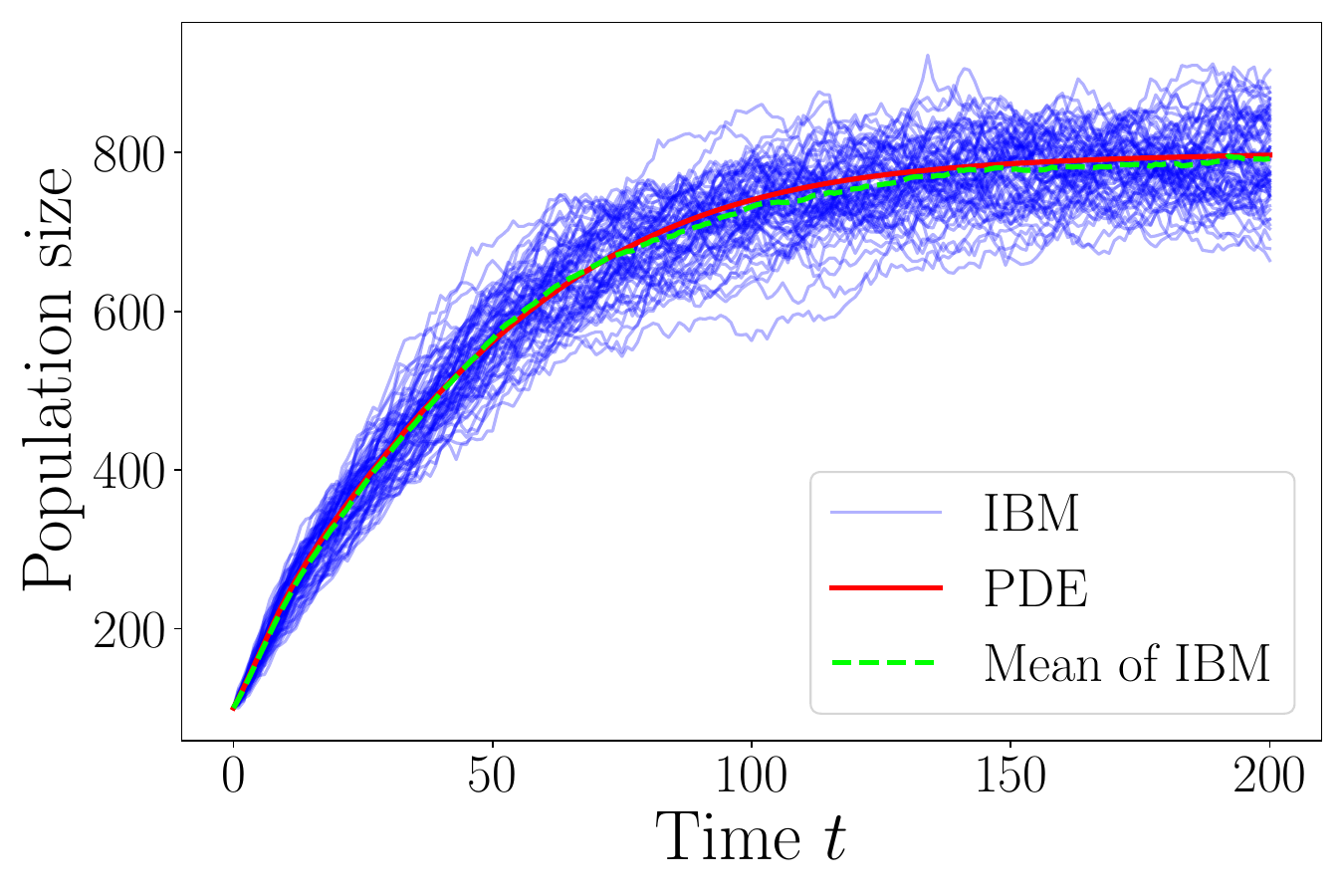} 
    \label{fig:sizecent}
\end{subfigure}
\hspace{0.01 cm}
\begin{subfigure}{0.32\textwidth}
    \includegraphics[width=\textwidth]{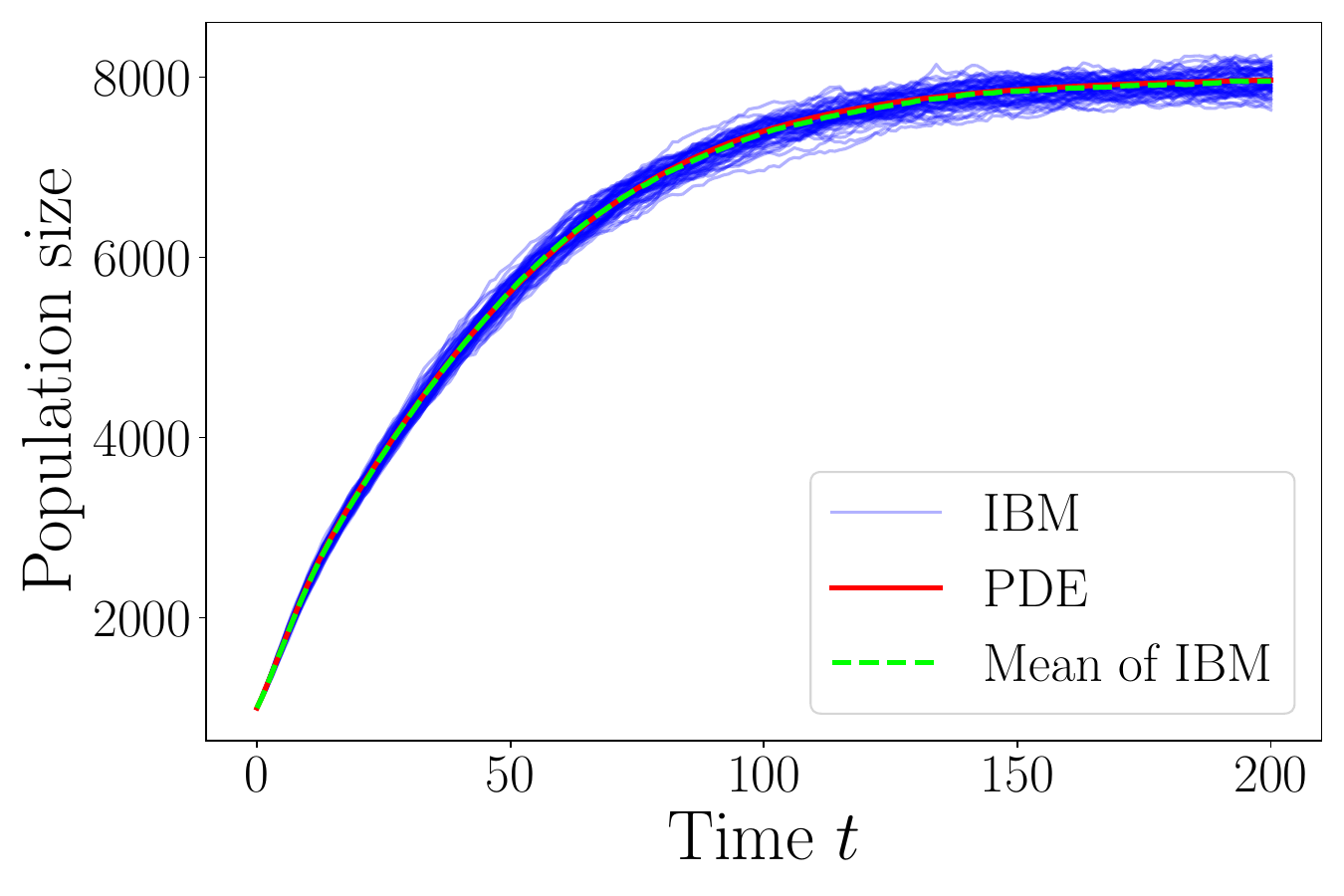} 
    \label{fig:sizemille}
\end{subfigure}
\hspace{0.01 cm}
\begin{subfigure}{0.32\textwidth}
    \includegraphics[width=\textwidth]{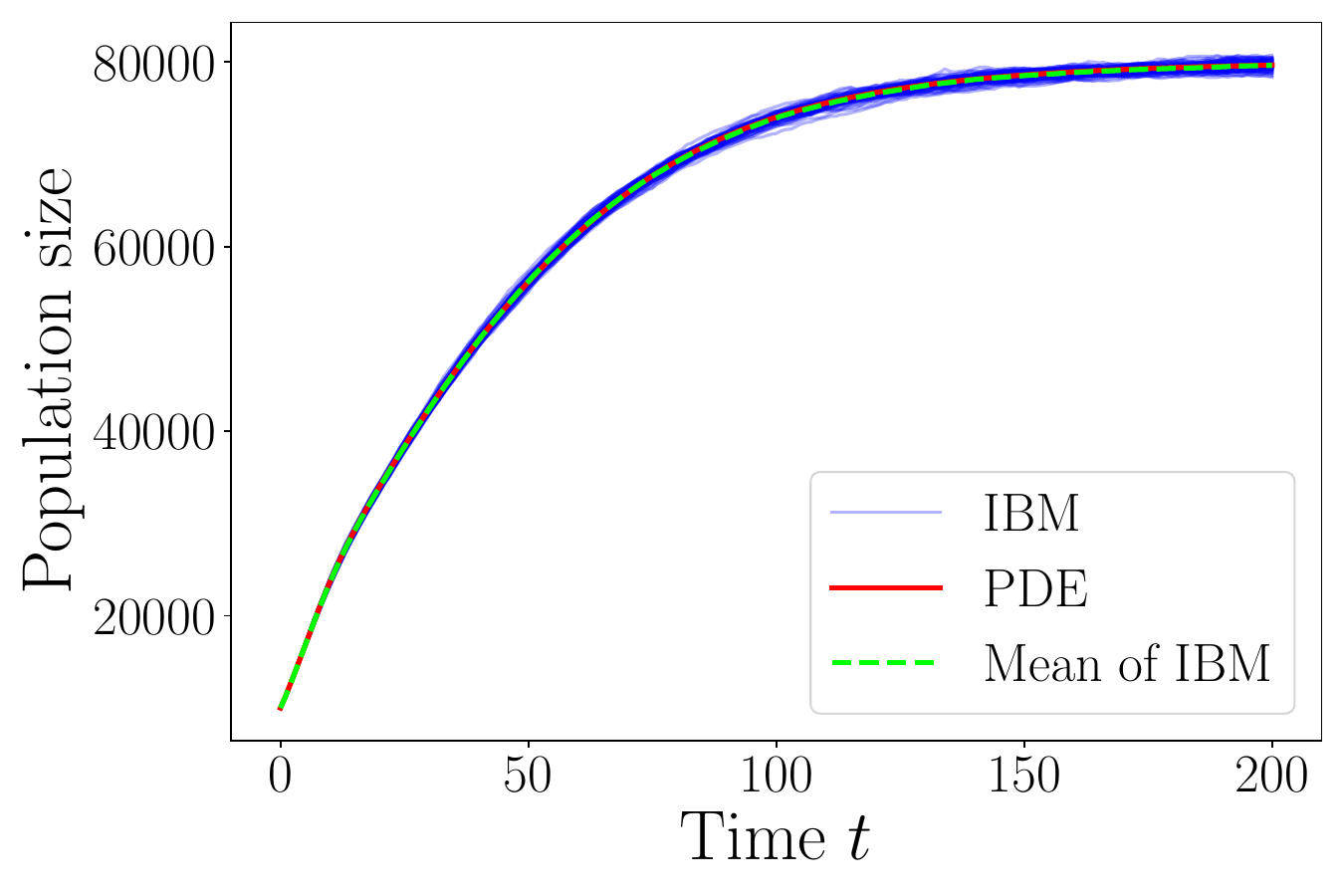} 
    \label{fig:size10000}
\end{subfigure}
\hspace{0.01 cm}
\begin{subfigure}{0.32\textwidth}
   \includegraphics[width=\textwidth]{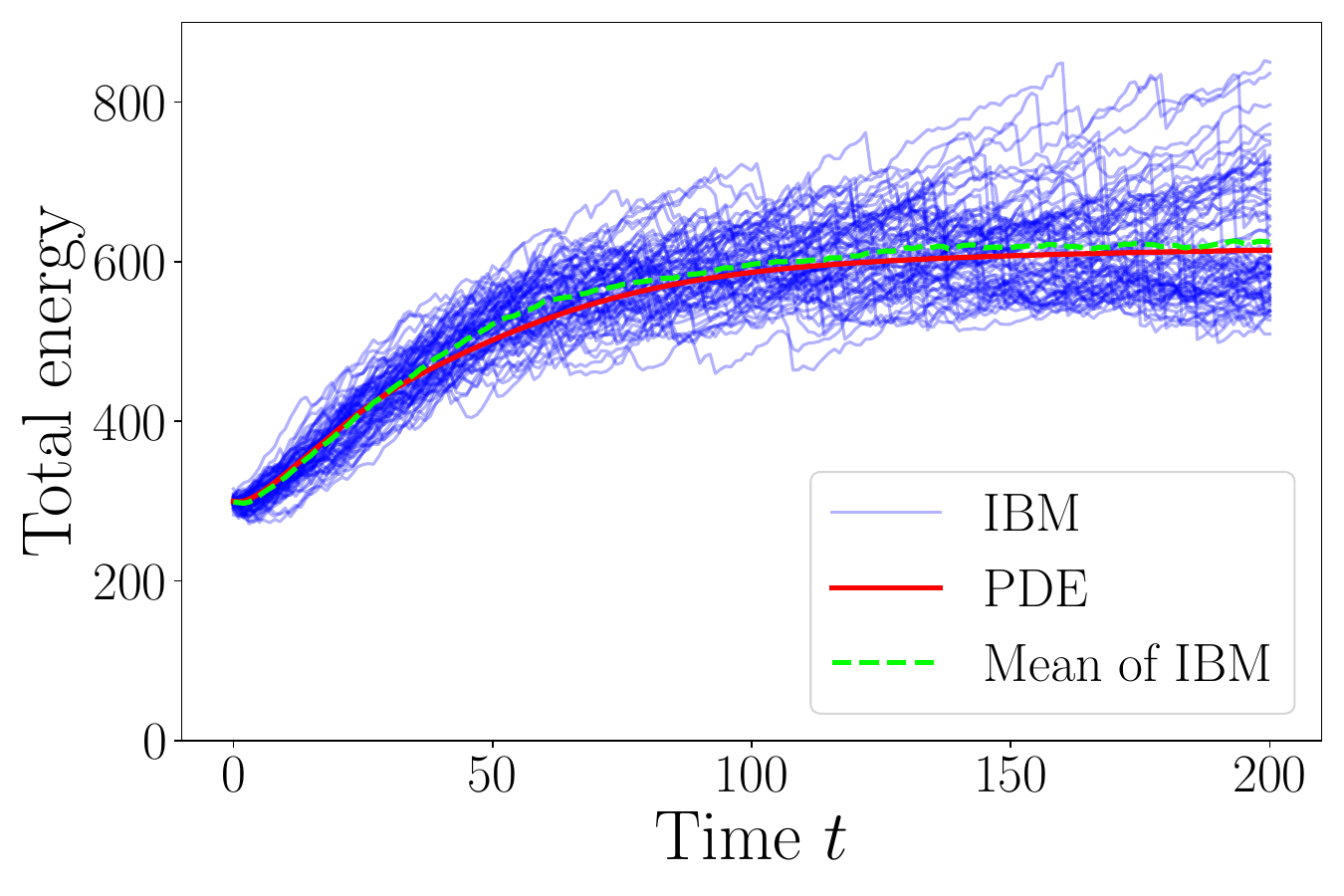}  
    \label{fig:energycent}
\end{subfigure}
\hspace{0.01 cm}
\begin{subfigure}{0.32\textwidth}
   \includegraphics[width=\textwidth]{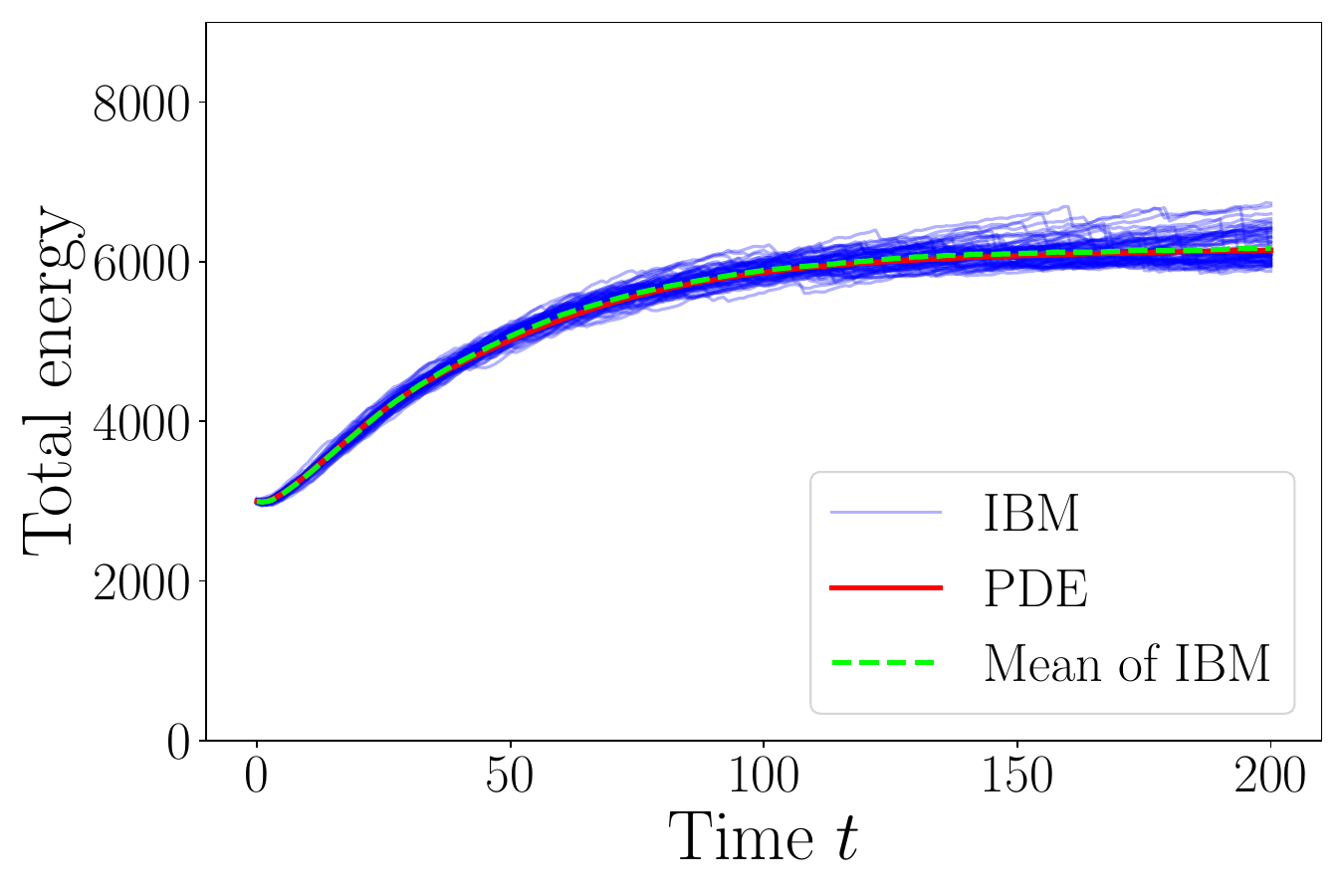}  
    \label{fig:energymille}
\end{subfigure}
\hspace{0.01 cm}
\begin{subfigure}{0.32\textwidth}
   \includegraphics[width=\textwidth]{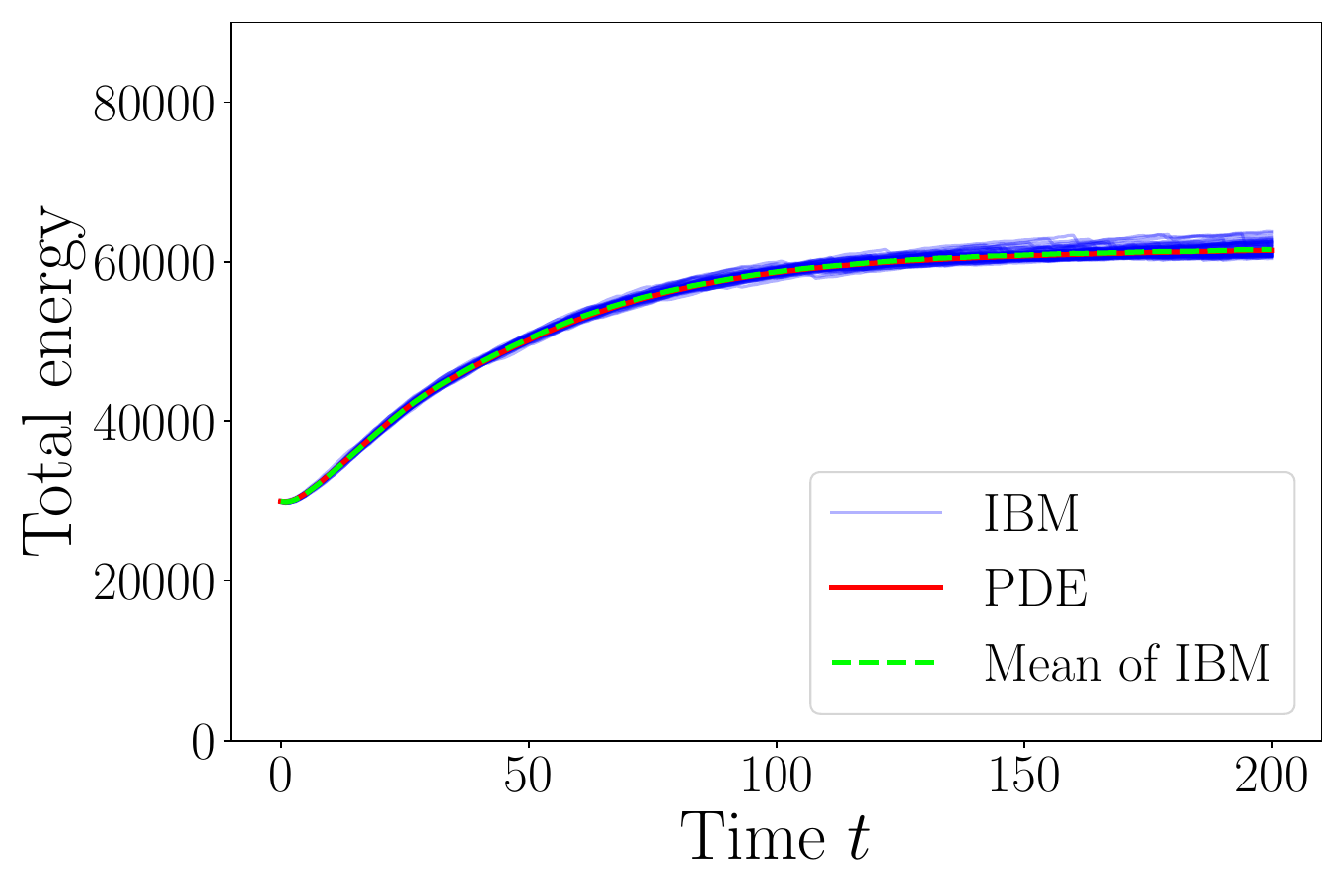}  
    \label{fig:energy10000}
\end{subfigure}
\hspace{0.01 cm}
\begin{subfigure}{0.32\textwidth}
   \includegraphics[width=\textwidth]{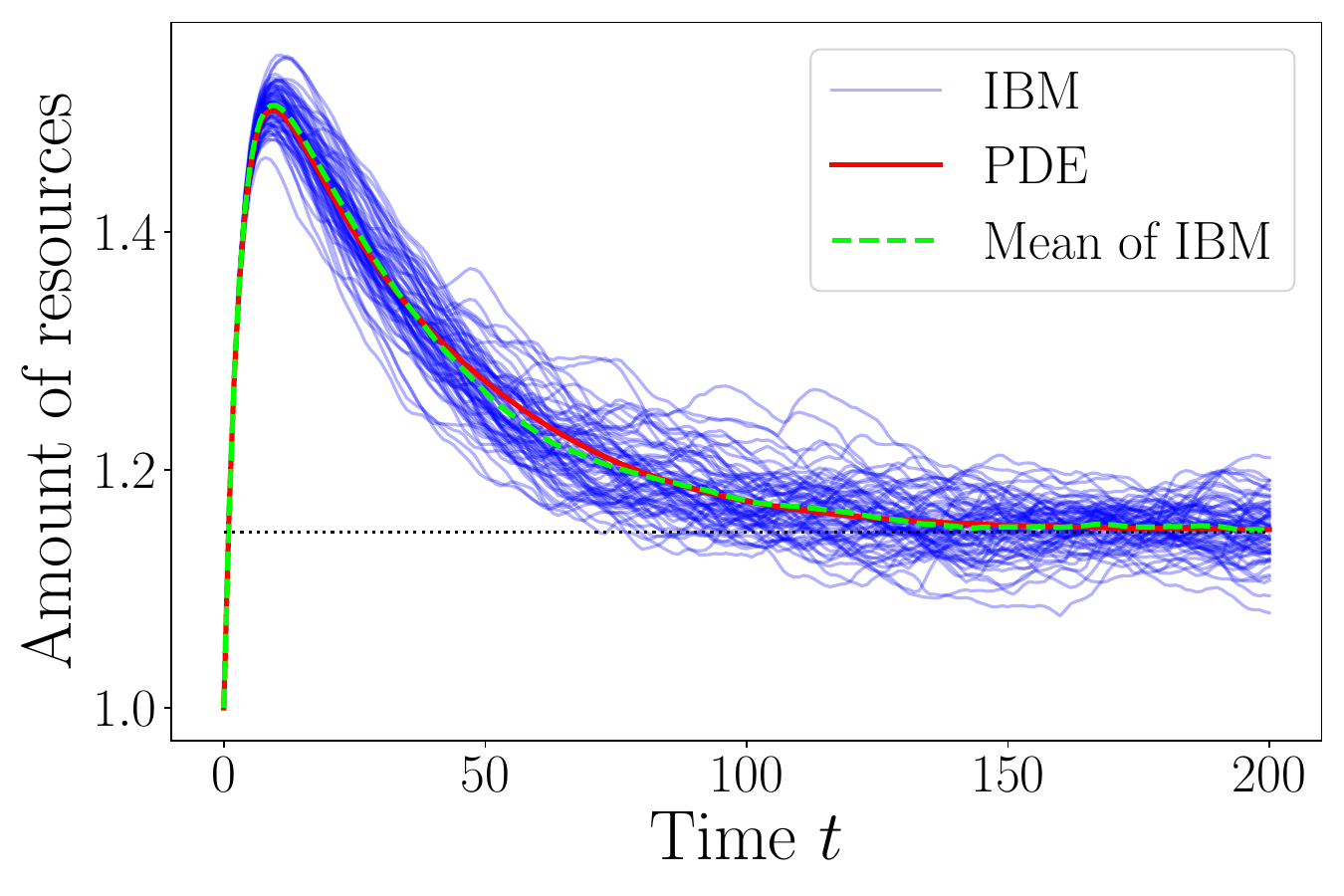}
    \label{fig:resourcecent}
\end{subfigure}
\hspace{0.01cm}
\begin{subfigure}{0.32\textwidth}
   \includegraphics[width=\textwidth]{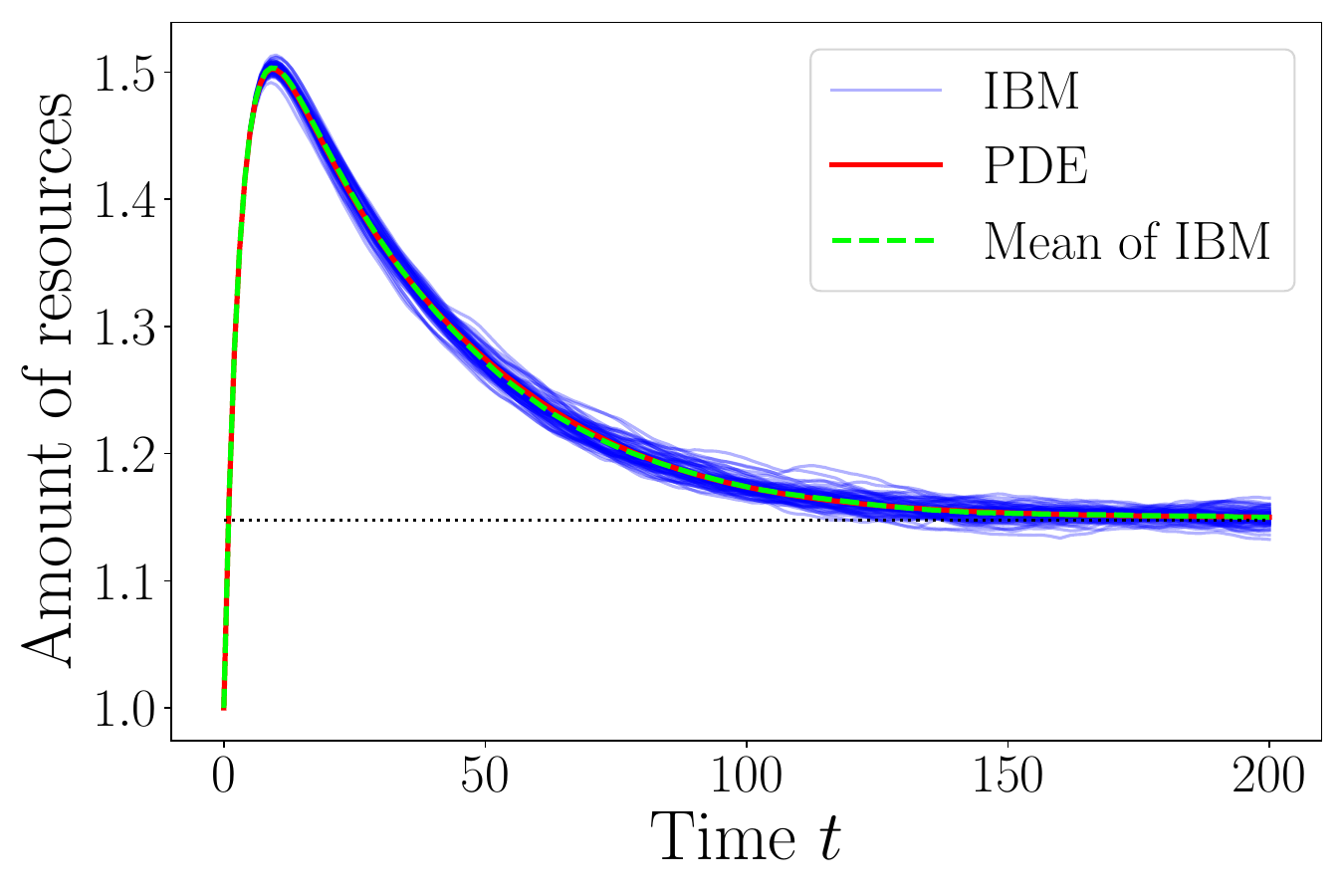}
    \label{fig:resourcemille}
\end{subfigure}
\hspace{0.01cm}
\begin{subfigure}{0.32\textwidth}
   \includegraphics[width=\textwidth]{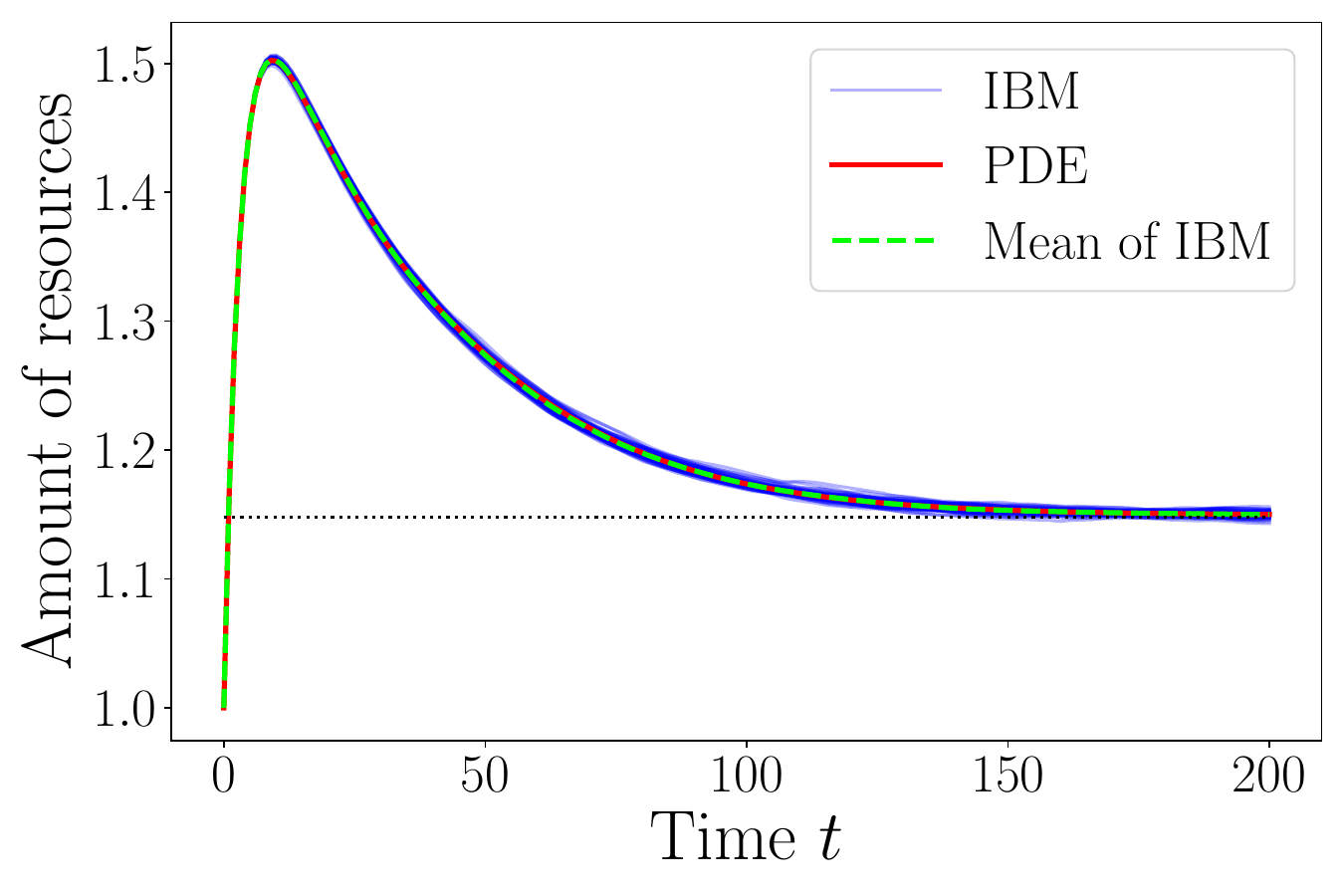}
    \label{fig:resource10000}
\end{subfigure}
\hspace{0.01cm}
\begin{subfigure}{0.32\textwidth}
  \includegraphics[width=\textwidth]{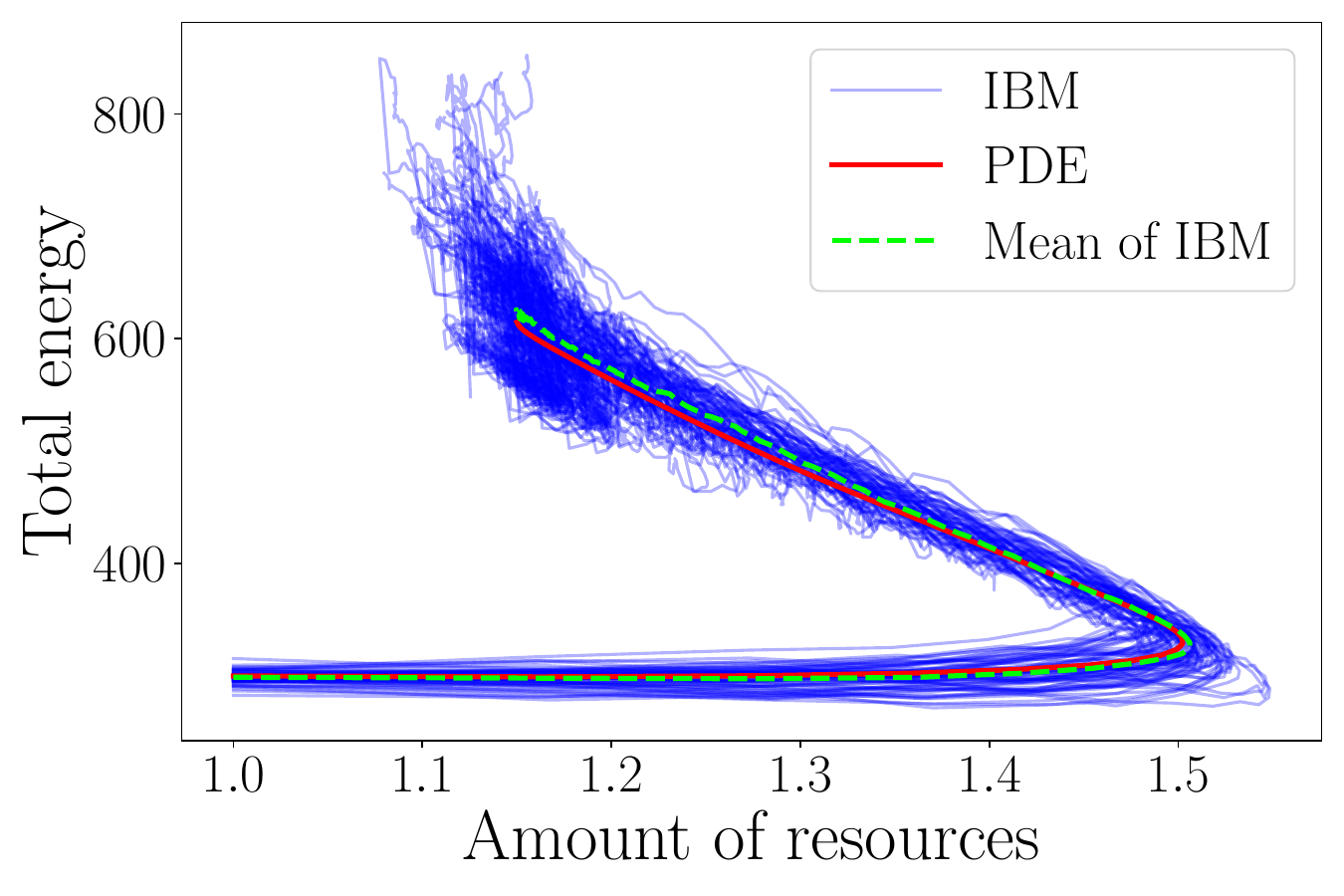}
  \caption{$K=100$}
    \label{fig:phasepcent}
 \end{subfigure}
    \begin{subfigure}{0.32\textwidth}
  \includegraphics[width=\textwidth]{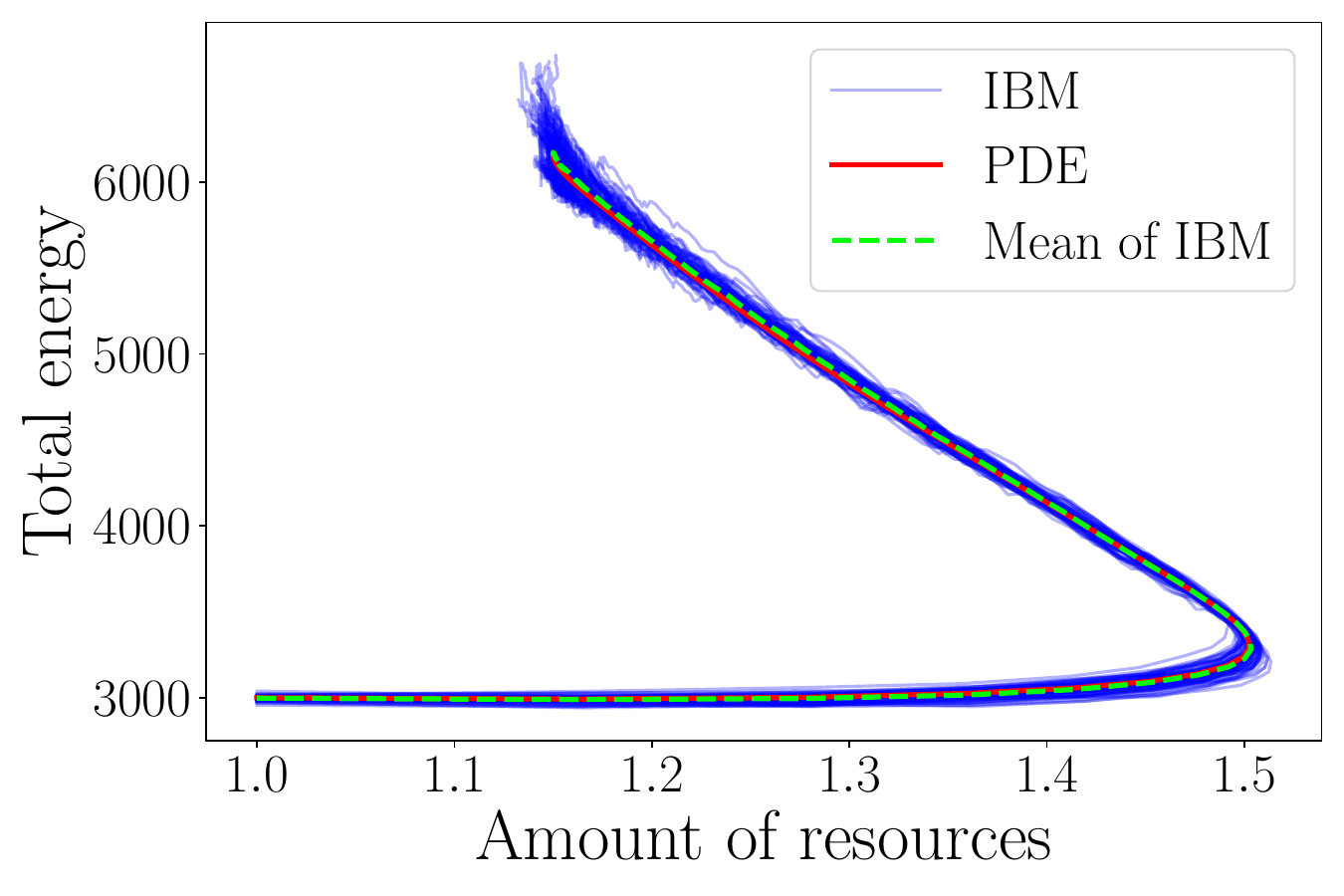}
  \caption{$K=1000$}
    \label{fig:phasepmille}
 \end{subfigure}
    \begin{subfigure}{0.32\textwidth}
  \includegraphics[width=\textwidth]{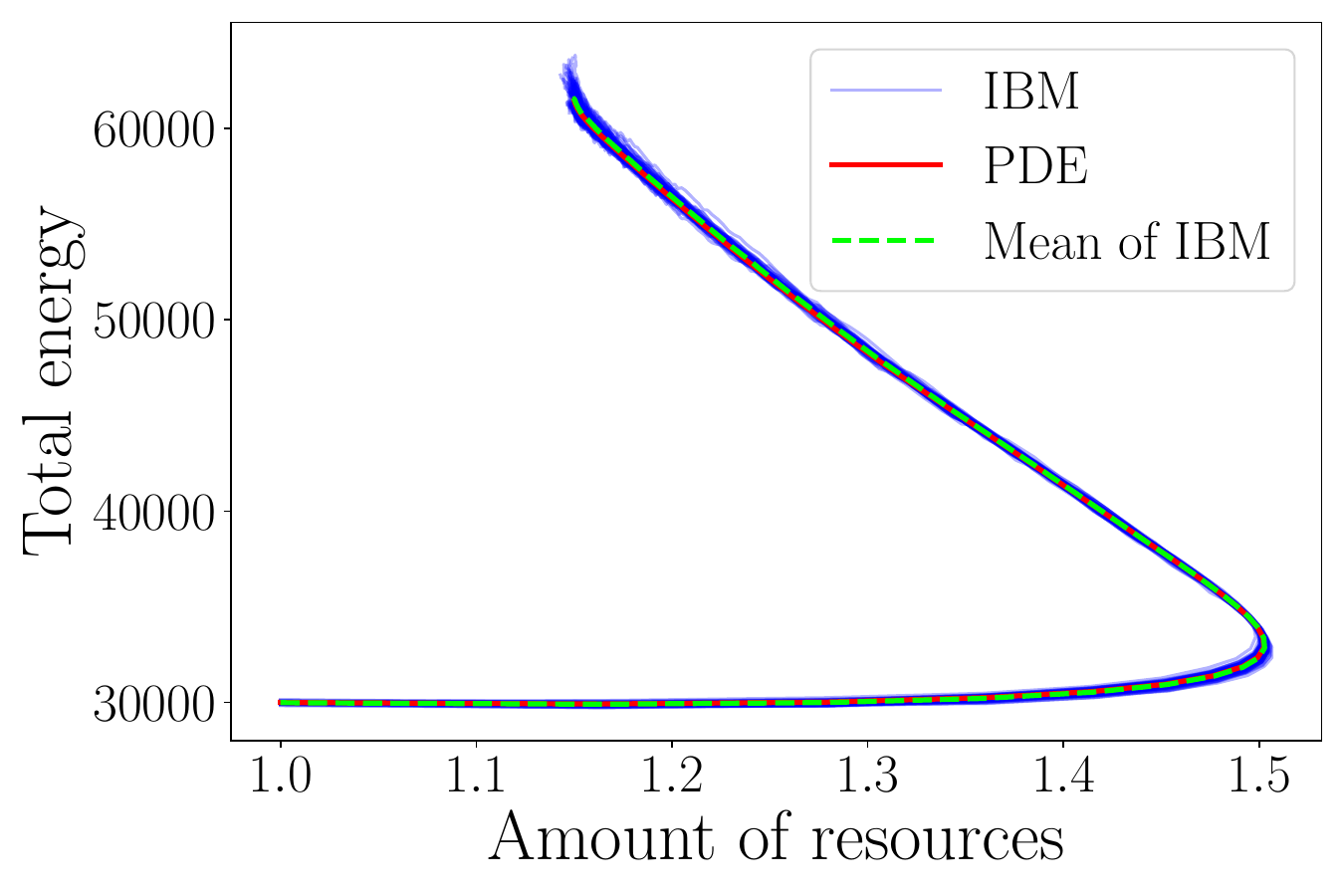}
  \caption{$K=10000$}
    \label{fig:phasep10000}
\end{subfigure}
\caption{From the first row to the third row, time evolutions of $N^{K}_{t}$, $E^{K}_{t}$ and $R^{K}_{t}$ (in blue), respectively $N^{*}_{t} := \langle \mu^{*}_{t}, 1 \rangle$, $E^{*}_{t} := \langle \mu^{*}_{t}, \mathrm{Id} \rangle$ and $R^{*}_{t}$ (in red), representing the population size, the total energy of the population and the amount of resources, and associated respectively with the trajectories of 100 independent IBM simulations (in blue) and the numerical resolution of the PDE system \eqref{eq:indivedp}, \eqref{eq:ressedp}, \eqref{eq:boundaryedp} with initial condition $u_{0}$ (in red). The fourth row presents the energy/resource phase portrait. The green dotted curve is the mean value of the stochastic simulations in blue. These graphs are presented for small (left), medium (middle) and large (right) initial population sizes. On the third row, the dotted black line locates the value $R_{\mathrm{eq}}$ assumed to be an equilibrium for the amount of resource.}
\label{fig:convergence}
\end{figure}

\begin{figure}[h!]
\centering
\begin{subfigure}{0.28\textwidth}
  \includegraphics[width=\textwidth]{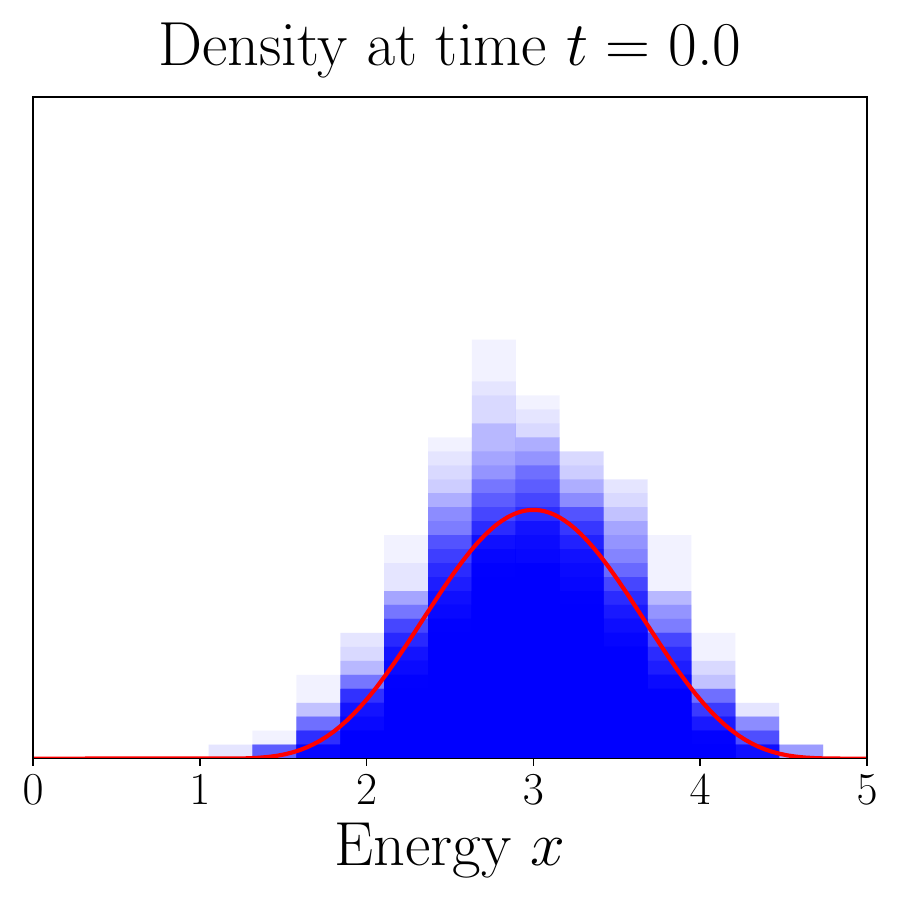}
    \label{fig:d0100}
 \end{subfigure}
 \begin{subfigure}{0.28\textwidth}
  \includegraphics[width=\textwidth]{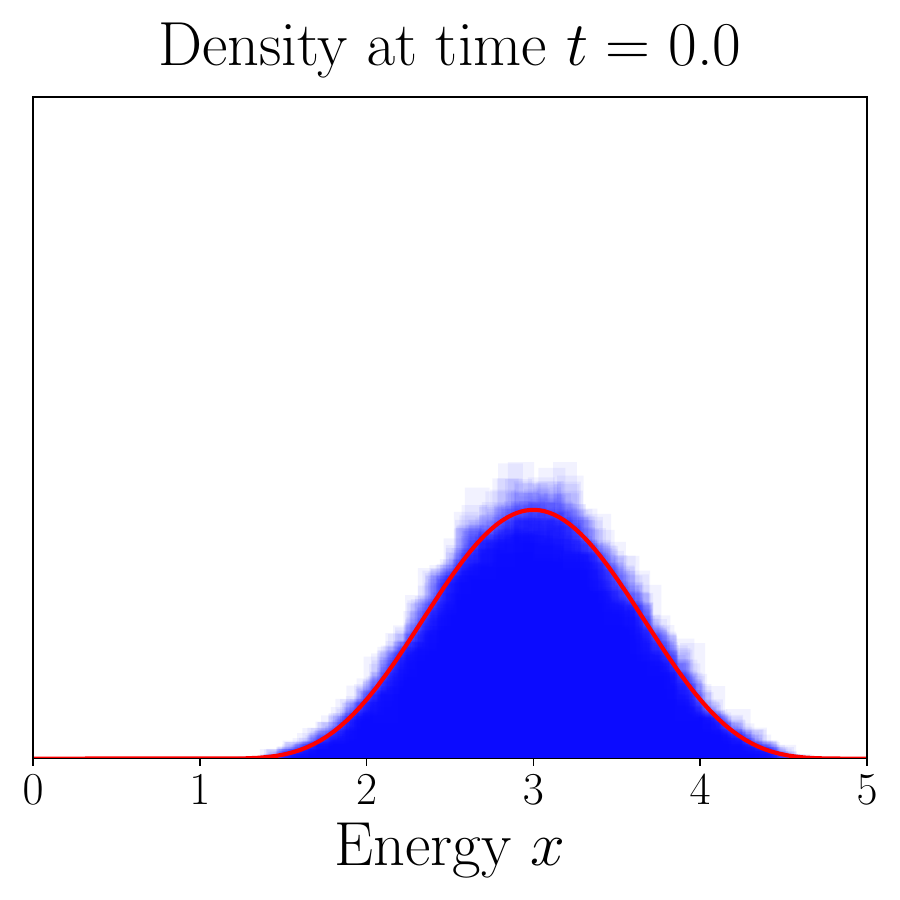}
    \label{fig:d01000}
 \end{subfigure}
 \begin{subfigure}{0.28\textwidth}
  \includegraphics[width=\textwidth]{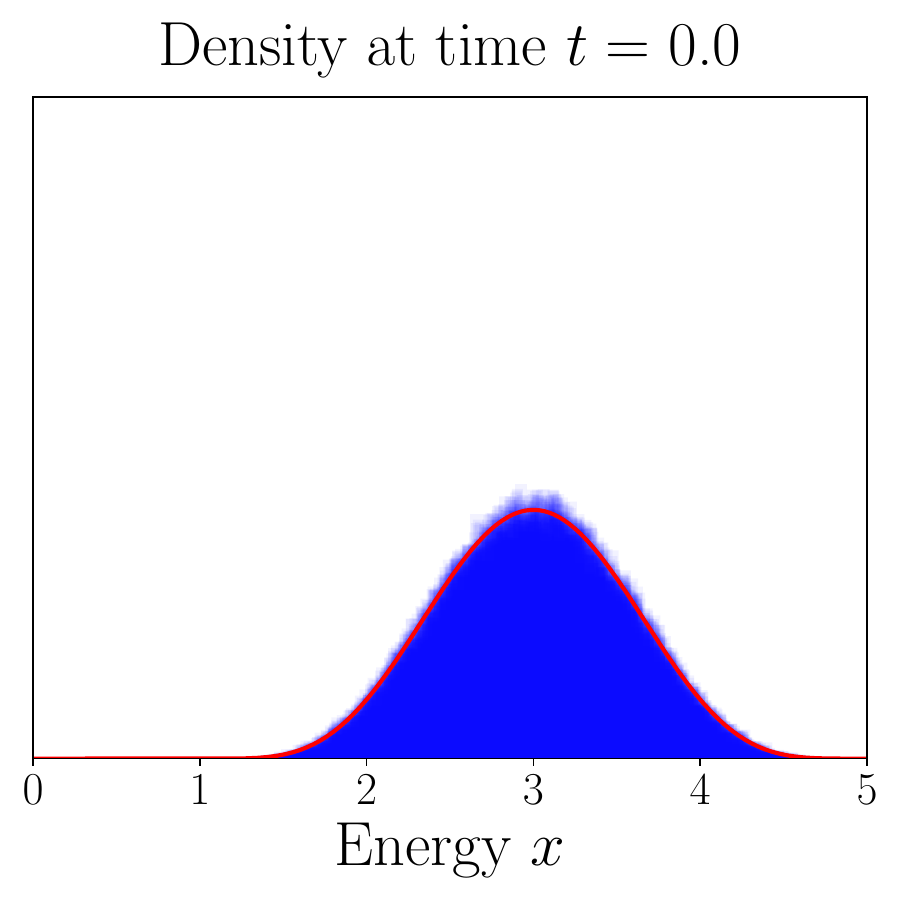}
    \label{fig:d0der}
 \end{subfigure}
 \begin{subfigure}{0.28\textwidth}
  \includegraphics[width=\textwidth]{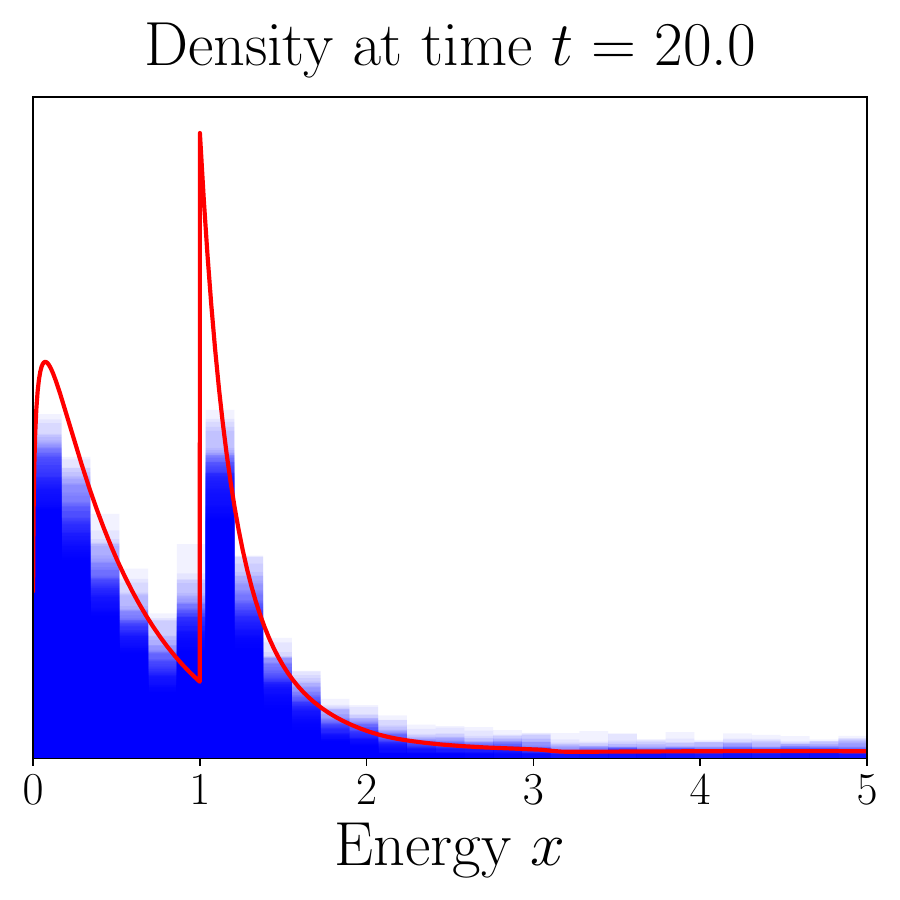}
    \label{fig:d20100}
 \end{subfigure}
    \begin{subfigure}{0.28\textwidth}
    \includegraphics[width=\textwidth]{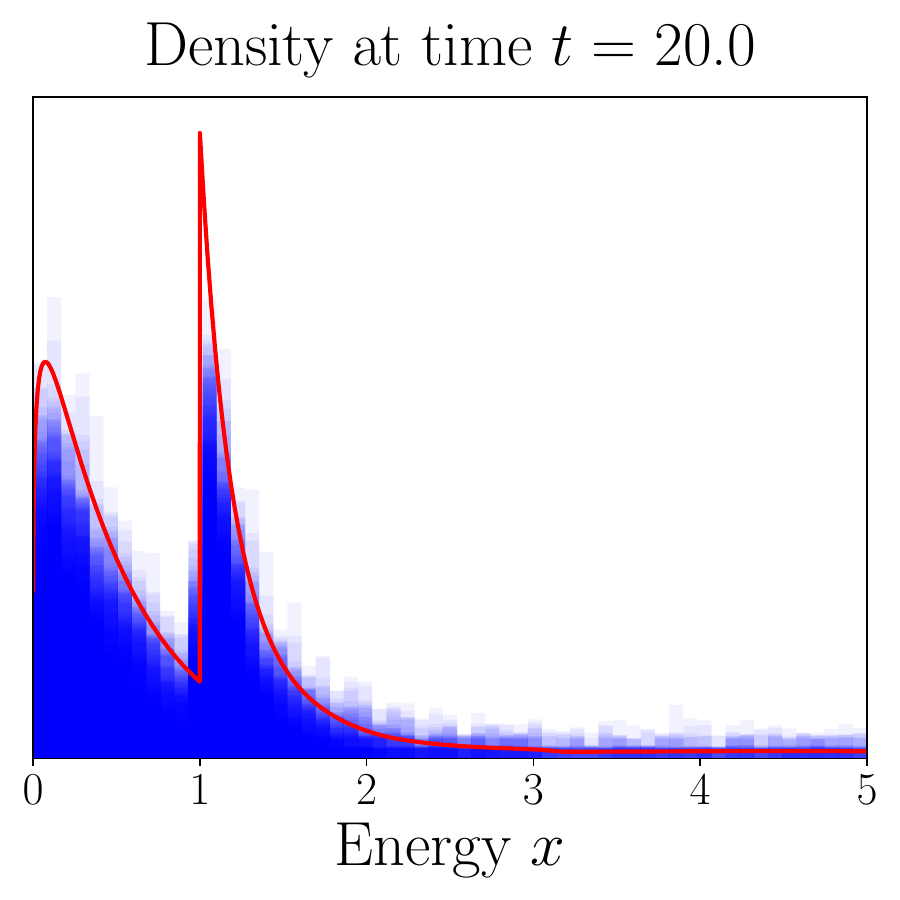}
    \label{fig:d201000}
 \end{subfigure}
 \begin{subfigure}{0.28\textwidth}
  \includegraphics[width=\textwidth]{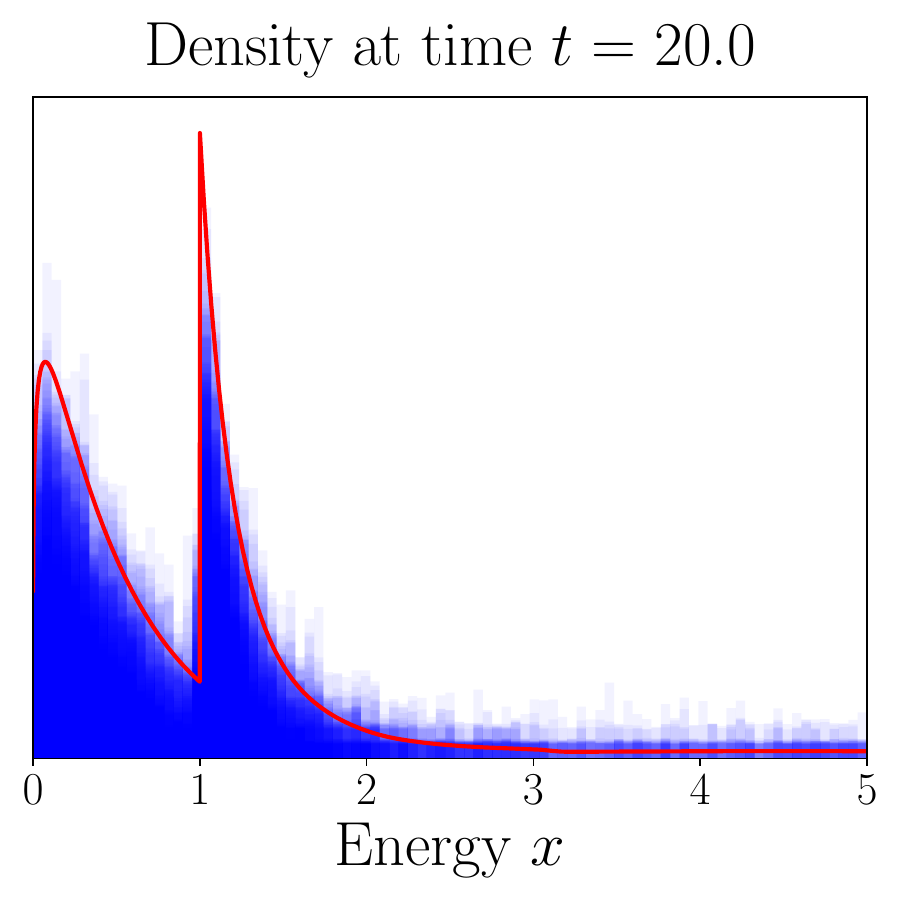}
    \label{fig:d20der}
 \end{subfigure}
    \begin{subfigure}{0.28\textwidth}
  \includegraphics[width=\textwidth]{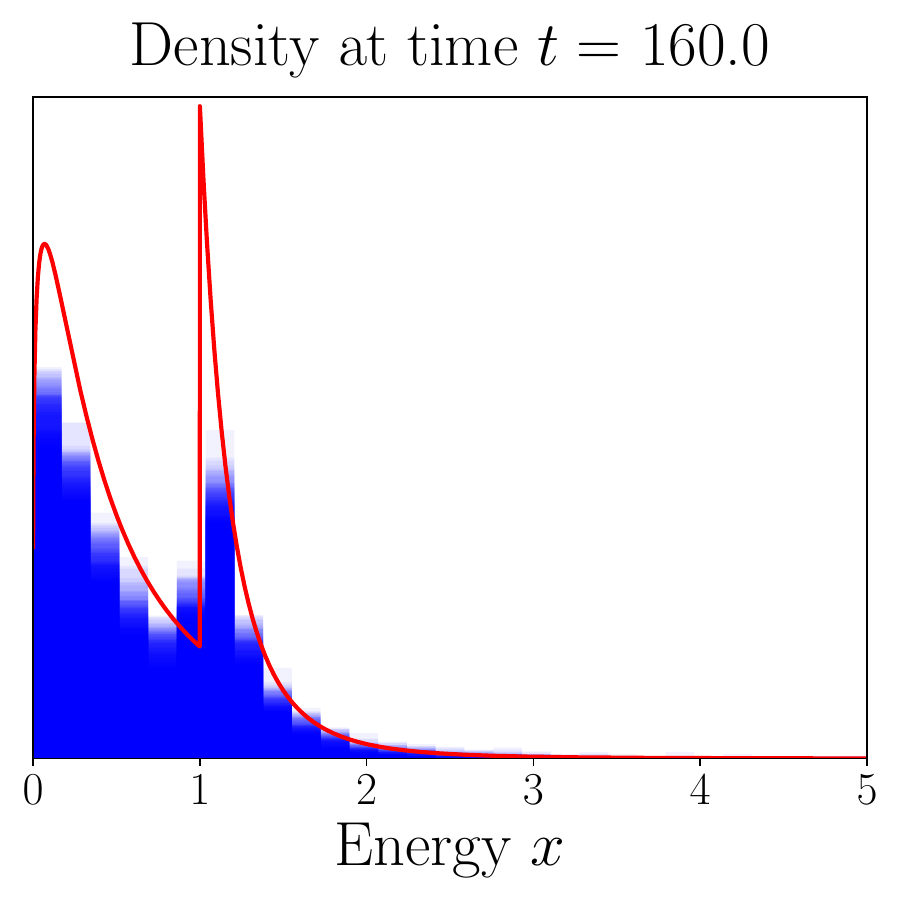}
  \caption{$K=100$}
    \label{fig:d160100}
\end{subfigure}
\begin{subfigure}{0.28\textwidth}
  \includegraphics[width=\textwidth]{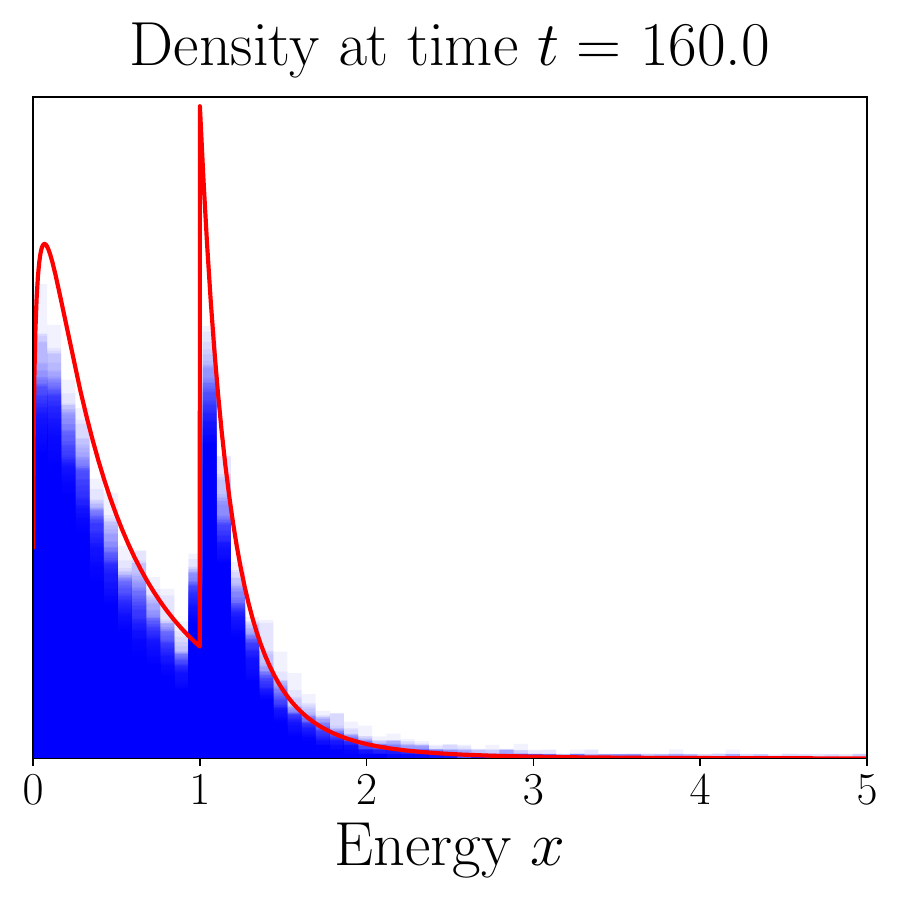}
  \caption{$K=1000$}
    \label{fig:d1601000}
\end{subfigure}
\begin{subfigure}{0.28\textwidth}
  \includegraphics[width=\textwidth]{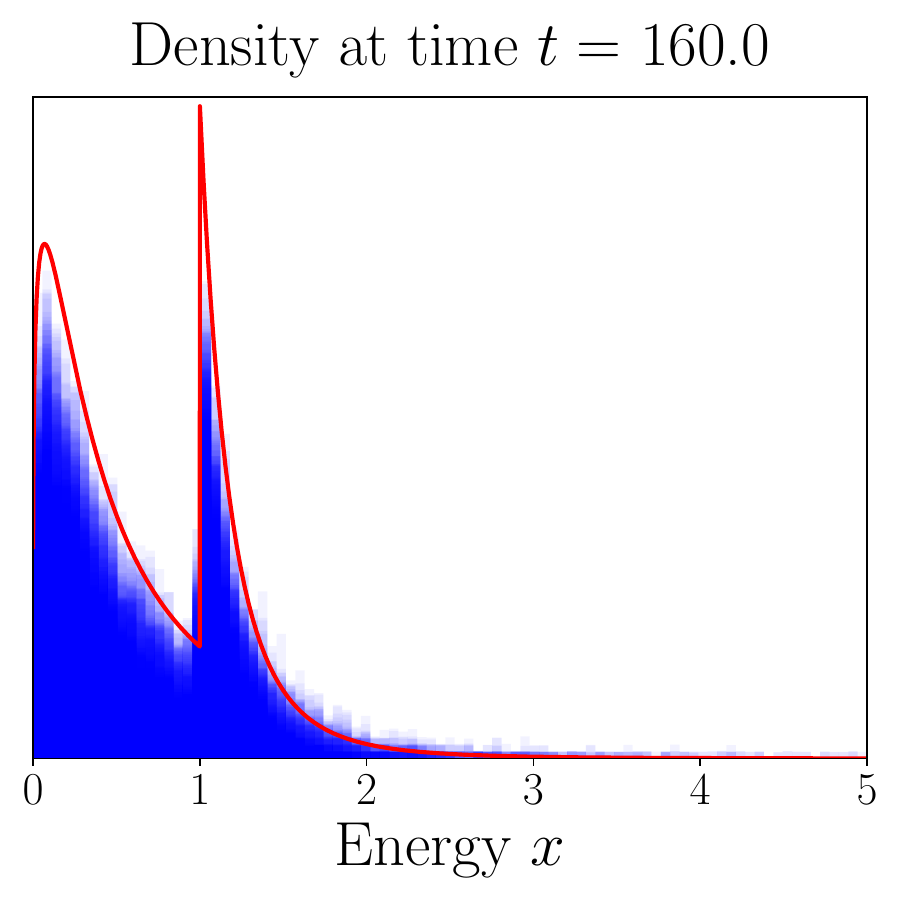}
  \caption{$K=10000$}
    \label{fig:d160der}
\end{subfigure}
\caption{Energy distribution on the energy window $[0,5]$ at time $t=0$ (above), 20 (middle) and 160 (bottom) for small (left), medium (middle) and large (right) initial population sizes. The red curve represents the renormalized energy distribution $\tilde{u}_{t}(.)$ for the corresponding value of $t$. The blue histogram represent the empirical energy distribution of individuals for 100 independent runs of IBM. The number of bins for the histograms is adapted to population sizes on each subfigure.}
\label{fig:densities}
\end{figure}

\section{Proof of Theorem~\ref{theo:convergence}}
\label{sec:proof}

We follow the \hyperlink{sketch}{sketch of the proof} highlighted in Section~\ref{sec:theorem}.

\subsection{Proof of the tightness of $\left(\mathscr{L}^{K}\right)_{K \in \mathbb{N}^{*}}$ in $\mathbb{D}([0,T], (\mathcal{M}_{\omega}(\mathbb{R}^{*}_{+}),v) \times [0,R_{\max}])$}
\label{subsec:tension}

First, we prove the tightness of $\left(\mathscr{L}^{K}_{\mu}\right)_{K \in \mathbb{N}^{*}}$, where for $K \in \mathbb{N}^{*}$, $\mathscr{L}^{K}_{\mu}$ is the law of $(\mu^{K}_{t})_{t \in [0,T]}$. We give the following criterion of tightness in $\mathbb{D}([0,T], (\mathcal{M}_{\omega}(\mathbb{R}^{*}_{+}),v))$, which is an extension of Theorem 2.1 in \cite{roel_86} to weighted spaces of measures. For every $f \in \mathfrak{B}_{\omega}(\mathbb{R}^{*}_{+})$, we define the projection
$$\begin{array}{lccc}
\pi_{f} : &  \mathbb{D}([0,T],(\mathcal{M}_{\omega}(\mathbb{R}^{*}_{+}),v)) & \longrightarrow & \mathbb{D}([0,T],\mathbb{R}) \\
& (\mu_{t})_{t \in [0,T]} & \longmapsto & (\langle \mu_{t}, f \rangle)_{t \in [0,T]}.
\end{array} $$
Also, we write $f \in \mathcal{C}_{0}(\mathbb{R}^{*}_{+})$, if $f$ is continous, $f(x) \xrightarrow[x \rightarrow 0]{} 0$ and $f(x) \xrightarrow[x \rightarrow + \infty]{} 0$.
\begin{theorem}
Let $T \geq 0$, $\left(P^{K}\right)_{K \in \mathbb{N}}$ be a sequence of probability measures on $\mathbb{D}([0,T],(\mathcal{M}_{\omega}(\mathbb{R}^{*}_{+}),v))$, and $D$ be a dense countable subset of $\mathcal{C}_{0}(\mathbb{R}^{*}_{+})$ for the topology of uniform convergence. Assume that for all $f \in D \cup \{ \omega\}$, $\left(\pi_{f}*P^{K}\right)_{K \in \mathbb{N}}$ is a tight sequence of probability measures on $\mathbb{D}([0,T],\mathbb{R})$, where $\pi_{f} * P^{K}$ is the usual pushforward of $P^{K}$ by $\pi_{f}$. Then $\left(P^{K}\right)_{K \in \mathbb{N}}$ is tight on $\mathbb{D}([0,T],(\mathcal{M}_{\omega}(\mathbb{R}^{*}_{+}),v))$.
\label{theo:roel}
\end{theorem}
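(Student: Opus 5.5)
The plan is to transfer Roelly's original argument (Theorem 2.1 in \cite{roel_86}) to the weighted space by a change of measure. I will use the map $\Theta : \mu \mapsto \omega\mu$, i.e. $\langle \Theta(\mu), f\rangle := \langle \mu, \omega f\rangle$. It sends $\mathcal{M}_{\omega}(\mathbb{R}^{*}_{+})$ bijectively onto the finite measures $\mathcal{M}_{1}(\mathbb{R}^{*}_{+})$. The reason this helps is that $\omega$ is continuous and positive, so $f\mapsto \omega f$ is a bijection of $\mathcal{C}_{c}(\mathbb{R}^{*}_{+})$. Consequently $\Theta$ is a homeomorphism between $(\mathcal{M}_{\omega}(\mathbb{R}^{*}_{+}),v)$ and $(\mathcal{M}_{1}(\mathbb{R}^{*}_{+}),v)$. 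Through it, $\pi_{f}$ corresponds to $\pi_{f/\omega}$ on the image side, and $\pi_{\omega}$ corresponds to the total mass $\pi_{1}$. Since $\Theta$ induces a homeomorphism of the Skorokhod spaces, it is enough to prove tightness of $(\Theta * P^{K})_{K}$ in $\mathbb{D}([0,T],(\mathcal{M}_{1}(\mathbb{R}^{*}_{+}),v))$.

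First I check the hypotheses on the image side. Tightness of $\pi_{\omega}*P^{K}$ gives tightness of the total masses. In particular, for every $\varepsilon>0$ there is $M$ such that, uniformly in $K$, the mass stays in $[0,M]$ on $[0,T]$ with probability at least $1-\varepsilon$. The set $\{\nu : \langle \nu,1\rangle\le M\}$ is vaguely compact, so this yields the compact containment condition.

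Next I build a countable family separating points in the vague topology, following Jakubowski's criterion as in Roelly's proof. The family is $\{\pi_{h} : h \in D'\}$, where $D' := \{f/\omega : f\in D\}$ or a suitable replacement. Here lies the obstacle: $f/\omega$ need not belong to $\mathcal{C}_{0}$, and $D'$ need not be dense in $\mathcal{C}_{c}$. Indeed, $1/\omega$ may blow up at $0$ when $\omega(0+)=0$, and may fail to vanish at $+\infty$. I would avoid dividing altogether and argue directly on the original side. For $g\in\mathcal{C}_{c}(\mathbb{R}^{*}_{+})$ and $\delta>0$, choose $f\in D$ with $\|f-g\|_{\infty}\le\delta$. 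On the event $\sup_{t}\langle\mu_{t},\omega\rangle\le M$, one gets
$$\sup_{t\le T}|\langle \mu_{t},g-f\rangle| \le \delta \,\sup_{t}\langle \mu_{t}, \mathbb{1}_{\mathrm{supp}\, g}\rangle \le \delta M/\min_{\mathrm{supp}\, g}\omega,$$
using that $\omega$ is positive and continuous, hence bounded below on compacts. (A little care is needed since $f$ is not compactly supported. I will handle this by also approximating with cut-offs $f\chi_{n}$, where $\chi_{n}\in\mathcal{C}_{c}$ is a bump; alternatively, I can choose $D$-approximants of $g$ and bound the tail contribution $\langle\mu_t,|f|\mathbb{1}_{(a,b)^{c}}\rangle$ by $\delta$ plus $\sup|f|$ on the tails, which is small for $f\in\mathcal{C}_{0}$ near $g$.) By the Aldous-type modulus criterion in $\mathbb{D}([0,T],\mathbb{R})$, a uniform $\delta$-approximation of $\pi_{g}$ by tight processes $\pi_{f}$ on a set of probability at least $1-\varepsilon$ implies tightness of $\pi_{g}*P^{K}$. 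Hence $\pi_{g}*P^{K}$ is tight for every $g\in\mathcal{C}_{c}$, and in particular for a countable dense family in $\mathcal{C}_{c}$.

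Finally, I apply Jakubowski's theorem (Theorem 3.1 in Jakubowski 1986) in the completely regular space $(\mathcal{M}_{\omega}(\mathbb{R}^{*}_{+}),v)$. The two inputs are the compact containment from the $\omega$-mass bound and the tightness of $\pi_{g}$ for a countable point-separating family $g\in\mathcal{C}_{c}$ closed under addition. I expect the main difficulty to be the compactness step: showing that $\{\mu:\langle\mu,\omega\rangle\le M\}$ is compact and metrizable for the vague topology on $\mathcal{M}_{\omega}$. I will obtain this via $\Theta$ from the Banach–Alaoglu compactness of mass-bounded sets of finite measures in the vague topology, using Appendix B.2.1 of \cite{broduthesis} for polishness.
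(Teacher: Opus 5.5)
Your overall architecture is sound: compact containment from the tightness of $\pi_\omega*P^K$ and the vague compactness of $\{\langle\mu,\omega\rangle\le M\}$, then tightness of $\pi_g*P^K$ for every $g\in\mathcal{C}_c(\mathbb{R}^*_+)$, then Jakubowski's criterion. The gap is the middle step, upgrading tightness from $D$ to $\mathcal{C}_c$. Your displayed bound is wrong as written, since $g-f$ is not supported in $\mathrm{supp}\,g$. Neither repair closes it. The cut-offs $f\chi_n$ are not in $D$, so nothing tells you that $\pi_{f\chi_n}*P^K$ is tight. In the tail estimate you do have $|f|\le\delta$ off an interval $(a,b)\supseteq\mathrm{supp}\,g$, but that only gives $\langle\mu_t,|f|\mathbb{1}_{(0,a]}\rangle\le\delta\,\mu_t((0,a])$. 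This mass is not controlled by $\langle\mu_t,\omega\rangle\le M$ when $\omega(0+)=0$, which is exactly the case the paper has in mind ($d\to+\infty$ at $0$). Monotonicity of $\omega$ only controls the right tail, through $\mu_t([b,+\infty))\le M/\omega(b)$. If $\omega(0+)>0$, your argument works with no cut-off at all, because then $\langle\mu_t,1\rangle\le M/\omega(0+)$.

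When $\omega(0+)=0$, the step is not merely unproven but false. Take $\omega=\mathrm{Id}$, $\mu(\mathrm{d}x)=x^{-1}\mathbb{1}_{(0,1)}(x)\mathrm{d}x$ and $\nu=2\delta_{1/2}$. Then $\langle\mu,\omega\rangle=\langle\nu,\omega\rangle=1$, while $h\mapsto\langle\mu-\nu,h\rangle$ is unbounded for $\|\cdot\|_\infty$ on $\mathcal{C}^\infty_c(\mathbb{R}^*_+)$. Its kernel is therefore sup-dense in $\mathcal{C}_0(\mathbb{R}^*_+)$, and by separability it contains a countable dense $D$. Now consider the deterministic paths equal to $\nu$ except on $[T/2,T/2+1/K)$, where they equal $\mu$. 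Their projections along $D\cup\{\omega\}$ are constant. Yet they are not relatively compact in $\mathbb{D}([0,T],(\mathcal{M}_\omega(\mathbb{R}^*_+),v))$, and $\pi_g*P^K$ is not tight for any $g$ with $\langle\mu-\nu,g\rangle\neq0$.

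So an extra hypothesis is needed, for instance one of the following:
\begin{itemize}
\item $\omega(0+)>0$;
\item a $D$ that approximates each $g\in\mathcal{C}_c$ in sup norm by elements supported in a fixed compact $K'\supseteq\mathrm{supp}\,g$ (your bound then becomes $\delta M/\min_{K'}\omega$ and your Jakubowski route is complete);
\item tightness of $\pi_\varphi*P^K$ for every $\varphi\in\mathcal{C}^\infty_c(\mathbb{R}^*_+)$, which is what Section~\ref{subsec:tension} actually verifies.
\end{itemize}

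The paper's proof runs into the same point. It builds $C=\bigcap_k\pi_{f_k}^{-1}(C_k)$ and gets compact containment from $f_0=\omega$ and Banach--Alaoglu, as you do. It then transfers the c\`adl\`ag modulus from the real projections to $d_v$ via Theorem~2.4 of \cite{kurtz1981approximation}, on the grounds that $D$ is dense in a convergence-determining class. That transfer needs $\langle\mu,f_n\rangle\to\langle\mu,g\rangle$ uniformly on $\{\langle\mu,\omega\rangle\le M\}$, which fails near $0$ for the same reason; in the example above, $C$ contains every spiky path. Under any of the extra conditions, your Jakubowski argument and the paper's explicit compact set are equally valid. They differ only in how the modulus is transferred from the real projections to $d_v$.
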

We prove Theorem~\ref{theo:roel} in Appendix~\ref{appendix:roel}. It is well-known that there exists a countable set $D \subseteq \mathcal{C}^{\infty}_{c}(\mathbb{R}^{*}_{+})$, such that $D$ is dense in $\mathcal{C}_{0}(\mathbb{R}^{*}_{+})$ for the topology of uniform convergence (see Lemma II.4.1. in \cite{broduthesis}). Thus, thanks to Theorem~\ref{theo:roel}, it suffices to show that for all $\varphi \in \mathcal{C}^{\infty}_{c}(\mathbb{R}^{*}_{+})  \cup \{ \omega\}$, $\left(\pi_{\varphi}*\mathscr{L}_{\mu}^{K}\right)_{K \in \mathbb{N}}$ is a tight sequence of probability measures on $\mathbb{D}([0,T],\mathbb{R})$. Let $\varphi$ be such a function, and note that in particular, $\varphi \in \mathfrak{B}_{\omega}(\mathbb{R}^{*}_{+})$ and $\varphi$ is bounded. Then by Proposition~\ref{prop:martingalerenormalisee},  applied to $(t,x) \mapsto \varphi(x)$, for $t \in [0,T]$, we have the semi-martingale decomposition
        \[ \langle \mu^{K}_{t}, \varphi \rangle = V^{K}_{\varphi,t} + \heartsuit^{K}_{\varphi,t}, \]
where $(\heartsuit^{K}_{\varphi,t})_{t \geq 0}$ is a square-integrable martingale. The tightness of $\left(\pi_{\varphi}*\mathscr{L}_{\mu}^{K}\right)_{K \in \mathbb{N}}$ in $\mathbb{D}([0,T],\mathbb{R})$ is proven thanks to a criterion from Aldous and Rebolledo (\cite{ald_86}, Corollary 2.3.3). It suffices to show that
        \begin{enumerate}
            \item For every $t \in [0,T]$, the sequence of laws of $(\langle \mu^{K}_{t}, \varphi \rangle)_{K \geq 1}$ is tight in $\mathbb{R}$.
            \item For every $t \in [0,T]$, for every $\varepsilon >0$, for every $\eta > 0$, there exists $\delta >0$ and $K_{0} \geq 1$, such that for every sequence of stopping times $(S_{K},T_{K})_{K \in \mathbb{N}^{*}}$ such that $S_{K} \leq T_{K} \leq t$ for all $K \in \mathbb{N}^{*}$, we have
            \begin{align}           
 \underset{K \geq K_{0}}{\mathrm{sup}} \mathbb{P}\bigg( |\langle \heartsuit^{K}_{\varphi} \rangle_{T_{K}} - \langle \heartsuit^{K}_{\varphi} \rangle_{S_{K}}| \geq \eta, T_{K} \leq S_{K} + \delta \bigg) \leq \varepsilon, \label{eq:crochetcontrole} \\
                \underset{K \geq K_{0}}{\mathrm{sup}} \mathbb{P}\bigg( | V^{K}_{\varphi,T_{K}} - V^{K}_{\varphi,S_{K}}| \geq \eta, T_{K} \leq S_{K} + \delta \bigg) \leq \varepsilon.
               \label{eq:finitevarcontrol}
         \end{align}
            \end{enumerate}
First, we use Markov inequality to obtain, for any $t \in [0,T]$, $M >0$, $K \in \mathbb{N}^{*}$,
    \begin{align*}
        \mathbb{P}(| \langle \mu^{K}_{t}, \varphi \rangle | \geq M) & \leq \dfrac{1}{M} \mathbb{E}(| \langle \mu^{K}_{t}, \varphi \rangle |) \\
        & \leq \dfrac{1}{M} \sup_{K \in \mathbb{N}^{*}} \mathbb{E}( \sup_{t \in [0,T]} |\langle \mu^{K}_{t}, \varphi \rangle |).
    \end{align*} 
Remark that $\varphi \in \mathfrak{B}_{\omega}(\mathbb{R}^{*}_{+})$, so Proposition~\ref{lemme:controlenp} with $p=1$ entails that the sequence of laws of $(\langle \mu^{K}_{t}, \varphi \rangle)_{K \geq 1}$ is tight in $\mathbb{R}$. Then, we fix $t \in [0,T]$, $\delta >0$ and $(S_{K},T_{K})_{K \in \mathbb{N}^{*}}$ a sequence of stopping times such that $S_{K} \leq T_{K} \leq t$ for all $K \in \mathbb{N}^{*}$. By Lemma~\ref{lemme:equivalence}, Proposition~\ref{prop:martingalerenormalisee}, and using the fact that $\varphi \in \mathcal{C}^{\infty}_{c}(\mathbb{R}^{*}_{+})  \cup \{ \omega\}$, so $\varphi$ is bounded or $\varphi=\omega$, there exists a constant $C>0$ such that for $K \geq 1$,
\begin{align*}
    \mathbb{E}\left(|\langle \heartsuit^{K}_{\varphi} \rangle_{T_{K}} - \langle \heartsuit^{K}_{\varphi} \rangle_{S_{K}}| \mathbb{1}_{\{ | T_{K}-S_{K}| \leq \delta \}}\right) & \\
&  \hspace{-3 cm}  =  \dfrac{1}{K} \mathbb{E} \bigg(\mathbb{1}_{\{ | T_{K}-S_{K}| \leq \delta \}} \displaystyle{\int_{S_{K}}^{T_{K}}\int_{\mathbb{R}^{*}_{+}}}\bigg[b(x)\bigg( \varphi(x_{0})+\varphi(x-x_{0})-\varphi(x) \bigg)^{2} \\
    & \hspace{3 cm} + d(x)\varphi^{2}(x) \bigg] \mu^{K}_{s}(\mathrm{d}x) \mathrm{d}s \bigg) \\
& \hspace{-3 cm}  \leq \dfrac{C}{K} \mathbb{E}\left( \mathbb{1}_{\{ | T_{K}-S_{K}| \leq \delta \}} \displaystyle{\int_{S_{K}}^{T_{K}}}\left\langle \mu^{K}_{s}, 1+ \mathrm{Id}+\omega \right\rangle \mathrm{d}s \right) \\ 
 & \hspace{-3cm}   \leq \dfrac{C}{K}\delta \sup_{K \in \mathbb{N}^{*}} \mathbb{E}\left( \sup_{t \in [0,T]} \left(E^{K}_{t} + N^{K}_{t}+ \Omega_{t}^{K}\right) \right). 
\end{align*}
By Proposition~\ref{lemme:controlenp} with $p=1$ and using Markov inequality, for every $\varepsilon>0$ and $\eta >0$, we can thus find $\delta$ and $K_{0}$ such that \eqref{eq:crochetcontrole} holds true uniformly on the choice of the sequence $(S_{K},T_{K})_{K \in \mathbb{N}^{*}}$. Similarly, we use Lemma~\ref{lemme:equivalence}, Proposition~\ref{prop:martingalerenormalisee}, the fact that $\varphi \in \mathcal{C}^{\infty}_{c}(\mathbb{R}^{*}_{+})  \cup \{ \omega\}$, so $\varphi$ and $\varphi'$ are bounded and $\varphi \in \mathfrak{B}_{\omega}(\mathbb{R}^{*}_{+})$, or $\varphi \equiv \omega$, to assess that
    \begin{align*}
        \mathbb{E}\left(| V^{K}_{\varphi,T_{K}} - V^{K}_{\varphi,S_{K}}| \mathbb{1}_{\{ | T_{K}-S_{K}| \leq \delta \}} \right) \\
& \hspace{-3cm} \leq \mathbb{E}\bigg( \mathbb{1}_{\{ | T_{K}-S_{K}| \leq \delta \}} \displaystyle{\int_{S_{K}}^{T_{K}} \int_{\mathbb{R}^{*}_{+}} }  \bigg\{ \overline{g}(x)|\varphi'(x)| +  d(x)|\varphi(x)|\\
        & \hspace{1cm}+ b(x)\bigg| \varphi(x_{0})+\varphi(x-x_{0})-\varphi(x)\bigg| \bigg\} \mu^{K}_{s}(\mathrm{d}x) \mathrm{d}s \bigg) \\
 & \hspace{-3cm} \leq \omega_{1} \mathbb{E}\bigg( \mathbb{1}_{\{ | T_{K}-S_{K}| \leq \delta \}} \displaystyle{\int_{S_{K}}^{T_{K}} \int_{\mathbb{R}^{*}_{+}} }  \bigg( 1+x+\omega(x) \bigg) \mu^{K}_{s}(\mathrm{d}x) \mathrm{d}s \bigg) \\
 & \hspace{-3 cm} \leq \omega_{1}\delta \sup_{K \in \mathbb{N}^{*}} \mathbb{E}\left( \sup_{t \in [0,T]} \left(E^{K}_{t} + N^{K}_{t}+ \Omega_{t}^{K}\right) \right),
    \end{align*}
and we conclude in the same manner. Now, let us show the tightness of $\left(\mathscr{L}^{K}_{R}\right)_{K \in \mathbb{N}^{*}}$, where for $K \in \mathbb{N}^{*}$, $\mathscr{L}^{K}_{R}$ is the law of $(R^{K}_{t})_{t \in [0,T]}$. We use a simpler criterion of Aldous, without decomposing $R^{K}_{t}$ into a finite variation part and a martingale part. First, for every $t \geq 0$, for every $K \in \mathbb{N}^{*}$, then $R^{K}_{t} \in [0,R_{\max}]$, so for every $t \in [0,T]$, $(R^{K}_{t})_{K \geq 1}$ is tight in $\mathbb{R}$. Then from Theorem 16.10. in \cite{bill_99}, it suffices to show that for every $t \in [0,T]$, for every $\varepsilon>0$, for every $\eta >0$, there exists $\delta >0$ and $K_{0} \geq 1$, such that for every sequence of stopping times $(S_{K},T_{K})_{K \in \mathbb{N}^{*}}$ with $S_{K} \leq T_{K} \leq t$ for all $K \in \mathbb{N}^{*}$, we have
\begin{align*} 
\sup\limits_{K \geq K_{0}} \mathbb{P}\left(|R^{K}_{T_{K}}-R^{K}_{S_{K}}| \geq \eta, T_{K} \leq S_{K} + \delta\right) \leq \varepsilon.
\end{align*}
Let us fix $t \in  [0,T]$, $\delta>0$, $K \geq 1$ and a sequence of stopping times as defined previously, then we have by \eqref{eq:eqress} that
\begin{align*}
    \mathbb{E}\left(|R^{K}_{T_{K}}-R^{K}_{S_{K}}| \mathbb{1}_{\{T_{K} \leq S_{K} + \delta\}} \right) & \\
 & \hspace{-1 cm} = \mathbb{E}\bigg(\bigg| \displaystyle{\int_{S_{K}}^{T_{K}}} \varsigma(R^{K}_{s})  - \chi\langle \mu^{K}_{s}, f(.,R^{K}_{s}) \rangle  \mathrm{d}s \bigg| \mathbb{1}_{\{T_{K} \leq S_{K} + \delta\}} \bigg) \\
 & \hspace{-1 cm}   \leq  \delta ||\varsigma||_{\infty,[0,R_{\max}]} + \mathbb{E}\left( \chi \displaystyle{\int_{S_{K}}^{T_{K}}} \langle \mu^{K}_{s}, \overline{g} \rangle  \mathrm{d}s \; \mathbb{1}_{\{T_{K} \leq S_{K} + \delta\}} \right) \\
 & \hspace{-1 cm}   \leq  \delta \left( ||\varsigma||_{\infty,[0,R_{\max}]} + \chi C_{g} \sup\limits_{K \in \mathbb{N}^{*}} \mathbb{E}\left(  \sup\limits_{t \in [0,T]} \left( E^{K}_{t}+ N^{K}_{t}+ \Omega^{K}_{t} \right) \right) \right),   
\end{align*}
where we used the fact that for every $R \geq 0$ and $x>0$, $|f(x,R)| \leq \overline{g}(x)$ and Assumption~\ref{hyp:poidsomega}. This concludes by Markov inequality and Proposition~\ref{lemme:controlenp} with $p=1$. At this step, we have shown the tightness of $\left(\mathscr{L}^{K}\right)_{K \in \mathbb{N}^{*}}$ in $\mathbb{D}([0,T], (\mathcal{M}_{\omega}(\mathbb{R}^{*}_{+}),v) \times [0,R_{\max}])$. Then, we can use Prokhorov theorem, because $\mathbb{D}([0,T], (\mathcal{M}_{\omega}(\mathbb{R}^{*}_{+}),v) \times [0,R_{\max}])$ is metrizable (see Theorem 5.1. in \cite{bill_99}). This theorem states that we can extract a subsequence, still denoted as $\bigg(\left(\mu^{K}_{t},R^{K}_{t}\right)_{t \in [0,T]}\bigg)_{K \in \mathbb{N}^{*}}$ for the sake of simplicity, that converges in law towards some $(\mu^{*}_{t},R^{*}_{t})_{t \in [0,T]}$ in $\mathbb{D}([0,T], (\mathcal{M}_{\omega}(\mathbb{R}^{*}_{+}),v) \times [0,R_{\max}])$.

\subsection{Continuity of accumulation points}
\label{subsec:continuitylimit}

In this section, we show the continuity of the limit $(\mu^{*}_{t},R^{*}_{t})_{t \in [0,T]}$ highlighted at the end of Section~\ref{subsec:tension}, which is essential in the use of Theorem~\ref{theo:melroel} in Section~\ref{subsec:topotricks}, and in the characterization of the limit in Section~\ref{subsec:identification}. We follow Step 2 in Section 5 of \cite{jourdain_11} and begin with the continuity with respect to the topology of vague convergence.

\begin{lemme}
We work under the assumptions of Theorem~\ref{theo:convergence}. Then, any limit $(\mu^{*}_{t},R^{*}_{t})_{t \in [0,T]}$ of a subsequence of $\bigg(\left(\mu^{K}_{t},R^{K}_{t}\right)_{t \in [0,T]}\bigg)_{K \in \mathbb{N}^{*}}$ converging in law in $\mathbb{D}([0,T], (\mathcal{M}_{\omega}(\mathbb{R}^{*}_{+}),v) \times [0,R_{\max}])$ is in $\mathcal{C}([0,T], (\mathcal{M}_{\omega}(\mathbb{R}^{*}_{+}),v) \times [0,R_{\max}])$. 
\label{lemme:continuiti}
\end{lemme}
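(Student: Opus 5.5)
The plan is the classical jump-size argument (Step 2 in Section 5 of \cite{jourdain_11}; see also Theorem 3.10.2 in \cite{ek_2005}). If the maximal jump size of the prelimit processes tends to $0$ in probability, then any limit in law in the Skorokhod space is almost surely continuous. The resource component is already continuous for every $K$: $R^{K}$ solves \eqref{eq:eqress} between jumps and is not modified at jump times. Since $R^{*}$ is a limit in $\mathbb{D}([0,T],[0,R_{\max}])$ of continuous processes, it is continuous, because $\mathcal{C}$ is closed in $\mathbb{D}$ for the Skorokhod topology. It therefore remains to treat $\mu^{*}$ for the vague topology.

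The vague topology on $\mathcal{M}_{\omega}(\mathbb{R}^{*}_{+})$ can be metrized, on the relevant sets, by a distance of the form
\[
\mathrm{d}_{v}(\mu,\nu)=\sum_{k\geq 1}2^{-k}\min\bigl(1,|\langle \mu-\nu,f_{k}\rangle|\bigr),
\]
with $(f_{k})_{k}$ a dense countable family in $\mathcal{C}_{c}(\mathbb{R}^{*}_{+})$ taken with $\|f_{k}\|_{\infty}\leq 1$. It is therefore enough to prove that each real process $t\mapsto\langle\mu^{*}_{t},f\rangle$, for $f\in\mathcal{C}_{c}(\mathbb{R}^{*}_{+})$ bounded, is continuous; continuity for $\mathrm{d}_{v}$ then follows by dominated summation. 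For fixed $f$, the map $\pi_{f}$ is continuous from $\mathbb{D}([0,T],(\mathcal{M}_{\omega},v))$ into $\mathbb{D}([0,T],\mathbb{R})$, since $f$ is continuous with compact support. Hence $\langle\mu^{K},f\rangle$ converges in law to $\langle\mu^{*},f\rangle$ along the subsequence. Alternatively, I would apply the jump-size criterion directly with $\mathrm{d}_{v}$ instead of going through each $f$.

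The key estimate is a deterministic bound on the jumps of $\mu^{K}$. At a birth, $\mu^{K}$ jumps by $\frac{1}{K}(\delta_{x_{0}}+\delta_{x-x_{0}}-\delta_{x})$, and at a death by $-\frac{1}{K}\delta_{x}$. Between jumps the flow $X^{K}$ is continuous, and $R^{K}$ is unaffected by jumps. Consequently, almost surely,
\[
\sup_{t\in[0,T]}|\langle\mu^{K}_{t},f\rangle-\langle\mu^{K}_{t-},f\rangle|\leq\frac{3\|f\|_{\infty}}{K}, \qquad \sup_{t\in[0,T]}\mathrm{d}_{v}(\mu^{K}_{t},\mu^{K}_{t-})\leq\frac{3}{K}.
\]
Here one uses Corollary~\ref{corr:jinfini}: the processes are well defined on $[0,T]$, with no accumulation of jumps and no explosion, so that they are genuinely càdlàg with jumps of this form. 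The functional $x\mapsto\sup_{t\leq T}\mathrm{d}_{v}(x(t),x(t-))$ is not continuous on $\mathbb{D}$, but the classical lemma still applies: if $\sup_{t}\mathrm{d}(X^{K}_{t},X^{K}_{t-})\to 0$ in probability and $X^{K}\Rightarrow X$, then $X$ is almost surely continuous. To conclude, I would invoke Theorem 3.10.2 in \cite{ek_2005} for the pair $(\mu^{K},R^{K})$ with the product metric.

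The main obstacle is a technical point. $(\mathcal{M}_{\omega}(\mathbb{R}^{*}_{+}),v)$ is Polish, but the metric $\mathrm{d}_{v}$ above generates the vague topology only on sets where $\langle\mu,\omega\rangle$ (or the total mass on compacts) is controlled. The Polish metric used in Appendix B.2.1 of \cite{broduthesis} may include an extra term involving $\omega$, which would have to be handled. I would deal with this by localising on the event $\{\sup_{t\leq T}\Omega^{K}_{t}\leq M\}$, whose probability is close to $1$ uniformly in $K$ by Proposition~\ref{lemme:controlenp} and Markov's inequality. On this event the jump bound holds for that metric as well: the jump of $\langle\mu^{K},\omega\rangle$ is at most $\frac{1}{K}(|\omega(x_{0})+\omega(x-x_{0})-\omega(x)|+\omega(x))$. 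This is not uniformly small, but its contribution is truncated by the $\min(1,\cdot)$ and, if needed, handled using the fact that $\langle\mu^{K}_{t},\omega\rangle\leq M$. I would then let $M\to\infty$.
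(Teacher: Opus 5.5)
Your proof is correct, and its skeleton is the paper's: both reduce continuity of the limit to the Ethier--Kurtz jump criterion (Theorem 10.2 p.148 in \cite{ek_2005}), and both handle $R^{*}$ through the continuity of each $R^{K}$.

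The genuine difference is the key estimate.
\begin{itemize}
\item The paper measures jumps with the $\omega$-Fortet--Mourier distance, a supremum over test functions with $|\varphi|\leq\omega$ and $|\varphi'|\leq 1$. These are not uniformly bounded: a death at energy $x$ moves $\langle\mu^{K},\varphi\rangle$ by up to $\omega(x)/K$. The paper therefore goes through the martingale part $\heartsuit^{K}_{\varphi}$, Doob's inequality, the quadratic-variation bound from \eqref{eq:conditionomegadeux} and the moment bound of Proposition~\ref{lemme:controlenp}. It gets the jump sizes to vanish only in $L^{1}$, then almost surely along a further subsequence.
\item You use a metric built from countably many $\mathcal{C}_{c}$ functions with $\|f_{k}\|_{\infty}\leq 1$. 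For it, the jump bound $3/K$ is pathwise and deterministic. It needs no moment estimate and no assumption on $\omega$. It also avoids moving the supremum over the countable family of test functions outside the expectation, a step the paper's $L^{1}$ route relies on.
\end{itemize}
Since the lemma only claims vague continuity, your route is sufficient and simpler. The weighted control matters only for the $\omega$-weak upgrade, which the paper obtains separately in Lemma~\ref{corr:plusdefonctions} and Corollary~\ref{lemme:continuity}.

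Your last paragraph addresses a non-issue. The Skorokhod topology, hence convergence in law and path continuity, depends only on the topology of the state space. So any metric generating the vague topology may be used in the criterion. The paper's own vague metric $d^{\omega}_{v}(\mu,\nu)=d^{1}_{v}(\omega*\mu,\omega*\nu)$ contains no $\langle\cdot,\omega\rangle$ term. If $d^{1}_{v}$ has your form, your bound carries over with $\|\omega f_{k}\|_{\infty}$ in place of $\|f_{k}\|_{\infty}$.

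Had a term $|\langle\mu-\nu,\omega\rangle|$ been present, the metric would generate the $\omega$-weak topology rather than the vague one, and your proposed localisation would not work. On $\{\sup_{t}\Omega^{K}_{t}\leq M\}$ a death jump $\omega(x)/K$ is only bounded by $M$, not small, and the $\min(1,\cdot)$ truncation does not change that. You would instead need a second-moment argument as in the paper, for instance
$\mathbb{E}\bigl(\sum_{\mathrm{deaths}\leq T}(\omega(x)/K)^{2}\bigr)=\frac{1}{K}\mathbb{E}\int_{0}^{T}\langle\mu^{K}_{s},d\,\omega^{2}\rangle\,\mathrm{d}s\to 0$, which follows from \eqref{eq:conditionomegadeux} and Proposition~\ref{lemme:controlenp}.
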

\begin{proof}
For the sake of simplicity, we write again $\bigg((\mu^{K}_{t},R^{K}_{t})_{t \in [0,T]}\bigg)_{K \in \mathbb{N}^{*}}$ for the converging subsequence. We begin with the continuity of $(\mu^{*}_{t})_{t \in [0,T]}$. By a simple adaptation of Theorem 10.2 p.148 in \cite{ek_2005} to our weighted context, it suffices to show that almost surely,
$$\sup\limits_{t \in [0,T]} \mathrm{d}_{P}^{\omega}(\mu^{K}_{t},\mu^{K}_{t-}) \xrightarrow[K \rightarrow + \infty]{} 0, $$
where $\mathrm{d}_{P}^{\omega}$ is the $\omega$-Prokhorov distance defined in Definition~\ref{defi:prok}. By Lemma B.2.5 in \cite{broduthesis}, we can replace the $\omega$-Prokhorov distance in the previous convergence by the $\omega$-Fortet-Mourier distance $d_{\mathrm{FM}}^{\omega}$, defined for every $\mu, \nu \in \mathcal{M}_{\omega}(\mathbb{R}^{*}_{+})$ by
$$ d_{\mathrm{FM}}^{\omega}(\mu,\nu) := \displaystyle{\sup_{\varphi \in \mathcal{D}}} |\langle \mu-\nu, \varphi \rangle|,$$ 
where $\mathcal{D} := (\varphi_{n})_{n \in \mathbb{N}}$ is a countable and dense subset of the set $\left\{ \varphi \in \mathcal{C}^{\infty}_{c}(\mathbb{R}^{*}_{+}), \; ||\varphi/ \omega||_{\infty} \leq 1, \; ||\varphi'||_{\infty} \leq 1 \right\}$ for the topology of uniform convergence (which exists by Lemma II.4.1 in \cite{broduthesis}). Thus, to conclude, it suffices to show that almost surely 
$$\sup\limits_{t \in [0,T]} \mathrm{d}^{\omega}_{\mathrm{FM}}(\mu^{K}_{t},\mu^{K}_{t-}) = \sup\limits_{t \in [0,T]} \sup\limits_{n \in \mathbb{N}} |\langle \mu^{K}_{t}-\mu^{K}_{t-}, \varphi_{n} \rangle| = \sup\limits_{n \in \mathbb{N}} \sup\limits_{t \in [0,T]} |\langle \mu^{K}_{t}-\mu^{K}_{t-}, \varphi_{n} \rangle| \xrightarrow[K \rightarrow + \infty]{} 0. $$
Without loss of generality, we can prove that this convergence holds true in $L^{1}$, because this implies almost sure convergence up to extraction, and our argument using Theorem 10.2 p.148 in \cite{ek_2005} remains true up to extracting a new subsequence that still converges in law towards $(\mu^{*}_{t})_{t \in [0,T]}$ in $\mathbb{D}([0,T], (\mathcal{M}_{\omega}(\mathbb{R}^{*}_{+}),v) \times [0,R_{\max}])$. 
Furthermore, for $K \geq 1$ and $n \geq 0$, we have the domination
$$\sup\limits_{t \in [0,T]} |\langle \mu^{K}_{t}-\mu^{K}_{t-}, \varphi_{n} \rangle| \leq 2 \sup\limits_{t \in [0,T]} \Omega^{K}_{t}.$$
The right-hand side above does not depend on $n$, and its expectation is bounded uniformly on $K$ by Proposition~\ref{lemme:controlenp} with $p=1$. Hence, by this domination argument, it suffices to show that
$$ \mathbb{E}\left( \sup\limits_{n \in \mathbb{N}} \sup\limits_{t \in [0,T]} |\langle \mu^{K}_{t}-\mu^{K}_{t-}, \varphi_{n} \rangle|\right) = \sup\limits_{n \in \mathbb{N}} \mathbb{E}\left( \sup\limits_{t \in [0,T]} \left|\langle \mu^{K}_{t}-\mu^{K}_{t-}, \varphi_{n} \rangle\right|\right) \xrightarrow[K \rightarrow + \infty]{} 0.$$
Let us consider any $\varphi \in \mathcal{C}^{\infty}_{c}(\mathbb{R}^{*}_{+})$ with $||\varphi/ \omega||_{\infty} \leq 1$ and $||\varphi'||_{\infty} \leq 1$. For $t \in [0,T]$ and $K \geq 1$, we have from the decomposition of Proposition~\ref{prop:martingalerenormalisee} that
$$ \langle \mu^{K}_{t}-\mu^{K}_{t-}, \varphi \rangle = V^{K}_{\varphi,t}-V^{K}_{\varphi,t-} + \heartsuit^{K}_{\varphi,t} - \heartsuit^{K}_{\varphi,t-} = \heartsuit^{K}_{\varphi,t} - \heartsuit^{K}_{\varphi,t-},$$
because $t \in [0,T] \mapsto V^{K}_{\varphi,t} $ is continuous. Hence, we obtain
$$ \sup\limits_{t \in [0,T]} |\langle \mu^{K}_{t}-\mu^{K}_{t-}, \varphi \rangle| \leq 2 \sup\limits_{t \in [0,T]} \left|\heartsuit^{K}_{\varphi,t} \right|.$$
Then, we use Doob maximal inequality for square-integrable martingales to obtain
$$ \mathbb{E}\left( \sup\limits_{t \in [0,T]} \left|\heartsuit^{K}_{\varphi,t} \right| \right) \leq 4 \mathbb{E}\left( \left\langle \heartsuit^{K}_{\varphi} \right\rangle_{T} \right) \leq \dfrac{4(C_{b}(\omega(x_{0})+x_{0})^{2}+C_{d})}{K} \mathbb{E}\left( E^{K}_{T} + N^{K}_{T} + \Omega^{K}_{T} \right), $$
by Proposition~\ref{prop:martingalerenormalisee}, the fact that $||\varphi/\omega||_{\infty} \leq 1$ and $||\varphi'||_{\infty} \leq 1$, and Assumption~\ref{hyp:poidsomega} (in particular, we use \eqref{eq:conditionomegadeux} in Lemma~\ref{lemme:equivalence}). This upper bound is uniform on $\varphi$ and converges to 0 when $K \rightarrow + \infty$ thanks to Proposition~\ref{lemme:controlenp} with $p=1$, which concludes for the continuity of $(\mu^{*}_{t})_{t \in [0,T]}$. Finally, we use again Theorem 10.2 p.148 in \cite{ek_2005} for the continuity of $(R^{*}_{t})_{t \in [0,T]}$, and it suffices to show that
$$ \sup\limits_{t \in [0,T]} \left| R^{K}_{t} - R^{K}_{t-} \right| \xrightarrow[K \rightarrow + \infty]{} 0, $$
which is immediate because every $(R^{K}_{t})_{t \in [0,T]}$ is continuous by construction.
\end{proof}

\textbf{Remark:} In the previous proof, note that we need to use the $\omega$-Fortet-Mourier distance instead of the total variation distance, which is classically used in similar contexts (see Lemma 5.7 in \cite{FRITSCH20151}). This is essentially because we need to control quantities of the form $|\langle \mu^{K}_{t}-\mu^{K}_{t-}, \varphi \rangle| $, where $\varphi$ is not necessarily bounded, but only dominated by $\omega$.
\\\\
Now, we want to show the continuity of any accumulation point with respect to the $\omega$-weak topology. We begin with a preliminary result.

\begin{lemme}
We work under the assumptions of Theorem~\ref{theo:convergence} and consider a subsequence of the sequence $\left(\left(\mu^{K}_{t},R^{K}_{t}\right)_{t \in [0,T]}\right)_{K \in \mathbb{N}^{*}}$ that converges in law towards $(\mu^{*}_{t},R^{*}_{t})_{t \in [0,T]}$ in $\mathbb{D}([0,T], (\mathcal{M}_{\omega}(\mathbb{R}^{*}_{+}),v) \times [0,R_{\max}])$. Let $\varphi : (t,x) \in \mathbb{R}^{+} \times \mathbb{R}^{*}_{+} \mapsto \varphi_{t}(x)$ be continuous and such that
\begin{align*}
\exists C>0, \forall x>0, \quad \sup_{t \in [0,T]} |\varphi_{t}(x)| \leq C(1+x+ \omega(x)).
\end{align*}
Then, we have
\begin{align*}
\mathbb{E}\left( \sup_{t \in [0,T]} \left| \langle \mu^{*}_{t}, \varphi_{t} \rangle \right|\right) <+ \infty.
\end{align*}
\label{corr:plusdefonctions}
\end{lemme}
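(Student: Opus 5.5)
It suffices to treat $|\varphi_t(x)| \le C(1+x+\omega(x))$ and to bound $\mathbb{E}\bigl(\sup_{t\in[0,T]} \langle \mu^{*}_{t}, 1+\mathrm{Id}+\omega \rangle\bigr)$. Indeed, $|\langle \mu^*_t,\varphi_t\rangle| \le C\langle \mu^*_t, 1+\mathrm{Id}+\omega\rangle$ for every $t$. The plan is to approximate $h := 1+\mathrm{Id}+\omega$ from below by compactly supported continuous functions, use convergence in law for the vague topology, and pass to the limit with Fatou-type arguments using the uniform bound of Proposition~\ref{lemme:controlenp} with $p=1$.

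Concretely, I will choose a non-decreasing sequence $(h_m)_{m\ge 1}$ in $\mathcal{C}_c(\mathbb{R}^*_+)$ with $0\le h_m\le h$ and $h_m \uparrow h$ pointwise. For instance, take $h_m = h\chi_m$ with $\chi_m$ a continuous cutoff equal to $1$ on $[1/m,m]$ and supported in $[1/(2m),2m]$, increasing in $m$. For a fixed $m$, I consider the functional
\[
G_m : (\mu_t)_{t\in[0,T]} \mapsto \min\Bigl(A, \sup_{t\in[0,T]} \langle \mu_t, h_m\rangle\Bigr)
\]
with a truncation level $A>0$. This map is bounded. Since $h_m \in \mathcal{C}_c(\mathbb{R}^*_+)$, it is continuous on $\mathcal{C}([0,T],(\mathcal{M}_\omega(\mathbb{R}^*_+),v))$ for the Skorokhod topology: on continuous paths, Skorokhod convergence is locally uniform convergence, and $\mu\mapsto\langle\mu,h_m\rangle$ is vaguely continuous. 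By Lemma~\ref{lemme:continuiti}, the limit $\mu^*$ almost surely lies in this set of continuous paths. The continuous mapping theorem therefore gives $\mathbb{E}(G_m(\mu^K)) \to \mathbb{E}(G_m(\mu^*))$ along the subsequence. Moreover,
\[
\mathbb{E}(G_m(\mu^K)) \le \sup_K \mathbb{E}\Bigl(\sup_{t\in[0,T]}(E^K_t+N^K_t+\Omega^K_t)\Bigr) =: M < +\infty
\]
by Proposition~\ref{lemme:controlenp} with $p=1$. This uses (H4) with $p>1$, which implies the $p=1$ moment condition. Hence $\mathbb{E}(G_m(\mu^*))\le M$ for every $m$ and every $A$.

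Next, I let $A\to\infty$ by monotone convergence, which gives $\mathbb{E}\bigl(\sup_t\langle\mu^*_t,h_m\rangle\bigr)\le M$. I then let $m\to\infty$. For each $t$, $\langle\mu^*_t,h_m\rangle \uparrow \langle\mu^*_t,h\rangle$ by monotone convergence. Since a supremum over $t$ of an increasing family commutes with the limit in $m$, we get $\sup_t\langle\mu^*_t,h_m\rangle \uparrow \sup_t\langle\mu^*_t,h\rangle$, and a second monotone convergence yields $\mathbb{E}\bigl(\sup_t\langle\mu^*_t,h\rangle\bigr)\le M$. This also shows $\mu^*_t$ integrates $1+\mathrm{Id}$, so $\langle \mu^*_t,\varphi_t\rangle$ is well defined. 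Finally, measurability of $\sup_t|\langle\mu^*_t,\varphi_t\rangle|$ follows from the continuity of $t\mapsto\mu^*_t$ and of $\varphi$: by dominated convergence with the bound $h$, the map $t\mapsto \langle \mu^*_t,\varphi_t\rangle$ is at least lower semicontinuous after truncation, so the supremum may be taken over rational $t$ plus $t=T$.

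The main obstacle is the continuity step. Vague convergence only tests against compactly supported functions, and the map $\sup_t$ is continuous in Skorokhod topology only at continuous paths. This is exactly why I would use Lemma~\ref{lemme:continuiti} and compactly supported truncations $h_m$ rather than $h$ itself. If continuity at the limit caused trouble, I would instead use the Skorokhod representation theorem: realize the convergence almost surely and apply Fatou's lemma pathwise, $\sup_t\langle\mu^*_t,h_m\rangle\le\liminf_K\sup_t\langle\mu^K_t,h_m\rangle$, which holds because the Skorokhod time changes converge uniformly to the identity.
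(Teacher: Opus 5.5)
Your proof is correct and follows essentially the same route as the paper. You reduce to $1+\mathrm{Id}+\omega$, approximate it from below by an increasing sequence of compactly supported functions, and use that $\mu\mapsto\sup_{t\in[0,T]}\langle\mu_t,h_m\rangle$ is continuous at the continuous limit given by Lemma~\ref{lemme:continuiti}. You then bound uniformly in $K$ through Proposition~\ref{lemme:controlenp} with $p=1$ and conclude by monotone convergence.

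The only difference is that you truncate at level $A$ and extract a Fatou-type inequality, whereas the paper passes to the limit in expectation via uniform integrability. Your variant is slightly more economical, since it needs nothing beyond the first-moment bound. Your closing remark on measurability is loose for signed $\varphi$, because lower semicontinuity in $t$ only holds for nonnegative integrands. That point is peripheral, though, and the paper does not address it either.
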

\begin{proof}
Let $T \geq 0$, for the sake of simplicity, we still denote as $\left(\left(\mu^{K}_{t},R^{K}_{t}\right)_{t \in [0,T]}\right)_{K \in \mathbb{N}^{*}}$ the converging subsequence in the assumptions of Lemma~\ref{corr:plusdefonctions}. Without loss of generality, we will show that 
\begin{align*}
\mathbb{E}\left( \sup_{t \in [0,T]}  \langle \mu^{*}_{t}, 1+ \mathrm{Id} + \omega \rangle \right) <+ \infty.
\end{align*}
There exists an increasing sequence $(\varphi_{n})_{n \in \mathbb{N}}$ of $\mathcal{C}^{\infty}_{c}(\mathbb{R}^{*}_{+})$ non-negative functions that converges simply towards $1 + \mathrm{Id}+\omega$, so that by monotone convergence, for every $t \in [0,T]$,
$$ \langle \mu^{*}_{t}, 1+\mathrm{Id}+\omega \rangle = \lim_{n \rightarrow + \infty} \langle \mu^{*}_{t}, \varphi_{n} \rangle \quad \mathrm{and} \quad \forall x >0, \quad \sup_{n \in \mathbb{N}} \varphi_{n}(x) \leq 1+x+\omega(x).$$
Thanks to Lemma~\ref{lemme:continuiti} and Proposition 2.4 p.303 in \cite{Jacod1987LimitTF}, the mapping $\mu \mapsto \sup_{t \in [0,T]}  \langle \mu_{t}, \varphi_{n} \rangle $ is continuous on $\mathbb{D}([0,T], (\mathcal{M}_{\omega}(\mathbb{R}^{*}_{+}),v))$. Hence, for every $n \in \mathbb{N}$ and $t \in [0,T]$, we have almost surely $ \sup_{t \in [0,T]} \langle \mu^{*}_{t}, \varphi_{n} \rangle = \lim\limits_{K \rightarrow + \infty} \sup_{t \in [0,T]} \langle \mu^{K}_{t}, \varphi_{n} \rangle$, and thanks to Corollary~\ref{corr:uniformly}, the family of random variables $\left(\sup\limits_{t \in [0,T]} \langle \mu^{K}_{t}, \varphi_{n} \rangle \right)_{K \in \mathbb{N}^{*}}$ is uniformly integrable. Thus, we obtain
\begin{align*}
\mathbb{E} \left( \sup_{t \in [0,T]}\langle \mu^{*}_{t}, \varphi_{n} \rangle  \right) & =  \lim_{K \rightarrow + \infty} \mathbb{E}\left(\sup_{t \in [0,T]}  \langle \mu^{K}_{t}, \varphi_{n} \rangle \right) \leq  \sup\limits_{K \geq 1} \mathbb{E}\left( \sup_{t \in [0,T]} \left( E^{K}_{t} + N^{K}_{t} + \Omega^{K}_{t}\right)\right).
\end{align*}
We conclude by monotone convergence applied to the left-hand-side above, and Proposition~\ref{lemme:controlenp} with $p=1$ for the right-hand-side.
\end{proof}

\begin{corr}
We work under the assumptions of Theorem~\ref{theo:convergence}. Then, any limit $(\mu^{*}_{t},R^{*}_{t})_{t \in [0,T]}$ of a subsequence of $\bigg(\left(\mu^{K}_{t},R^{K}_{t}\right)_{t \in [0,T]}\bigg)_{K \in \mathbb{N}^{*}}$ converging in law in $\mathbb{D}([0,T], (\mathcal{M}_{\omega}(\mathbb{R}^{*}_{+}),v) \times [0,R_{\max}])$ is in $\mathcal{C}([0,T], (\mathcal{M}_{\omega}(\mathbb{R}^{*}_{+}),\mathrm{w}) \times [0,R_{\max}])$. 
\label{lemme:continuity}
\end{corr}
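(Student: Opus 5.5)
We need continuity in the $\omega$-weak topology, given vague continuity (Lemma~\ref{lemme:continuiti}) and the moment bounds. The plan is to show that vague continuity plus continuity of $t \mapsto \langle \mu^{*}_{t}, \omega \rangle$ implies $\omega$-weak continuity. This is a weighted version of the classical fact that vague convergence together with convergence of total mass gives weak convergence. The continuity of $R^{*}$ is already given by Lemma~\ref{lemme:continuiti}, so only $\mu^{*}$ needs treatment.

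\textbf{Step 1 (reduction).} Fix a sample path, $t_{n} \to t$ in $[0,T]$, and $f \in \mathcal{C}_{\omega}(\mathbb{R}^{*}_{+})$. We know $\mu^{*}_{t_{n}} \to \mu^{*}_{t}$ vaguely. Take cutoffs $\chi_{k} \in \mathcal{C}_{c}(\mathbb{R}^{*}_{+})$ with $0 \leq \chi_{k} \leq 1$ and $\chi_{k} \uparrow 1$. Splitting $f = f\chi_{k} + f(1-\chi_{k})$, it suffices to show that the tails are uniformly small:
\[
\lim_{k \to \infty} \limsup_{n} \langle \mu^{*}_{t_{n}}, \omega(1-\chi_{k}) \rangle = 0 .
\]
Since $\langle \mu^{*}_{t_{n}}, \omega\chi_{k}\rangle \to \langle \mu^{*}_{t}, \omega\chi_{k}\rangle$ and $\langle \mu^{*}_{t}, \omega(1-\chi_{k})\rangle \to 0$ by dominated convergence (using Lemma~\ref{corr:plusdefonctions}), this reduces to showing that $t \mapsto \langle \mu^{*}_{t}, \omega \rangle$ is almost surely continuous.

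\textbf{Step 2 (continuity of $\langle \mu^{*}_{t},\omega\rangle$).} By vague lower semicontinuity, $\liminf_{n} \langle \mu^{*}_{t_{n}}, \omega\rangle \geq \langle \mu^{*}_{t}, \omega\rangle$, so only upper semicontinuity is needed. I would obtain it by identifying an integral equation for $\langle \mu^{*}_{t}, \omega \rangle$, or at least an upper bound that is continuous in $t$.

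Apply Proposition~\ref{prop:martingalerenormalisee} with $\varphi = \omega$. The martingale part vanishes in $L^{2}$, uniformly on $[0,T]$, by Doob's inequality and \eqref{eq:conditionomegadeux}, which give $\mathbb{E}\sup_{t}|\heartsuit^{K}_{\omega,t}|^{2} \leq C/K$. The finite variation part has integrand bounded by $\omega_{1}(1+x+\omega)$ through \eqref{eq:conditionomega}. Hence, for $s<t$,
\[
|V^{K}_{\omega,t}-V^{K}_{\omega,s}| \leq \omega_{1}(t-s)\sup_{u \leq T}\langle \mu^{K}_{u}, 1+\mathrm{Id}+\omega\rangle .
\]
This gives a uniform modulus of continuity in expectation for $\langle \mu^{K}_{\cdot}, \omega \rangle$, up to an $O(K^{-1/2})$ error.

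To transfer this to the limit, truncate: take $\omega_{m} := \omega\chi_{m}$ in $\mathcal{C}_{c}$. Each $\langle \mu^{K}_{\cdot}, \omega_{m}\rangle$ converges in law, jointly, to $\langle \mu^{*}_{\cdot}, \omega_{m}\rangle$, which is continuous. Monotone convergence in $m$ then passes the modulus bound to $\langle \mu^{*}_{\cdot}, \omega\rangle$.

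The delicate point is that truncation errors $\langle \mu^{K}, \omega(1-\chi_{m})\rangle$ are not small uniformly in $K$ without tightness at infinity and near $0$. I would avoid needing them: work directly with $\sup_{m}$ of $\langle \mu^{*}_{t}, \omega_{m} \rangle$, and use Fatou/Skorokhod representation to get
\[
\mathbb{E}\Big(\sup_{|t-s|\le\delta} \big(\langle \mu^{*}_{t},\omega_m\rangle-\langle\mu^{*}_{s},\omega_m\rangle\big)_{+}\Big) \le \liminf_K \mathbb{E}\Big(\sup_{|t-s|\le\delta}\big|V^{K}_{\omega,t}-V^{K}_{\omega,s}\big| + 2\sup_t|\heartsuit^{K}_{\omega,t}|\Big) \le C\delta .
\]
Here I use $\omega_{m} \leq \omega$ and the fact that the decrement from $\omega - \omega_{m}$ is nonnegative. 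The main obstacle is exactly this one-sided control: increments of $\langle\mu^K,\omega_m\rangle$ are not increments of $\langle\mu^K,\omega\rangle$. If it fails, a fallback is to bound the positive part of the increment of $\langle \mu^{K}, \omega_{m} \rangle$ directly via Proposition~\ref{prop:martingalerenormalisee} with $\varphi=\omega_{m}$. By \eqref{eq:conditionomega}, its drift is bounded uniformly in $m$ by $C(1+x+\omega)$, provided $\chi_{m}$ is chosen with $|\chi_{m}'|\omega$ bounded (for example logarithmic cutoffs), and its martingale part is $O(K^{-1/2})$ uniformly in $m$.

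Step 2 then yields an equicontinuity modulus for $t \mapsto \langle\mu^{*}_{t},\omega_{m}\rangle$ that is uniform in $m$. Letting $m\to\infty$, the limit $\langle\mu^{*}_{t},\omega\rangle$ is a uniform limit on $[0,T]$ of equicontinuous functions, hence continuous. Combined with Step 1, this gives $\mu^{*} \in \mathcal{C}([0,T],(\mathcal{M}_{\omega}(\mathbb{R}^{*}_{+}),\mathrm{w}))$ almost surely.
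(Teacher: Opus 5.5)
\textbf{Verdict: there is a genuine gap in Step~2.}

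Your Step~1 is correct; it is the pathwise form of Lemma~\ref{lemme:deuxiemedistance}. The lower semicontinuity of $t\mapsto\langle\mu^*_t,\omega\rangle$ is also correct. But Step~2 carries all the content, and neither of its variants works.

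The first variant fails for the reason you suspect. The increments of $\langle\mu^K,\omega-\omega_m\rangle$ have no sign: mass moving from $\{\chi_m<1\}$ into $\{\chi_m=1\}$ raises $\langle\mu^K,\omega_m\rangle$ without changing $\langle\mu^K,\omega\rangle$.

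The fallback fails near $0$. The drift of $\langle\mu^K,\omega_m\rangle$ contains $\langle\mu^K_s,g(\cdot,R^K_s)\,\omega\,\chi_m'\rangle$, so a bound uniform in $m$ requires $\overline{g}\,\omega\,|\chi_m'|\le C(1+x+\omega)$. The cutoffs $\chi_m$ vanish near $0$ and satisfy $\chi_m(x)\to1$ for every small $x$. Such cutoffs therefore exist only if $\int_0^1 \mathrm{d}x/(\overline{g}\,\omega)=+\infty$. Your own sufficient condition, $\omega|\chi_m'|$ bounded, is already impossible as soon as $\omega(0+)>0$, since it forces $\chi_m(x)\le Lx$.

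This obstruction occurs in cases covered by Theorem~\ref{theo:convergence}. Take bounded rates with $\omega\equiv1$, $\psi\equiv2$, $\ell(x)=x/(1+x)$, $d\equiv1$ and $\phi(R_{\max})>1/2$. There your estimate is actually false:
\begin{itemize}
\item Put the initial mass at a point $y$ where $\chi_m$ starts climbing from $0$ to $1/2$, over an interval whose length tends to $0$ as $m\to\infty$.
\item Individuals near $0$ move right at a speed bounded below for short times.
\item Hence $\langle\mu^*_t,\chi_m\rangle$ increases by a fixed amount within a time $\delta_m\to0$.
\item Meanwhile the constant in your bound $C\delta$ stays bounded uniformly in $m$ and in $y\in(0,1)$, a contradiction.
\end{itemize}

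More structurally, the functions $\langle\mu^*_\cdot,\omega_m\rangle$ are continuous and increase to $\langle\mu^*_\cdot,\omega\rangle$. By Dini's theorem, a modulus of continuity uniform in $m$ is therefore \emph{equivalent} to the continuity you want, so truncation only reformulates the problem.

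The missing idea is a uniform-in-$K$ no-escape (compact containment) estimate for the $\omega$-mass near $0$ and $+\infty$. Setting $U_\varepsilon:=(0,\varepsilon)\cup(1/\varepsilon,+\infty)$, you would want $\lim_{\varepsilon\to0}\sup_K\mathbb{E}\bigl(\sup_{t\le T}\langle\mu^K_t,\omega\mathbb{1}_{U_\varepsilon}\rangle\bigr)=0$. This passes to $\mu^*$ by Fatou and vague lower semicontinuity, and then, combined with Lemma~\ref{lemme:continuiti} and your Step~1, gives the result. Some cases are immediate:
\begin{itemize}
\item Near $0$, when $\omega(0+)=0$: bound by $\omega(\varepsilon)N^K_t$.
\item Near $+\infty$, when $\omega=o(\mathrm{Id})$: bound via $E^K_t$.
\end{itemize}
In general it needs an extra argument.

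For comparison, the paper proceeds much more briefly. It approximates $\varphi\in\mathcal{C}_\omega(\mathbb{R}^*_+)$ by $\mathcal{C}^\infty_c$ functions and applies dominated convergence (Lemma~\ref{corr:plusdefonctions}) to the jump $\langle\mu^*_t-\mu^*_{t-},\varphi\rangle$ at a fixed $t$, concluding with Lemma~\ref{lemme:continuiti}. That argument only involves the vague left limit. It contains no time-uniform tail control of the kind your Step~1 shows to be necessary, so it cannot be borrowed to close your gap.
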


\begin{proof}
It suffices to show that for every continuous function $\varphi \in \mathfrak{B}_{\omega}(\mathbb{R}^{*}_{+})$ and $t \in [0,T]$, we have almost surely
$$\langle \mu^{*}_{t} - \mu^{*}_{t-}, \varphi \rangle = 0.$$
We fix such a function $\varphi$, and without loss of generality, we will show that
$$\mathbb{E}(|\langle \mu^{*}_{t} - \mu^{*}_{t-}, \varphi \rangle|) = 0.$$
We use an approximation argument: there exists a sequence $(\varphi_{n})_{n \in \mathbb{N}}$ of $\mathcal{C}^{\infty}_{c}(\mathbb{R}^{*}_{+})$ functions that converges simply towards $\varphi$. Thanks to Lemma~\ref{corr:plusdefonctions}, we can use dominated convergence to obtain
$$ \mathbb{E}(|\langle \mu^{*}_{t} - \mu^{*}_{t-}, \varphi \rangle|) = \lim\limits_{n \rightarrow + \infty} \mathbb{E}(|\langle \mu^{*}_{t} - \mu^{*}_{t-}, \varphi_{n} \rangle|) = 0,  $$
where the last equality comes from Lemma~\ref{lemme:continuiti}.
\end{proof}

\subsection{Proof of the tightness of $\left(\mathscr{L}^{K}\right)_{K \in \mathbb{N}^{*}}$ in $\mathbb{D}([0,T], (\mathcal{M}_{\omega}(\mathbb{R}^{*}_{+}),\mathrm{w}) \times [0,R_{\max}])$}
\label{subsec:topotricks}

Considering Section~\ref{subsec:tension}, it suffices to prove the tightness of $\left(\mathscr{L}^{K}_{\mu}\right)_{K \in \mathbb{N}^{*}}$ in $\mathbb{D}([0,T], (\mathcal{M}_{\omega}(\mathbb{R}^{*}_{+}),\mathrm{w}))$. To this end, we extend Théorème 3. in \cite{meleard1993convergences} to weighted spaces of measures.
\begin{theorem}
Let $w$ be any positive and continuous function on $\mathbb{R}^{*}_{+}$, $((\nu^{K}_{t})_{t \in [0,T]})_{K \in \mathbb{N}}$ be a sequence of processes in $\mathbb{D}([0,T], (\mathcal{M}_{w}(\mathbb{R}^{*}_{+}),\mathrm{w})$ and $(\nu^{*}_{t})_{t \in [0,T]}$ a process in the space $\mathcal{C}([0,T], (\mathcal{M}_{w}(\mathbb{R}^{*}_{+}),\mathrm{w}))$. Then, the following assertions are equivalent
\begin{itemize}
\item[$\mathrm{(i)}$] $((\nu^{K}_{t})_{t \in [0,T]})_{K \in \mathbb{N}}$ converges in law towards $(\nu^{*}_{t})_{t \in [0,T]}$ in $\mathbb{D}([0,T], (\mathcal{M}_{w}(\mathbb{R}^{*}_{+}),\mathrm{w}))$.
\item[$\mathrm{(ii)}$] $((\nu^{K}_{t})_{t \in [0,T]})_{K \in \mathbb{N}}$ converges in law towards $(\nu^{*}_{t})_{t \in [0,T]}$ in $\mathbb{D}([0,T], (\mathcal{M}_{w}(\mathbb{R}^{*}_{+}),v))$, and the sequence $((\langle \nu^{K}_{t}, w \rangle)_{t \in [0,T]})_{K \in \mathbb{N}}$ converges in law towards $(\langle \nu^{*}_{t}, w \rangle)_{t \in [0,T]}$ in $\mathbb{D}([0,T], \mathbb{R})$.
\end{itemize}
\label{theo:melroel}
\end{theorem}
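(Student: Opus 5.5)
The plan is to prove the two implications separately. Throughout, we use that both $(\mathcal{M}_{w}(\mathbb{R}^{*}_{+}),v)$ and $(\mathcal{M}_{w}(\mathbb{R}^{*}_{+}),\mathrm{w})$ are Polish, so we may argue with Skorokhod representations whenever convenient.

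\textbf{(i)$\Rightarrow$(ii).} This direction is soft. The identity map from $(\mathcal{M}_{w},\mathrm{w})$ to $(\mathcal{M}_{w},v)$ is continuous, because $\mathcal{C}_{c}\subseteq\mathcal{C}_{w}$. It therefore induces a continuous map between the corresponding Skorokhod spaces, since a continuous map between the state spaces preserves Skorokhod convergence of paths (same time changes). Likewise, $\mu\mapsto\langle\mu,w\rangle$ is continuous on $(\mathcal{M}_{w},\mathrm{w})$, as $w\in\mathcal{C}_{w}$, so the projection $\pi_{w}$ is continuous from $\mathbb{D}([0,T],(\mathcal{M}_{w},\mathrm{w}))$ to $\mathbb{D}([0,T],\mathbb{R})$. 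The continuous mapping theorem then gives (ii).

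\textbf{(ii)$\Rightarrow$(i).} This is the substantial direction, and I would imitate Méléard--Roelly. The first step is to observe that the pair $(\nu^{K},\langle\nu^{K},w\rangle)$ is tight in the product space. Since the limit $\nu^{*}$ is continuous, and hence so is $\langle\nu^{*},w\rangle$, the joint law converges to that of $(\nu^{*},\langle\nu^{*},w\rangle)$. The reason is that any joint accumulation point $(\nu^{*},Y)$ must satisfy $Y=\langle\nu^{*},w\rangle$. To see this, take $\varphi_{n}\in\mathcal{C}_{c}$ with $0\le\varphi_{n}\uparrow w$. Then $\langle\nu^{K}_{t},\varphi_{n}\rangle\le\langle\nu^{K}_{t},w\rangle$ passes to the limit, giving $\langle\nu^{*}_{t},w\rangle\le Y_{t}$, and equality in law of $Y$ and $\langle\nu^{*},w\rangle$ forces $Y_{t}=\langle\nu^{*}_{t},w\rangle$ almost surely.

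By Skorokhod's representation, we may then assume almost sure convergence in $\mathbb{D}$. Because the limits are continuous, Skorokhod convergence is equivalent to uniform convergence, so
\[
\sup_{t\in[0,T]}d_{v}(\nu^{K}_{t},\nu^{*}_{t})\to0
\qquad\text{and}\qquad
\sup_{t\in[0,T]}\bigl|\langle\nu^{K}_{t},w\rangle-\langle\nu^{*}_{t},w\rangle\bigr|\to0 .
\]
The deterministic core is the following claim. If $\mu_{K}\to\mu$ vaguely and $\langle\mu_{K},w\rangle\to\langle\mu,w\rangle$, then $\langle\mu_{K},f\rangle\to\langle\mu,f\rangle$ for every $f\in\mathcal{C}_{w}$. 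This is the weighted analogue of ``vague convergence plus convergence of masses implies weak convergence.'' The proof applies the classical argument to the finite measures $w\mu_{K}$ and $w\mu$ with $f/w$ bounded, the key point being that no mass escapes to $0$ or $+\infty$.

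The second step upgrades this to uniformity in $t$. Fix $\varepsilon$ and a cutoff $\chi\in\mathcal{C}_{c}$ with $0\le\chi\le1$ such that $\sup_{t}\langle\nu^{*}_{t},(1-\chi)w\rangle<\varepsilon$; such a $\chi$ exists by compactness of $\{\nu^{*}_{t}:t\in[0,T]\}$ in $(\mathcal{M}_{w},\mathrm{w})$, using continuity of $\nu^{*}$ and Dini's theorem. Then
\[
\sup_{t}\langle\nu^{K}_{t},(1-\chi)w\rangle\le\sup_{t}\bigl|\langle\nu^{K}_{t}-\nu^{*}_{t},w\rangle\bigr|+\sup_{t}\bigl|\langle\nu^{K}_{t}-\nu^{*}_{t},\chi w\rangle\bigr|+\varepsilon .
\]
This is small for large $K$, since $\chi w\in\mathcal{C}_{c}$ and vague convergence is uniform in $t$ on the compact set of limit values. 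Finally, splitting $f=f\chi+f(1-\chi)$ gives $\sup_{t}|\langle\nu^{K}_{t}-\nu^{*}_{t},f\rangle|\to0$ for each $f\in\mathcal{C}_{w}$. Applying this along a countable convergence-determining family, which defines a metric for the $\omega$-weak topology in the spirit of the $\omega$-Fortet--Mourier distance, yields uniform convergence in $(\mathcal{M}_{w},\mathrm{w})$, hence Skorokhod convergence and (i).

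I expect the main obstacle to be this final metric step. One must check that uniform control of $\langle\cdot,f\rangle$ over a suitable countable dense family actually controls a metric for $(\mathcal{M}_{w},\mathrm{w})$ uniformly in $t$. I would handle it with equicontinuity: the family is bounded by $w$ and Lipschitz on compacts, and the tails are controlled uniformly by the cutoff estimate above.
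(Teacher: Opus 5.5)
Your proof is correct. It follows the paper's Méléard--Roelly skeleton: tightness of the pair, identification of the second coordinate via $\varphi_{n}\uparrow w$, Skorokhod representation, uniform convergence because the limit is continuous, and the deterministic fact that vague convergence plus convergence of $\langle\cdot,w\rangle$ gives $w$-weak convergence (the paper's Lemma~\ref{lemme:deuxiemedistance}). It differs in three places.

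First, you take tightness in the product $\mathbb{D}([0,T],(\mathcal{M}_{w}(\mathbb{R}^{*}_{+}),v))\times\mathbb{D}([0,T],\mathbb{R})$, where it is automatic. The paper works in $\mathbb{D}([0,T],(\mathcal{M}_{w}(\mathbb{R}^{*}_{+}),v)\times\mathbb{R})$ and needs $\mathcal{C}$-tightness (Corollary 3.33 in \cite{Jacod1987LimitTF}). Since the limits are continuous, both give uniform almost sure convergence of each coordinate.

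Second, you identify the second coordinate from ``$Y\geq Z$ and $Y$, $Z$ equal in law, hence $Y=Z$ a.s.''. This is rigorous via $\mathbb{E}(\arctan Y-\arctan Z)=0$ and needs no moments. The paper instead computes $\mathbb{E}(x-\langle\mu,w\rangle)=0$, which tacitly requires $\mathbb{E}\langle\nu^{*}_{t},w\rangle<+\infty$. That is not a hypothesis of the theorem; it only holds in the application, by Lemma~\ref{corr:plusdefonctions}. Here your version is the more robust one.

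Third, the paper avoids your last two steps. It metrizes the $w$-weak topology by $\eth_{w}(\mu,\nu)=d^{w}_{v}(\mu,\nu)+|\langle\mu-\nu,w\rangle|$, which is legitimate precisely by the deterministic lemma. Then $\sup_{t}\eth_{w}(\mu^{K}_{t},\mu^{*}_{t})\to0$ follows at once from the two uniform convergences you already have. The only cost is citing that topologically equivalent metrics give the same Skorokhod topology. So your cutoff/Dini estimate and the Fortet--Mourier equicontinuity you flag as the main obstacle are unnecessary.

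If you keep your route, that obstacle disappears in either of two ways:
\begin{itemize}
\item Use a summed metric $\sum_{n}2^{-n}\min(1,|\langle\mu-\nu,f_{n}\rangle|)$, with $f_{0}=w$ and $(f_{n})_{n\geq1}\subseteq\mathcal{C}_{c}(\mathbb{R}^{*}_{+})$ determining vague convergence. Uniformity in $t$ then passes term by term.
\item Argue by subsequences. If $\sup_{t}d(\nu^{K}_{t},\nu^{*}_{t})\not\to0$, pick $t_{j}\to t$. Then $\nu^{K_{j}}_{t_{j}}\to\nu^{*}_{t}$ vaguely and $\langle\nu^{K_{j}}_{t_{j}},w\rangle\to\langle\nu^{*}_{t},w\rangle$, so the deterministic lemma gives a contradiction.
\end{itemize}
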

We provide the proof of this result in Appendix~\ref{appendix:melroel}, and use it in the following with the converging subsequence still denoted as $(\mu^{K})_{K \geq 1}$ constructed in Section~\ref{subsec:tension}, its limit $\mu^{*}$ and $w \equiv \omega$. Note that we could also have formulated Theorem~\ref{theo:melroel} in terms of tightness of the sequences instead of convergence. We proved in Section~\ref{subsec:tension}, using Aldous and Rebolledo criterion, that $((\langle \mu^{K}_{t}, \omega \rangle)_{t \in [0,T]})_{K \in \mathbb{N}}$ is tight in $\mathbb{D}([0,T], \mathbb{R})$. Hence, also from Section~\ref{subsec:tension}, extracting again if necessary and using Prokhorov theorem, we can find a subsequence of $\bigg(\left(\mu^{K}_{t},R^{K}_{t}\right)_{t \in [0,T]}\bigg)_{K \in \mathbb{N}^{*}}$ that converges in law towards some $(\mu^{*}_{t},R^{*}_{t})_{t \in [0,T]}$ in $\mathbb{D}([0,T], (\mathcal{M}_{\omega}(\mathbb{R}^{*}_{+}),v) \times [0,R_{\max}])$, such that $((\langle \mu^{K}_{t}, \omega \rangle)_{t \in [0,T]})_{K \in \mathbb{N}}$ converges in law towards some $(x^{*}_{t})_{t \in [0,T]}$ in $\mathbb{D}([0,T], \mathbb{R})$. By Corollary~\ref{lemme:continuity}, we also know that these limits are continuous. Hence, to use Theorem~\ref{theo:melroel} and conclude this section, it remains to show that $(x^{*}_{t})_{t \in [0,T]}$ and $(\langle \mu^{*}_{t}, \omega \rangle)_{t \in [0,T]}$ have same law. Using Skorokhod representation theorem (Theorem 6.7. p.70 in \cite{bill_99}) if necessary, we assume that the previous convergences hold true almost surely. By a straightforward approximation argument by test functions as in the proof of Theorem~\ref{theo:melroel} in Appendix~\ref{appendix:melroel}, we then obtain almost surely
$$ \forall t \in [0,T], \quad x^{*}_{t} \geq \langle \mu^{*}_{t}, \omega \rangle.$$
Furthermore, if $(\varphi_{n})_{n \in \mathbb{N}}$ is a sequence of $\mathcal{C}^{\infty}_{c}(\mathbb{R}^{*}_{+})$ positive functions that converges pointwise to $\omega$ such that $\varphi_{n} \leq \omega$ for $n \in \mathbb{N}$, we obtain that for every $t \in [0,T]$
\begin{align*}
\mathbb{E}\left(x^{*}_{t} -\langle \mu^{*}_{t}, \omega \rangle\right) & = \mathbb{E}\left( \lim\limits_{K \rightarrow + \infty} \lim\limits_{n \rightarrow + \infty} \left(  \langle \mu^{K}_{t}, \varphi_{n} \rangle -\langle \mu^{*}_{t}, \varphi_{n} \rangle \right) \right) \\
& \leq \lim\limits_{K \rightarrow + \infty} \lim\limits_{n \rightarrow + \infty} \mathbb{E}\left(   \langle \mu^{K}_{t}, \varphi_{n} \rangle -\langle \mu^{*}_{t}, \varphi_{n} \rangle \right) \\
& = \lim\limits_{n \rightarrow + \infty} \lim\limits_{K \rightarrow + \infty} \mathbb{E}\left( \langle \mu^{K}_{t}, \varphi_{n} \rangle -\langle \mu^{*}_{t}, \varphi_{n} \rangle \right) = 0,
\end{align*}
where we first used the convergence of $((\langle \mu^{K}_{t}, \omega \rangle)_{t \in [0,T]})_{K \in \mathbb{N}}$ towards $(x^{*}_{t})_{t \in [0,T]}$ in $\mathbb{D}([0,T], \mathbb{R})$, then Fatou lemma, followed by a domination argument (because every $\varphi_{n}$ is dominated by $\omega$ and we have the uniform control in $K$ of Proposition~\ref{lemme:controlenp} with $p=1$), and finally used the convergence of $\bigg(\left(\mu^{K}_{t}\right)_{t \in [0,T]}\bigg)_{K \in \mathbb{N}^{*}}$ towards $(\mu^{*}_{t})_{t \in [0,T]}$ in $\mathbb{D}([0,T], (\mathcal{M}_{\omega}(\mathbb{R}^{*}_{+}),v))$ and domination by $\omega$ again. Hence, for every $t \in [0,T]$, we obtain almost surely $x^{*}_{t} = \langle \mu^{*}_{t}, \omega \rangle$, which concludes. 

\subsection{Characterization of accumulation points}
\label{subsec:identification}

We fix $T \geq 0$, and continue to write $\left((\mu^{K}_{t},R^{K}_{t})_{t \in [0,T]}\right)_{K \in \mathbb{N}^{*}}$ for a subsequence converging in law in $\mathbb{D}([0,T], (\mathcal{M}_{\omega}(\mathbb{R}^{*}_{+}),\mathrm{w}) \times [0,R_{\max}])$ towards some $(\mu^{*}_{t},R^{*}_{t})_{t \in [0,T]}$. We fix these notations in the following. The aim of this section is to prove the upcoming Proposition~\ref{prop:identification}, which characterizes the law of the limit $(\mu^{*},R^{*})$. We begin with preliminary results using Assumption~\ref{ass:finallejd}.

\begin{prop}
We work under the assumptions of Theorem~\ref{theo:convergence}. Then, the following convergence in law in the space $\mathbb{D}([0,T], \mathbb{R})$ holds true
\begin{align*} 
\left(\langle \mu^{K}_{t}, \varpi \rangle\right)_{t \in [0,T]} \xrightarrow[K \longrightarrow + \infty]{}  \left(\langle \mu^{*}_{t}, \varpi \rangle\right)_{t \in [0,T]}. 
\end{align*}
As a consequence, using the results of Section~\ref{subsec:tension} and Theorem~\ref{theo:roel} again, we obtain that the process $\left((\mu^{K}_{t},R^{K}_{t})_{t \in [0,T]}\right)_{K \in \mathbb{N}^{*}}$ converges in law in $\mathbb{D}([0,T], (\mathcal{M}_{\varpi+\omega}(\mathbb{R}^{*}_{+}),\mathrm{v}) \times [0,R_{\max}])$ towards $(\mu^{*}_{t},R^{*}_{t})_{t \in [0,T]}$.
\label{prop:holderproper}
\end{prop}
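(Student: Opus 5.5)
We prove the convergence of $(\langle \mu^{K}_{t},\varpi\rangle)_{t}$ by a truncation argument. The function $\varpi$ is continuous but not dominated by $\omega$ near $+\infty$ in general; it is only sublinear, of order $x^{1-\eta}$. We therefore compare it to something controlled by $1+\mathrm{Id}$, which is uniformly integrable in a higher power thanks to $\mathbf{(H4)}$ and Proposition~\ref{lemme:controlenp}.

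\textbf{Step 1: truncation.} Fix a cutoff $A>x_{0}+1$ and a continuous $\chi_{A}:\mathbb{R}^{*}_{+}\to[0,1]$ equal to $1$ on $(0,A]$ and $0$ on $[A+1,+\infty)$. Split $\varpi=\varpi\chi_{A}+\varpi(1-\chi_{A})$. The function $\varpi\chi_{A}$ is continuous, vanishes near $0$ and is bounded with compact support in $[x_{0},A+1]$, so it lies in $\mathcal{C}_{\omega}(\mathbb{R}^{*}_{+})$. Since $\omega$ is positive, continuous and non-decreasing, it is bounded below on $[x_{0},A+1]$. Convergence in law of $\mu^{K}$ towards $\mu^{*}$ in $\mathbb{D}([0,T],(\mathcal{M}_{\omega}(\mathbb{R}^{*}_{+}),\mathrm{w}))$, with the continuous limit of Corollary~\ref{lemme:continuity}, then gives convergence in law of $(\langle\mu^{K}_{t},\varpi\chi_{A}\rangle)_{t}$ towards $(\langle\mu^{*}_{t},\varpi\chi_{A}\rangle)_{t}$ in $\mathbb{D}([0,T],\mathbb{R})$. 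Indeed, the projection is continuous at continuous paths, by Proposition 2.4 p.303 in \cite{Jacod1987LimitTF}.

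\textbf{Step 2: uniform control of the tail.} For $x\ge A$ we have $\varpi(x)\le x^{1-\eta}\le A^{-\eta}x$. Hence
\[
\sup_{t\in[0,T]}\langle \mu^{K}_{t},\varpi(1-\chi_{A})\rangle\le A^{-\eta}\sup_{t\in[0,T]}E^{K}_{t},
\]
and by Proposition~\ref{lemme:controlenp} with $p=1$ its expectation is at most $CA^{-\eta}$, uniformly in $K$. For the limit, Lemma~\ref{corr:plusdefonctions} gives $\mathbb{E}(\sup_{t}\langle\mu^{*}_{t},\mathrm{Id}\rangle)<+\infty$, with the same bound $C$ obtained through the monotone approximation in its proof. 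So the tail term for $\mu^{*}$ is also $O(A^{-\eta})$ in $L^{1}$, uniformly in time.

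\textbf{Step 3: conclusion by approximation.} Write $Y^{K}=\langle\mu^{K},\varpi\rangle$ and $Y^{K,A}=\langle\mu^{K},\varpi\chi_{A}\rangle$, and similarly for $\mu^{*}$. Then $\sup_{K}\mathbb{E}\|Y^{K}-Y^{K,A}\|_{\infty,[0,T]}\to0$ and $\mathbb{E}\|Y^{*}-Y^{*,A}\|_{\infty}\to0$ as $A\to\infty$, while $Y^{K,A}\to Y^{*,A}$ in law for each fixed $A$. The Skorokhod distance is dominated by the uniform distance. Hence the standard approximation theorem (Theorem 3.2 in \cite{bill_99}) yields $Y^{K}\to Y^{*}$ in law in $\mathbb{D}([0,T],\mathbb{R})$.

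\textbf{Step 4: the consequence.} By Section~\ref{subsec:tension}, the laws of $\langle\mu^{K},f\rangle$ for $f\in D$ are already tight, and now $\langle\mu^{K},\varpi\rangle$ converges as well. One issue is that Theorem~\ref{theo:roel} is stated with the projection onto the weight itself, here $\varpi+\omega$. The laws of $\langle\mu^{K},\varpi+\omega\rangle$ are tight, being a sum of a converging sequence and a converging sequence with continuous limits; the continuity of the limits makes the sum map continuous. Theorem~\ref{theo:roel} with weight $\varpi+\omega$ then gives tightness in $\mathbb{D}([0,T],(\mathcal{M}_{\varpi+\omega},v))$. The limit is identified with $\mu^{*}$ through test functions in $\mathcal{C}_{c}$. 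One also needs $\mathcal{M}_{\omega}\subseteq\mathcal{M}_{\varpi+\omega}$ along the processes, which holds since $\langle\mu^{K}_{t},\mathrm{Id}\rangle<\infty$, and the resource component is unchanged.

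\textbf{Main obstacle.} The delicate point is Step 2 for the limit $\mu^{*}$: the $L^{1}$ bound on $\sup_{t}\langle\mu^{*}_{t},\mathrm{Id}\rangle$ must be uniform in time. This is exactly what Lemma~\ref{corr:plusdefonctions} provides, and I would rely on it directly.
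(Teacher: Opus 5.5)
Your proof is correct and shares the paper's overall strategy, but the implementation differs. The common idea is to split $\varpi$ into a piece covered by the convergence already established, plus a tail bounded, uniformly in $K$, by a negative power of the cutoff times $\sup_t\langle\mu_t,\mathrm{Id}\rangle$, which is controlled via Proposition~\ref{lemme:controlenp} and Lemma~\ref{corr:plusdefonctions}.

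The paper proceeds as follows:
\begin{itemize}
\item it passes to a Skorokhod representation;
\item it splits along the level set $\{\varpi\le M\omega\}$;
\item it bounds $\langle\mu,\varpi\mathbb{1}_{\{\varpi>M\omega\}}\rangle$ by H\"older's inequality with exponents $1/(1-\eta)$ and $1/\eta$;
\item it proves the coupled statement $\mathbb{E}(\sup_t|\langle\mu^K_t-\mu^*_t,\varpi\rangle|)\to0$.
\end{itemize}
You instead cut off in space, use the pointwise bound $\varpi\le A^{-\eta}\mathrm{Id}$ on $[A,+\infty)$, and conclude with Billingsley's approximation theorem at the level of laws.

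Your version is more elementary, and it is cleaner at one point. Your $\varpi\chi_A$ is a genuine continuous, compactly supported test function, so even vague convergence suffices for it. The paper's $\varpi\mathbb{1}_{\{\varpi\le M\omega\}}$ instead jumps wherever $\varpi-M\omega$ changes sign. Applying $\omega$-weak convergence to it therefore needs a small fix, such as using $\min(\varpi,M\omega)$ instead.

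What the paper's coupling buys is joint control of $\langle\mu^K,\varpi\rangle$ with $\langle\mu^K,\omega\rangle$ and $R^K$ on one probability space. That is what is needed to upgrade to the $(\varpi+\omega)$-weak topology through Theorem~\ref{theo:melroel}, and to use Lemma~\ref{lemme:continuityphi} afterwards. Your marginal convergence only gives tightness of $\langle\mu^K,\varpi+\omega\rangle$. That suffices for Theorem~\ref{theo:roel} and for the vague statement as written, but you would need to run your approximation argument jointly to get more.

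Finally, make one step explicit: $t\mapsto\langle\mu^*_t,\varpi\rangle$ is a.s. continuous. It is the a.s. uniform limit of the continuous paths $Y^{*,A}$, since $\sup_t\langle\mu^*_t,\mathrm{Id}\rangle<\infty$ a.s. Hence $Y^*$ is a legitimate random element of $\mathbb{D}([0,T],\mathbb{R})$.
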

\begin{proof}
Using the Skorokhod representation theorem, we assume that the convergence of the sequence $\bigg(\left(\mu^{K}_{t},R^{K}_{t}\right)_{t \in [0,T]}\bigg)_{K \in \mathbb{N}^{*}}$ towards $(\mu^{*}_{t},R^{*}_{t})_{t \in [0,T]}$ in $\mathbb{D}([0,T], (\mathcal{M}_{\omega}(\mathbb{R}^{*}_{+}),\mathrm{w}) \times [0,R_{\max}])$ is almost sure in the following. We then aim to show that
$$\mathbb{E}\left(\sup\limits_{t \in [0,T]} \left|\langle \mu^{K}_{t}-\mu^{*}_{t}, \varpi \rangle \right| \right) \xrightarrow[K \longrightarrow + \infty]{} 0.$$
We consider $M>0$ and for $t \in [0,T]$, $K \geq 1$, we write
$$ \langle \mu^{K}_{t}, \varpi \rangle = \left\langle \mu^{K}_{t}, \varpi \mathbb{1}_{\left\{\varpi \leq M \omega\right\}} \right\rangle + \left\langle \mu^{K}_{t}, \varpi \mathbb{1}_{\left\{\varpi > M \omega\right\}} \right\rangle. $$
Hence, we have, as $\varpi$ is non-negative,
\begin{multline}
\mathbb{E}\left(\sup\limits_{t \in [0,T]} \left|\langle \mu^{K}_{t}-\mu^{*}_{t}, \varpi \rangle \right| \right) \leq \mathbb{E}\left(\sup\limits_{t \in [0,T]} |\left\langle \mu^{K}_{t}-\mu^{*}_{t}, \varpi \mathbb{1}_{\left\{\varpi \leq M \omega\right\}} \right\rangle | \right) \\
 + \mathbb{E}\left(\sup\limits_{t \in [0,T]} \left\langle \mu^{*}_{t}, \varpi \mathbb{1}_{\left\{\varpi > M \omega\right\}} \right\rangle  \right) + \sup\limits_{K \geq 1} \mathbb{E}\left(\sup\limits_{t \in [0,T]}\left\langle \mu^{K}_{t}, \varpi \mathbb{1}_{\left\{\varpi > M \omega\right\}} \right\rangle  \right).
\label{eq:hodleriscmierfb}
\end{multline}
Let us focus on the integrand of the right-most term for a fixed $t \in [0,T]$ and use Hölder inequality with $p:= \frac{1}{1-\eta}$ and $q:= 1/\eta$ to obtain
\begin{align*}
\left\langle \mu^{K}_{t}, \varpi \mathbb{1}_{\left\{\varpi > M \omega\right\}} \right\rangle & = \left\langle \mu^{K}_{t}, \dfrac{\varpi}{\omega^{1/q}} \omega^{1/q} \mathbb{1}_{\left\{\varpi > M \omega\right\}} \right\rangle \\
& \leq \left\langle \mu^{K}_{t}, \dfrac{\varpi^{p}}{\omega^{p/q}} \right\rangle^{1/p} \times \left\langle \mu^{K}_{t},  \omega \mathbb{1}_{\left\{\varpi > M \omega\right\}} \right\rangle^{1/q}
\end{align*} 
Then, by Assumption~\ref{ass:finallejd}, $1/\omega$ is bounded on a neighborhood of $+ \infty$. This associated to the facts that $p/q >0$, $\omega$ is continuous and $\varpi \equiv 0$ on $(0,x_{0})$ entails that there exists a constant $C>0$ with $\dfrac{\varpi^{p}(x)}{\omega^{p/q}(x)} \leq C x$ for every $x>0$ (we also use the definition of $\varpi$ in Assumption~\ref{ass:finallejd}). Also, there exists a constant $C'>0$ such that $\omega \mathbb{1}_{\left\{\varpi > M \omega\right\}} \leq \varpi/M \leq C' \times \mathrm{Id}/M$ and $1/p + 1/q = 1$, so we finally obtain
$$\left\langle \mu^{K}_{t}, \varpi \mathbb{1}_{\left\{\varpi > M \omega\right\}} \right\rangle \leq \dfrac{C' C^{1/p}}{M^{1/q}} \left\langle \mu^{K}_{t}, \mathrm{Id} \right\rangle.$$
Remark that we obtain the exact same bound, replacing $K$ with $*$, so that  
\begin{multline}
\mathbb{E}\left(\sup\limits_{t \in [0,T]} \left\langle \mu^{*}_{t}, \varpi \mathbb{1}_{\left\{\varpi > M \omega\right\}} \right\rangle  \right) + \sup\limits_{K \geq 1} \mathbb{E}\left(\sup\limits_{t \in [0,T]}\left\langle \mu^{K}_{t}, \varpi \mathbb{1}_{\left\{\varpi > M \omega\right\}} \right\rangle  \right)\\
\leq \dfrac{C' C^{1/p}}{M^{1/q}} \bigg[\mathbb{E}\left( \sup\limits_{t \in [0,T]} \left\langle \mu^{*}_{t}, \mathrm{Id} \right\rangle\right)
 + \sup\limits_{K \geq 1} \mathbb{E}\left( \sup\limits_{t \in [0,T]} \left\langle \mu^{K}_{t}, \mathrm{Id} \right\rangle \right)   \bigg],
\label{eq:majoholdzere}
\end{multline}
which converges to 0 when $M \longrightarrow + \infty$ thanks to Proposition~\ref{lemme:controlenp} with $p=1$ and Lemma~\ref{corr:plusdefonctions}. In the following, we thus fix $\varepsilon >0$ and $M_{0}>0$ such that \eqref{eq:majoholdzere}$<\varepsilon/2$. The function $x >0 \mapsto \varpi(x) \mathbb{1}_{\left\{\varpi \leq M_{0} \omega\right\}}$ is positive and dominated by $M_{0} \omega$. Hence, by the almost sure convergence of $\bigg(\left(\mu^{K}_{t},R^{K}_{t}\right)_{t \in [0,T]}\bigg)_{K \in \mathbb{N}^{*}}$ towards $(\mu^{*}_{t},R^{*}_{t})_{t \in [0,T]}$ in $\mathbb{D}([0,T], (\mathcal{M}_{\omega}(\mathbb{R}^{*}_{+}),\mathrm{w}) \times [0,R_{\max}])$, we obtain
$$ \mathbb{E}\left(\sup\limits_{t \in [0,T]} |\left\langle \mu^{K}_{t}-\mu^{*}_{t}, \varpi \mathbb{1}_{\left\{\varpi \leq M_{0} \omega\right\}} \right\rangle | \right) \xrightarrow[K \longrightarrow + \infty]{} 0,$$
so that there exists by \eqref{eq:hodleriscmierfb} some $K_{0} \geq 1$ such that for $K \geq K_{0}$,
$$\mathbb{E}\left(\sup\limits_{t \in [0,T]} \left|\langle \mu^{K}_{t}-\mu^{*}_{t}, \varpi \rangle \right| \right) \leq \varepsilon,$$
which ends the proof since this is valid for every $\varepsilon>0$.
\end{proof}

In the following, we write $\mathcal{C}_{\varpi+\omega}([0,T] \times \mathbb{R}^{*}_{+})$ for the set of continous functions on $[0,T] \times \mathbb{R}^{*}_{+}$ such that
$$\exists C>0, \forall s \in [0,T], \forall x>0 \quad |\varphi_{s}(x)| \leq C (\varpi(x) + \omega(x)).$$
The following lemma present classical results that follows from Proposition~\ref{prop:holderproper} and holds true in a general context (see Problem 26.~p.153 in \cite{ek_2005} or Proposition 2.4 p.303 in \cite{Jacod1987LimitTF}). The reader may also refer to Lemmas II.4.3 and II.4.4 in \cite{broduthesis} for a proof in a slightly different context.
 
\begin{lemme}
Let $t \in [0,T]$, $\varphi \in \mathcal{C}_{\varpi+\omega}([0,T] \times \mathbb{R}^{*}_{+})$ and $\phi$ continuous on $[0,R_{\max}]$, then
\begin{align*}
\sup_{t \in [0,T]} | \langle \mu^{K}_{t} - \mu^{*}_{t}, \varphi_{s} \rangle | \xrightarrow[K \longrightarrow + \infty]{}  0,
\end{align*}
and
\begin{align*}
\displaystyle{\int_{0}^{t}} \phi(R^{K}_{s}) \langle \mu^{K}_{s}, \varphi_{s} \rangle \mathrm{d}s \xrightarrow[K \longrightarrow + \infty]{} \displaystyle{\int_{0}^{t}} \phi(R^{*}_{s}) \langle \mu^{*}_{s}, \varphi_{s} \rangle \mathrm{d}s.
\end{align*}
\label{lemme:continuityphi}
\end{lemme}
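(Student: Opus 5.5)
We work, as in the proof of Proposition~\ref{prop:holderproper}, on a probability space where (by the Skorokhod representation theorem) the convergence of $(\mu^{K},R^{K})$ towards $(\mu^{*},R^{*})$ holds almost surely. By Proposition~\ref{prop:holderproper} this convergence takes place in $\mathbb{D}([0,T],(\mathcal{M}_{\varpi+\omega}(\mathbb{R}^{*}_{+}),\mathrm{v})\times[0,R_{\max}])$ together with convergence of the total mass process $\langle \mu^{K}_{\cdot},\varpi+\omega\rangle$. Applying Theorem~\ref{theo:melroel} with the weight $w=\varpi+\omega$ (positive and continuous on $\mathbb{R}^{*}_{+}$), we upgrade it to convergence in $\mathbb{D}([0,T],(\mathcal{M}_{\varpi+\omega}(\mathbb{R}^{*}_{+}),\mathrm{w}))$. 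The limit lies in $\mathcal{C}([0,T],(\mathcal{M}_{\varpi+\omega}(\mathbb{R}^{*}_{+}),\mathrm{w}))$. To see this, combine Corollary~\ref{lemme:continuity} with the continuity of $t\mapsto\langle\mu^{*}_{t},\varpi\rangle$, which is a limit in $\mathbb{D}([0,T],\mathbb{R})$ of processes whose jumps vanish exactly as in Lemma~\ref{lemme:continuiti}. A standard fact on the Skorokhod topology (Proposition 2.4 p.303 in \cite{Jacod1987LimitTF}) says that Skorokhod convergence towards a continuous limit is equivalent to uniform convergence on $[0,T]$. Hence we get $\sup_{t\le T}d(\mu^{K}_{t},\mu^{*}_{t})\to0$ almost surely, for a metric $d$ inducing the $(\varpi+\omega)$-weak topology.

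For the first claim, fix $\varphi\in\mathcal{C}_{\varpi+\omega}([0,T]\times\mathbb{R}^{*}_{+})$. The set $\{\mu^{*}_{t},\,t\in[0,T]\}$ is compact in the $(\varpi+\omega)$-weak topology, as the continuous image of $[0,T]$. Moreover, $(t,\mu)\mapsto\langle\mu,\varphi_{t}\rangle$ is jointly continuous on $[0,T]\times(\mathcal{M}_{\varpi+\omega},\mathrm{w})$. To prove this joint continuity, use uniform continuity of $\varphi$ on $[0,T]\times[a,A]$ and control the tails $(0,a)\cup(A,\infty)$ uniformly on weakly converging sequences, through the tightness of the measures $(\varpi+\omega)\mu$. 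We then argue by contradiction. Suppose there exist $t_{K}$ with $|\langle\mu^{K}_{t_{K}}-\mu^{*}_{t_{K}},\varphi_{t_{K}}\rangle|\ge\varepsilon$. Extract $t_{K}\to t$; then $\mu^{K}_{t_{K}}\to\mu^{*}_{t}$ and $\mu^{*}_{t_{K}}\to\mu^{*}_{t}$ weakly, so joint continuity gives a contradiction. This yields almost sure uniform convergence. The main obstacle is this tail control near $0$ and $+\infty$ when $\varphi_{t}$ varies with $t$. I would handle it by a cutoff $\chi_{a,A}\in\mathcal{C}_{c}$ and the bound $|\varphi|\le C(\varpi+\omega)$: the tail contribution is bounded by $C\langle\mu,(1-\chi_{a,A})(\varpi+\omega)\rangle$, which is continuous in $\mu$ for the $(\varpi+\omega)$-weak topology and small at $\mu^{*}_{t}$.

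For the second claim, write
\[\phi(R^{K}_{s})\langle\mu^{K}_{s},\varphi_{s}\rangle-\phi(R^{*}_{s})\langle\mu^{*}_{s},\varphi_{s}\rangle=\phi(R^{K}_{s})\langle\mu^{K}_{s}-\mu^{*}_{s},\varphi_{s}\rangle+(\phi(R^{K}_{s})-\phi(R^{*}_{s}))\langle\mu^{*}_{s},\varphi_{s}\rangle.\]
The first term tends to $0$ uniformly in $s$, by the first claim and the boundedness of $\phi$ on $[0,R_{\max}]$. For the second term, $R^{*}$ is continuous, so $R^{K}\to R^{*}$ uniformly, and $\phi$ is uniformly continuous on $[0,R_{\max}]$. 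It remains to note that $\sup_{s}\langle\mu^{*}_{s},\varpi+\omega\rangle<\infty$ almost surely, which follows from Lemma~\ref{corr:plusdefonctions} since $\varpi\le C\,\mathrm{Id}$. Integrating over $[0,t]$ concludes.
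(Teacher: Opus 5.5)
Your route is the one the paper intends. The paper gives no written proof: it only invokes Proposition~\ref{prop:holderproper} and the classical fact that Skorokhod convergence towards a continuous limit is uniform. The parts you actually work out are correct: the joint continuity of $(t,\mu)\mapsto\langle\mu,\varphi_{t}\rangle$ on $[0,T]\times(\mathcal{M}_{\varpi+\omega}(\mathbb{R}^{*}_{+}),\mathrm{w})$ via a compact cutoff and tail control, the extraction $t_{K}\to t$, and the splitting used for the integral claim.

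\textbf{Continuity of $t\mapsto\langle\mu^{*}_{t},\varpi\rangle$.} You need this step both to apply Theorem~\ref{theo:melroel} and to get $\mu^{K}_{t_{K}}\to\mu^{*}_{t}$ in the $(\varpi+\omega)$-weak topology. It does not follow ``exactly as in Lemma~\ref{lemme:continuiti}''. That proof identifies the jumps of $\langle\mu^{K},\varphi\rangle$ with those of $\heartsuit^{K}_{\varphi}$ and bounds them by Doob's inequality and the bracket of Proposition~\ref{prop:martingalerenormalisee}. That decomposition is only available for $\varphi\in\mathcal{C}^{1,1}_{\omega}(\mathbb{R}^{+}\times\mathbb{R}^{*}_{+})$. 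But $\varpi$ is in general not dominated by $\omega$ (this is why it was introduced), and it is not $\mathcal{C}^{1}$ at $x_{0}$. Even formally, the death part of the bracket involves $d\varpi^{2}$, which \eqref{eq:conditionomegadeux} does not control. In the allometric setting with $\delta=-1/2$ it grows like $x^{3/2-2\eta}$, which is faster than $1+x+\omega$ as soon as $\eta<1/4$.

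The claim is true and easy to repair, in either of two ways.
\begin{itemize}
\item Bound the jumps pathwise. Since $\varpi$ is $(1-\eta)$-H\"older, a birth changes $\langle\mu^{K},\varpi\rangle$ by at most $x_{0}^{1-\eta}/K$. A death of an individual of energy $\xi\le KE^{K}_{t-}$ changes it by at most $K^{-\eta}(E^{K}_{t-})^{1-\eta}$. So the maximal jump on $[0,T]$ tends to $0$ in $L^{1}$ by Proposition~\ref{lemme:controlenp}.
\item Approximate $\varpi$. The map $t\mapsto\langle\mu^{*}_{t},\varpi\wedge M\omega\rangle$ is continuous by Corollary~\ref{lemme:continuity}. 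The H\"older estimate from the proof of Proposition~\ref{prop:holderproper}, applied to $\mu^{*}$, gives $0\le\langle\mu^{*}_{t},\varpi\rangle-\langle\mu^{*}_{t},\varpi\wedge M\omega\rangle\le CM^{-\eta}\sup_{s\in[0,T]}\langle\mu^{*}_{s},\mathrm{Id}\rangle$ uniformly in $t$. The right-hand side is almost surely finite by Lemma~\ref{corr:plusdefonctions}, so $\langle\mu^{*}_{\cdot},\varpi\rangle$ is a uniform limit of continuous functions.
\end{itemize}

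\textbf{Mode of convergence.} This is a smaller point. Theorem~\ref{theo:melroel} is a statement about laws of the measure component alone. It therefore cannot upgrade the almost sure convergence on your fixed Skorokhod space, and it drops the coupling with $R^{K}$ that your second claim uses. Moreover, Proposition~\ref{prop:holderproper} only gives $\sup_{t}|\langle\mu^{K}_{t}-\mu^{*}_{t},\varpi\rangle|\to0$ in $L^{1}$, not almost surely.

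It is cleaner to argue pathwise. Along a further subsequence this supremum tends to $0$ almost surely. Then for $t_{K}\to t$ you get $\mu^{K}_{t_{K}}\to\mu^{*}_{t}$ $\omega$-weakly, and $\langle\mu^{K}_{t_{K}},\varpi\rangle\to\langle\mu^{*}_{t},\varpi\rangle$ using the continuity above. Lemma~\ref{lemme:deuxiemedistance} then gives $(\varpi+\omega)$-weak convergence, which is all your contradiction argument needs. Since every subsequence has such a further subsequence, both claims hold in probability, jointly with $R^{K}$. That is the form in which the lemma is used in Proposition~\ref{prop:identification}.
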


In the following, for every $\varphi \in \mathcal{C}^{1,1}_{\omega,T}(\mathbb{R}^{+} \times \mathbb{R}^{*}_{+})$, for every $\nu \in \mathbb{D}([0,T],(\mathcal{M}_{\omega}(\mathbb{R}^{*}_{+}),\mathrm{w}))$, and for every $t \in [0,T]$, we define
     \begin{multline*}
       \Psi_{t}(\nu) := \langle \nu_{t}, \varphi_{t} \rangle - \langle \nu_{0}, \varphi_{0} \rangle - \displaystyle{\int_{0}^{t} \int_{\mathbb{R}^{*}_{+}} \bigg( \Phi_{s}(R_{s}^{\nu},x) }   \\
     + b(x)(\varphi_{s}(x_{0}) + \varphi_{s}(x-x_{0}) - \varphi_{s}(x)) - d(x)\varphi_{s}(x) \bigg) \nu_{s}(\mathrm{d}x) \mathrm{d}s,
     \end{multline*}
with $\Phi$ associated to $\varphi$ as in \eqref{eq:Phidef}, and $R^{\nu}$ defined by $R^{\nu}_{0} = R_{0}$ and for $t \in [0,T]$,
\begin{align}
\dfrac{\mathrm{d}R_{t}^{\nu}}{\mathrm{d}t} = \rho(R_{t}^{\nu},\nu_{t}).
\label{eq:ressourceùiu}
\end{align}  
We are now ready to state the main result of this section.

\begin{prop}
Under the \hyperlink{ren}{renormalized setting}, let $T \geq 0$ and $\left((\mu^{K}_{t},R^{K}_{t})_{t \in [0,T]}\right)_{K \in \mathbb{N}^{*}}$ be a subsequence of our renormalized sequence of processes, converging in law towards $(\mu^{*}_{t},R^{*}_{t})_{t \in [0,T]}$ in the space $\mathbb{D}([0,T], (\mathcal{M}_{\omega}(\mathbb{R}^{*}_{+}),\mathrm{w}) \times [0,R_{\max}])$. Then, almost surely, for every $t \in [0,T]$ and $\varphi \in \mathcal{C}^{1,1}_{\omega,T}(\mathbb{R}^{+} \times \mathbb{R}^{*}_{+})$, 
$$\Psi_{t}(\mu^{*}) = 0.$$
Furthermore, $R^{*}=R^{\mu^{*}}$ almost surely, with $R^{\mu^{*}}$ defined as in \eqref{eq:ressourceùiu}. We thus conclude that almost surely, $(\mu^{*}_{t},R^{*}_{t})_{t \in [0,T]}$ verifies Equations~\eqref{eq:resslim} and \eqref{eq:indivlim}.
\label{prop:identification}
\end{prop}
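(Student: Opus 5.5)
We follow the classical identification scheme of Fournier and Méléard. By the Skorokhod representation theorem, we assume that the convergence of $(\mu^{K},R^{K})$ towards $(\mu^{*},R^{*})$ holds almost surely in $\mathbb{D}([0,T],(\mathcal{M}_{\omega}(\mathbb{R}^{*}_{+}),\mathrm{w})\times[0,R_{\max}])$. By Proposition~\ref{prop:holderproper}, it also holds in the $(\varpi+\omega)$-weighted sense. Since the limit is continuous (Corollary~\ref{lemme:continuity}), Skorokhod convergence upgrades to uniform convergence on $[0,T]$.

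\textbf{Step 1: the resource equation.} We first treat the resource equation. For each $K$, $R^{K}_{t}=R_{0}+\int_{0}^{t}\varsigma(R^{K}_{s})-\chi\phi(R^{K}_{s})\langle\mu^{K}_{s},\psi\rangle\,\mathrm{d}s$. We have $\psi\le C\overline{g}$, because $\psi>\ell\ge0$ and $\overline{g}\ge\phi(R_{\max})\psi-\ell$, or by direct use of $|f|\le \overline{g}$ as in Section~\ref{subsec:tension}. Assumption~\ref{ass:finallejd} then gives $\psi\in\mathcal{C}_{\varpi+\omega}$. Lemma~\ref{lemme:continuityphi} therefore lets us pass to the limit and obtain \eqref{eq:resslim}. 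Since $\rho(\cdot,\mu^{*}_{s})$ is Lipschitz in $R$ with constant controlled by $\langle\mu^{*}_{s},\psi\rangle$, which is bounded on $[0,T]$ by Lemma~\ref{corr:plusdefonctions}, Gronwall's lemma gives uniqueness of the solution to \eqref{eq:ressourceùiu}. Hence $R^{*}=R^{\mu^{*}}$.

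\textbf{Step 2: the measure equation.} Fix $\varphi\in\mathcal{C}^{1,1}_{\omega,T}$ and $t\in[0,T]$. By Proposition~\ref{prop:martingalerenormalisee}, $\Psi_{t}(\mu^{K})$ computed with $R^{K}$ in place of $R^{\nu}$ equals $\heartsuit^{K}_{\varphi,t}$. Doob's inequality and the bound on the bracket give
$$\mathbb{E}\big(|\heartsuit^{K}_{\varphi,t}|^{2}\big)\le \frac{C}{K}\sup_{K}\mathbb{E}\Big(\sup_{s\le T}(E^{K}_{s}+N^{K}_{s}+\Omega^{K}_{s})\Big)\to0.$$
This uses $|\varphi|(1+d)\le C\omega$ together with \eqref{eq:conditionomegadeux}, and Proposition~\ref{lemme:controlenp}. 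It then remains to show $\Psi_{t}(\mu^{K})\to\Psi_{t}(\mu^{*})$ in $L^{1}$, or in probability. We split the integrand into four parts.
\begin{itemize}
\item[(i)] The term $\partial_{1}\varphi$ is bounded by $C\omega$.
\item[(ii)] The term $d\varphi$ is bounded by $C\omega$.
\item[(iii)] The term $g(x,R)\partial_{2}\varphi$ is bounded by $C\overline{g}\le C(\varpi+\omega)$. We write $g(x,R)=\phi(R)\psi(x)-\ell(x)$ to separate the $R$-dependence into a continuous factor.
\item[(iv)] The birth term satisfies $b\,|\varphi_{s}(x_{0})+\varphi_{s}(x-x_{0})-\varphi_{s}(x)|\le C b\le C(\varpi+\omega)$ for $x>1$. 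Here $\partial_{2}\varphi$ is bounded because $|\partial_{2}\varphi|\omega\le C\omega$. For $x\le 1$, $b$ is continuous and vanishes below $x_{0}$.
\end{itemize}
All four integrands are therefore in $\mathcal{C}_{\varpi+\omega}([0,T]\times\mathbb{R}^{*}_{+})$, possibly multiplied by $\phi(R_{s})$. Lemma~\ref{lemme:continuityphi}, combined with the uniform convergence $R^{K}\to R^{*}$, gives almost sure convergence of each term. A small extra argument handles $\phi(R^{K}_{s})$ versus $\phi(R^{*}_{s})$, using the Lipschitz property \eqref{eq:lipshitzphi} and $\sup_{K}\mathbb{E}\sup_{s}\langle\mu^{K}_{s},\psi\rangle<\infty$.

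\textbf{Step 3: from almost sure to $L^{1}$ convergence.} Almost sure convergence is upgraded to $L^{1}$ via uniform integrability. All terms are dominated by $C\sup_{s\le T}\langle\mu^{K}_{s},1+\mathrm{Id}+\omega\rangle$, whose $p$-th moment with $p>1$ is uniformly bounded by Proposition~\ref{lemme:controlenp} and $\mathbf{(H4)}$. We conclude that $\mathbb{E}|\Psi_{t}(\mu^{*})|=\lim_{K}\mathbb{E}|\Psi_{t}(\mu^{K})|=0$. This holds almost surely for each fixed $(t,\varphi)$.

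\textbf{Step 4: a single null set.} To get one null set valid for all $t$ and $\varphi$, we use continuity of $t\mapsto\Psi_{t}(\mu^{*})$, coming from continuity of $\mu^{*}$ in the $\omega$-weak topology, and restrict to rational $t$. For $\varphi$, we take a countable family dense for a norm adapted to $\mathcal{C}^{1,1}_{\omega,T}$ (weighted sup of $\varphi$, $\partial_{1}\varphi$, $\omega\,\partial_{2}\varphi$). We then extend by dominated convergence, using Lemma~\ref{corr:plusdefonctions} together with Assumption~\ref{ass:finallejd} for integrability of $\overline{g}+b$ against $\mu^{*}$.

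\textbf{Main obstacle.} We expect the main difficulty to lie in the unbounded birth and growth terms, where only $\varpi$-domination is available rather than $\omega$-domination. This is exactly why Proposition~\ref{prop:holderproper} is invoked. A secondary difficulty is the density and separability argument in Step 4, which we would handle by approximating with smooth compactly supported functions after truncation near $0$ and $+\infty$.
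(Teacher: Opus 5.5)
Your proof is correct. For the measure equation it follows the paper's argument:
\begin{itemize}
\item $\Psi_t(\mu^K)=\heartsuit^K_{\varphi,t}$ is $O(K^{-1/2})$ in $L^2$;
\item Assumption~\ref{ass:finallejd} puts every integrand in $\mathcal{C}_{\varpi+\omega}([0,T]\times\mathbb{R}^*_+)$, so Lemma~\ref{lemme:continuityphi} applies;
\item uniform integrability from the $p>1$ moment bound (the content of Corollary~\ref{corr:uniformly}) gives $\mathbb{E}|\Psi_t(\mu^*)|=\lim_K\mathbb{E}|\Psi_t(\mu^K)|=0$.
\end{itemize}
Working with the Skorokhod representation and almost sure convergence, rather than convergence in law plus Proposition 2.3 p.494 in \cite{ek_2005}, is only a change of packaging. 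It does make the joint convergence of the four terms automatic, which the paper's term-by-term convergence in law tacitly needs.

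The genuine difference is how you identify $R^*=R^{\mu^*}$. The paper proves that the solution map $\mathcal{R}:\nu\mapsto R^{\nu}$ is continuous at continuous paths by a Gronwall estimate. It then applies the mapping theorem to $R^K-\mathcal{R}(\mu^K)\equiv 0$, using test functions $\varphi_n\to\mathbb{1}_{\{0\}}$. You instead pass to the limit in the integral equation for $R^K$, which gives \eqref{eq:resslim} directly. You then use uniqueness for the ODE driven by the fixed path $\mu^*$, whose Lipschitz constant $\|\varsigma'\|_{\infty,[0,R_{\max}]}+\chi k\langle\mu^*_s,\psi\rangle$ is a.s.\ bounded on $[0,T]$ by Lemma~\ref{corr:plusdefonctions}. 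This is shorter. Because your Gronwall constant involves only the limit, you never need a bound on $\sup_K\sup_s\langle\mu^K_s,\psi\rangle$. The paper's continuity estimate does invoke such a bound, and it is delicate because $\psi$ is only dominated by $\varpi+\omega$, not by $\omega$. The paper's route buys, in exchange, a continuity statement for $\mathcal{R}$ that can be reused elsewhere.

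Your Step 4 goes beyond the paper, whose proof only gives $\Psi_t(\mu^*)=0$ a.s.\ for each fixed $(t,\varphi)$, as in the formulation of Theorem~\ref{theo:convergence}. Handling $t$ through rational times and continuity is fine. The density argument in $\varphi$ does not work as written, for two reasons:
\begin{itemize}
\item $\mathcal{C}^{1,1}_{\omega,T}$ with a weighted sup-norm is not separable.
\item Truncating near $0$ breaks the uniform bound on $\partial_2$ whenever $\varphi_s(0^+)\neq 0$. The cutoff produces a term $g\,\varphi\,\zeta_n'$ of size $\overline{g}/x$ near $0$, which need not be $\mu^*_s$-integrable, so dominated convergence can fail.
\end{itemize}
What does work is the following. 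For each $C\in\mathbb{N}$, the set of $\varphi$ satisfying the defining bound of $\mathcal{C}^{1,1}_{\omega,T}$ with constant $C$ is separable for local uniform convergence of $(\varphi,\partial_1\varphi,\partial_2\varphi)$ on compacts of $[0,T]\times\mathbb{R}^*_+$. Approximating inside that set keeps all integrands dominated by $C'(\varpi+\omega)$. Dominated convergence against $\mu^*_s(\mathrm{d}x)\,\mathrm{d}s$ then concludes.

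A small point in your item (iv): boundedness of $b$ on $(x_0,1]$ comes from $b\le C_b(1+x+\omega)$ in Assumption~\ref{hyp:poidsomega}, not from continuity of $b$.
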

\begin{proof}
Let us fix $\varphi \in \mathcal{C}^{1,1}_{\omega,T}(\mathbb{R}^{+} \times \mathbb{R}^{*}_{+})$, $K\in \mathbb{N}^{*}$. We verify that we can always define a function $\tilde{\varphi} \in \mathcal{C}^{1,1}_{\omega}(\mathbb{R}^{+} \times \mathbb{R}^{*}_{+})$ with $\tilde{\varphi} \equiv \varphi$ on $[0,T] \times \mathbb{R}^{*}_{+}$. The results of Proposition~\ref{prop:martingalerenormalisee} are then valid for $\tilde{\varphi}_{t}$ for $t \geq 0$, hence for $\varphi_{t}$ for any $t \in [0,T]$. In the following, we fix $t \in [0,T]$ and to simplify the notations, we write $\varphi$ instead of $\tilde{\varphi}$. We divide the proof in three steps. First, we reduce the problem of showing that $\Psi_{t}(\mu^{*}) = 0$ almost surely to proving that $(\Psi_{t}(\mu^{K}))_{K \in \mathbb{N}^{*}}$ converges in law towards $\Psi_{t}(\mu^{*})$ in $\mathbb{R}$. Then, we prove the convergence in law of $(\Psi_{t}(\mu^{K}))_{K \in \mathbb{N}^{*}}$ towards a randow variable defined in the same manner as $\Psi_{t}(\mu^{*})$, but replacing $R^{\mu^{*}}$ with $R^{*}$. Finally, we prove that almost surely, $R^{*} = R^{\mu^{*}}$, which concludes the proof.
\bigbreak
\textbf{Step 1: Reduction of the problem of showing that $\Psi_{t}(\mu^{*}) = 0$ almost surely}
\\\\
We consider $\Psi_{t}(\mu^{K})=\heartsuit^{K}_{\varphi,t}$ defined in Proposition~\ref{prop:martingalerenormalisee}. By Proposition~\ref{prop:martingalerenormalisee} applied to $\varphi$, by assumptions on $\varphi$ and by Lemma~\ref{lemme:equivalence}, there exists a constant $C>0$ such that
\begin{align*}
    \mathbb{E}(|\Psi_{t}(\mu^{K})|^{2}) & = \mathbb{E}(|\heartsuit_{\varphi,t}^{K}|^{2})  = \mathbb{E}(\langle \heartsuit_{\varphi}^{K} \rangle_{t})  \leq \dfrac{C}{K} \sup_{K \in \mathbb{N}^{*}} \mathbb{E}\left(\sup_{t \in [0,T]} (E^{K}_{t}+ N^{K}_{t} + \Omega^{K}_{t}) \right),
\end{align*}
which entails with Proposition~\ref{lemme:controlenp} with $p=1$ that $\mathbb{E}(|\Psi_{t}(\mu^{K})|) \xrightarrow[K \rightarrow + \infty]{} 0$, because $\mathbb{E}(|\Psi_{t}(\mu^{K})|)^{2} \leq \mathbb{E}(|\Psi_{t}(\mu^{K})|^{2})$ by Jensen inequality. Hence, it suffices to show that $\mathbb{E}(|\Psi_{t}(\mu^{K})|) \xrightarrow[K \rightarrow + \infty]{} \mathbb{E}(|\Psi_{t}(\mu^{*})|)$ to conclude that $\Psi_{t}(\mu^{*}) = 0$ almost surely. By Corollary~\ref{corr:uniformly}, the family of square-integrable martingales $\left(\Psi_{t}(\mu^{K})\right)_{K \in \mathbb{N}^{*}}$ is uniformly integrable. Then, by Proposition 2.3 p.494 in \cite{ek_2005}, it suffices to show that $(\Psi_{t}(\mu^{K}))_{K \in \mathbb{N}^{*}}$ converges in law towards $\Psi_{t}(\mu^{*})$ in $\mathbb{R}$. 
%
\\\\
\textbf{Step 2: Convergence in law of $(\Psi_{t}(\mu^{K}))_{K \in \mathbb{N}^{*}}$ towards $\Psi_{t}(\mu^{*})$}
\\\\
By definition, $R^{\mu^{K}}$ coincide with $R^{K}$ and we write
\begin{multline*}
\Psi_{t}(\mu^{K}) =  \langle \mu^{K}_{t}, \varphi_{t} \rangle -  \langle \mu^{K}_{0}, \varphi_{0} \rangle -  \displaystyle{\int_{0}^{t}} \phi\left(R^{K}_{s}\right) \langle \mu^{K}_{s}, \psi(.) \partial_{2}\varphi(s,.) \rangle \mathrm{d}s \\
    + \displaystyle{\int_{0}^{t}}\bigg\langle \mu^{K}_{s}, d(x)\varphi_{s}(x)  - b(x)\bigg( \varphi_{s}(x_{0})+\varphi_{s}(x-x_{0})-\varphi_{s}(x)\bigg) -\partial_{1}\varphi(s,x) + \ell(x)\partial_{2}\varphi(s,x) \bigg\rangle  \mathrm{d}s,
\end{multline*}
with $\phi$, $\psi$ and $\ell$ defined in Section~\ref{subsec:dyn}, related to respectively the functional response, the growth rate and the metabolic rate. Then, for $s \in [0,T]$ and $x>0$, let us define
$$ \mho_{\varphi,s}(x) := d(x)\varphi_{s}(x)  - b(x)\bigg( \varphi_{s}(x_{0})+\varphi_{s}(x-x_{0})-\varphi_{s}(x)\bigg) -\partial_{1}\varphi(s,x) + \ell(x)\partial_{2}\varphi(s,x), $$
which is well-defined because $b \equiv 0$ on $(0,x_{0})$, and we obtain
\begin{align}
\Psi_{t}(\mu^{K}) = \langle \mu^{K}_{t}, \varphi_{t} \rangle -  \langle \mu^{K}_{0}, \varphi_{0} \rangle + \displaystyle{\int_{0}^{t}} \langle \mu^{K}_{s}, \mho_{\varphi,s} \rangle \mathrm{d}s - \displaystyle{\int_{0}^{t}}\phi\left(R^{K}_{s}\right) \langle \mu^{K}_{s}, \psi(.)\partial_{2}\varphi(s,.) \rangle \mathrm{d}s,
\label{eq:expresioosnpsi}
\end{align}
First, by Corollary~\ref{lemme:continuity}, the limit $(\mu^{*}_{t})_{t \in [0,T]}$ is in $\mathcal{C}([0,T], (\mathcal{M}_{\omega}(\mathbb{R}^{*}_{+}),\mathrm{w}))$. Hence, from Theorem 7.8 p.131 in \cite{ek_2005}, and because $\left((\mu^{K}_{t})_{t \in [0,T]}\right)_{K \in \mathbb{N}^{*}}$ converges in law towards $(\mu^{*}_{t})_{t \in [0,T]}$ in the space $\mathbb{D}([0,T], (\mathcal{M}_{\omega}(\mathbb{R}^{*}_{+}),\mathrm{w}))$, we have that for every $t \in [0,T]$, the marginal distribution $(\mu_{t}^{K})_{K \in \mathbb{N}^{*}}$ converges in law towards $\delta_{\mu^{*}_{t}}$ in $(\mathcal{M}_{\omega}(\mathbb{R}^{*}_{+}),\mathrm{w})$. By definition of the $\omega$-weak topology (which makes every $\mu \in \mathcal{M}_{\omega}(\mathbb{R}^{*}_{+}) \mapsto \langle \mu, \varphi \rangle$ continuous if $\varphi \in \mathfrak{B}_{\omega}(\mathbb{R}^{*}_{+})$ is continuous) and assumption on $\varphi$, we obtain that
\begin{center}
$\left(\langle \mu^{K}_{t}, \varphi_{t} \rangle -  \langle \mu^{K}_{0}, \varphi_{0} \rangle\right)_{K \in \mathbb{N}^{*}}$ converges in law towards $\langle \mu^{*}_{t}, \varphi_{t} \rangle -  \langle \mu^{*}_{0}, \varphi_{0} \rangle$ in $\mathbb{R}$. 
\end{center} 
Then, with the assumption $\varphi \in \mathcal{C}^{1,1}_{\omega,T}(\mathbb{R}^{+} \times \mathbb{R}^{*}_{+})$, Assumption~\ref{hyp:poidsomega} and the additional Assumption~\ref{ass:finallejd}, we verify that $\mho_{\varphi} \in \mathcal{C}_{\varpi+\omega}([0,T] \times \mathbb{R}^{*}_{+})$ (in particular, we use the fact that $\partial_{2}\varphi(s,.)$ is bounded, uniformly on $s \in [0,T]$, to control the term $\varphi_{s}(x-x_{0})- \varphi_{s}(x)$; and also the fact that $d \varphi_{s}/\omega$ is bounded, uniformly on $s \in [0,T]$). Hence, by Lemma~\ref{lemme:continuityphi} applied to $\mho_{\varphi}$ and $\phi \equiv 1$, we obtain that
\begin{center}
$\left(\displaystyle{\int_{0}^{t}} \langle \mu^{K}_{s}, \mho_{\varphi,s} \rangle \mathrm{d}s\right)_{K \in \mathbb{N}^{*}}$ converges in law towards $\displaystyle{\int_{0}^{t}} \langle \mu^{*}_{s}, \mho_{\varphi,s} \rangle \mathrm{d}s$ in $\mathbb{R}$.
\end{center} 
Finally, by assumption on $\varphi$ and Assumption~\ref{ass:finallejd}, we verify that $(t,x) \mapsto \psi(x) \partial_{2}\varphi(t,x) \in \mathcal{C}_{\varpi+\omega}([0,T] \times \mathbb{R}^{*}_{+})$, and the function $\phi$ is continuous on $[0,R_{\max}]$ by assumption (see again Section~\ref{subsec:dyn} for the definition of the functions $\psi$ and $\phi$). By Lemma~\ref{lemme:continuityphi}, we obtain that 
\begin{center}
$\displaystyle{\int_{0}^{t}} \phi\left(R^{K}_{s}\right) \langle \mu^{K}_{s}, \psi(.) \partial_{2}\varphi(s,.) \rangle \mathrm{d}s$ converges in law towards $\displaystyle{\int_{0}^{t}} \phi\left(R^{*}_{s}\right) \langle \mu^{*}_{s}, \psi(.) \partial_{2}\varphi(s,.) \rangle \mathrm{d}s$.
\end{center}
Hence, to conclude the whole proof, it suffices to show that $(R^{*}_{t})_{t \in [0,T]}$ and $(R^{\mu^{*}}_{t})_{t \in [0,T]}$ are equal almost surely.
\\\\
\textbf{Step 3: Proof of $R^{*} = R^{\mu^{*}}$ almost surely}
\\\\
For every $\nu \in \mathbb{D}([0,T], (\mathcal{M}_{\omega}(\mathbb{R}^{*}_{+}),\mathrm{w})$, $R^{\nu}$ is entirely determined by $\nu$ with \eqref{eq:ressourceùiu}. Indeed, there exists a function $\mathcal{R} : \mathbb{D}([0,T], (\mathcal{M}_{\omega}(\mathbb{R}^{*}_{+}),\mathrm{w}) \mapsto \mathcal{C}([0,T], [0,R_{\max}])$ such that $\mathcal{R}(\nu) = R^{\nu}$, and it is precisely given for $t \in [0,T]$ by the functional equation
$$\mathcal{R}(\nu)_{t} = R_{0} + \displaystyle{\int_{0}^{t}} \bigg( \varsigma(\mathcal{R}(\nu)_{s}) - \chi \langle  \nu_{s}, f(.,\mathcal{R}(\nu)_{s}) \rangle \bigg) \mathrm{d}s.$$
Let us first prove that $\mathcal{R}$ is continuous at every $\nu := (\nu_{s})_{s \in [0,T]} \in \mathcal{C}([0,T],(\mathcal{M}_{\omega}(\mathbb{R}^{*}_{+}),\mathrm{w}))$. We fix such $\nu$ and a sequence $((\nu^{K}_{s})_{s \in [0,T]})_{K \geq 1}$ that converges towards $\nu$ in the space $\mathbb{D}([0,T],(\mathcal{M}_{\omega}(\mathbb{R}^{*}_{+}),\mathrm{w}))$, and we aim to show that $ \sup\limits_{t \in [0,T]} \left| \mathcal{R}(\nu^{K})_{t} - \mathcal{R}(\nu)_{t} \right| \xrightarrow[K \rightarrow + \infty]{} 0.$ For $t \in [0,T]$ and $K \geq 1$, we compute
\begin{align*}
    & |\mathcal{R}(\nu^{K})_{t}-\mathcal{R}(\nu)_{t}|  \\ = & \hspace{0.1 cm} \bigg|\displaystyle{\int_{0}^{t}} \bigg[\varsigma(\mathcal{R}(\nu^{K})_{s}) -\varsigma(\mathcal{R}(\nu)_{s}) - \chi \left( \phi(\mathcal{R}(\nu^{K})_{s})\langle  \nu^{K}_{s}, \psi \rangle -\phi(\mathcal{R}(\nu)_{s})\langle  \nu_{s}, \psi \rangle \right) \bigg] \mathrm{d}s \bigg| \\
    \leq & \hspace{0.1 cm} ||\varsigma'||_{\infty,[0,R_{\max}]}\displaystyle{\int_{0}^{t}} |\mathcal{R}(\nu^{K})_{s} -\mathcal{R}(\nu)_{s}| \mathrm{d}s + \chi \displaystyle{\int_{0}^{t}} \left| \phi(\mathcal{R}(\nu^{K})_{s}) - \phi(\mathcal{R}(\nu)_{s}) \right| \langle  \nu^{K}_{s}, \psi \rangle \mathrm{d}s \\
    &  \hspace{5cm} + \chi ||\phi||_{\infty,[0,R_{\max}]} \displaystyle{\int_{0}^{t}} \left| \langle  \nu^{K}_{s}-\nu_{s}, \psi \rangle \right| \mathrm{d}s \\
   \leq & \hspace{0.1 cm} \bigg(||\varsigma'||_{\infty,[0,R_{\max}]} + \chi k T \sup\limits_{K \geq 1} \sup\limits_{u \in [0,T]} \left(E^{K}_{u} + N^{K}_{u} +\Omega_{u}^{K} \right) \bigg)\displaystyle{\int_{0}^{T}} \sup\limits_{\tau \in [0,s]} |\mathcal{R}(\nu^{K})_{\tau} -\mathcal{R}(\nu)_{\tau}| \mathrm{d}s \\
    &  \hspace{5cm} + \chi ||\phi||_{\infty,[0,R_{\max}]} T \sup_{s \in [0,T]} \left| \langle  \nu^{K}_{s}-\nu_{s}, \psi \rangle \right|,
\end{align*} 
where we used the fact that $\phi$ is Lipschitz continuous (see \eqref{eq:lipshitzphi}), that $\psi \leq \overline{g}$ and Assumption~\ref{hyp:poidsomega}. Then, by Proposition~\ref{lemme:controlenp} with $p=1$, the previous upper bound is almost surely finite and independent of $t$ so it is an upper bound for $ \sup\limits_{s \in [0,t]} \left| \mathcal{R}(\nu^{K})_{s} - \mathcal{R}(\nu)_{s} \right|$ and by Gronwall's lemma, there exists a constant $C'>0$ such that
\begin{align*}
\sup\limits_{t \in [0,T]} |\mathcal{R}(\nu^{K})_{t}-\mathcal{R}(\nu)_{t}| & \leq \chi ||\phi||_{\infty,[0,R_{\max}]} T \sup_{s \in [0,T]} \left| \langle  \nu^{K}_{s}-\nu_{s}, \psi \rangle \right| e^{C'T}.
\end{align*}
By Assumption~\ref{ass:finallejd} and because $\psi \leq \overline{g}$, then $(t,x) \mapsto \psi(x) \in \mathcal{C}_{\varpi+\omega}([0,T] \times \mathbb{R}^{*}_{+})$. By Lemma~\ref{lemme:continuityphi} (replacing $\mu^{K}$ with $\nu^{K}$ and $\mu^{*}$ with $\nu$ since they verify the same assumptions), this ends the proof of the fact that $\mathcal{R}$ is continuous at every $\nu$ continuous. Now, let us get back to the proof that $R^{*}=R^{\mu^{*}}$ almost surely. We consider $(\varphi_{n})_{n \in \mathbb{N}}$ a sequence of $\mathcal{C}^{\infty}_{c}(\mathbb{R})$ positive functions that converges pointwise towards $\mathbb{1}_{\{0\}}$ and such that $\varphi_{n}(0)=1$ for every $n \geq 0$. By the mapping theorem and convergence in law of $(\mu^{K},R^{K})_{K \geq 1}$ towards $(\mu^{*},R^{*})$, we have that for every $n \in \mathbb{N}$, for every $t \in [0,T]$,
\begin{align}
\lim\limits_{K \rightarrow + \infty} \mathbb{E}\left(\varphi_{n}\left(R^{K}_{t}-R^{\mu^{K}}_{t} \right)\right) = \lim\limits_{K \rightarrow + \infty} \mathbb{E}\left(\varphi_{n}\left(R^{K}_{t}-\mathcal{R}\left(\mu^{K}_{t}\right) \right)\right) =  \mathbb{E}\left(\varphi_{n}\left(R^{*}_{t}-R^{\mu^{*}}_{t} \right)\right).
\label{eq:aireaieairéa}
\end{align}
On the one hand, for every $K \geq 1$, we have by definition that $R^{K}= R^{\mu^{K}}$ so that the left-most term in \eqref{eq:aireaieairéa} is constant equal to 1. On the other hand, taking the limit $n \rightarrow + \infty$ in the right-most term  gives $\mathbb{E}\left(\mathbb{1}_{\{0\}}\left(R^{*}_{t}-R^{\mu^{*}}_{t} \right)\right)$ (dominated convergence holds trivially because everything is bounded), so we conclude that for every $t \in [0,T]$, we almost surely have $R^{*}_{t} = R^{\mu^{*}}_{t}$, which concludes thanks to the continuity of the considered functions.
\end{proof}


\textbf{Acknowledgments.}
The author would like to thank the members of the Inria team SIMBA (Statistical Inference and Modeling for Biological Applications) for valuable discussions around this work, and especially among them Nicolas Champagnat and Coralie Fritsch. Also, Sylvain Billiard should be mentioned for his active participation to the biological motivation of this work. 
\\\\
\textbf{Funding.}
This work was partially supported by the Chaire ``Mod\'elisation Math\'ematique et Biodiversit\'e'' of VEOLIA Environment, \'Ecole Polytechnique, Mus\'eum National d'Histoire Naturelle and Fondation X, and by the European Union (ERC, SINGER, 101054787). Views and opinions expressed are however those of the author(s) only and do not necessarily reflect those of the European Union or the European Research Council. Neither the European Union nor the granting authority can be held responsible for them.

\appendix
\appendixpage

\section{Proofs of intermediate results}
\label{app:inter}

\subsection{Proof of Proposition~\ref{prop:etapeun}}
\label{app:prop27}

In all the following proof, we suppose by contradiction that $\mathbb{P}(t_{\exp}(\mu_{0},R_{0}) \leq J_{1} < + \infty) >0$, and work under this event. Importantly, this implies that until time $t_{\exp}(\mu_{0},R_{0})$, there are no random birth or death jumps in the population, so the process $(\mu_{t})_{t}$ is well-defined and deterministic on $[0,t_{\exp}(\mu_{0},R_{0}))$. Thus, under the event $\{t_{\exp}(\mu_{0},R_{0}) \leq J_{1} < + \infty\}$, we have $V_{0} \neq \varnothing$ (otherwise $t_{\exp}(\mu_{0},R_{0}) = + \infty$) and for every $u \in V_{0}$ and $s < t_{\exp}(\mu_{0},R_{0})$, we have $\xi^{u}_{s} =X^{u}_{s}(\Xi_{0},R_{0})$ with the notations of Section~\ref{sec:construction}.
We assess that one of the two following situations occurs:
\\\\
$\begin{array}{cl}
\mathrm{(i)} & \exists u \in V_{0}, \quad \xi_{s}^{u} \xrightarrow[s \rightarrow t_{\exp}(\mu_{0},R_{0})]{} + \infty, \\
\mathrm{(ii)} & \exists u \in V_{0}, \quad \xi_{s}^{u} \xrightarrow[s \rightarrow t_{\exp}(\mu_{0},R_{0})]{} 0.
\end{array}$
\\\\
Indeed, if (i) and (ii) are not verified, then by definition, we would necessarily have $t_{\exp}(\mu_{0},R_{0}) = + \infty$.
\\\\
First, suppose that there exists $u \in V_{0}$ such that (i) is verified. We let the reader check that the same decomposition as in Lemma~\ref{lemme:decompocun} holds true for every $s \in [0,t_{\exp}(\mu_{0},R_{0}))$, so that we can use \eqref{eq:controlressource} for every such $s$. In that case, we would have
$$ \forall s < t_{\exp}(\mu_{0},R_{0}), \quad 
\xi_{s}^{u} \leq E_{s} \leq R_{0}+E_{0}+s||\varsigma||_{\infty,[0,R_{\max}]} \leq R_{0}+E_{0}+ t_{\exp}(\mu_{0},R_{0})||\varsigma||_{\infty,[0,R_{\max}]}  < + \infty, $$
which contradicts (i) by letting $s \rightarrow t_{\exp}(\mu_{0},R_{0}) $.
\\\\
Else, suppose that $u \in V_{0}$ is such that (ii) is verified. By construction of our process with Poisson point measures, under the event $\{t_{\exp}(\mu_{0},R_{0}) \leq J_{1} < + \infty \}$, we have on the one hand that
\begin{align}
\displaystyle{\int_{0}^{t_{\exp}(\mu_{0},R_{0})}} \left( \sum\limits_{u \in V_{0}} (b+d)(\xi^{u}_{s}) \right) \mathrm{d}s< + \infty.
\label{eq:sautpoisson}
\end{align}
Indeed, before time $t_{\exp}(\mu_{0},R_{0})$, individual energies $\xi^{u}_{s}$ follow the deterministic flows $X^{u}_{.}(\Xi_{0},R_{0})$, so the previous integral is a deterministic quantity. If the previous integral was infinite, the first jump time $J_{1}$ given by the Poisson point measures $\mathcal{N}$ and $\mathcal{N}'$ would follow an inhomogeneous exponential law, and we would have 
$$\mathbb{P}(J_{1}<t_{\exp}(\mu_{0},R_{0}))=1- \exp\left(\displaystyle{\int_{0}^{t_{\exp}(\mu_{0},R_{0})}} \left( \sum\limits_{u \in V_{0}} (b+d)(\xi^{u}_{s}) \right) \mathrm{d}s\right)=1.$$
On the other hand, we write
\begin{align*}
\displaystyle{\int_{0}^{t_{\exp}(\mu_{0},R_{0})}} d(\xi^{u}_{s}) \mathrm{d}s \geq  \displaystyle{\int_{0}^{t_{\exp}(\mu_{0},R_{0})}} d(\xi^{u}_{s})\mathbb{1}_{\{g(\xi^{u}_{s},R_{s})<0 \}} \mathrm{d}s.
\end{align*} 
By (ii) and considering \eqref{eq:indivenergy}, there exists a sequence $(O_{n})_{n \in \mathbb{N}}$ of disjoint open intervals such that 
\begin{align}
\{s \in (0,t_{\exp}(\mu_{0},R_{0})), \, g(\xi^{u}_{s},R_{s})<0\} = \bigsqcup\limits_{n\in \mathbb{N}} O_{n} \quad \mathrm{and} \quad 
(0, \xi^{u}_{0}) \subseteq \{ \xi^{u}_{s}, \; s \in \bigsqcup_{n\in \mathbb{N}} O_{n} \}.
\label{eq:minorassun}
\end{align}
For every $n\in \mathbb{N}$, from \eqref{eq:indivenergy}, $ s \mapsto \xi^{u}_{s}$ is a decreasing bijection from $O_{n}$ to $\xi^{u}(O_{n})$, and we write $\tilde{\xi}^{u}$ for its inverse function. We perform the change of variables $w=\xi^{u}_{s}$ on each interval $O_{n}$ to obtain
$$ \displaystyle{\int_{0}^{t_{\exp}(\mu_{0},R_{0})}} d(\xi^{u}_{s}) \mathrm{d}s \geq \sum\limits_{n \in \mathbb{N}}  \displaystyle{\int_{ O_{n}}} d(\xi^{u}_{s}) \mathrm{d}s = \sum\limits_{n \in \mathbb{N}} \displaystyle{\int_{ \xi^{u}(O_{n})}} \dfrac{d(w)}{-g(w,R_{\tilde{\xi}^{u}_{w}})} \mathrm{d}w \geq \displaystyle{\int_{0}^{\xi^{u}_{0}}} \dfrac{d}{\ell}(w) \mathrm{d}w, $$
because of \eqref{eq:minorassun}, and $R \mapsto g(x,R)$ is non-decreasing for every $x>0$. By \eqref{eq:sautpoisson}, the left-most integral is finite, which contradicts the fact that the right-most integral is infinite with Assumption~\ref{hyp:probamortel}, and this ends the proof.

\subsection{Proof of Proposition~\ref{prop:controlpop}}
\label{app:prop212}

Let $M>0$ and $T \geq 0$. We apply Lemma~\ref{lemme:decompocun} to $\varphi : (t,x) \mapsto 1+ \omega(x)$ and $F : (r,x) \mapsto x$, and use Proposition~\ref{prop:controlebiomasse} to obtain for every $0 \leq t < T \wedge \tau_{M} \wedge J_{\infty}$,
\begin{align*}
        E_{t}+ N_{t} + \Omega_{t} \leq & \hspace{0.1 cm} R_{0} + E_{0}+t||\varsigma||_{\infty,[0,R_{\max}]} +  N_{0} + \Omega_{0} +\displaystyle{\int_{0}^{t}} \langle
        \mu_{s}, g(.,R_{s})\omega' \rangle \mathrm{d}s
        \\ & +  \displaystyle{\int_{0}^{t}\int_{\mathcal{U} \times \mathbb{R}^{*}_{+} } \mathbb{1}_{\{u \in V_{s-} \}} \mathbb{1}_{\{h \leq b(\xi^{u}_{s-})\}}} \\
        & \hspace{3cm} \bigg(1+ \omega(x_{0})+\omega(\xi^{u}_{s-}-x_{0})-\omega(\xi^{u}_{s-}) \bigg)\mathcal{N}(\mathrm{d}s,\mathrm{d}u,\mathrm{d}h) \\
     & - 
    \displaystyle{\int_{0}^{t}\int_{\mathcal{U} \times \mathbb{R}^{*}_{+} } \mathbb{1}_{\{ u \in V_{s-}\}} \mathbb{1}_{\{h \leq d(\xi^{u}_{s-})\}}\bigg(1+ \omega(\xi^{u}_{s-}) \bigg)}  \mathcal{N}'(\mathrm{d}s,\mathrm{d}u,\mathrm{d}h) \\
    \leq & \hspace{0.1 cm} R_{\max} + E_{0} +  N_{0} + \Omega_{0}+ T||\varsigma||_{\infty,[0,R_{\max}]} +\displaystyle{\int_{0}^{T}} \langle
        \mu_{s}, \overline{g}\omega' \rangle \mathbb{1}_{\{s < \tau_{M} \wedge J_{\infty}\}} \mathrm{d}s
        \\ & +  \displaystyle{\int_{0}^{T}\int_{\mathcal{U} \times \mathbb{R}^{*}_{+} } \mathbb{1}_{\{s < \tau_{M} \wedge J_{\infty}\}} \mathbb{1}_{\{u \in V_{s-} \}} \mathbb{1}_{\{h \leq b(\xi^{u}_{s-})\}}} \\
     & \hspace{3 cm }\bigg(1+ |\omega(x_{0})+\omega(\xi^{u}_{s-}-x_{0})-\omega(\xi^{u}_{s-})| \bigg)\mathcal{N}(\mathrm{d}s,\mathrm{d}u,\mathrm{d}h).
    \end{align*}

We can take the supremum of the left-hand side over $t \in [0,T \wedge \tau_{M} \wedge J_{\infty})$. Then, we take expectations and apply Fubini theorem, which is valid since all integrands are positive. Also, all the expectations are finite thanks to the first and second points of Assumption~\ref{hyp:poidsomega} and by definition of $\tau_{M}$, so we have a true semi-martingale decomposition of the integrated term against the Poisson point measure $\mathcal{N}$. We obtain
\begin{align*}
\mathbb{E}\bigg(\sup_{t \in [0,T \wedge \tau_{M} \wedge J_{\infty})} (E_{t} + N_{t}+ \Omega_{t})\bigg)  \leq & \hspace{0.1 cm} R_{\max} + \mathbb{E}(E_{0} +  N_{0} + \Omega_{0})+ T||\varsigma||_{\infty,[0,R_{\max}]} \\
&  +(C_{g}+C_{b})\displaystyle{\int_{0}^{T}} \mathbb{E}\left((E_{s}+N_{s}+\Omega_{s}) \mathbb{1}_{\{s < \tau_{M} \wedge J_{\infty}\}}\right) \mathrm{d}s \\
\leq & \hspace{0.1 cm} R_{\max} + \mathbb{E}(E_{0} +  N_{0} + \Omega_{0})+ T||\varsigma||_{\infty,[0,R_{\max}]} \\
&  +(C_{g}+C_{b})\displaystyle{\int_{0}^{T}} \mathbb{E}\left(\sup_{\tau \in [0,s \wedge \tau_{M} \wedge J_{\infty})} (E_{\tau} + N_{\tau}+ \Omega_{\tau})\right) \mathrm{d}s
\end{align*}     

By Gronwall lemma, we then have
\begin{multline}
\mathbb{E}\left(\sup_{t \in [0,T \wedge \tau_{M} \wedge J_{\infty})} (E_{t}+N_{t}+\Omega_{t})\right)  \\
 \leq \bigg(R_{\max} + \mathbb{E}(E_{0} +  N_{0} + \Omega_{0})+ T||\varsigma||_{\infty,[0,R_{\max}]}\bigg)e^{(C_{g}+ C_{b})T}.
\label{eq:gronwalloset}
\end{multline} 
Now, as $(\tau_{M})_{M \in \mathbb{N}^{*}}$ is a non-decreasing sequence, to prove \eqref{eq:taum}, it suffices to show that for all $T \geq 0$, $\mathbb{P}(\tau_{M} \leq T) \xrightarrow[M \rightarrow + \infty]{} 0$. Because $\mathbb{E}(E_{0} +  N_{0} + \Omega_{0})<+\infty$, this follows from \eqref{eq:gronwalloset} and
\begin{align*}
\mathbb{P}(\tau_{M} \leq T) & \leq \mathbb{P}\left(\sup_{t \in [0,T \wedge \tau_{M} \wedge J_{\infty})} (E_{t} + N_{t}+\Omega_{t}) \geq M\right) \\
& \leq \dfrac{1}{M}\mathbb{E}\left(\sup_{t \in [0,T \wedge \tau_{M} \wedge J_{\infty})} (E_{t} + N_{t}+\Omega_{t})\right). 
\end{align*}
Finally, to obtain \eqref{eq:controleomega}, we use \eqref{eq:taum} and apply Fatou lemma to obtain
\begin{align*}
\mathbb{E}\left(\sup_{t \in [0,T \wedge J_{\infty})} (E_{t}+N_{t}+\Omega_{t})\right) \leq \underset{M \rightarrow + \infty}{\mathrm{lim}} \mathbb{E}\left(\sup_{t \in [0,T \wedge \tau_{M}\wedge J_{\infty})} (E_{t}+N_{t}+\Omega_{t})\right),
\end{align*}  
which ends the proof by \eqref{eq:gronwalloset}.

\subsection{Proof of Proposition~\ref{lemme:controlenp}}
\label{app:lemmcontr}

\begin{lemme}
Let $\varphi \in \mathcal{C}^{1,1}(\mathbb{R}^{+} \times \mathbb{R}^{*}_{+})$, and $F \in \mathcal{C}^{1,1}([0,R_\mathrm{in}] \times \mathbb{R})$. Under Assumptions~\ref{hyp:probamortel} and \ref{hyp:poidsomega}, we have almost surely, for all $K \in \mathbb{N}^{*}$, $t \geq 0$,
\begin{align*}
F(R^{K}_{t},\langle \mu^{K}_{t}, \varphi_{t} \rangle) = & \hspace{0.1 cm} F(R_{0},\langle \mu^{K}_{0}, \varphi_{0} \rangle) \\
    & + \displaystyle{\int_{0}^{t}} \rho(\mu^{K}_{s},R^{K}_{s}) \partial_{1}F(R^{K}_{s},\langle \mu^{K}_{s}, \varphi_{s} \rangle) \\
    & \hspace{5cm} + \langle \mu^{K}_{s}, \Phi_{s}(R^{K}_{s},.) \rangle \partial_{2} F(R^{K}_{s},\langle \mu^{K}_{s}, \varphi_{s} \rangle) \mathrm{d}s \\
    & + \displaystyle{\int_{0}^{t}}\int_{\mathcal{U} \times \mathbb{R}^{*}_{+} } \mathbb{1}_{\{u \in V^{K}_{s-} \}} \mathbb{1}_{\{h \leq b\left(\xi^{u,K}_{s-}\right)\}} \\
    & \hspace{3 cm}  \bigg( F(R^{K}_{s},\langle \mu^{K}_{s-}+\frac{1}{K} \left( \delta_{x_{0}} + \delta_{\xi^{u,K}_{s-}-x_{0}}-\delta_{\xi^{u,K}_{s-}} \right), \varphi_{s} \rangle) \\
    & \hspace{6 cm}- F(R^{K}_{s}, \langle \mu^{K}_{s-}, \varphi_{s} \rangle) \bigg) \mathcal{N}(\mathrm{d}s,\mathrm{d}u,\mathrm{d}h) &
\end{align*}
\begin{align*}
     \hspace{3cm}& +
    \displaystyle{\int_{0}^{t}\int_{\mathcal{U} \times \mathbb{R}^{*}_{+} } \mathbb{1}_{\{u \in V^{K}_{s-} \}} \mathbb{1}_{\{h \leq d(\xi^{u,K}_{s-})\}} }\bigg( F(R^{K}_{s},\langle \mu^{K}_{s-}-\frac{1}{K}\delta_{\xi^{u,K}_{s-}}, \varphi_{s} \rangle) \\
    & \hspace{6cm}- F(R^{K}_{s}, \langle \mu^{K}_{s-}, \varphi_{s} \rangle) \bigg) \mathcal{N}'(\mathrm{d}s,\mathrm{d}u,\mathrm{d}h),
\end{align*}
with $\Phi$ associated to $\varphi$ as in \eqref{eq:Phidef}.
\label{lemme:decompok}
\end{lemme}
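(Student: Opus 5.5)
This is Lemma~\ref{lemme:decompocun} transported to the renormalized process, so the plan is to reduce to it rather than redo the Itô computation. For fixed $K$, the auxiliary process $(\nu^{K}_{t},R^{K}_{t})_{t\geq 0}$ is constructed exactly as in Section~\ref{sec:construction}, with $\chi$ replaced by $\chi_{K}=\chi/K$. Assumptions~\ref{hyp:probamortel} and \ref{hyp:poidsomega} do not involve $\chi$. Moreover, \eqref{eq:assumptioninticonditionK} gives $\mathbb{E}\langle \nu^{K}_{0},1+\mathrm{Id}+\omega\rangle<+\infty$. Hence the general setting holds for $(\nu^{K},R^{K})$, and Corollary~\ref{corr:jinfini} yields $\tau_{\exp}\wedge J_{\infty}=+\infty$ almost surely. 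Taking a countable intersection over $K\in\mathbb{N}^{*}$, we get an almost sure event on which every renormalized process is defined for all $t\geq 0$.

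\textbf{Step 1: apply Lemma~\ref{lemme:decompocun} on this event.} Fix $K$ and apply the lemma to $(\nu^{K},R^{K})$. Use the test function $\varphi$ together with $F^{K}(r,x):=F(r,x/K)$, which is still $\mathcal{C}^{1,1}$. Since $\langle \nu^{K}_{t},\varphi_{t}\rangle/K=\langle\mu^{K}_{t},\varphi_{t}\rangle$, the left-hand side becomes $F(R^{K}_{t},\langle\mu^{K}_{t},\varphi_{t}\rangle)$.

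\textbf{Step 2: match the drift terms.} By the chain rule, $\partial_{2}F^{K}(r,x)=\frac{1}{K}\partial_{2}F(r,x/K)$. Therefore
\[
\langle\nu^{K}_{s},\Phi_{s}(R^{K}_{s},.)\rangle\,\partial_{2}F^{K}=\langle\mu^{K}_{s},\Phi_{s}(R^{K}_{s},.)\rangle\,\partial_{2}F(R^{K}_{s},\langle\mu^{K}_{s},\varphi_{s}\rangle),
\]
and $\partial_{1}F^{K}$ evaluated along the path equals $\partial_{1}F(R^{K}_{s},\langle\mu^{K}_{s},\varphi_{s}\rangle)$. For the resource drift, note that $\varsigma(R)-\frac{\chi}{K}\langle\nu^{K}_{s},f(.,R)\rangle=\varsigma(R)-\chi\langle\mu^{K}_{s},f(.,R)\rangle$. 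So the drift of $(\nu^{K},R^{K})$ with $\chi_{K}$ is exactly $\rho(\mu^{K}_{s},R^{K}_{s})$ with the original $\chi$; this is the identification intended in \eqref{eq:regroupindivdeux}. In the jump integrals, the increments $\langle \nu^{K}_{s-}\pm\ldots,\varphi_{s}\rangle/K$ become $\langle\mu^{K}_{s-}+\frac1K(\ldots),\varphi_{s}\rangle$. This gives the stated formula.

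\textbf{Main obstacle.} The delicate point is bookkeeping rather than analysis. First, the domain of $F$ is stated as $[0,R_{\mathrm{in}}]\times\mathbb{R}$, whereas the lemma requires $[0,R_{\max}]\times\mathbb{R}$. Since $R^{K}_{t}\in[0,R_{\max}]$, I would take $F$ defined on $[0,R_{\max}]$ (or extend it in a $\mathcal{C}^{1,1}$ way); only the values along the path matter. Second, the pathwise identity in Lemma~\ref{lemme:decompocun} must hold for all $t$ and all $K$ simultaneously. This follows from the countable intersection of full-measure events above, and no integrability is needed because the identity is pathwise.
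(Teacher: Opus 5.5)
Your proposal is correct and is essentially the paper's proof. The paper writes $F(R^{K}_{t},\langle\mu^{K}_{t},\varphi_{t}\rangle)=F(R^{K}_{t},\langle\nu^{K}_{t},\varphi_{t}/K\rangle)$, applies Lemma~\ref{lemme:decompocun} to $(\nu^{K},R^{K})$, recovers $\rho$ through the choice $\chi_{K}=\chi/K$, and invokes Corollary~\ref{corr:jinfini} for validity at all $t\geq 0$; putting the factor $1/K$ into $F$, as you do, is equivalent. Your countable intersection over $K$ and your $R_{\mathrm{in}}$ versus $R_{\max}$ remark make explicit points the paper leaves implicit, but one step needs more than you (and the paper) give: Section~\ref{subsec:dyn} assumes $\chi>1$, Proposition~\ref{prop:controlebiomasse} uses it, and $\chi_{K}<1$ once $K>\chi$. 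The fix is to bound $R_{t}+\chi_{K}E_{t}$ instead, which is non-increasing up to $\|\varsigma\|_{\infty,[0,R_{\max}]}$, and this gives the same non-explosion conclusion for each fixed $K$.
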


\begin{proof}
For $K \in \mathbb{N}^{*}$ and $t \geq 0$, it suffices to remark that $$F(R^{K}_{t},\langle
\mu^{K}_{t}, \varphi_{t} \rangle) = F\left(R^{K}_{t},\langle
\nu^{K}_{t}, \dfrac{1}{K}\varphi_{t} \rangle \right),$$
and to apply Lemma~\ref{lemme:decompocun}. Note in particular that we recover the function $\rho$ because we pick the appropriate renormalization of the parameter $\chi$ in \eqref{eq:regroupindiv}. The decomposition is almost surely valid for every $t \geq 0$ thanks to Corollary~\ref{corr:jinfini}.
\end{proof}

Now for the proof of Proposition~\ref{lemme:controlenp}, we fix $T \geq 0$, $K \geq 1$, and apply Lemma~\ref{lemme:decompok} to $F : (r,x) \mapsto x^{p}\mathbb{1}_{\{x > 0 \}}$ (which is non-negative, non-decreasing on the variable $x$ and $\mathcal{C}^{1,1}([0,R_{\max}] \times \mathbb{R})$ because $p \geq 1$) and $\varphi : (t,x) \mapsto 1+x+ \omega(x) $, to obtain for every $t \in [0, T]$,
\begin{align*}
\left(E_{t}^{K} +N_{t}^{K} + \Omega^{K}_{t}\right)^{p} \leq & \hspace{0.1 cm} \left(E_{0}^{K} +N_{0}^{K} + \Omega^{K}_{0}\right)^{p} + pC_{g} \displaystyle{\int_{0}^{T} \left(E_{s}^{K} +N_{s}^{K} + \Omega^{K}_{s}\right)^{p} \mathrm{d}s} \\
    & + \displaystyle{\int_{0}^{T}\int_{\mathcal{U} \times \mathbb{R}^{*}_{+} } \mathbb{1}_{\{u \in V^{K}_{s-} \}} \mathbb{1}_{\{h \leq b\left(\xi^{u,K}_{s-}\right)\}}} \\  
    & \hspace{2 cm}\bigg[ \bigg(E_{s-}^{K} +N_{s-}^{K} + \Omega^{K}_{s-} +\dfrac{1+\omega(x_{0})}{K}  \bigg)^{p} \\
    & \hspace{4.5cm} - \left(E_{s-}^{K} +N_{s-}^{K} + \Omega^{K}_{s-}\right)^{p} \bigg] \mathcal{N}(\mathrm{d}s,\mathrm{d}u,\mathrm{d}h).
\end{align*}
We used the fact that $\omega$ is positive and non-decreasing, the first point of Assumption~\ref{hyp:poidsomega} and the fact that $F$ is non-decreasing on the variable $x$ (so that the integral against $\mathcal{N}'$ is non-positive). For $M>0$, as in Proposition~\ref{prop:controlpop}, we define the stopping times
$$\tau^{K}_{M} := \inf \left\{ t \geq 0, \; E^{K}_{t} + N^{K}_{t} + \Omega^{K}_{t} \geq M \right\},$$
with the convention $\inf(\varnothing) = + \infty$. We consider the supremum over $[0,T \wedge \tau^{K}_{M}]$ of $\left(E_{t}^{K} +N_{t}^{K} + \Omega^{K}_{t}\right)^{p}$ and then take expectations. In particular, we verify that integrals against $\mathcal{N}$ are true semi-martingales thanks to the definition of the stopping time $\tau^{K}_{M}$, and all the expectations are well-defined and finite until this stopping time. We also use the fact that there exists $C_{p,x_{0}}>0$ such that for every $y > 0$, \[ \left(y+\dfrac{1+\omega(x_{0})}{K}\right)^{p}-y^{p} \leq \dfrac{C_{p,x_{0}}}{K}(1+y^{p-1}). \] We apply Fubini theorem because all the integrands are positive and this leads to
\begin{align*}
    \mathbb{E}\bigg(\underset{t \in [0,T \wedge \tau^{K}_{M}]}{\mathrm{sup}} \left(E_{t}^{K} +N_{t}^{K} + \Omega^{K}_{t}\right)^{p}\bigg) & \\
  & \hspace{-3 cm} \leq  \mathbb{E}\left(\left(E_{0}^{K} +N_{0}^{K} + \Omega^{K}_{0}\right)^{p}\right) + pC_{g} \displaystyle{\int_{0}^{T \wedge \tau^{K}_{M}} \mathbb{E}\left( \left(E_{s}^{K} +N_{s}^{K} + \Omega^{K}_{s}\right)^{p} \right) \mathrm{d}s} \\
    & \hspace{-2 cm} + C_{p,x_{0}}C_{b} \displaystyle{\int_{0}^{T \wedge \tau^{K}_{M}}}
   \mathbb{E}\left(E_{s-}^{K} +N_{s-}^{K} + \Omega^{K}_{s-} + \left(E_{s-}^{K} +N_{s-}^{K} + \Omega^{K}_{s-}\right)^{p} \right)  \mathrm{d}s,
\end{align*}
where we used the second point of Assumption~\ref{hyp:poidsomega}, and in particular the fact that it implies $b \leq C_{b}(1+ \mathrm{Id} + \omega)$. Finally, we use the fact that for $x \geq 0$, $x+x^{p} \leq 2(1+x^{p})$ to obtain
\begin{align*}
    \mathbb{E}\bigg(\underset{t \in [0,T\wedge \tau^{K}_{M}]}{\mathrm{sup}} \left(E_{t}^{K} +N_{t}^{K} + \Omega^{K}_{t}\right)^{p}\bigg) &\\
&   \hspace{-2.2 cm} \leq \mathbb{E}\left(\left(E_{0}^{K} +N_{0}^{K} + \Omega^{K}_{0}\right)^{p}\right) + 2TC_{p,x_{0}}C_{b} \\
    & \hspace{-1.8cm} + (pC_{g}+2C_{p,x_{0}}C_{b}) \displaystyle{\int_{0}^{T} \mathbb{E} \left( \sup_{u \in [0,s \wedge \tau^{K}_{M}]} \left(E_{u}^{K} +N_{u}^{K} + \Omega^{K}_{u}\right)^{p} \right) \mathrm{d}s}.
\end{align*}
We conclude with Gronwall lemma that
\begin{align*}
    \mathbb{E}\bigg(\underset{t \in [0,T \wedge \tau^{K}_{M}]}{\mathrm{sup}}  \left(E_{t}^{K} +N_{t}^{K} + \Omega^{K}_{t}\right)^{p}\bigg) & \\
& \hspace{-2 cm} \leq \bigg(\mathbb{E}\left(\left(E_{0}^{K} +N_{0}^{K} + \Omega^{K}_{0}\right)^{p}\right) + 2TC_{p,x_{0}}C_{b} \bigg)e^{(pC_{g}+2C_{p,x_{0}}C_{b})T}.
\end{align*}
By \eqref{eq:lesmom}, this upper bound is finite, uniformly on $K$, and does not depend on $M$. By the same arguments as in the proof of Proposition~\ref{prop:controlpop}, the sequence $\tau^{K}_{M}$ goes to $+ \infty$ when $M \rightarrow + \infty$, and this ends the proof.

\subsection{Proof of Theorem~\ref{theo:roel}}
\label{appendix:roel}

We reorganize the structure of the proof of Theorem 2.1 in \cite{roel_86}. A slight difference lies in the characterization of relatively compact sets in $(\mathcal{M}_{\omega}(\mathbb{R}^{*}_{+}),v)$ compared to the case $(\mathcal{M}_{1}(\mathbb{R}^{*}_{+}),v)$ treated in \cite{roel_86}. Let $\varepsilon >0$, our goal is to show that there exists $C$ relatively compact in $\mathbb{D}([0,T],(\mathcal{M}_{\omega}(\mathbb{R}^{*}_{+}),v))$ such that for all $K \in \mathbb{N}$, $P^{K}(C) \geq 1-\varepsilon$.
\\\\
First, let us enumerate the elements of the countable set $D \cup \{ \omega \}$ as $(f_{k})_{k \in \mathbb{N}}$, with $f_{0} = \omega$. For every $k \in \mathbb{N}$, by assumption, $(\pi_{f_{k}}*P^{K})_{K \in \mathbb{N}}$ is a tight sequence of probability measures on $\mathbb{D}([0,T],\mathbb{R})$, so there exists a compact set $C_{k} \subseteq \mathbb{D}([0,T],\mathbb{R})$ such that for every $K \in \mathbb{N}$, $(\pi_{f_{k}}*P^{K})(C_{k}) \geq 1 - \varepsilon/2^{k+1}$. We define
$$ C := \bigcap\limits_{k \in \mathbb{N}} \pi^{-1}_{f_{k}}(C_{k}), $$
which immediately verifies by construction, for every $K \in \mathbb{N}$,
\begin{align*}
P^{K}(C) = 1 - P^{K}\left(\bigcup\limits_{k \in \mathbb{N}} \left(\pi^{-1}_{f_{k}}(C_{k})\right)^{c} \right) \geq 1 - \sum\limits_{k \in \mathbb{N}} \left(1 - (\pi_{f_{k}}*P^{K})(C_{k}) \right) \geq 1- \varepsilon.
\end{align*}
Let us show that $C$ is relatively compact in $\mathbb{D}([0,T],(\mathcal{M}_{\omega}(\mathbb{R}^{*}_{+}),v))$ to conclude. We begin with a preliminary definition.

\begin{defi}
    For $\delta >0$, we write $\Pi_{\delta}$ for the set of all finite partitions $0=t_{0}<...<t_{k}=T$ of $[0,T]$, of any size $k$ and verifying $\min_{1  \leq i \leq k} (t_{i}-t_{i-1}) > \delta$. Let $(X,d)$ be a metric space, then the $\delta$-càdlàg modulus of continuity of any function $f \in \mathbb{D}([0,T],(X,d))$ is
        $$w_{\delta,d}(f) := \inf_{(t_{i})_{0 \leq i \leq k} \in \Pi_{\delta}} \hspace{0.2 cm} \sup_{1 \leq i \leq k} \hspace{0.2 cm} \sup_{(s,t) \in [t_{i-1},t_{i})} \hspace{0.2 cm} d(f(s),f(t)). $$
\end{defi}
The space $\mathbb{D}([0,T],(\mathcal{M}_{\omega}(\mathbb{R}^{*}_{+}),v))$ is complete (see Proposition B.2.1. in \cite{broduthesis} and Theorem 5.6 p.121 in \cite{ek_2005}), so we can use Theorem 6.3 p.123 in \cite{ek_2005} to assess that $C$ is relatively compact, if and only if
\begin{center}
$\forall t \in [0,T] \cap \mathbb{Q}, \quad \{ \mu_{t}, \mu \in C\} $ is relatively compact in $(\mathcal{M}_{\omega}(\mathbb{R}^{*}_{+}),v)$,
\end{center}
and
\begin{align}
\lim\limits_{\delta \rightarrow 0} \sup\limits_{\mu \in C} w_{\delta,d_{v}}(\mu) = 0.
\label{eq:kurtzikurtz}
\end{align} 
First, we have the inclusion $C \subseteq \pi_{\omega}^{-1}(C_{0})$, which implies that there exists a constant $M>0$, such that for every $t \in [0,T]$, for every $\mu \in C$, $\langle \mu_{t}, \omega \rangle \leq M$. For every $t \in [0,T]$, the subset $\{ \mu_{t}, \mu \in C\}$ is thus relatively compact in $(\mathcal{M}_{\omega}(\mathbb{R}^{*}_{+}),v)$ (it is included in the set $\{\nu, \, \langle \nu, \omega \rangle \leq M \}$, which is compact by the Banach-Alaoglu theorem). Let us finally show \eqref{eq:kurtzikurtz}. The set $D$ is by assumption a dense countable subset of $\mathcal{C}_{0}(\mathbb{R}^{*}_{+})$ for the topology of uniform convergence. The set $\mathcal{C}_{0}(\mathbb{R}^{*}_{+})$ is composed of bounded continuous functions, and is an algebra that separates points, thus it is a separating class of functions for $(\mathcal{M}_{\omega}(\mathbb{R}^{*}_{+}),v)$ (Theorem 4.5 in \cite{ek_2005}). In particular, $D = (f_{k})_{k \in \mathbb{N}^{*}}$ is dense in a convergence determining set, so by Theorem 2.4.~p.9 in \cite{kurtz1981approximation}, to show \eqref{eq:kurtzikurtz}, it suffices to show that for every $k \in \mathbb{N}^{*}$,
\begin{align*}
\lim\limits_{\delta \rightarrow 0} \sup\limits_{\mu \in C} w_{\delta,d}(\langle \mu, f_{k} \rangle) = 0,
\end{align*} 
where $d$ is the usual distance on $\mathbb{R}$. For every $k \in \mathbb{N}$, $C_{k}$ is compact in $\mathbb{D}([0,T],\mathbb{R})$, hence by Theorem 6.3 p.123 in \cite{ek_2005} again,
\begin{align*}
\lim\limits_{\delta \rightarrow 0} \sup\limits_{\phi \in C_{k}} w_{\delta,d}(\phi) = 0.
\end{align*} 
For every $k \in \mathbb{N}$, we have the inclusion $C \subseteq \pi_{f_{k}}^{-1}(C_{k})$, so finally 
\begin{align*}
\lim\limits_{\delta \rightarrow 0} \sup\limits_{\mu \in C} w_{\delta,d}(\langle \mu, f_{k} \rangle) \leq \lim\limits_{\delta \rightarrow 0} \sup\limits_{\mu \in \pi_{f_{k}}^{-1}(C_{k})} w_{\delta,d}(\langle \mu, f_{k} \rangle) \leq \lim\limits_{\delta \rightarrow 0} \sup\limits_{\phi \in C_{k}} w_{\delta,d}(\phi) = 0,
\end{align*} 
which entails \eqref{eq:kurtzikurtz} and ends the proof.

\subsection{Proof of Theorem~\ref{theo:melroel}}
\label{appendix:melroel}

We follow the same structure of proof as in \cite{meleard1993convergences}.  First, we introduce some notations and show a deterministic result.

\begin{defi}
The Prokhorov distance is defined for every $\mu,\nu \in M_{1}(\mathbb{R}^{*}_{+})$ by
\begin{align*}
d_{P}^{1}(\mu,\nu) := \inf \{ \varepsilon >0, \forall A \in \mathcal{B}(\mathbb{R}^{*}_{+}), \hspace{0.1 cm}  \mu(A) \leq \nu(A^{\varepsilon}) + \varepsilon, \hspace{0.1 cm} \nu(A) \leq \mu(A^{\varepsilon}) + \varepsilon \},
\end{align*}
where $\mathcal{B}(\mathbb{R}^{*}_{+})$ is the usual Borel $\sigma$-algebra on $\mathbb{R}^{*}_{+}$, and $A^{\varepsilon} := \{ y \in \mathbb{R}^{*}_{+}, \inf_{x \in A} |x-y| < \varepsilon \}.$ For any positive function $w$, we extend this definition to the $w$-Prokhorov distance $d_{P}^{w}$, defined for every $\mu,\nu \in \mathcal{M}_{w}(\mathbb{R}^{*}_{+})$ by
\begin{align*}
d_{P}^{w}(\mu,\nu) := d_{P}^{1}(w*\mu,w*\nu),
\end{align*} 
where $w*\mu$ is the usual pushforward of $\mu$ by $w$ (\textit{i.e.} for every measurable function $f$, $\langle w*\mu, f \rangle := \langle \mu, w f \rangle$). 
\label{defi:prok}
\end{defi}

The $w$-Prokhorov distance metrizes the space $(\mathcal{M}_{w}(\mathbb{R}^{*}_{+}),\mathrm{w})$ (see Proposition B.2.2 in \cite{broduthesis}). Also, it is well-known that $(\mathcal{M}_{1}(\mathbb{R}^{*}_{+}),v)$ is metrizable by a distance $d_{v}^{1}$ (see Proposition B.2.1 in \cite{broduthesis}), and from this base case, we define the distance $d_{v}^{w}$, for every $\mu, \nu \in \mathcal{M}_{w}(\mathbb{R}^{*}_{+})$ by
\begin{align*}
d_{v}^{w}(\mu,\nu) := d_{v}^{1}(w*\mu,w*\nu).
\end{align*}
 
\begin{lemme}
Let $w$ be a positive function on $\mathbb{R}^{*}_{+}$, $(\nu_{n})_{n \in \mathbb{N}}$ and $\nu$ elements of $\mathcal{M}_{w}(\mathbb{R}^{*}_{+})$. Then,
\begin{center}
$d^{w}_{P}(\nu_{n},\nu) \xrightarrow[n \rightarrow + \infty]{} 0 $, if and only if $\bigg(d^{w}_{v}(\nu_{n},\nu) \xrightarrow[n \rightarrow + \infty]{} 0 $ and $|\langle \nu_{n}- \nu, w \rangle|\xrightarrow[n \rightarrow + \infty]{} 0. \bigg)$
\end{center} 
\label{lemme:deuxiemedistance}
\end{lemme}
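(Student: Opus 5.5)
The plan is to transfer everything to the unweighted case through the pushforward map $\mu \mapsto w*\mu$. By Definition~\ref{defi:prok}, we have $d^{w}_{P}(\nu_{n},\nu) = d^{1}_{P}(w*\nu_{n},w*\nu)$ and $d^{w}_{v}(\nu_{n},\nu) = d^{1}_{v}(w*\nu_{n},w*\nu)$. Moreover, $\langle \nu_{n}-\nu, w\rangle = \langle w*\nu_{n} - w*\nu, 1\rangle$ with the convention $\langle w*\mu,f\rangle = \langle \mu, wf\rangle$. Setting $m_{n} := w*\nu_{n}$ and $m := w*\nu$, which are finite measures on $\mathbb{R}^{*}_{+}$ since $\nu_{n},\nu \in \mathcal{M}_{w}(\mathbb{R}^{*}_{+})$, the lemma reduces to the classical statement for finite measures. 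That statement reads: $d^{1}_{P}(m_{n},m)\to 0$ if and only if $d^{1}_{v}(m_{n},m)\to 0$ and $m_{n}(\mathbb{R}^{*}_{+}) \to m(\mathbb{R}^{*}_{+})$.

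For the direct implication, I will use that $d^{1}_{P}$ metrizes the weak topology on finite measures, i.e. convergence against $\mathcal{C}_{b}(\mathbb{R}^{*}_{+})$. This is the case $w\equiv 1$ of Proposition B.2.2 in \cite{broduthesis}. Weak convergence gives convergence against $\mathcal{C}_{c}(\mathbb{R}^{*}_{+})$, hence vague convergence, hence $d^{1}_{v}(m_{n},m)\to 0$ since $d^{1}_{v}$ metrizes the vague topology. Taking the test function $1 \in \mathcal{C}_{b}$ gives convergence of total masses, which is exactly $|\langle \nu_{n}-\nu,w\rangle|\to 0$.

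The converse is the main step: vague convergence plus convergence of total masses should imply weak convergence. I will first prove tightness of $(m_{n})$ on $\mathbb{R}^{*}_{+}$, with mass escaping neither to $0$ nor to $+\infty$. Fix $\varepsilon>0$ and choose $0<a<b$ with $m((0,a]\cup[b,\infty))<\varepsilon$, then take $\chi\in\mathcal{C}_{c}(\mathbb{R}^{*}_{+})$ with $0\le\chi\le 1$ and $\chi=1$ on $[a,b]$. Vague convergence gives $\langle m_{n},\chi\rangle\to\langle m,\chi\rangle \ge m(\mathbb{R}^{*}_{+})-\varepsilon$. Convergence of total masses then yields $\limsup_n \langle m_{n},1-\chi\rangle \le \varepsilon$.

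Next, for $f\in\mathcal{C}_{b}$, I will split $f = f\chi + f(1-\chi)$. The first piece is compactly supported, so $\langle m_{n}, f\chi\rangle \to \langle m, f\chi\rangle$ by vague convergence. The second piece is bounded by $\|f\|_{\infty}\varepsilon$ asymptotically, for both $m_{n}$ and $m$. Letting $\varepsilon\to 0$ gives $\langle m_{n},f\rangle\to\langle m,f\rangle$, i.e. weak convergence, and therefore $d^{1}_{P}(m_{n},m)\to 0$.

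The only delicate point is the metrization facts themselves: that $d^{1}_{P}$ metrizes weak convergence of finite, not necessarily probability, measures on the non-compact, non-complete space $\mathbb{R}^{*}_{+}$, and that $d^{1}_{v}$ metrizes vague convergence. I take both from Propositions B.2.1 and B.2.2 of \cite{broduthesis}, which the paper allows us to use.
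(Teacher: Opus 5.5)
Your proof is correct and follows essentially the paper's argument: the paper also cuts off with a compactly supported $\zeta_{p_0}$, and controls the tail via $\langle \nu_{n}, w(1-\zeta_{p_0})\rangle = \langle \nu_{n}, w\rangle - \langle \nu_{n}, w\zeta_{p_0}\rangle$ using convergence of $\langle \nu_{n}, w\rangle$ together with vague convergence, then handles the compactly supported part by vague convergence. The only difference is that you first push everything forward by $w$ and argue with finite measures and $\mathcal{C}_{b}$ test functions, whereas the paper works directly with $f \in \mathcal{C}_{w}(\mathbb{R}^{*}_{+})$, $|f| \leq Cw$; because your reduction matches the pushforward definitions of $d^{w}_{P}$ and $d^{w}_{v}$ exactly, it does not need the continuity of $w$ that the paper's direct version implicitly uses.
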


\begin{proof}
The direct implication is straightforward, we show the converse implication. Let $\varepsilon>0$ and $f \in \mathcal{C}_{w}(\mathbb{R}^{*}_{+})$, so there exists a constant $C>0$ such that $|f| \leq Cw$. Also, we consider $(\zeta_{p})_{p \in \mathbb{N}}$ an increasing sequence of positive functions in $\mathcal{C}^{\infty}_{c}(\mathbb{R}^{*}_{+})$ that converges pointwise towards the constant function equal to $1$ on $\mathbb{R}^{*}_{+}$. First, we write, for any $p \in \mathbb{N}$,
\begin{align*}
| \langle \nu_{n}, f \rangle - \langle \nu, f \rangle | & \leq | \langle \nu_{n}, f \zeta_{p} \rangle - \langle \nu, f \zeta_{p} \rangle | + | \langle \nu_{n}, f (1 -\zeta_{p}) \rangle | + | \langle \nu, f (1 -\zeta_{p}) \rangle | \\
& \leq | \langle \nu_{n}, f \zeta_{p} \rangle - \langle \nu, f \zeta_{p} \rangle | + C\bigg(  \langle \nu_{n}, w (1 -\zeta_{p}) \rangle  + \langle \nu, w (1 -\zeta_{p}) \rangle \bigg).
\end{align*}  
By dominated convergence, $ \langle \nu, w (1 -\zeta_{p}) \rangle$ converges to 0 when $p$ goes to $+ \infty$. Let us fix $p_{0} \in \mathbb{N}$ such that $ \langle \nu, w (1 -\zeta_{p_{0}}) \rangle \leq \varepsilon$. Then, we can write for any $n \in \mathbb{N}$,
$$ \langle \nu_{n}, w (1 -\zeta_{p_{0}}) \rangle = \langle \nu_{n}, w \rangle  - \langle \nu_{n}, w\zeta_{p_{0}} \rangle, $$
which converges to $\langle \nu, w (1 -\zeta_{p_{0}}) \rangle$ when $n$ goes to $+ \infty$ by assumption. Thus, there exists $n_{0} \in \mathbb{N}$ such that for every $n \geq n_{0}$, $\langle \nu_{n}, w (1 -\zeta_{p_{0}}) \rangle  \leq 2 \varepsilon$. Finally, by vague convergence, the term $| \langle \nu_{n}, f \zeta_{p_{0}} \rangle - \langle \nu, f \zeta_{p_{0}} \rangle |$ converges to 0 when $n$ goes to $+ \infty$. To conclude, for every $\varepsilon>0$, we can find $n_{1}\in \mathbb{N}$ such that for every $n \geq n_{1}$, $| \langle \nu_{n}, f \rangle - \langle \nu, f \rangle | \leq (1+3C) \varepsilon$, so that $| \langle \nu_{n}, f \rangle - \langle \nu, f \rangle | \xrightarrow[n \rightarrow + \infty]{} 0 $. This is valid for any $f \in \mathcal{C}_{w}(\mathbb{R}^{*}_{+})$, which ends the proof.
\end{proof}

\begin{defi}
For $w$ positive function on $\mathbb{R}^{*}_{+}$, we define the distance $\eth_{w}$ on $\mathcal{M}_{w}(\mathbb{R}^{*}_{+})$ by
$$\forall (\mu,\nu) \in \mathcal{M}_{w}(\mathbb{R}^{*}_{+}), \quad \eth_{w}(\mu,\nu) := d^{w}_{v}(\mu,\nu) + | \langle \mu-\nu, w \rangle |.$$
\label{defi:eth}
\end{defi}

We verify immediately thanks to Lemma~\ref{lemme:deuxiemedistance} that the distances $\eth_{w}$ and $d_{P}^{w}$ are topologically equivalent on $\mathcal{M}_{w}(\mathbb{R}^{*}_{+})$. Let us continue with a probabilistic result. 

\begin{prop}
Let $w$ be a positive function on $\mathbb{R}^{*}_{+}$, $(\nu_{n})_{n \in \mathbb{N}}$ and $\nu$ random variables in $\mathcal{M}_{w}(\mathbb{R}^{*}_{+})$. Then, $(\nu_{n})_{n \in \mathbb{N}}$ converges in law towards $\nu$ in $(\mathcal{M}_{w}(\mathbb{R}^{*}_{+}),d_{P}^{w})$, if and only if $(\nu_{n})_{n \in \mathbb{N}}$ converges in law towards $\nu$ in $(\mathcal{M}_{w}(\mathbb{R}^{*}_{+}),d_{v}^{w})$ and $(\langle \nu_{n}, w \rangle)_{n \in \mathbb{N}}$ converges in law towards $\langle \nu, w \rangle$ in $\mathbb{R}$.
\label{prop:probaresukt}
\end{prop}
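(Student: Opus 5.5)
The plan is to use Lemma~\ref{lemme:deuxiemedistance} and Definition~\ref{defi:eth}. The distances $\eth_{w}$ and $d_{P}^{w}$ are topologically equivalent on $\mathcal{M}_{w}(\mathbb{R}^{*}_{+})$, so convergence in law in $(\mathcal{M}_{w}(\mathbb{R}^{*}_{+}),d_{P}^{w})$ is the same as convergence in law in $(\mathcal{M}_{w}(\mathbb{R}^{*}_{+}),\eth_{w})$. The problem therefore becomes a statement about the metric $\eth_{w}=d_{v}^{w}+|\langle \cdot-\cdot,w\rangle|$.

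\textbf{Direct implication.} The identity map from $(\mathcal{M}_{w},\eth_{w})$ to $(\mathcal{M}_{w},d_{v}^{w})$ is continuous, since $d_{v}^{w}\leq \eth_{w}$. The map $\mu\mapsto\langle\mu,w\rangle$ from $(\mathcal{M}_{w},\eth_{w})$ to $\mathbb{R}$ is $1$-Lipschitz. The continuous mapping theorem then gives both convergences in law.

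\textbf{Converse.} The natural route is to consider the map $\Theta:\mu\mapsto(\mu,\langle\mu,w\rangle)$ from $\mathcal{M}_{w}(\mathbb{R}^{*}_{+})$ into the product space $(\mathcal{M}_{w},d_{v}^{w})\times\mathbb{R}$. By Lemma~\ref{lemme:deuxiemedistance}, $\Theta$ is a homeomorphism from $(\mathcal{M}_{w},d_{P}^{w})$ onto its image $G:=\{(\mu,\langle\mu,w\rangle)\}$, the graph of $\mu\mapsto\langle\mu,w\rangle$. It is then enough to prove that $\Theta(\nu_{n})$ converges in law to $\Theta(\nu)$ in the product space. Since $\Theta(\nu_n)$ and $\Theta(\nu)$ are supported in $G$, the mapping theorem applied to $\Theta^{-1}$ on $G$ concludes, provided $\Theta^{-1}$ is continuous on $G$ (which follows from Lemma~\ref{lemme:deuxiemedistance}) and convergence in law inside the subspace $G$ follows from convergence in law in the ambient space (by the portmanteau theorem with relatively closed sets).

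\textbf{Main obstacle.} Joint convergence in law does not follow from convergence of the marginals in general. I would first try tightness. Both marginal sequences are tight: $(\mathcal{M}_{w},d_{v}^{w})$ is Polish, and each sequence converges in law. Hence the sequence of pairs $(\nu_{n},\langle\nu_{n},w\rangle)$ is tight in the product space. Let $(\nu',x')$ be a subsequential limit in law. Then $\nu'\overset{\mathcal{L}}{=}\nu$ and $x'\overset{\mathcal{L}}{=}\langle\nu,w\rangle$.

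Using Skorokhod representation, and the lower semicontinuity of $\mu\mapsto\langle\mu,w\rangle$ for the vague-type topology (approximating $w$ from below by $\mathcal{C}_{c}$ functions, as in Section~\ref{subsec:topotricks}), we get $x'\geq\langle\nu',w\rangle$ almost surely. Both variables have the same law. For real variables with $X\geq Y$ a.s. and $X\overset{\mathcal{L}}{=}Y$, one gets $X=Y$ a.s.: apply $\arctan$, note $\mathbb{E}(\arctan X-\arctan Y)=0$ while the integrand is non-negative. So $x'=\langle\nu',w\rangle$ a.s., and the limit has the law of $\Theta(\nu)$. Uniqueness of the subsequential limit then yields convergence of the whole sequence.

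The delicate points are the lower semicontinuity and the implicit assumption that $w$ is continuous, which is needed for the approximation by compactly supported functions. I would state that assumption explicitly, as in Theorem~\ref{theo:melroel}.
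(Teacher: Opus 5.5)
Your proposal is correct and follows essentially the paper's route: joint tightness of $(\nu_{n},\langle\nu_{n},w\rangle)$, a subsequential limit $(\nu',x')$, Skorokhod representation with lower semicontinuity giving $x'\geq\langle\nu',w\rangle$, equality in law forcing almost sure equality, and Lemma~\ref{lemme:deuxiemedistance} to pass back to $d_{P}^{w}$. Your $\arctan$ step is slightly more careful than the paper's identity $\mathbb{E}(x-\langle\mu,w\rangle)=0$, which tacitly assumes $\langle\nu,w\rangle$ is integrable, and your transfer through the homeomorphism $\Theta$ onto its graph is only a repackaging of the paper's almost-sure convergence argument.
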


\begin{proof}
The direct implication is straightforward, we show the converse implication. First, by convergence of the marginal distributions, the sequence of laws of $(\nu_{n}, \langle \nu_{n}, w \rangle)_{n \in \mathbb{N}}$ is tight in $(\mathcal{M}_{w}(\mathbb{R}^{*}_{+}),d^{w}_{v}) \times \mathbb{R}$. By Prokhorov theorem, any subsequence of $(\nu_{n}, \langle \nu_{n}, w \rangle)_{n \in \mathbb{N}}$ admits a subsequence converging in law in $(\mathcal{M}_{w}(\mathbb{R}^{*}_{+}),d^{w}_{v}) \times \mathbb{R}$, towards a random variable that we denote as $(\nu',x')$. Furthermore, by convergence of the marginal distributions, $\nu'$, respectively $x'$, has same law as $\nu$, respectively $\langle \nu,w \rangle$. Our main goal is to show that for any such converging subsequence (still denoted as $(\nu_{n}, \langle \nu_{n}, w \rangle)_{n \in \mathbb{N}}$), the law of the limiting couple $(\nu',x')$ is unique and equal to the law of the couple $(\nu, \langle \nu, w \rangle)$. Note that the technical part of this proof is to show the equality in law as couples, and not only for the marginal distributions. By the Skorokhod representation theorem (Theorem 6.7. p.70 in \cite{bill_99}), there exists a probability space $\Omega$ and random variables $(\mu_{n},x_{n})_{n \in \mathbb{N}}$ and $(\mu,x)$ defined on $\Omega$ with values in $\mathcal{M}_{w}(\mathbb{R}^{*}_{+}) \times \mathbb{R}$, such that
\begin{itemize}
\item $\forall n \in \mathbb{N}$, $(\mu_{n},x_{n})$ and $(\nu_{n},\langle \nu_{n}, w \rangle)$ have same law;
\item $(\mu,x)$ and $(\nu',x')$ have same law, so $\mu$ and $x$ have respectively same law as $\nu'$ and $x'$, thus respectively same law as $\nu$ and $\langle \nu, w \rangle$;
\item $(\mu_{n},x_{n})_{n \in \mathbb{N}}$ converges almost surely towards $(\mu,x)$ in $(\mathcal{M}_{w}(\mathbb{R}^{*}_{+}),d^{w}_{v}) \times \mathbb{R}$.
\end{itemize}
First, we have for $n \in \mathbb{N}$, by the previous equalities in law,
\[ \mathbb{E}(\mathbb{1}_{\{ 0 \}}(x_{n}- \langle \mu_{n}, w \rangle)) = \mathbb{E}(\mathbb{1}_{\{ 0 \}}(\langle \nu_{n},w \rangle - \langle \nu_{n}, w \rangle))=1, \]
so $x_{n}$ is almost surely equal to $\langle \mu_{n}, w \rangle$ for every $n \in \mathbb{N}$. Then, let $(\zeta_{p})_{p \in \mathbb{N}}$ an increasing sequence of positive functions in $\mathcal{C}^{\infty}_{c}(\mathbb{R}^{*}_{+})$ that converges pointwise towards the constant function equal to $1$ on $\mathbb{R}^{*}_{+}$, and write almost surely, for every $p \in \mathbb{N}$,
\begin{align*}
x = \lim_{n \rightarrow + \infty} x_{n} = \lim_{n \rightarrow + \infty} \langle \mu_{n}, w \rangle \geq \lim_{n \rightarrow + \infty} \langle \mu_{n}, w \zeta_{p} \rangle = \langle \mu, w \zeta_{p} \rangle,
\end{align*}
so by monotone convergence, $x \geq \langle \mu, w \rangle$. Also, we have
\[ \mathbb{E}(x- \langle \mu, w \rangle) = \mathbb{E}(\langle \nu,w \rangle - \langle \nu, w \rangle)=0, \]
so finally $x$ is almost surely equal to $\langle \mu, w \rangle$. Hence, by the third point above, the previous almost sure equalities, and Lemma~\ref{lemme:deuxiemedistance}, $(\mu_{n})_{n \in \mathbb{N}}$ converges almost surely towards $\mu$ in $(\mathcal{M}_{w}(\mathbb{R}^{*}_{+}),d^{w}_{P})$. By the previous equalities in law, we obtain that $(\nu_{n})_{n \in \mathbb{N}}$ converges in law towards $\nu$ in $(\mathcal{M}_{w}(\mathbb{R}^{*}_{+}),d^{w}_{P})$. We thus have shown that every subsequence of $(\nu_{n})_{n \in \mathbb{N}}$ admits a subsequence converging in law in $(\mathcal{M}_{w}(\mathbb{R}^{*}_{+}),d^{w}_{P})$ towards $\nu$, which concludes.
\end{proof}

\textbf{Proof of Theorem~\ref{theo:melroel}:}
\\\\
First, we can choose the distance $d_{v}^{w}$ to metrize $(\mathcal{M}_{w}(\mathbb{R}^{*}_{+}),v)$; and we can choose the distance $\eth_{w}$ to metrize $(\mathcal{M}_{w}(\mathbb{R}^{*}_{+}),\mathrm{w})$, and use Proposition~\ref{prop:probaresukt} with $d^{w}_{P}$ replaced by $\eth_{w}$. This is justified by the fact that for every $T>0$, the associated Skorokhod distances are topologically equivalent on $\mathbb{D}([0,T],\mathcal{M}_{w}(\mathbb{R}^{*}_{+}))$ (see Lemma B.2.9 in \cite{broduthesis}). The fact that $\mathrm{(i)}$ implies $\mathrm{(ii)}$ in Theorem~\ref{theo:melroel} is straightforward, so we focus on the converse implication. First, the limiting process $(\nu^{*}_{t})_{t \in [0,T]}$ is in $\mathcal{C}([0,T], (\mathcal{M}_{w}(\mathbb{R}^{*}_{+}),\mathrm{w}))$, hence the sequence $((\nu^{K}_{t})_{t \in [0,T]})_{K \in \mathbb{N}}$ and $((\langle \nu^{K}_{t}, w \rangle)_{t \in [0,T]})_{K \in \mathbb{N}}$ are $\mathcal{C}$-tight (Definition 3.25 p.351 in \cite{Jacod1987LimitTF}) in the space $\mathbb{D}([0,T], (\mathcal{M}_{w}(\mathbb{R}^{*}_{+}),v))$ and the space $\mathbb{D}([0,T],\mathbb{R})$. By Corollary 3.33 p.353 in \cite{Jacod1987LimitTF}, the sequence $((\nu^{K}_{t},\langle \nu^{K}_{t}, w \rangle)_{t \in [0,T]})_{K \in \mathbb{N}}$ is $\mathcal{C}$-tight in $\mathbb{D}([0,T], (\mathcal{M}_{w}(\mathbb{R}^{*}_{+}),v) \times \mathbb{R})$. The structure of the proof is then essentially the same as in Proposition~\ref{prop:probaresukt}.
\\\\
By Prokhorov theorem, any subsequence of $((\nu^{K}_{t},\langle \nu^{K}_{t}, w \rangle)_{t \in [0,T]})_{K \in \mathbb{N}}$ admits a subsequence converging in law in $\mathbb{D}([0,T], (\mathcal{M}_{w}(\mathbb{R}^{*}_{+}),v) \times \mathbb{R})$, towards a random variable that we denote as $(\nu'_{t},x'_{t})_{t \in [0,T]}$, and which is almost surely continuous. Furthermore, by convergence of the marginal distributions, $(\nu'_{t})_{t \in [0,T]}$, respectively $(x'_{t})_{t \in [0,T]}$, has same law as $(\nu^{*}_{t})_{t \in [0,T]}$, respectively $(\langle \nu^{*}_{t},w \rangle)_{t \in [0,T]}$.~By the Skorokhod representation theorem (Theorem 6.7. p.70 in \cite{bill_99}), there exists a probability space $\Omega$ and random variables $((\mu^{K}_{t},x^{K}_{t})_{t \in [0,T]})_{K \in \mathbb{N}}$ and $(\mu^{*}_{t},x^{*}_{t})_{t \in [0,T]}$, defined on $\Omega$ with values in the spaces $\mathbb{D}([0,T],(\mathcal{M}_{w}(\mathbb{R}^{*}_{+}),\mathrm{w}) \times \mathbb{R})$ and $\mathcal{C}([0,T],(\mathcal{M}_{w}(\mathbb{R}^{*}_{+}),\mathrm{w}) \times \mathbb{R})$, such that
\begin{itemize}
\item $\forall K \in \mathbb{N}$, $(\mu^{K}_{t},x^{K}_{t})_{t \in [0,T]}$ and $(\nu^{K}_{t},\langle \nu^{K}_{t}, w \rangle)_{t \in [0,T]}$ have same law. In particular, $(\mu^{K}_{t},x^{K}_{t})_{t \in [0,T]}$ is almost surely with values in $\mathbb{D}([0,T], (\mathcal{M}_{w}(\mathbb{R}^{*}_{+}),\mathrm{w}) \times \mathbb{R})$.
\item $(\mu^{*}_{t},x^{*}_{t})_{t \in [0,T]}$ and $(\nu'_{t},x'_{t})_{t \in [0,T]}$ have same law, so in particular $(\mu^{*}_{t})_{t \in [0,T]}$ and $(x^{*}_{t})_{t \in [0,T]}$ have respectively same law as $(\nu'_{t})_{t \in [0,T]}$ and $(x'_{t})_{t \in [0,T]}$, thus same law as $(\nu^{*}_{t})_{t \in [0,T]}$ and $(\langle \nu^{*}_{t},w \rangle)_{t \in [0,T]}$. Also, $(\mu^{*}_{t},x^{*}_{t})_{t \in [0,T]}$ is almost surely continuous.
\item $((\mu^{K}_{t},x^{K}_{t})_{t \in [0,T]})_{K \in \mathbb{N}}$ converges almost surely towards $(\mu^{*}_{t},x^{*}_{t})_{t \in [0,T]}$ in $\mathbb{D}([0,T],(\mathcal{M}_{w}(\mathbb{R}^{*}_{+}),v) \times \mathbb{R})$.
\end{itemize}
By Lemma B.2.8 in \cite{broduthesis}, $((\mu^{K}_{t},x^{K}_{t})_{t \in [0,T]})_{K \in \mathbb{N}}$ converges almost surely uniformly towards $(\mu^{*}_{t},x^{*}_{t})_{t \in [0,T]}$ in $\mathbb{D}([0,T],(\mathcal{M}_{w}(\mathbb{R}^{*}_{+}),v) \times \mathbb{R})$. Thus, we can write almost surely
\begin{align}
\lim_{n \rightarrow + \infty}  \sup_{t \in [0,T]} d_{v}(\mu^{K}_{t},\mu^{*}_{t}) = 0 \label{eq:equautonunejaj} \\
\lim_{n \rightarrow + \infty}  \sup_{t \in [0,T]} |x^{K}_{t}- x^{*}_{t} | = 0. \label{eq:eqdeuxutonunejaj}
\end{align}
In the following, let us show that almost surely, 
\begin{align}
\sup_{t \in [0,T]} |x^{K}_{t}-\langle  \mu^{K}_{t}, w \rangle| = \sup_{t \in [0,T]} |x^{*}_{t} - \langle \mu^{*}_{t}, w \rangle) | = 0.
\label{eq:almostusrleyx}
\end{align}
First, by the previous equalities in law and because $f \in \mathbb{D}([0,T],\mathbb{R}) \mapsto \sup_{t \in [0,T]} |f(t)|$ is well-defined and measurable, we have
\[ \mathbb{E}\left(\mathbb{1}_{\{0\}}\left(\sup_{t \in [0,T]} | x^{K}_{t}- \langle \mu^{K}_{t}, w \rangle | \right)\right) = \mathbb{E}\left(\mathbb{1}_{\{0\}}\left(\sup_{t \in [0,T]} | \langle \nu^{K}_{t}, w \rangle - \langle \nu^{K}_{t}, w \rangle | \right)\right)=1, \]
so almost surely $\sup_{t \in [0,T]} |x^{K}_{t}-\langle  \mu^{K}_{t}, w \rangle| = 0$. Also, for any $t \in [0,T]$ fixed, we can use the same argument as in the proof of Proposition~\ref{prop:probaresukt} to show that almost surely $x^{*}_{t} = \langle \mu^{*}_{t}, w \rangle$. Hence, almost surely, 
$$ \sup_{t \in [0,T] \cap \mathbb{Q}} |x^{*}_{t} - \langle \mu^{*}_{t}, w \rangle | = 0, $$
and $t \in [0,T] \mapsto |x^{*}_{t} - \langle \mu^{*}_{t}, w \rangle |$ is almost surely a continuous function, hence we obtain \eqref{eq:almostusrleyx}. From \eqref{eq:equautonunejaj}, \eqref{eq:eqdeuxutonunejaj} and \eqref{eq:almostusrleyx}, and by definition of $\eth_{w}$, we obtain that almost surely
\begin{align*}
\lim_{n \rightarrow + \infty}  \sup_{t \in [0,T]} \eth_{w}(\mu^{K}_{t},\mu^{*}_{t}) = 0.
\end{align*}
In particular, this implies that $((\mu^{K}_{t})_{t \in [0,T]})_{K \in \mathbb{N}}$ converges almost surely (and this convergence is even uniform on $[0,T]$) towards $(\mu^{*}_{t})_{t \in [0,T]}$ in $\mathbb{D}([0,T],(\mathcal{M}_{w}(\mathbb{R}^{*}_{+}),\mathrm{w}))$, which concludes thanks to the equality in law between $(\mu^{K}_{t})_{t \in [0,T]}$ and $(\nu^{K}_{t})_{t \in [0,T]}$ for all $K \in \mathbb{N}$, and between $(\mu^{*}_{t})_{t \in [0,T]}$ and $(\nu^{*}_{t})_{t \in [0,T]}$.

\section{Simulation parameters and algorithms}
\label{app:simu}

\subsection{Simulation parameters}
\label{app:simupara}

For $x>0$ and $R \geq 0$, we set:
\begin{enumerate}
\item $\ell(x) :=C_{\alpha}x^{\alpha}$,
\item $b(x):= \mathbb{1}_{x > x_{0}}C_{\beta}x^{\beta}$,
\item $f(x,R) := \dfrac{R}{\kappa+R}  C_{\gamma}x^{\gamma}$ (\textit{i.e.} $\phi(R) = \dfrac{R}{\kappa+R}$ and $\psi(x) = C_{\gamma}x^{\gamma}$),
\item $d(x):= C_{\delta}x^{\delta}$,
\item $\varsigma(R) := D(R_{\mathrm{in}}-R),$
\end{enumerate}
where the different parameters are specified in Table~\ref{tabl:simu}. Also, we fix a value for the conversion efficiency coefficient $\chi >1$ appearing in \eqref{eq:eqress}. We choose a deterministic initial condition for the resources $R_{0} \in [0,R_{\max}]$. For the initial state of the population, we pick $K$ random individual energies, chosen independently and according to an initial distribution with compact support denoted as $[x_{\min},x_{\max}]$, and absolutely continuous with respect to Lebesgue measure, with a density $u_{0}$ shown on Figure~\ref{fig:intiialdensitth}, which is given by
$$ \forall x>0, \quad u_{0}(x) := C \left(  \dfrac{(x-x_{\min})(x_{\max}-x)}{(x_{\max}-x_{\min})^{2}} \right)^{5} \mathbb{1}_{x \in [x_{\min},x_{\max}]}, $$
with a constant $C>0$ being such that $\int_{\mathbb{R}^{*}_{+}} u_{0}(x) \mathrm{d}x = 1$, and parameters specified in Table~\ref{tabl:simu}.

\begin{figure}[h!]
\centering
\includegraphics[scale=0.5]{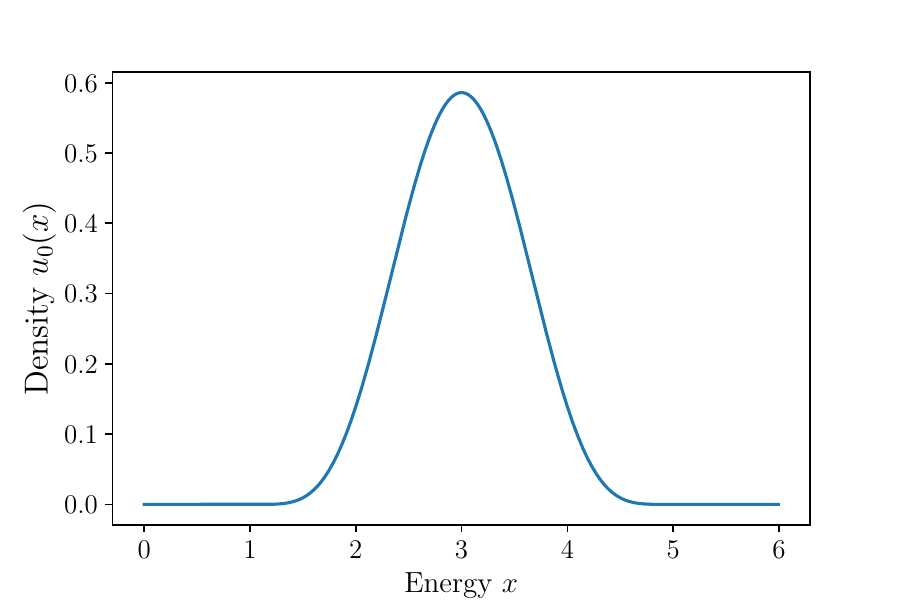}
\caption{Initial density $u_{0}(.)$}
\label{fig:intiialdensitth}
\end{figure}

We simulate the IBM and the PDE model during a time $T>0$. Under the allometric setting with $\alpha \in (0,1)$, for $t \in [0,T]$, we can upper bound the maximal energy an individual can reach by the solution of 
\begin{center}
        $\left\{
\begin{array}{ll}
f'(t) =   (\phi(R_{\max})C_{\gamma}-C_{\alpha}) f^{\alpha}(t),\\
f(0) = x_{\max} 
\end{array}
\right.$
    \end{center}
taken at time $T$, where $R_{\max}>0$ is defined in \ref{subsec:dyn}. This upper bound is precisely given by 
\begin{align}
M_{T} := \bigg(x_{\max}^{1-\alpha} + (1-\alpha)(\phi(R_{\max})C_{\gamma}-C_{\alpha})T \bigg)^{\frac{1}{1-\alpha}}.
\label{eq:aimechi}
\end{align}

Hence, we work on the fixed energy window $[0,M_{T}]$ for the construction of our algorithms in Appendix~\ref{app:algo}. The set of parameters of Table~\ref{tabl:simu} corresponds to the allometric setting supported by the Metabolic Theory of Ecology \cite{brown_04,malerba_2019}.

\begin{table}[h!]
\begin{center}
\begin{tabular}{|c|c|}
	\hline 
	Parameter & Value \\
	\hline
	$\alpha$ & 0.75 \\
	$\gamma$ & 0.75 \\
	$\beta$ & -0.25 \\
	$\delta$ & -0.25 \\
	$C_{\alpha}$ & 1 \\
	$C_{\gamma}$ & 2 \\
	$C_{\beta}$ & 0.1 \\
	$C_{\delta}$ & 0.05 \\
	$\kappa$ & 5 \\
	$D$ & 0.275 \\
	\hline 
\end{tabular}
\hspace{1.5 cm}
\begin{tabular}{|c|c|}
	\hline 
	Parameter & Value \\
	\hline
	$R_{\mathrm{in}}$ & 2 \\	
	$R_{\max}$ & 2 \\
	$\chi$ & 200 \\
	$K$ & 100, 1000 or 10000 \\
	$x_{0}$ & 1 \\
	$R_{0}$ & 1 \\
	$x_{\min}$ & 1 \\
	$x_{\max}$ & 5 \\
	$T$ & 200 \\
	$M_{T}$ & 208688 \\
	\hline 
\end{tabular}
\end{center}
\caption{Simulation parameters.}
\label{tabl:simu}
\end{table}

\subsection{Algorithms for the IBM and the PDE model}
\label{app:algo}

\subsubsection{Algorithm for the IBM}

We approximate the IBM with a Gillespie algorithm \cite{GILLESPIE1976403}. We introduce two parameters $\overline{b},\overline{d}>0$, and recall the structure of one step of this classical algorithm in Section III.3.1 of \cite{broduthesis}. Importantly, note that contrary to existing simulations in the literature (see Section 2.2 in \cite{FRITSCH20151}), we have unbounded jump rates, so we need to justify that our algorithm approximates well the IBM (\textit{i.e.} that the artificial upper bounds $\overline{b}$ and $\overline{d}$ chosen for birth and death rates are sufficiently high). We refer to the discussion of Section III.3 in \cite{broduthesis} and work with parameters $\overline{d} = 2.10^{4}$ and $\overline{b}= 0.1$.

\subsubsection{Algorithm for the PDE model}

We use a finite differences scheme of order 1 to approximate the transport term in \eqref{eq:indivedp}. To simulate the temporal evolution of the system, we simply use a classical Euler scheme. There is a numerical approximation due to these low order schemes, especially in the context of unbounded growth, birth and death rates. Nevertheless, in this article, our goal is not to go deep in the optimization of our algorithm, nor to give a mathematical proof of the convergence of our numerical scheme. Still, we refer to Section III.4.1.2 in \cite{broduthesis} for the difficulties we encountered in the calibration of the energy discretization grid. Indeed, we propose a specific choice of energy discretization (see Section III.4.1 in \cite{broduthesis} for details) adapted to our birth dynamics, inspired by usual strategies depicted in the literature \cite{doumicos,anarat}.

\bibliographystyle{alpha}
\bibliography{biblio}

\end{document}